\newcommand{\beqn}{\begin{equation*}}
\newcommand{\eeqn}{\end{equation*}}
\newcommand{\beq}{\begin{equation}}
\newcommand{\eeq}{\end{equation}}
\newcommand{\mc}{\mathcal}
\newcommand{\mb}{\mathbb}
\newcommand{\mf}{\mathfrak}
\newcommand{\us}{{\on{us}}}
\newcommand{\ov}{\overline}
\newcommand{\uds}[1]{\underline{\smash{#1}}}
\numberwithin{equation}{section}
\DeclareMathAlphabet{\mathpgoth}{OT1}{pgoth}{m}{n}
\DeclareMathAlphabet{\mathesstixfrak}{U}{esstixfrak}{m}{n}
\DeclareMathAlphabet{\mathboondoxfrak}{U}{BOONDOX-frak}{m}{n}
\definecolor{darkred}{rgb}{0.5,0,0}
\definecolor{darkgreen}{rgb}{0,0.5,0}
\definecolor{darkblue}{rgb}{0,0,0.5}
\newcommand\@erelb@r[1]{%
  \mathrel{\tikz[baseline=-.5ex]\draw[#1] (0,0)--(0.3,0);}
}
\newcommand{\erelbar}[1]{\@erelbar#1}
\def\@erelbar#1#2{%
  \ifcase\numexpr#1*4+#2\relax
    \@erelb@r{-}\or     
    \@erelb@r{->}\or    
    \@erelb@r{-|}\or    
    \@erelb@r{->|}\or   
    \@erelb@r{<-}\or    
    \@erelb@r{<->}\or   
    \@erelb@r{<-|}\or   
    \@erelb@r{<->}\or   
    \@erelb@r{|-}\or    
    \@erelb@r{|->}\or   
    \@erelb@r{|-|}\or   
    \@erelb@r{|<->|}\or 
    \@erelb@r{|<-}\or   
    \@erelb@r{|<->}\or  
    \@erelb@r{|<-|}\or  
    \@erelb@r{|<->|}    
  \else
    \@wrong
  \fi
}
\newtheorem{theorem}{Theorem}[section]
\newtheorem{hyp}[theorem]{Hypothesis}
\newtheorem{assumption}{Assumption}[section]
\newtheorem{corollary}[theorem]{Corollary}
\newtheorem{notation}[theorem]{Notation}
\newtheorem{proposition}[theorem]{Proposition}
\newtheorem{lemma}[theorem]{Lemma}
\newtheorem{lem}[theorem]{}
\theoremstyle{definition}
\newtheorem{definition}[theorem]{Definition}
\theoremstyle{remark}
\newtheorem{remark}[theorem]{Remark}
\newtheorem{example}[theorem]{Example}
\newcommand{\blem}{\begin{lem} \rm}
\newcommand{\elem}{\end{lem}}
\newcommand\M{\mathcal{M}}
\newcommand\D{\mathcal{D}}
\renewcommand\M{\mathcal{M}}
\renewcommand\D{\mathbb{D}}
\newcommand\wt{\on{wt}}
\newcommand\YY{\mathcal{Y}}
\newcommand\DD{\mathbb{D}}
\newcommand{\LL}{\mathcal{L}} 
\newcommand{\LB}{{\widehat{L}}}  
\newcommand{\WB}{{\bm L}}
\newcommand{\WK}{{\bm K}}
\newcommand{\J}{\mathcal{J}}
\newcommand{\U}{\mathcal{U}}
\newcommand{\F}{\mathcal{F}}
\newcommand{\N}{\mathbb{N}}
\newcommand{\cN}{\mathcal{N}}
\newcommand{\R}{\mathbb{R}}
\newcommand{\C}{\mathbb{C}}
\newcommand{\cS}{\mathcal{S}}
\newcommand{\cM}{\mathcal{M}}
\newcommand{\cU}{\mathcal{U}}
\newcommand{\Z}{\mathbb{Z}}
\newcommand{\Q}{\mathbb{Q}}
\renewcommand{\P}{\mathbb{P}}
\newcommand{\bb}{\mathfrak{b}}
\newcommand{\xx}{\mathpgoth{x}}
\newcommand\lie[1]{\mathfrak{#1}}
\newcommand{\g}{\lie{g}}
\newcommand{\on}{\operatorname}
\newcommand{\white}{\circ} 
\newcommand{\black}{\bullet} 
\newcommand\circt{{\includegraphics[width=.05in]{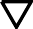}}}
\newcommand\whitet{{\includegraphics[width=.05in]{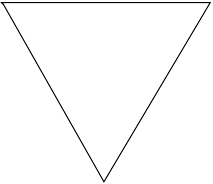}}}
\newcommand\greyt{{\includegraphics[width=.05in]{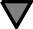}}}
\newcommand\blackt{{\includegraphics[width=.05in]{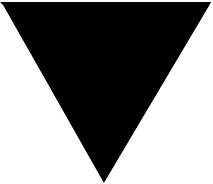}}}
\newcommand{\ainfty}{{$A_\infty$\ }}
\newcommand{\Ob}{\on{Ob}}
\newcommand{\dual}{\vee}
\newcommand{\Obj}{\on{Obj}}
\newcommand{\Edge}{\on{Edge}}
\newcommand{\Leaf}{\on{Leaf}}
\newcommand{\Lag}{\on{Lag}}
\newcommand{\Ver}{\on{Vert}}
\newcommand{\s}{\on{ps}}
\newcommand{\fin}{\on{fin}}
\newcommand{\Aut}{ \on{Aut} }
\newcommand{\Hom}{ \on{Hom}}
\renewcommand{\ker}{ \on{ker}}
\newcommand{\ann}{{\on{ann}}}
\renewcommand{\index}{ \on{index}}
\newcommand{\val}{ \on{val}}
\newcommand{\codim}{\on{codim}}
\newcommand\dirac{/\kern-1.2ex\partial} 
\newcommand\qu{/\kern-.7ex/} 
\newcommand\lqu{\backslash \kern-.7ex \backslash} 
\newcommand\dr{r_+ \kern-.7ex - \kern-.7ex r_-}
\newcommand{\labell}\label
\renewcommand{\d}{{\on{d}}}
\newcommand{\ol}{\overline}
\newcommand{\olp}{\ol{\partial}}
\newcommand\Lam{\Lambda}
\newcommand\eps{\epsilon}
\newcommand\om{\omega}
\newcommand{\lan}{\langle}
\newcommand{\ran}{\rangle}
\newcommand{\ti}{\tilde}
\newcommand\pt{\on{pt}}
\newcommand\cL{\mathcal{L}}
\newcommand\cT{\mathcal{T}}
\newcommand\cF{\mathcal{F}}
\newcommand\cI{\mathcal{I}}
\newcommand\Map{\on{Map}}
\newcommand\ev{\on{ev}}
\renewcommand\ul{\uds}
\newcommand\mO{\mathcal{O}}
\newcommand\E{\mathcal{E}}
\newcommand\reg{{\on{reg}}}
\newcommand\bdefn{\begin{definition}}
\newcommand\edefn{\end{definition}}
\newcommand\bea{\begin{eqnarray*}}
\newcommand\eea{\end{eqnarray*}}
\newcommand\bcv{\left[ \begin{array}{r} }
\newcommand\ecv{\end{array} \right] }
\newcommand\bma{\left[ \begin{array}{l} }
\newcommand\ema{\end{array} \right]}
\newcommand\ben{\begin{enumerate}}
\newcommand\een{\end{enumerate}}
\newcommand\bex{\begin{example}}
\newcommand\bsj{\left\{ \begin{array}{rrr} }
\newcommand\esj{\end{array} \right\}}
\newcommand\Id{\on{Id}}
\newcommand\univ{{\on{univ}}}
\newcommand\eex{\end{example}}
\newcommand\sx{*\kern-.5ex_X}
\newcommand{\Ext}{ \on{Ext}}
\newcommand{\Bl}{\on{Bl}}
\newcommand{\Fuk}{\on{Fuk}}
\newcommand{\bGamma}{\mathbb{\Gamma}}
\newcommand{\bPi}{\mathbb{\Pi}}
\newcommand\tensor{\otimes}
\def\mathunderaccent#1{\let\theaccent#1\mathpalette\putaccentunder}
\def\putaccentunder#1#2{\oalign{$#1#2$\crcr\hidewidth \vbox
to.2ex{\hbox{$#1\theaccent{}$}\vss}\hidewidth}}
\newcommand\cwo[1]{ { \color{darkred}  } }
\definecolor{darkred}{rgb}{0.5,0,0}
\definecolor{darkpurple}{rgb}{0.5,0,.5}
\definecolor{darkpink}{rgb}{0,.5,0.5}
\definecolor{cyan}{rgb}{.25,0,0.75}
\definecolor{darkgreen}{rgb}{0,0.5,0}
\definecolor{darkblue}{rgb}{0,0,0.5}
\begin{document}

\title{Fukaya categories of blowups}

\author{Sushmita Venugopalan}
\address{Institute of Mathematical Sciences, CIT Campus, Taramani,
  Chennai 600113, India.} \email{sushmita@imsc.res.in}

\author{Chris Woodward}
\address{Mathematics-Hill Center, Rutgers University, 110
  Frelinghuysen Road, Piscataway, NJ 08854-8019, U.S.A.}
\email{ctw@math.rutgers.edu}

\author{Guangbo Xu}
\address{Department of Mathematics, Texas A\&M University, College
  Station, TX 77843.  } \email{guangboxu@tamu.edu}

\thanks{This work was partially supported by NSF grants DMS 
  2105417 for Woodward and 2204321 for Xu. 
 Any opinions, findings, and conclusions or recommendations 
  expressed in this material are those of the author(s) and do not 
  necessarily reflect the views of the National Science Foundation.  }

\date{\today}

\begin{abstract} 
We compute the Fukaya category of the symplectic blowup of a compact rational symplectic manifold at a point in the   following sense: Suppose a collection of Lagrangian branes satisfy Abouzaid's criterion \cite{Abouzaid_generation} for split-generation of a bulk-deformed Fukaya category of cleanly-intersecting Lagrangian branes.  We show (Theorem \ref{gen}) that for a small blowup parameter, their inverse images in the blowup together with a collection of branes near the exceptional locus split-generate the Fukaya category of the blowup.  This categorifies a result on quantum cohomology by Bayer \cite{Bayer_2004} and is an example of a more general conjectural description of the behavior of the Fukaya category under transitions occurring in the minimal model program, namely that minimal model program transitions generate additional summands.
\end{abstract}

\maketitle

\parskip 0in \tableofcontents \parskip .1in

\section{Introduction}

In this paper we study the Fukaya category of a symplectic manifold obtained by a small symplectic blowup at a point.  In particular, we show that given a collection of branes in a given symplectic manifold satisfying Abouzaid's criterion for split-generation \cite{Abouzaid_generation}, the Fukaya category of the blowup is split-generated by the image of an embedding of the Fukaya category of the original manifold (with bulk deformation) together with a collection of branes near the exceptional locus. This result is a symplectic analog of Orlov's blowup formula \cite{Orlov_1993} that gives a semi-orthogonal decomposition of the derived category of a blowup.  We also show (conditional on a generalization of a result of Ganatra \cite{Ganatra_thesis} to the compact case described in Remark \ref{ganatrarem}) that in this non-degenerate situation the quantum cohomology is isomorphic to the 
Hochschild cohomology of the Fukaya category, c.f. Kontsevich \cite[p.18]{HMS}. 

We first give a non-technical description of our main result. Let $(X, \omega)$ be a compact symplectic manifold and $QH^\bullet (X,\bb)$ its quantum cohomology ring at a bulk deformation $\bb$.  One expects a bulk deformed Fukaya category $\Fuk(X, \bb)$ whose objects are (weakly unobstructed) Lagrangian submanifolds and whose morphisms count pseudoholomorphic disks/polygons. 
We construct a curved \ainfty category $\Fuk_{\cL}^\sim(X,\bb)$ of branes supported
on some cleanly-intersecting collection $\cL$,
and an associated flat \ainfty category
$\Fuk_{\cL}^\flat(X,\bb)$ whose objects
are branes in $\cL$ equipped with weakly bounding 
cochains. There are natural {\it open-closed} and  {\it closed-open} maps
\[
\xymatrix{ HH_\bullet (\Fuk_\cL^\flat ({X},{\bb})) \ar[r]^-{OC}  &  QH^\bullet({X}, {\bb} ) \ar[r]^-{CO} & HH^\bullet  
                           (\Fuk_\cL^\flat ({X},{\bb}))}
                           \]
between the Hochschild (co)homology of the Fukaya category and the bulk deformed quantum cohomology.  Given such, we say that a collection of Lagrangian branes ${\mf G}$ {\it generates} the bulk deformed quantum cohomology if 
\begin{equation}
OC( HH_\bullet( \Fuk^\flat_{\mf G}(X, \bb))) = QH^\bullet(X, \bb),
\end{equation}
where $\Fuk^\flat_{\mf G}(X, \bb)$ is the full sub $A_\infty$ category with objects ${\mf G}$. Recall that by Abouzaid \cite{Abouzaid_generation} and Ganatra \cite{Ganatra_thesis}, in the exact setting with trivial bulk deformation, this generation condition (with quantum cohomology replaced by symplectic cohomology) implies that the collection ${\mf G}$ split-generates the (wrapped) Fukaya category and the open-closed and closed-open maps are isomorphisms.

Our main result regards the change of the Fukaya category under a point blowup in view of the above generation criterion.  The study of the behavior of the Fukaya category under blowups was initiated by Charest-Woodward \cite{flips} for more general Minimal Model Program transitions. The blowup $\pi: \tilde X \to X$ of $X$ at a point $p$ is parametrized by $\epsilon>0$, which is the area of a complex line in the exceptional divisor $\tilde Z \subset \tilde X$. Since the rank of cohomology increases by $n-1$ where $n$ is the complex dimension of $X$, one expects new branes created by the blowup in order to generate the extra cohomology classes under the open-closed map. Indeed, a collection ${\mf E}$ of $n-1$ branes supported near the exceptional divisor, whose Floer cohomology are nontrivial, were identified in \cite{flips}. In this paper, we prove that these new branes are indeed new split-generators (as in Definition \ref{def:split-gen}) of the Fukaya category of the blowup. 

\begin{theorem}\label{gen} (proved in Section \ref{subsec:splitgen})
Let $p\in X$ be a point and $\epsilon>0$ be sufficiently small. Let $\bb$ be a bulk deformation. Suppose ${\mf G}$ is a finite collection of Lagrangian branes in $X$ disjoint from $p$, that generates the bulk deformed quantum cohomology $QH^\bullet( X, \bb + q^{-\epsilon} p)$.   
Then the collection $\pi^{-1}({\mf G}) \cup {\mf E}$ generates $QH^{\bullet}(\tilde{X}, \pi^{-1}(\bb))$ and split-generates 
the Fukaya category $\Fuk_{\ti{\cL}}^\flat (\tilde{X}, \pi^{-1}(\bb))$ of $\tilde{X}$ with bulk deformation $\pi^{-1}(\bb)$ for any cleanly-self-intersecting collection $\ti{\cL}$ containing the split-generators. Moreover, (conditional on the extension of Ganatra \cite{Ganatra_thesis} to the compact case) there are isomorphisms
\begin{equation}\label{ofrings} 
\xymatrix{ HH_\bullet (\Fuk_{\ti{\cL}}^\flat( \tilde{X}, \pi^{-1}(\bb)) ) \ar[r]^-{OC}  & QH^\bullet (\tilde{X}, \pi^{-1}(\bb)) 
\ar[r]^-{CO} & HH^\bullet (\Fuk_{\ti{\cL}}^\flat (\tilde{X},\pi^{-1}(\bb) )) }.
\end{equation}
\end{theorem}

\begin{remark}
The theorem is a special case \label{speccase} of Kontsevich's expectation that Hochschild cohomology of the Fukaya category is isomorphic to the quantum cohomology \cite[p.18]{HMS}.  K. Ono communicated to us that he also proved results in this direction, and some special cases are proved in Sanda \cite{Sanda_2017}.  Pedroza \cite{Pedroza_2019} studied the effect of blowups on the Floer cohomology of Lagrangians disjoint from the blowup point, in the monotone case.  Fukaya categories of certain blowups of toric varieties are studied from the viewpoint of the Strominger-Yau-Zaslow conjecture in Abouzaid-Auroux-Katzarkov \cite{aak}. Our theorem also slightly generalizes a result for small quantum cohomology of Bayer
\cite{Bayer_2004}, who proved that semi-simplicity of quantum cohomology is preserved under point blowups. 
Other works on the Fukaya categories of blowups can be found in P. Seidel \cite{seidel:flux} and I. Smith \cite{smith:quadrics}. \label{flux}
\end{remark}

\begin{remark} \label{ganatrarem} Ganatra has shown in the exact setting \cite{Ganatra_thesis} 
  $\bb$-deformed Hochschild homology and cohomology of
  $\Fuk_\cL(X,\bb)$ are isomorphic as vector spaces (after a degree
  shift):
  \[ HH_\bullet(\Fuk^\flat_\cL(X,\bb)) \cong HH^{\dim(X) -
    \bullet}(\Fuk^\flat_\cL(X,\bb)) \]
  and (in the compact setting here) are both isomorphic to the quantum
  cohomology $QH^{\dim(X)- \bullet}(X,\bb)$.  Ganatra's results
  \cite{Ganatra_thesis} are written for the exact, undeformed, and 
  flat case where
  the main construction is that of a category of
  Lagrangians in $X^- \times X$ including both Lagrangians of split
  form $L \times K$ as well as the diagonal.  Note that if
  $(L,b_L),(K,b_K)$ are Lagrangians equipped with weakly bounding cochains then a result of Amorim \cite{Amorim} implies that
  $L \times K$ may be equipped with a weak Maurer-Cartan solution and
  so defines an object $(L \times K, b_{L \times K})$ of
  $X^- \times X$.  One expects the potential $W(b_{L \times K})$ to
  vanish so that $CF(\Delta, L \times K)$ is a 
  projectively flat \ainfty algebra.
  Furthermore, this construction should interact as expected with the
  open-closed maps in \cite{Ganatra_thesis}.  The results on
  isomorphisms stated in \eqref{ofrings} for the compact case are
  conditional on this extension. 
\end{remark}

\begin{remark}
Theorem \ref{gen} can only be possible under suitable technical assumptions. First, to keep the technicality at a minimum, we assume that the cohomology class of the symplectic form $\omega$ is rational (and $\epsilon$ is rational). We also only consider Lagrangian branes satisfying certain rationality condition (see Definition \ref{srat} and Hypothesis \ref{lagrangian_assumption}). These assumptions allow us to apply the perturbation scheme of Cieliebak-Mohnke \cite{Cieliebak_Mohnke} to define the Fukaya category and the open-closed/closed-open maps. In addition, the Fukaya category is only defined for an arbitrary finite collection of rational Lagrangian branes with clean pairwise intersections, but not all such branes.
\end{remark}

Theorem \ref{gen} is a categorical version of a result of A. Bayer \cite{Bayer_2004}, who proves that blowup creates   algebra summands in the quantum cohomology.  In particular, if $QH^{\bullet}(X,\bb)$ is semisimple for generic $\bb$ (with positive $q$-valuation), then the same holds for slightly negative $q$-valuation (as allowed in Theorem \ref{gen}) and hence so is the quantum cohomology $QH^\bullet(\tilde{X}, \pi^{-1}(\bb))$ of the blowup.

\begin{corollary} (proved in Section \ref{subsec:splitgen}) \label{under}
There is an orthogonal decomposition of the idempotent-completed derived category %
\begin{equation} \label{dpi} 
D^\pi \Fuk_{\ti{\cL}}^\flat (\tilde{X}, \pi^{-1}(\bb) ) \cong D^\pi \Fuk_{\cL}^\flat (X, \bb + q^{-\eps} p) \oplus D^\pi \Fuk^\flat_{\mf E}(\tilde{X}, \pi^{-1}(\bb) ) \end{equation}
into the bulk-deformed Fukaya category $D^\pi \Fuk^\flat (X,\bb + q^{-\eps} p) $ of $X$ and a category of ``exceptional branes'' $D^\pi \Fuk^\flat_{\mf E}(\tilde{X}, \pi^{-1}(\bb))$. The $(n-1)$ objects in ${\mf E}$ have endomorphism algebras 
isomorphic to non-degenerate Clifford algebras. Moreover, the quantum cohomology of $\tilde{X}$ admits a ring isomorphism 
\[
QH^\bullet (\tilde{X}, \pi^{-1}(\bb)) \cong QH^\bullet ( X, \bb + q^{-\eps} p ) \oplus  QH^\bullet (\pt)^{\oplus n-1} .
\]
\end{corollary}

See also Gonz\'alez-Woodward \cite{gw:surject} and Iritani \cite[Theorem 1.3]{iritani:wall}.  

\begin{remark}  \label{clarify1}
\begin{enumerate} 
\item We expect a similar result to hold for flips, and in particular,
  blowups $\tilde{X}$ along non-trivial center $Z \subset X$.  In this
  case, if $L_Z \subset Z$ is a Lagrangian in $Z$ then we expect that
  $L_Z$ admits a ``thickening'' $\tilde{L}_Z \subset \tilde{X}$ that admits
  a collection of $\codim(Z) - 1$ local systems and bounding cochains
  defining objects in the Fukaya category of $\tilde{X}$, so that the
  Fukaya category of $\tilde{X}$ is generated by the proper transforms of
  Lagrangians in $X$ and the thickening of objects in $Z$.  At least
  in the case that the normal bundle of $Z$ admits a reduction in
  structure group to a torus, there is a strategy of proof.
  See for example Schultz \cite{schultz} for results in this
  direction. 
\item Although the decomposition of categories for a fixed bulk
  deformation is orthogonal, the decomposition of quantum cohomologies
  in Corollary \ref{under} is also expected to semi-orthogonal with
  respect to some categorical analog of the quantum connection,
  c.f. Lee-Lin-Wang \cite{lee:flips}.
\end{enumerate}
\end{remark} 

\subsection{Outline of proof}

In this section, we outline the main technical works in this paper and the strategy of proving Theorem \ref{gen} and Corollary \ref{under}. In Section \ref{section2}---Section \ref{section4} we work under the general setting, giving an independent construction of the bulk deformed Fukaya category, the open-closed/closed-open map, and a proof of Abouzaid's generation criterion, as well as  other results. In Section \ref{section5}---Section \ref{section6} we restrict to the case of the blowup; by modifying the previous constructions, we establish the correspondence between Fukaya categories before and after the blowup. 

Before we start the outline, we recall the notion of Novikov field. Let $q$ be a formal variable and let
\[
\Lambda = \left\{ \sum_{i=1}^\infty c_i q^{d_i}, \ c_i \in \C, \ d_i  \in \R, \lim_{i \to \infty} d_i =  + \infty \right\}
\]
be the {\em universal Novikov field}. The valuation by powers of $q$ is denoted
\[
\val_q: \Lambda  - \{ 0 \} \to \R, \quad \sum_{i=1}^\infty c_i q^{d_i} \mapsto \min_{c_i \neq 0 }(d_i).
\]
Denote the subsets with non-negative resp. positive valuation
\begin{equation} \label{novikov} 
\Lambda_{\ge 0} = \{ f \in \Lambda \  | \ \val_q(f) \ge 0 \}, \quad \text{resp.} \quad \Lambda_{> 0} = \{  f \in \Lambda \ | \ \val_q(f) > 0 \}.  
\end{equation}
In the Novikov ring $\Lambda_{\ge 0}$, 
the group of units is the subgroup $\Lambda^\times$ with zero $q$-valuation. \label{units}

\subsubsection{Definition of the Fukaya category}

The first technical construction in this paper is the definition of the Fukaya category using moduli spaces of treed (pearly) disks regularized via the Cieliebak-Mohnke method \cite{Cieliebak_Mohnke}. (A similar construction was also carried out by Perutz-Sheridan \cite{Perutz_Sheridan_2022}.) We allow as objects of the Fukaya category rational compact embedded Lagrangian branes. Let $X$ be a compact symplectic manifold with symplectic form $\omega$ with rational symplectic class $[\omega] \in H^2(X,\Q)$. A {\em Lagrangian brane} is a compact embedded Lagrangian $L \subset X$ equipped with a local system, 
by which we mean a flat $\Lambda^\times$-bundle 
$\LB \to L$, a spin structure, and a grading. Given a finite rational collection (see Definition \ref{srat}) of cleanly-intersecting 
submanifolds $\cL$ and a $\Lambda_{\ge 0}$-valued cycle
$\bb$ \footnote{The same set-up works for pseudocycles, but it notational much more involved. As explained in Zinger \cite{Zinger_2008} any integral homology class may be represented by a pseudocycle.} denote by $\Fuk_\cL^\sim (X,\bb)$
the Fukaya $A_\infty$ category of $X$ supported on $\cL$ with bulk deformation $\bb$. The set of objects is 
\[
\on{Ob}(\Fuk_\cL^\sim (X, \bb)) = \left\{ \LB   |\  L \in {\mc L},\ \LB \to L  \ \text{flat bundle} \right\}
\]
and morphisms are Floer cochains 
\[
\Hom ( \LB, \LB') = CF^\bullet ( \LB, \LB'), \quad \LB, \LB' \in {\rm Ob}(\Fuk_{\mc L}^\sim (X, \bb)).
\]
In the Morse model used here, Floer cochains are formal combinations
of fibers of local systems over critical points of a Morse function 
\[
F_{L, L'}:\ L' \cap L' \to \R .
\]
The composition maps
\[ m_d: \ \Hom( \LB_{d-1}, \LB_d) \otimes \ldots \otimes \Hom(\LB_0, \LB_1) \to
\Hom(L_0,L_d)[2-d], \quad d \ge 0 \]
count treed holomorphic disks $u: C \to X$ with interior markings mapping to the bulk deformation $\bb$. These are maps from combinations $C = S \cup T$ of disks $S_v \subset S$ and segments $T_e \subset T$ that satisfy Gromov's pseudoholomorphicity conditions on the disks $S_v$ and the gradient flow equation on the segments $T_e$ (see Figure \ref{treeddisk0}) for each vertex $v \in \Ver(\Gamma)$ and edge $e \in \Edge(\Gamma)$ of the combinatorial type $\Gamma$ of $C$.
\begin{wrapfigure}{r}{6cm}
\centering
\includegraphics{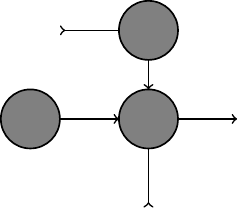}
\caption{A treed disk with two inputs and one output.}\label{treeddisk0}
\end{wrapfigure} 
The Cieliebak-Mohnke perturbation scheme \cite{Cieliebak_Mohnke} depends on choosing a Donaldson hypersurface:   a codimension two submanifold $D \subset X$ whose homology class is Poincar\'e dual to a high multiple of $[\omega]$ such that the union of Lagrangian submanifolds $L \in \LL$ is exact in the complement of $D$. For a suitably chosen almost complex structure, any holomorphic sphere 
in $X$ intersects $D$ at least three times \cite[8.17]{Cieliebak_Mohnke} and any non-constant holomorphic disk intersects $D$ at least once. For any holomorphic treed polygon $u: C \to X$ with boundary pieces labelled by $L_0, \ldots, L_d$, \label{ulphi} the intersections with the divisor $D$ then stabilize the domain $C$. These intersections ``stabilize'' all domains of treed holomorphic maps and allow us to use domain-dependent perturbations of the almost complex structure to overcome the difficulty of regularizing multiply-covered maps. In this paper, we extend the construction of \cite{Charest_Woodward_2017}\cite{flips}\cite{Woodward_Xu_2018} to regularize moduli space of treed holomorphic polygons 
\[
{\mc M}_{d, 1}(L_0, \ldots, L_d)
\]
(see Section \ref{section2}).  Counts of rigid treed holomorphic disks with boundary in the given Lagrangians define the composition maps $m_d$ and the Fukaya category $\Fuk_{\mc L}^\sim(X, \bb)$ as a (curved) strictly unital $A_\infty$ category.

%
%
%

\subsubsection{Spectral decomposition}

Starting from a curved strictly unital \ainfty category, we define flat \ainfty categories
 by restricting to particular values of the curvature. 
For any element $b \in \Hom (\LB, \LB )$ with positive $q$-valuation define 
\[ \mu (b) := \sum_{d \ge 0} m_d(\underbrace{b,\ldots, b}_d) .
\]
Following Fukaya-Oh-Ohta-Ono \cite{fooo} denote by $MC(\LB)$ the space of {\it weakly bounding cochains}, i.e., solutions to the {\em weak Maurer-Cartan equation}
\[
MC(\LB):= \{ b \in \Hom^{\on{odd}} (\LB, \LB) \ | \ \mu (b) \in \Lambda 1_{\LB} \}
\]
and
\[
MC({\mc L}):= \{ \WB = (\LB, b) \ | \ \LB \in \Ob(\Fuk_{\mc L}^\sim (X, \bb)),\ b\in MC(\LB) \}.
\]
For each $w \in \Lambda$, one denotes by $\Fuk_{\cL}(X,\bb)_w$ the flat $A_\infty$ category whose objects are 
\begin{equation}\label{flatdef}
\Ob(\Fuk_{\cL}(X,\bb)_w) = \Big\{ \WB = (\LB, b) \in MC({\mc L}) \ | \ \mu(b) = w 1_{\LB} \Big\}
\end{equation}
and whose sets of morphisms are the Floer cochain groups 
\[
{\rm Hom} (\WB, \WB'):= \Hom ( \LB, \LB') = CF^\bullet( L, L').
\]
Define the composition maps as follows: For $d \ge 1$ define
\begin{multline}\label{mddef} 
{\bm m}_d:  \Hom( \WB_{d-1}, \WB_d) \otimes \ldots \otimes  \Hom (\WB_0, \WB_1) \to \Hom ( \WB_0, \WB_d)[2-d], \\
  x_d \otimes \cdots \otimes x_1  \mapsto \sum_{k_0,\ldots, k_d} m_{d + k_0 +
    \ldots + k_d} (\underbrace{b_d,\ldots, b_d}_{k_d}, x_d, \ldots
     \underbrace{b_1,\ldots, b_1}_{k_1} ,x_1, \underbrace{b_0,\ldots, b_0}_{k_0});
  \end{multline}
when $d = 0$ define for each $\WB \in {\rm Ob} ( \Fuk_\LL^\flat (X, {\mf b})_w)$ that $m_0(1) = 0 \in \Hom( \WB, \WB)$. One checks using $b \in MC(L)$ that the \ainfty axiom holds:
\begin{equation}\label{ainftyassoc} 
0 = \sum_{\substack{i, j \geq 0\\ i + j \leq d}} (-1)^{\maltese_{1}^j} {\bm m}_{d-i+1} ( x_d, \ldots, x_{i+j+1},   {\bm m}_i (x_{j+i},\ldots,x_{j+ 1}), x_j,\ldots,x_{1})
\end{equation}
for all 
homogeneous 
$x_d \in \Hom( \WB_{d-1}, \WB_d), \ldots, x_1 \in \Hom(\WB_0, \WB_1)$ where
\begin{equation}\label{maltese}
\maltese_{l}^{k}:= \sum_{l \leq i \leq k} \Vert x_i \Vert,\ \quad \Vert x_i \Vert:= |x_i| + 1.
\end{equation}
In this way, one obtains a family of {\em flat} $A_\infty$ categories $\Fuk_{\cL}(X,\bb)_w$ indexed by values of the potential $w \in \Lambda$ and bulk deformation $\bb$. More generally, for any subset ${\mf G}$ of weakly unobstructed branes define the flat $A_\infty$ category ${\rm Fuk}_{\mf G}(X, {\mf b})_w$ in a similar way. Denote the flat \ainfty category obtained by disjoint union over all possible
curvatures
\[
\Fuk^\flat_{\mf G}(X, \bb):= \bigsqcup_{w\in \Lambda} \Fuk_{\mf G}(X, \bb)_w.
\]

\subsubsection{The open-closed and closed-open maps}

The open-closed and closed-open maps relate the Hochschild (co)homology with quantum cohomology. In the current framework, we use the Piunikhin--Salamon--Schwarz \cite{PSS} construction and the Cieliebak-Mohnke method to provide an independent construction of the bulk deformed quantum cohomology ring $QH^\bullet(X, \bb)$ (see Subsection \ref{quantumcohomology}). For any collection ${\mf L}$ of branes equipped with weakly bounding cochains, we define the open-closed map (Subsection \ref{section:OC})
\[
OC(\bb): HH_\bullet( \Fuk_{\mf L}^\flat (X, \bb)) \to QH^\bullet(X, \bb)
\]
and the closed-open map (Subsection \ref{section:CO})
\[
CO(\bb): QH^\bullet( X, \bb) \to HH^\bullet( \Fuk_{\mf L}^\flat(X, \bb))
\]
 via counts of treed holomorphic disks with one interior edge.

The specral decomposition of quantum cohomology and the spectral decomposition of the Fukaya category are related by the open-closed map. It has been known that (due to Auroux, Kontsevich, Seidel, see \cite[Section 6]{Auroux_97} and Sheridan \cite[Lemma 2.7]{Sheridan_2016}) in the monotone case, the values of the potential function correspond to eigenvalues of the quantum multiplication by the first Chern class; moreover, the open-closed map shall send the Hochschild homology of the eigen-subcategory to the corresponding generalized eigenspace. In the current situation, we prove a more general statement. Let 
\[ D_q := q \frac{d}{dq} \] 
denote the logarithmic derivative with respect to $q$ and 
define the {\em bulk-deformed
symplectic class} 
\begin{equation} \label{ombb} 
[\omega]^\bb := [\omega]  + D_q \bb  .\end{equation} 
Similarly write 
\[ \bb = \sum_i \bb_i \]
for homogeneous $\bb_i$ of degree $|\bb_i|$ and 
define the {\em bulk-deformed first Chern class} 
\begin{equation} \label{c1bb}
c_1^{\bb}(M) := c_1(M) + \sum_i  \frac{|\bb_i| - 2}{2} \bb_i .\end{equation}
\begin{theorem}\label{thm16}(proved in Section \ref{subsec:ocspect})
For any $w \in \Lambda$, the image 
\[
OC(\bb) (HH_\bullet( \Fuk_{\mf L}^\flat(X, \bb)_w)) \subset QH^\bullet( X, \bb)
\]
lies in the generalized eigenspace of the quantum multiplication by $c_1(M)^\bb$ resp.  the symplectic class $[\omega]^\bb$ corresponding to eigenvalue
$w$ resp. $D_q w$.
\end{theorem}

\subsubsection{The generation criterion}
A criterion for the split generation of the Fukaya category by a subset of branes 
is provided by results of Abouzaid \cite{Abouzaid_generation} and Ganatra
\cite{Ganatra_thesis}.

\begin{definition}
  Given a collection of objects ${\mf G}$
  let $\Fuk_{\mf G}^\flat(X,\bb)$ denote the
  sub Fukaya category with objects ${\mf G}$.
Write
\[ QH_{\mf G}(X,\bb) = (OC(\bb)) ( HH_\bullet(\Fuk_{\mf G}^\flat(X,\bb)) ) \] 
for the image of $HH_\bullet(\Fuk_{\mf G}(X,\bb))$ under the
open-closed map.  Say 
\[ QH^\bullet(X;\bb) \ \text{is {\em generated by ${\mf G} $ }  iff} \ QH_{\mf G}(X,\bb)
= QH^\bullet(X,\bb) .\]
\end{definition}

In our setting where the Fukaya category is the disjoint union of flat \ainfty categories $\Fuk^\flat_{\mf G}(X, \bb)_w$, the quantum cohomology ring $QH^\bullet(X;\bb)$ being generated by $\mf G$ is equivalent to the existence of a generalized eigen-space decomposition
\[QH^\bullet(X;\bb)=\oplus_{w \in w(\mf G)}QH^\bullet(X;\bb)_w\]
 and the condition
\[QH(X,\bb)_w = (OC(\bb)) ( HH_\bullet(\Fuk_{\mf G}^\flat(X,\bb)_w) ) \]
for all curvature values $w \in w({\mf G})$.

\begin{theorem} \label{crit} {\rm (Abouzaid \cite{Abouzaid_generation} in the exact case, extended to the compact case in Section \ref{acrit} below)} 
If $QH^\bullet (X,\bb)$ is generated by ${\mf G} \subset MC({\mc L})$ then for each $w \in \Lambda_{\geq 0}$ there is a subset of ${\mf G}$ that split-generates $\Fuk_\cL (X,\bb)_w$.
\end{theorem}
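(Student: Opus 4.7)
The plan is to adapt Abouzaid's generation criterion \cite{Abouzaid_generation}, in the formulation of Ganatra \cite{Ganatra_thesis}, to the compact bulk-deformed setting with the Cieliebak--Mohnke perturbation scheme used throughout this paper. Fix $w \in \Lambda_{\ge 0}$ and a brane $(\phi,b) \in \Ob(\Fuk_\cL(X,\bb,w))$ which we want to split-generate by the subset $\cG_w \subset \cG$ of objects with curvature $w$. I would introduce three geometric operations: the closed-open action
\[ CO^\phi(\bb) : QH^\bullet(X,\bb) \to CF^\bullet(\phi,\phi), \]
counting treed disks with boundary on $\phi$, one interior leaf labeled by a closed cycle, and one boundary output; the open-closed map $OC(\bb)$ already present in the statement; and a Cardy-type bimodule operation
\[ \mu^\phi : CC_\bullet(\Fuk_{\cG_w},\Fuk_{\cG_w}) \otimes CF^\bullet(\phi,\phi) \to CF^\bullet(\phi,\phi), \]
defined by counting treed disks with boundary on $\phi$ and a distinguished interior puncture through which a Hochschild chain is inserted via strip-like ends labeled by objects of $\cG_w$. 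All such moduli spaces would be stabilized by the Donaldson hypersurface $D(\cL)$ from \eqref{degree} and regularized by uncrowded coherent perturbations as in the main construction.

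The core of the argument is the Cardy relation
\[ [\mu^\phi(\alpha,\eta)] = \bigl[CO^\phi(\bb)\bigl(OC(\bb)(\alpha)\bigr)\bigr] \cdot [\eta] \quad \text{in } HF^\bullet(\phi,\phi), \]
for every Hochschild cycle $\alpha$ and Floer cocycle $\eta$. This is proved by constructing a one-parameter family of moduli spaces interpolating between disks on $\phi$ with an interior puncture pinched against the boundary (producing $\mu^\phi$) and disks with a long neck separating an interior closed component from a boundary cap on $\phi$ (producing $CO^\phi \circ OC$ followed by the Floer product with $\eta$). Counts of codimension-one boundary strata of this family give the desired chain homotopy, provided all intermediate moduli spaces are uncrowded and regular, which should follow from a coherent extension of the perturbation data $\ul{P}$ already used to construct $\Fuk_\cL(X,\bb)$.

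With the Cardy relation in hand, the conclusion is formal. Since $QH^\bullet(X,\bb)$ is generated by $\cG$, there is a chain $\alpha \in CC_\bullet(\Fuk_{\cG_w},\Fuk_{\cG_w})$ with $[OC(\bb)(\alpha)] = 1_X$, and unitality of $CO^\phi$ gives $CO^\phi(\bb)(1_X) = 1_\phi$. The Cardy relation then yields $[\mu^\phi(\alpha,-)] = \on{id}$ on $HF^\bullet(\phi,\phi)$. But by its very definition $\mu^\phi(\alpha,-)$ factors through the Yoneda embedding of $\cG_w$ into right $\Fuk_\cL(X,\bb,w)$-modules; this exhibits the identity endomorphism of the Yoneda module of $\phi$ as a homotopy retract of an object in the thick subcategory generated by $\cG_w$, so $\phi$ is split-generated by $\cG_w$, as claimed. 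The main obstacle is the construction of the interpolating family of moduli spaces carrying the Cardy homotopy in the presence of sphere bubbles, immersed self-intersections, Maurer--Cartan twisting by $b$, and the bulk deformation $\bb$: the perturbation data $\ul{P}$ must be extended to all new moduli types so that every codimension-one degeneration is either one of the two claimed boundary strata or cancels via the $A_\infty$ and weak Maurer--Cartan relations, and uncrowdedness must be preserved under the relevant stratification.
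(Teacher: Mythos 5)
Your proposal correctly identifies the two main ingredients that also drive the paper's argument: a Cardy-type relation proved via holomorphic treed annuli (the paper's Theorem \ref{cardy} and Theorem \ref{hop}), and the deduction, following Abouzaid's appendix, that surjectivity onto $1_\psi$ factored through Yoneda modules implies split-generation. However, there is a genuine gap in the step where you write ``Since $QH^\bullet(X,\bb)$ is generated by $\cG$, there is a chain $\alpha \in CC_\bullet(\Fuk_{\cG_w},\Fuk_{\cG_w})$ with $[OC(\bb)(\alpha)] = 1_X$.'' The hypothesis only gives you $\alpha$ in $HH_\bullet(\Fuk_\cG(X,\bb),\Fuk_\cG(X,\bb))$, which in the curved setting is, by definition, the direct sum $\bigoplus_{w' \in \Lambda} HH_\bullet(\Fuk_\cG(X,\bb,w'),\Fuk_\cG(X,\bb,w'))$. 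Nothing in the hypothesis lets you choose $\alpha$ from the single summand with $w' = w$; the open-closed map restricted to $HH_\bullet(\Fuk_{\cG_w})$ alone need not be surjective onto $QH^\bullet(X,\bb)$, so one cannot guarantee that $1_X$ is hit from that piece.

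What is missing is exactly the orthogonality across potential values that the paper proves in Theorem \ref{neq} and uses in Remark \ref{orthorem}: for a brane $\psi$ with potential $w$ and a Hochschild class $\alpha_{w'}$ from the $w'$-summand with $w' \neq w$, the composition $CO(\bb) \circ OC(\bb)(\alpha_{w'})$ vanishes in $HF(\psi,\psi)$. (The paper proves this via a count of one-node and two-node treed annuli giving the identity $(w_+-w_-)\langle \ul{x}_-, \ul{x}_+\rangle_1 = \langle \delta(\ul{x}_-),\ul{x}_+\rangle_2 - \langle \ul{x}_-,\delta(\ul{x}_+)\rangle_2$.) Once you have that, you can decompose your $\alpha = \sum_{w'}\alpha_{w'}$, observe that only $\alpha_w$ contributes to $CO^\psi(OC(\alpha)) = 1_\psi$, and then run the Cardy relation with $\alpha$ replaced by $\alpha_w$ to conclude split-generation by $\cG_w$. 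Without this vanishing result, your argument only shows that $\psi$ lies in the thick subcategory generated by $\cG_w$ \emph{together with morphisms involving objects of other potential values}, which is not yet the claim of the theorem. The other points you flag --- extending the perturbation scheme, Maurer--Cartan twisting, bulk insertions --- are real technical work but do not change the logical skeleton; the potential-stratification step does.
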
 

The proof is based on an adaption of Abouzaid's original argument \cite{Abouzaid_generation} to the compact setting to incorporate bulk deformations, weakly bounding cochains, and the Cieliebak-Mohnke method. The key is to prove the commutativity of the {\it Cardy diagram} by analyzing two different types of degenerations of treed holomorphic annuli. 

By slightly modifying the moduli spaces of treed holomorphic annuli, one can prove an orthogonality result for images of open-closed maps. The following result will be used in the blowup setting to show that the ``old'' branes and ``new'' branes are orthogonal in the Fukaya category of the blowup.

\begin{theorem}\label{thm110}(proved in Section \ref{subsec:orth}) 
Suppose ${\mf L}_-, {\mf L}_+$ be two disjoint collections of weakly unobstructed branes. Then the images
\begin{align*}
&\ OC(\bb)( HH_\bullet( \Fuk^\flat_{{\mf L}_-}(X, \bb))),\ &\ OC(\bb) (HH_\bullet( \Fuk^\flat_{{\mf L}_+}(X, \bb)))
\end{align*}
are orthogonal with respect to the intersection pairing. 
\end{theorem}

\subsubsection{Old branes in the blowup}

Our main result applies the Abouzaid criterion Theorem \ref{crit} to blowups. Recall that the {\em blowup} of affine space $X = \C^n$ at $p = 0$ is
\begin{equation} \label{blcn}
 \Bl(\C^n,0) = \{ (z,\ell) \in \C^n \times \P^{n-1} | z \in \ell
\} \end{equation} 
and is equipped with a natural holomorphic projection
\[ \pi: \Bl(\C^n,0) \to \C^n, \quad (z,\ell) \mapsto z . \] 
The inverse image of the blowup point
\[ \tilde{Z} = \pi^{-1}(p), \quad \tilde{Z} \cong \P^{n-1} \]
is the {\em exceptional locus} of the blowup.  A symplectic blowup $\pi: \tilde X \to X$ of $X$ at a point $p$ is defined similarly using a Darboux chart $U \ni p$ and gluing in the local
model of the previous paragraph:
\[ \tilde{X} = ((X - \{ p \} ) \cup \pi^{-1}(U) ) / \sim  . \] 
A natural family of symplectic forms $\tilde{\omega}_\eps$ on $\tilde{X}$
arises from the family of symplectic forms on $\Bl(\C^n,0)$ 
considered as a toric variety with moment polytope 
\[ \left\{ (x_1,\ldots, x_n) \in \R_{\ge 0}^n  \ | \  x_1 + \ldots + x_n \ge
\eps \right\}. \]
The resulting symplectic manifold $\tilde{X}$ is the {\em $\eps$-blowup}
of $X$ at $p$, depending on the choice of $\eps$ and Darboux chart
$U$. \label{depending}

An embedding of the original Fukaya category into the Fukaya category
of its blowup will be realized after a shift in bulk deformation given
by homology classes.  
\begin{theorem} \label{oldembed} (proved in Section \ref{embed}) 
Suppose ${\mc L}$ consists of Lagrangian submanifolds that are disjoint from $p$. For $\eps > 0 $ sufficiently small and suitable perturbation data $\ul{P} = (P_\Gamma)$, the structure maps of $\Fuk_{\cL}(X,\bb + q^{-\eps} p )$ are convergent
  and define an \ainfty category with the following property: There exists a homotopy equivalence of curved \ainfty categories
\[
\Fuk_{\cL}^\sim (X,\bb + q^{-\eps} p ) \to \Fuk_{\pi^{-1}(\cL)}^\sim (\tilde{X}, \pi^{-1}(\bb) ).
\]
Moreover, for any collection of weakly unobstructed branes ${\mf L}$, there is a commutative diagram
\[
\xymatrix{   HH_\bullet( \Fuk_{\mf L}^\flat(X, \bb + q^{-\epsilon} p)) \ar[rr]^-{OC(\bb + q^{-\epsilon} p)} \ar[d] & &   QH^\bullet( X ) \ar[d]^{\pi^*} \\
             HH_\bullet( \Fuk_{\pi^{-1}({\mf L})} ^\flat(X, \pi^{-1}(\bb))) \ar[rr]_-{OC(\pi^{-1}(\bb))}   & &  QH^\bullet( \tilde X) }.
\]
\end{theorem}

\begin{remark} 
The bulk deformation $\bb + q^{-\epsilon} p$ has a negative $q$-valuation, so is not of the type usually allowed. The structure maps of the Fukaya category with bulk deformations with negative $q$-valuations may not converge {\it a priori}. 
However, there is a geometric reason for the convergence: holomorphic maps have to ``spend'' a nontrivial amount of energy to pass through a given point $p$ each time. \label{dcirc}
\end{remark}

The proof of Theorem \ref{oldembed} relies on a correspondence between pseudoholomorphic curves induced by the projection. Namely, given any holomorphic curve $\tilde{u}: C \to \tilde{X}$ one obtains a holomorphic curve in the original
manifold by projection $u = \pi \circ \tilde{u}$. This correspondence induces a map between moduli spaces
\begin{equation} \label{mproj} 
{\mc M}_{d, 1}( \pi^{-1}(L_0), \ldots, \pi^{-1}(L_d)) \to {\mc M}_{d, 1}( L_0, \ldots, L_d)
\end{equation}
(and the compactifications) given by composing and collapsing unstable components. 
The projection \eqref{mproj} does not preserve the expected dimension of the moduli spaces 
but {\em does} preserve expected dimension
if the map $u = \pi \circ \tilde u$ is considered as a map with point constraints at $u^{-1}(p)$. Moreover, \eqref{mproj} is a bijection for rigid curves. 
To prove Theorem \ref{oldembed} it therefore suffices to show that perturbation data pulled back under the projection $\pi: \tilde{X} \to X$ make all relevant moduli spaces in $\tilde{X}$ regular; one may then simply compose with the projection to obtain the correspondence.

\subsubsection{Open-closed map from the new branes}

To compare the Fukaya categories, we wish to complete the collection of ``old branes'' before the blowup by adding a collection of new generators after the blowup.   
In a previous paper \cite{flips} Charest and the second author identified a finite collection of
Floer-non-trivial Lagrangian branes near the exceptional locus. Indeed, a neighborhood of the exceptional locus has a toric model ${\mc O}(-1) \to \mb{CP}^{n-1}$ that contains a toric Lagrangian 
$L_{{\bm \epsilon}} \cong (S^1)^n$ that is monotone in  ${\mc O}(-1)$. 
The new branes are given by the Lagrangian $L_{{\bm \epsilon}}$ and $(n-1)$ distinct local systems.  Each local system induces a representation denoted $y: H_1(L_{{\bm \epsilon}}) \to \Lambda^\times$ as $y=(y_1, \ldots, y_n)$.  The calculation in \cite[p145]{flips} shows that the potential function is the Givental potential
\[
 W_0 = q^\epsilon ( y_1 + \cdots + y_n + y_1 \cdots y_n)
\]
plus higher order terms coming from holomorphic curves not contained in the toric region. The local systems of the $(n-1)$ weakly unobstructed branes $\WB_1, \ldots, \WB_{n-1}$ are higher order perturbations of the $(n-1)$ non-degenerate critical points of $W_0$. The toric model also allows us to compute the Floer cohomology rings $HF^\bullet( \WB_i, \WB_i)$, which are isomorphic to Clifford algebras, as well as the leading order terms in the open-closed map on these branes.

\begin{theorem}\label{thm114}(proved in Sections \ref{subsec:floernew}, \ref{subsec:ocnew}) 
Let ${\mf E}= \{ \WB_1, \ldots, \WB_{n-1}\}$ be the collection of exceptional branes described in the the preceeding paragraph. 
\begin{enumerate}
    \item  The potential functions of $\WB_i$ have distinct values,
    
    \item and the composition
\[
\xymatrix{ HH_\bullet( \Fuk^\flat_{\mf E}(\tilde X, \pi^{-1}(\bb)) \ar[rr]^-{OC(\pi^{-1}(\bb))} & & H^\bullet( \tilde X) \ar[r] & \tilde H^\bullet( \tilde Z ) \cong \Lambda^{n-1} }
\]
is surjective. 
\end{enumerate}
\end{theorem}

Theorem \ref{gen} follows from Theorem \ref{thm110}, Theorem \ref{oldembed}, 
and the generation result. Indeed, Theorem \ref{thm110} implies that old and new branes are orthogonal under the open-closed map. As the intersection pairing is non-degenerate on the image of the new branes, 
these two images have trivial intersection. For dimensional reasons, Theorem \ref{oldembed} and Theorem \ref{thm114} imply the surjectivity of the open-closed map. The generation criterion (Theorem \ref{crit}) then applies. One uses the spectral property of the open-closed and closed-open maps (Theorem \ref{spectral} and \ref{CO_spectral}) 
to conclude that the new branes contribute to $(n-1)$ orthogonal one-dimensional pieces of the quantum cohomology, proving Corollary \ref{under}.

\section{Moduli spaces of treed disks}\label{section2}

In this section,  we define the moduli spaces used in the definition of bulk-deformed Fukaya categories and regularize them using Cieliebak-Mohnke perturbations.

\subsection{Trees} \label{subsection21}

First we introduce terminology for trees. Given a tree $\Gamma$, the set of edges $\Edge(\Gamma)$ is equipped with {\em head} and {\em tail} maps
\[
h, t : \Edge({\Gamma}) \to \Ver({\Gamma}) \cup \{ \infty \} .
\]
The {\em valence}  of any vertex $v \in \Ver({\Gamma})$ is the number 
\[
|v | = \# \{ e \in h^{-1}(v) \cup t^{-1}(v) \}
\]
of edges meeting the vertex $v$. An edge $e \in \Edge({\Gamma})$ is
\begin{itemize}
    \item  {\em combinatorially finite} if
$\infty \notin \{ h^{-1}(e), t^{-1}(e) \}$,
\item {\em semi-infinite} or
{\em a leaf} if $\{ h^{-1}(e), t^{-1}(e) \} = \{v , \infty \}$ for
some $v \in \Ver({\Gamma})$, and 
\item {\em infinite} if
$h(e) = t(e) = \infty$.
\end{itemize}
Denote
\[
{\Edge}_{\rm \fin}(\Gamma)\ {\rm resp.}\ {\Edge}_\rightarrow(\Gamma) \subset {\Edge}(\Gamma)
\]
the set of finite resp. semi-infinite edges, that is, leaves. 

For now,  we assume that trees are {\it rooted}, which means that when $\Ver(\Gamma) \neq \emptyset$ there is a distinguished vertex $v_{\rm root} \in \Ver(\Gamma)$ called the {\em root} and a distinguished semi-infinite edge $e_{\rm out} \in {\Edge}_\rightarrow(\Gamma)$ with $t(e_{\rm out}) = v_{\rm root}$ called the {\em output}. All edges are then oriented towards the output. This will suffice for defining the Fukaya category. Later on, we will consider not-necessarily-rooted trees. 

There is a special tree which does not have vertices: a {\em vertex-free tree} is a tree $\Gamma$ with ${\rm Vert}(\Gamma) = \emptyset$ with one infinite edge. However we set ${\Edge}_{\rightarrow} = \{ e_{\rm in}, e_{\rm out}\}$, the incoming
and the outgoing ends of the vertex-free tree. In any case, for a tree $\Gamma$, denote by $ {\Edge}_{\rm in}(\Gamma)$ resp.  $ {\Edge}_{\rm out}(\Gamma)$ the incoming and outgoing leaves. \label{leaves}

Our trees will be composed of two parts corresponding to the sphere and disk vertices.  We color these vertices black and white respectively, and call the resulting structure a {\it two-colored tree.}

\begin{definition}\label{defn21}(Two-colored trees)
\begin{enumerate}
\item A {\em ribbon structure} on a tree $\Gamma$ consists of a cyclic ordering
$o_v: \{ e \in \Edge({\Gamma}), e \ni v \} \to \{ 1, \ldots, |v| \}$ of the edges \label{ribbon} incident to each vertex
$v \in \Ver({\Gamma})$; a cyclic ordering is an equivalence class $[o_v]$ of orderings where two orderings $o_v, o_v'$ are equivalent if
they are related by a cyclic permutation.
\item A {\em rooted subtree} of a tree $\Gamma$ is a connected
subgraph
  ${\Gamma_\circ}$ whose vertices
  $\Ver({\Gamma_\circ})$ contain the root $v_{\rm root}$ of
  $\Gamma$,\footnote{When defining the open-closed map we will
    consider {two-colored tree}s whose root vertex is not in the {disk part $\Gamma_\circ$}.}
  and whose edges contain all finite edges $e \in \Edge(\Gamma)$ connecting vertices in
  $\Ver({\Gamma_\circ})$ and a subset of semi-infinite edges 
  $e \in \Edge(\Gamma)$ connected
  to vertices in $\Ver({\Gamma_\circ})$.
\item A {\em {two-colored tree}} is a tree $\Gamma$ together with a rooted subtree ${\Gamma_\circ}$ with a ribbon structure on ${\Gamma_\circ}$.

\item A {two-colored tree} $\Gamma$ is {\em stable} if each {\em sphere
    vertex} 
    \[ v \in \Ver_\black(\Gamma) := \Ver(\Gamma) \setminus \Ver(\Gamma_\circ) \]  
    has
  valence at least three and for each {\em disk vertex}
  $v \in \Ver({\Gamma_\circ})$ the number of edges $e \in \Edge({\Gamma_\circ})$
  connected to $v$ plus twice of the number of {\it interior edges}
  \[ e \in 
  \Edge_\black(\Gamma) := \Edge(\Gamma) \setminus \Edge(\Gamma_\circ)\]
  connected to $v$ is at least three.
\end{enumerate}
\end{definition}
We distinguish between boundary and interior leaves and disk and sphere components. Objects related to the rooted subtree (which are usually related to disks and boundary insertions) are labelled with $\circ$ while the corresponding notions related to the complement of the rooted subtree (which are related to spheres and interior insertions) are labelled with $\bullet$. For example, we denote by 
\[
{\rm Edge}_\circ(\Gamma):= {\rm Edge}(\Gamma_\circ) \subset {\rm Edge}(\Gamma)
\]
the set of {\it boundary edges} and we used above
\[
{\rm Edge}_\bullet(\Gamma):= {\rm Edge}(\Gamma) \setminus {\rm Edge}_\circ(\Gamma)
\]
the set of {\it interior edges}. Semi-infinite edges are also called {\it leaves} and we denote 
\[ {\rm Leaf}_\circ(\Gamma):={\Edge}_\rightarrow(\Gamma) \cap
\Edge_\circ(\Gamma),\  {\rm
  Leaf}_\black(\Gamma):={\Edge}_\rightarrow(\Gamma) \cap
\Edge_\black(\Gamma). \]

A moduli space of metric trees is obtained by allowing the finite edges on the {disk part} to acquire lengths. 

\begin{definition}\label{defn22}
Let $\Gamma$ be a {two-colored tree}. A {\em metric} on $\Gamma$ is a non-negative function on the space of finite boundary edges
\[
\ell: {\Edge}_{\rm \fin} (\Gamma_\circ) \to [0, +\infty).
\]
A {\em metric type} on $\Gamma$, denoted by $\uds{\ell}$, is the associated decomposition 
\[
{\Edge}_{\rm \fin}(\Gamma_\circ) = {\Edge}_0( \Gamma_\circ) \sqcup {\Edge}_+( \Gamma_\circ)
\]
corresponding to edges with zero or positive lengths. 
\label{changephrasing} \footnote{To define
  the Fukaya category we only need to consider metric on boundary
  edges. When we define the open-closed and closed-open maps we need
  more general metric types.}
\end{definition}

To compactify the set of gradient segments we allow the lengths of the
edges to go to infinity and break.  A {\em broken metric tree} is
obtained from a finite collection of metric trees by gluing outputs
with inputs as follows: Given two metric trees $(\Gamma_1, \ell_1)$
and $(\Gamma_2, \ell_2)$ with specified leaves
$e_1 \in \Leaf(\Gamma_1)$ and $e_2 \in \Leaf(\Gamma_2)$, let
$\ol{\Gamma}_1$ resp. $\ol{\Gamma}_2$ denote the space obtained by
adding a point $\infty_1$ resp. $\infty_2$ at the open end of $e_1$
resp. $e_2$. The space
\begin{equation}\label{glueT}
\Gamma := \ol{\Gamma}_1 \cup_{\infty_1 \sim \infty_2} \ol{\Gamma}_2
\end{equation}
is a broken metric tree, the point $\infty_1 \sim \infty_2$ being called a {\em breaking}. To obtain a well-defined root for the glued tree we require that exactly one of $e_1$ and $e_2$ is the output. See Figure \ref{cuttree}.
\begin{figure}[ht]
\includegraphics[height=.85in]{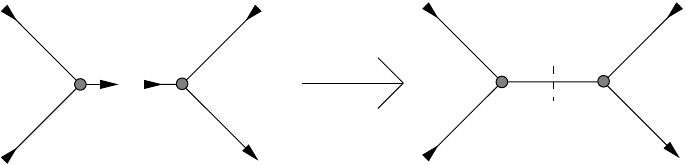} 
\caption{Creating a broken tree}
\label{cuttree}
\end{figure}
\noindent In general, a broken metric tree $\Gamma$ is obtained \label{obtainedgram} from
broken metric trees $\Gamma_1, \Gamma_2$ as in \eqref{glueT} in such a
way that the resulting space $\Gamma$ is connected and has no
non-contractible loops, that is, $\pi_0(\Gamma)$ is a point and
$\pi_1(\Gamma)$ is the trivial group\footnote{Later when we consider
  treed annuli we will allow loops.}. We think of the gluing points as
breakings rather than vertices, so that there are no new vertices in
the glued tree $\Gamma$.

In order to obtain Fukaya algebras with strict units, we wish for our moduli spaces to admit forgetful maps. For this we introduce weightings on certain edges, as in for example Ganatra \cite[Section 10.5]{Ganatra_thesis}.

\begin{definition}\label{defn23}
Consider an unbroken tree $\Gamma$. 
\begin{enumerate}
\item A weighting on $\Gamma$ is a map 
\[ \on{wt}: {\Edge}_{\rightarrow} (\Gamma) \to [0,1] \]
satisfying 
\[
\on{wt}|_{{\rm Leaf}(\Gamma)} \equiv 0, 
\]
and
\begin{equation}\label{weight_relation} 
\prod_{e\in {\Edge}_{\rm in}(\Gamma)} \on{wt}(e) = \on{wt}(e_{\rm out})
\end{equation}
The underlying decomposition 
\begin{eqnarray*} {\Edge}_\rightarrow(\Gamma) &=& {\Edge}^\blackt
  (\Gamma) \sqcup {\Edge}^\greyt (\Gamma) \sqcup {\Edge}^\circt
  (\Gamma) \\ &:=& \on{wt}^{-1}(0) \sqcup \on{wt}^{-1}((0,1)) \sqcup \on{wt}^{-1}(1).
\end{eqnarray*} 
is called a {\em weighting type}, denoted by $\uds{\on{wt}}$; elements of
$ {\Edge}^\blackt (\Gamma)$ resp. ${\Edge}^\greyt(\Gamma) $
resp. ${\Edge}^\circt (\Gamma)$ are called {\em unforgettable}
resp. {\em weighted} resp. {\em forgettable. }  A tree $\Gamma$ with a
weighting is called a {\em weighted tree}. \label{fpage}

\item If the output $e_{\rm out}$ of $\Gamma$ is unweighted then an
{\em  isomorphism 
of weighted trees} is an isomorphism of trees %
$ \psi: (\Gamma, \on{wt} ) \to (\Gamma', \on{wt}') $ that preserves the
weightings. If the output $e_{\rm out}$ of $\Gamma$ is weighted (which
implies $\Gamma$ has no interior incoming edge and all boundary
incoming edges are weighted or forgettable), then an isomorphism
$\psi: (\Gamma, \on{wt}) \to (\Gamma', \on{wt}')$ is an isomorphism of trees such
that there is a positive number $\alpha$ such that
\[ \on{wt} (e) = \on{wt}'(\psi(e))^\alpha, \quad \forall e \in
{\Edge}_{\rightarrow}(\Gamma). \]

\item If $\Gamma$ is broken, then a weighting on $\Gamma$ consists of weightings on all unbroken components that agree over breakings.
\end{enumerate} 
\end{definition}

\subsection{Treed disks}\label{subsection22}

The domains of treed holomorphic disks are unions of disks, spheres, and lines, rays, and line segments. A {\em disk} is a bordered Riemann surface biholomorphic to the complex unit disk $\DD = \Set{ z \in \C | \Vert z \Vert \leq 1 }$. The automorphism group of $\DD$ is $\Aut(\D) \cong PSL(2,\R)$. A {\em nodal disk} with a single boundary node is a topological space $S$ obtained from a disjoint union of disks $S_1,S_2$ by identifying pairs of boundary points $w_{12} \in S_1, w_{21} \in S_2$ on the boundary of each component so that
\begin{equation} \label{glueS} 
S = S_1 \cup_{w_{12} \sim w_{21}} S_2 .\end{equation} 
See Figure \ref{cutfig2}. The image of $w_{12}, w_{21}$ in the space $S$ is the {\em nodal point}. 
\begin{wrapfigure}{r}{6cm}
\centering
\includegraphics[scale=0.65]{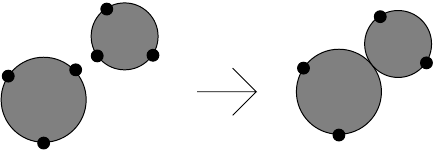} 
\caption{Creating a nodal disk}
\label{cutfig2}
\end{wrapfigure}
A nodal disk $S$ with multiple nodes $w_{ij}, i,j \in \{ 1,\ldots, k\}, i \neq j$ is obtained by repeating this construction \eqref{glueS} with $S_1,S_2$ nodal disks with fewer nodes, and $w_{12}, w_{21}$ distinct from the other nodes. More generally we allow boundary and interior markings. For an integer $d \ge 0$ a {\em nodal disk with $d+1$ boundary markings} is a nodal disk $S$ equipped with a finite ordered collection of points $\ul{x} = (x_0,\ldots,x_d)$ on the boundary $\partial S$,  disjoint from the nodes, in counterclockwise cyclic order around the boundary $\partial S$. A $(d+1)$-marked nodal disk $(S,\ul{x})$ is {\em stable} if each component $S_v$ has at least three special (nodal or marked) points, or equivalently the group $\Aut(S,\ul{x})$ of automorphisms of $S$ leaving $\ul{x}$ pointwise fixed is trivial. The moduli space of $(d+1)$-marked stable disks $ [(S,\ul{x})]$ forms a compact cell complex, isomorphic as a cell complex to the associahedron from Stasheff \cite{Stasheff_63, Stasheff_1970}.

More complicated configurations involve spherical components. A {\em marked sphere} is a complex surface biholomorphic to the projective line $S^2 \cong \P^1$ together with a distinct ordered list of markings $z_1, \ldots, z_k\in S^2$. A nodal disk $S$ with a single interior node $w \in S$ is defined similarly to that of a boundary node by using the construction \eqref{glueS}, except in this case $S$ is obtained by gluing together a nodal disk $S_1$ with a  marked sphere $S_2$ with $w_{12}, w_{21}$ points in the interior $\on{int}(S)$.
 \label{glueI}   \label{glueIp} 
 
General treed disks are defined as in Oh \cite{Oh_1993}, Cornea-Lalonde \cite{cl:clusters}, Biran-Cornea \cite{Biran_Cornea, Biran_Cornea_09}, and Seidel \cite{Seidel_2011}.

\begin{definition}[Treed disks, domain types]\label{defn24}
\begin{enumerate}
    
\item A {\em combinatorial type for treed disks} (or a {\it domain type}) is a two-colored tree $\Gamma$ together with a metric type $\ul\ell$ (see Definition \ref{defn22}) and a weighting type $\ul{\on{wt}}$ (see Definition \ref{defn23}). To save notations, we often abbreviate a domain type $(\Gamma, \ul\ell, \ul{\on{wt}})$ by $\Gamma$.

\item A {\em treed disk} $C$ of domain type $(\Gamma, \ul\ell, \ul{w})$ consists of the {\em surface part}
\[
S = (S_v, \ul{x}_v, \ul{z}_v)_{v\in \Ver(\Gamma)}
\]
(where $\ul{x}_v$ resp. $\ul{z}_v$ denotes the ordered set of boundary resp. interior markings), a {\em tree part}
\[
T = (T_e)_{e \in {\Edge}(\Gamma)},
\]
(where $T_e$ is a finite interval of a certain length $\ell(e)$ if $e$
is combinatorially finite, a semi-infinite interval $[0, +\infty)$ or
$(-\infty, 0]$ if $e$ is semi-infinite, so that $(\Gamma, \ell)$
becomes a metric tree whose metric type agrees with $\ul\ell$\footnote{If $e$ is an infinite edge, then regard $T_e$ as
  the real line $(-\infty, +\infty)$ which is also the union of two
  rays labelled by the input and the output.}), a weighting
\[ \on{wt}: {\Edge}_{\rightarrow}(\Gamma) \to [0,1]  \]  
whose underlying weighting type agrees with $\ul{\on{wt}}$, and {\em nodal
  points}
\[
z_{e, +} \in S_{h(e)},\ z_{e, -} \in S_{t(e)},\ \forall e \in {\Edge}(\Gamma).
\]
These data must satisfy the following conditions: for each vertex $v\in {\rm Vert}(\Gamma)$, the set of {\em special points}, i.e., the collection of boundary and interior markings and nodal points are distinct. See Figure \ref{treeddisk} for a typical configuration of a treed disk. To a treed disk $C$ we associate a compact topological space $S \cup T$ obtained by gluing different components in the obvious way.  Each such space $S \cup T$
includes a finite subset of points corresponding to the breakings and infinities of semi-infinite edges.
    
\item An {\em isomorphism} of treed disks $\phi$ from $C = S \cup T$
  to $C' = S' \cup T'$ consists of an isomorphism
  $\psi: (\Gamma, \ell, \on{wt}) \to (\Gamma', \ell', \on{wt}')$ of underlying
  weighted metric trees, a collection of conformal isomorphisms
\[ \phi_v: S_v \to S_{\psi(v)}', \quad v \in \Ver(\Gamma) \]  
of disks or spheres preserving the
  markings and special points, and a collection of length-preserving
  isomorphisms 
\[ \phi_e: T_e \to T_{\psi(e)}', \quad e \in \Edge(\Gamma) \] 
of intervals. \label{addstr}
    
\item A treed disk is stable if its underlying combinatorial type is
  stable (see Definition \ref{defn21}).\footnote{As in the case of
    spheres, treed disk is stable if and only if its automorphism
    group is trivial.}
\end{enumerate}
\end{definition}
\begin{figure}[ht]
\includegraphics[height=1.5in]{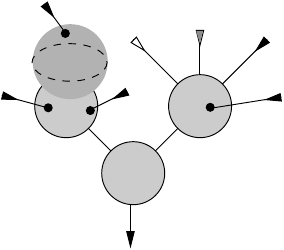}
\caption{A treed disk with three disk components and one sphere
  component, and its combinatorial type}
\label{treeddisk}
\end{figure} 

\begin{remark}\label{rem:partialorder}
There is a natural partial order among all stable domain types, denoted by $\Gamma' \preceq \Gamma$. Instead of giving the full definition, we only recall the typical situations. These typical situations include the case of bubbling off holomorphic spheres, bubbling off holomorphic disks, and breaking of gradient lines, in which $\Gamma'$ is obtained from $\Gamma$ by a change of the underlying tree. Moreover, when the length of an edge of $\Gamma$ changes from positive to zero, one obtains a different type $\Gamma' \prec \Gamma$ by changing the metric type; when the weighting of one or more semi-infinite edges of $\Gamma$ changes to zero or one, one also obtains a different type $\Gamma' \prec \Gamma$ by changing the weighting type accordingly. In general, $\Gamma' \preceq \Gamma$ if $\Gamma'$ can be obtained from $\Gamma$ by finitely many such changes.  We emphasize that each partial order relation $\Gamma' \preceq \Gamma$ induces a unique tree map from $\Gamma'$ to $\Gamma$, that is, a surjective map $\rho_V: {\rm Vert}(\Gamma') \to {\rm Vert}(\Gamma)$ that preserves the partial order among vertices and sends ${\rm Vert}(\Gamma_\circ')$ onto ${\rm Vert}(\Gamma_\circ)$, as well as a bijection
  ${\rm Leaf}_\circ(\Gamma') \cong {\rm Leaf}_\circ(\Gamma)$ and a
  bijection
  ${\rm Leaf}_\bullet(\Gamma') \cong {\rm Leaf}_\bullet(\Gamma)$.
\end{remark}

The moduli spaces of stable weighted treed disks are naturally cell
complexes. Suppose $\Gamma$ is a stable domain type with $d(\circ)$
boundary inputs and $d(\black)$ interior leaves. Let ${\cM}_\Gamma$ denote
the set of all isomorphism classes of treed disks of type $\Gamma$,
with its natural topology induced by embedding in the product of the moduli space of stable trees and stable disks.  The space ${\cM}_\Gamma$ is a
manifold of dimension
\begin{multline*}
\dim (\M_\Gamma) =
d(\circ)+ 2 d(\black)  + \# \Edge^{\greyt}(\Gamma) - \# {\Edge}_0(\Gamma) - 2 \# {\Edge}_{\rm interior}(\Gamma)\\
+ \begin{cases} -2 &\ {\rm if\ } e_{\rm out} \notin {\Edge}^{\greyt} (\Gamma), \\
-4 &  {\rm if\ } e_{\rm out} \in \Edge^{\greyt}(\Gamma).
                                    \end{cases}
                                    \end{multline*}
Denote 
\[
\ov{\cM}_\Gamma = \bigsqcup_{\substack{\Gamma' \preceq \Gamma\\ \Gamma'\ {\rm stable}} {\cM}_{\Gamma'}}.
\]
As in the definition of Gromov convergence of pseudoholomorphic curves, there is a natural way to endow $\ov{\cM}_\Gamma$ a compact Hausdorff topology that agrees on the manifold topology on each stratum ${\cM}_{\Gamma'}$, so that $\ov{\cM}_\Gamma$ is a cell complex with ${\cM}_\Gamma$ equal to the top cell.

\begin{remark} \label{examples} The moduli spaces of weighted treed disks are related to unweighted moduli spaces by taking products with intervals: If $\Gamma$ has at least one vertex and $\Gamma'$ denotes the domain type obtained from $\Gamma$ by setting the weights $w (e)$ to zero and the output $e_{\rm out}$ of $\Gamma$ is unweighted then 
\[
\M_\Gamma \cong \M_{\Gamma'} \times (0,1)^{|\Edge^{\greyt} (\Gamma)|} .
\]
If the outgoing edge $e_{\rm out}$ is weighted then
\[
\M_\Gamma \cong \M_{\Gamma'} \times (0,1)^{|\Edge^{\greyt} (\Gamma)| - 2}
\]
because of the way we define isomorphism of weighted types (see Definition \ref{defn23}). Figure \ref{weighted} illustrates a one-dimensional moduli space with weighted output and its boundary strata.
\end{remark}

\begin{figure}[ht]
    \centering
    \includegraphics[width=5in]{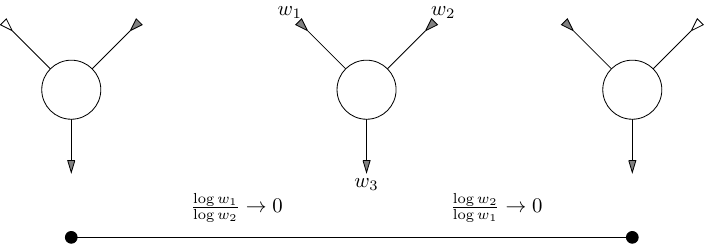}
    \caption{A one-dimensional moduli space of weighted treed disks with all three semi-infinite edges weighted.}
    \label{weighted}
\end{figure}

In general, moduli spaces of stable curves only admit universal curves in an orbifold sense. In the setting here, orbifold singularities are absent and the moduli spaces of stable treed disks admit honest universal curves.  For any stable domain type $\Gamma$ let $\ol{\U}_\Gamma$ denote the {\em universal treed disk} (or called the {\em universal curve}) consisting of isomorphism classes of pairs $(C,z)$ where $C$ is a treed disk of type $\Gamma$ and $z$ is a point in $C$, possibly on a disk component $S_v \cong \{ |z| \leq 1 \}$, a sphere component $S_v \cong \P^1$, or one of the edges $e$ of the tree part $T \subset C$ (the infinities of semi-infinite edges are allowed). The map
\[
\pi_\Gamma: \ol{{\U}}_\Gamma \to \ol{{\M}}_\Gamma, \quad [C,z] \to [C]
\]
is the universal projection.  Moreover, for each $[C]\in \ov{\mc M}_\Gamma$ represented by $C$, the fibre $\pi_\Gamma^{-1}([C])$ is homeomorphic to $C$.   In case $\Gamma$ has no vertices we define $\ol{\U}_\Gamma$ to be the real line, considered as a fiber bundle over the point $\ol{\M}_\Gamma$.

\begin{figure}[ht]
\includegraphics[width=4in]{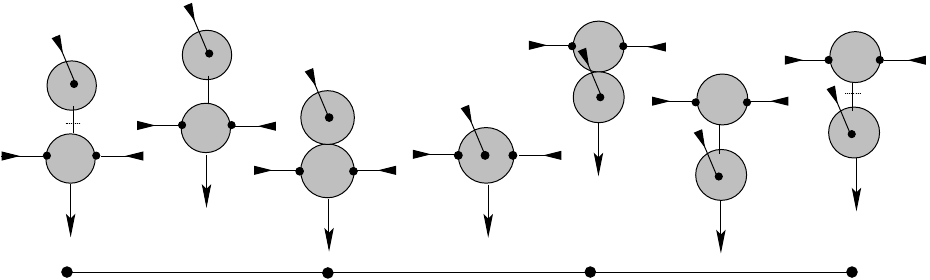}
\caption{Treed disks with interior leaves}
\label{interior}
\end{figure} 

We introduce notation for particular subsets of the universal curves. First, for each vertex $v \in {\rm Vert}(\Gamma)$, let 
\[
\ov{\cU}_{\Gamma, v} \subset \ov{\cU}_\Gamma
\]
denote the closed subset corresponding to points on the surface component $S_v$ and on the semi-infinite edges attached to $v$. 
For each boundary edge $e \in {\Edge}(\Gamma_\circ)$, let 
\[ \ov{\cU}_{\Gamma, e} \subset \ov{\cU}_{\Gamma} \]
the closed subset corresponding to points on the tree component $T_e$. Denote
\begin{equation} \label{US}
\ov{\cS}_\Gamma:= \bigcup_{v \in {\rm Vert}(\Gamma)} \ov{\cU}_{\Gamma, v}
\end{equation}
and 
\begin{equation} \label{UT}
\ov{\cT}_\Gamma:= \bigcup_{e \in {\Edge}(\Gamma_\circ)} \ov{\cU}_{\Gamma, e}.
\end{equation} 
Moreover, for each subtree $\Pi \subset \Gamma$ (not necessarily containing the root), denote by 
\[
\ov{\cU}_{\Gamma, \Pi}  \subset \ov{\cU}_{\Gamma}
\]
the set of points $z$ on components $S_v,T_e$ corresponding to
vertices $v$ and edges $e$ of $\Pi$. There is a contraction map
$\ov{\cU}_{\Gamma, \Pi} \to \ov{\cU}_{\Pi}$ contracting edges not in
$\Gamma$. In particular, for the {disk part} $\Gamma_\circ$, one has
$ \ov{\cU}_{\Gamma, \Gamma_\circ} \subset \ov{\cU}_{\Gamma}.$ Lastly,
for $\Pi \preceq \Gamma$, one has a boundary stratum
$ \ov{\cU}_{\Pi} \subset \ov{\cU}_\Gamma.$

The boundary of each treed disk is divided up into parts between the boundary inputs.
Denote by $(\partial C)_i$ the component of $\partial C$ between the
$i$-th and $i+1$-st leaves, in cyclic order. Similarly, denote $i$-th
boundary part of the universal curve
$ \partial_i \ol{\U}_\Gamma \subset \ov{\cU}_\Gamma$.

\subsection{Rational branes}\label{subsection23}

In this subsection,  we specify assumptions and additional data on the Lagrangian submanifolds in our construction.  Let $(X, \omega)$ be a compact symplectic manifold.  Let ${\mc L} = \{ L_1, \ldots, L_m\}$ be a collection of embedded Lagrangian submanifolds in $(X, \omega)$. Denote the {\it support} of ${\mc L}$ to be 
\[
|{\mc L}|:= \bigcup_{L \in {\mc L}} L \subset X.
\]
By a {\it brane} we will mean a flat $\Lambda^\times$ bundle $\LB$ over $L$ where $L$ is an (oriented, spin, embedded) Lagrangian submanifold.  By a {\it weakly unobstructed brane} we will mean a pair ${\bm L} = (\LB, b)$ where $\LB$ is a brane and $b$ is a Maurer-Cartan element of the curved $A_\infty$ algebra of $\LB$.

\begin{definition}[Rational Lagrangian] \label{srat}  
The collection of Lagrangians $\LL$ is called {\em rational} if there exists a line bundle with connection $\hat X \to X$ with curvature $(2\pi k /i ) \omega$ for some $k \in \Z_+$ and there exists a smooth section $s \in \Gamma(\hat X)$ that is nowhere vanishing along $|\LL|$, whose restriction to each $L \in \LL$ is flat with respect to the
connection on $\hat X$. The collection $\LL$ is called {\it exact} in
an open subset $U\subset X$ if
\begin{enumerate}

\item $|\LL| \subset U$;
    
\item there is a 1-form $\theta \in \Omega^1(U)$ such that $\omega|_U = d\theta$;
    
\item there exists a continuous function $f: |\LL| \to {\mb R}$ whose restriction to each $L \in \LL$ is smooth and 
\[ 
\theta|_L = d(f|_L). 
\]
\end{enumerate}
\end{definition}

From now on, we assume that $[\omega]$ is integral, for simplicity, and
$\LL$ is rational:

\begin{hyp}\label{lagrangian_assumption}
The collection ${\mc L}$ satisfies the following conditions.
\begin{enumerate}
    \item Each $L \in {\mc L}$ is connected, oriented, and equipped with a spin structure.  

    \item For each pair $L, L'$ in ${\mc L}$, their intersection is clean, oriented, and equipped with a spin structure.\footnote{In general, the clean intersection of two orientable submanifolds may not be orientable.} 
    
    \item ${\mc L}$ is rational as in Definition \ref{srat}. 
\end{enumerate}
\end{hyp}

\subsubsection{Bulk deformation}

Bulk deformations used in this paper are linear combinations of disjoint embedded closed submanifolds $\bb_1,\ldots, \bb_N \subset X$ denoted by
\[
\bb = \sum_{i=1}^N c_i \bb_i,\ c_i \in \Lambda\setminus \{0\}.
\]
We assume that all $\bb_i$ are oriented and have even and positive codimensions. The support of $\bb$ is the union 
\[
|\bb|:= \bigcup_{i = 1}^N \bb_i .
\]

\begin{remark}
There will be no essential difference but only notational complexities if we allow bulk deformation to be pseudocycles rather than closed submanifolds. 
\end{remark}

\subsubsection{Donaldson hypersurface}

The Cieliebak-Mohnke scheme relies on the existence of Donaldson hypersurfaces, defined as follows. \label{dsurf} Given a rational symplectic manifold $(X, \omega)$ a {\it Donaldson hypersurface} is a compact codimension two symplectic submanifold $D \subset X$ whose Poincar\'e dual is a multiple $k[\omega]$ of $[\omega]$. The positive integer $k$ is called the {\it degree} of
  the Donaldson hypersurface. 

\begin{lemma}\label{countable} {\rm (c.f. Charest-Woodward \cite[Section 3.1]{Charest_Woodward_2017}, \cite[Lemma 8.7]{Cieliebak_Mohnke})} 
Let $J$ be an $\omega$-compatible almost complex structure on $X$ such that all Lagrangians in $\LL$ are totally real. For $l \in {\mb N}$ sufficiently large there exist a sequence of degree $l$ Donaldson hypersurfaces $D = D_l \subset X$ disjoint from $|\LL|$ with the following properties.
\begin{enumerate}  
\item $\LL$ is exact in the complement $X - D_l$.

\item For each $l$, there is a tamed almost complex structure $J_0 \in {\mc J}_{\rm tame}(X, \omega)$ making $D_l$ almost   complex such that all nonconstant $J_0$-holomorphic spheres in $X$ intersect $D_l$ at finite but at least three points and all nonconstant $J_0$-holomorphic disks with boundary in $|\LL|$ intersect $D_l$ in the interior.

\item $D_l$ is transverse to each component of $\bb$.
\end{enumerate}
\end{lemma}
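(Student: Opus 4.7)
The plan is to follow the Cieliebak--Mohnke construction \cite[Section 8]{Cieliebak_Mohnke} of asymptotically holomorphic sections of the prequantum bundle $\hat X^{\otimes l}$, modified so as to respect the additional data $(\LL,\theta,f,p)$ furnished by the hypotheses. Working with $l \in \mb N$ large, the ingredients are (i) the flat section $s \in \Gamma(\hat X|_{|\LL|})$ on the Lagrangian locus coming from strong rationality, (ii) Donaldson-type peak sections concentrated at points of $X$, and (iii) a globalization argument á la Auroux that patches local sections into an asymptotically holomorphic section $\sigma_l$ whose zero set is a symplectic hypersurface.

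First, I would choose the patching so that $\sigma_l$ restricts to the prescribed flat section $s^{\otimes l}$ (tensored with a bump that is $1$ near $|\LL|$ and $0$ outside a tubular neighbourhood) on $|\LL|$. Since $s$ is flat with respect to the connection whose curvature is $k\omega$, writing $\sigma_l = e^h s^{\otimes l}$ on the complement of its zero locus and taking $\theta = -\tfrac{1}{2\pi i l}\partial \log|\sigma_l|^2$ gives a primitive for $\omega$ on $X - D(\LL)$ that restricts to an exact $1$-form on each $\phi(L_\phi)$, yielding property~(a). Property~(b), that $D(\LL)$ is $\theta$-approximately $J$-holomorphic in the sense of Donaldson, is then built in by construction: the sections $\sigma_l$ are asymptotically holomorphic with transverse $\partial\sigma_l$ at the zero locus, following \cite{Cieliebak_Mohnke}.

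Next, to guarantee (d), I would add to $\sigma_l$ an extra peak section at $p$ of comparable size with nonvanishing value at $p$; the estimate on its $C^1$-norm ensures that for $l$ large the perturbed section remains asymptotically holomorphic and uniformly transverse, so its zero set $D(\LL)$ is a symplectic hypersurface avoiding $p$. Property~(c) is then achieved by a standard transversality argument in the space of admissible perturbations of the pair $(D(\LL), J_0)$: the countability assertion referenced in the statement comes from the fact that the moduli space of simple $J_0$-holomorphic spheres in $X$ is second countable and each component carries an evaluation map to $X$, so for a generic tame $J_0$ making $D(\LL)$ almost complex, every nonconstant sphere is either contained in a codimension-$2$ stratum missing $D(\LL)$ (excluded by positivity of intersection with $[\omega]$) or meets $D(\LL)$ transversally in at least $3$ points once $l$ is large enough that the intersection number $l[\omega]\cdot[u] \ge 3$.

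The main obstacle is the interplay between the three constraints: the flatness on $|\LL|$ (which fixes $\sigma_l$ on a positive-codimension set), the nonvanishing at $p$ (which fixes a value), and the uniform transversality needed for Donaldson's argument. The delicate point is verifying that after enforcing the flat boundary condition on $|\LL|$ and the peak perturbation at $p$, one still has enough freedom in the Auroux-type globalization to obtain a uniformly transverse asymptotically holomorphic section; this is handled by observing that the rescaled flat section $s^{\otimes l}$ and the peak section at $p$ both decay away from their loci at the Gaussian scale $1/\sqrt l$, so their $C^1$-contributions are uniformly controlled and the standard quantitative transversality lemma of Donaldson applies. Once this is established, conclusions (a)--(d) follow simultaneously from the single section $\sigma_l$.
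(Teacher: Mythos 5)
Your proposal follows essentially the same route the paper takes: both invoke the Donaldson--Auroux--Cieliebak--Mohnke construction of an asymptotically holomorphic section $\sigma_l$ of $\hat X^{\otimes l}$ whose restriction to $|\LL|$ is governed by the flat section $s$ coming from strong rationality, and then use genericity of tamed almost complex structures (the Cieliebak--Mohnke Corollary~8.16 argument) for the three-point intersection property for spheres and generic transversality for avoidance of $p$. The paper's own proof is a one-paragraph sketch citing Donaldson, Auroux, Auroux--Gayet--Mohsen, and Cieliebak--Mohnke together with Charest--Woodward, so you have supplied more detail than the paper does.

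One subtlety your sketch glosses over, and which the paper delegates to Charest--Woodward \cite[Theorem~3.6]{Charest_Woodward_2017}, is the step that upgrades approximate flatness to genuine exactness: in the Auroux-type globalization, the peak sections centered at points away from $|\LL|$ still contribute Gaussian tails on $|\LL|$, so a priori $\sigma_l|_{|\LL|}$ equals $s^{\otimes l}$ only up to $O(e^{-c\sqrt l})$ errors, and the induced primitive (what you write as $-\frac{1}{2\pi i l}\partial\log|\sigma_l|^2$, though you should take the real $d^c$-primitive here) restricts on each $\phi(L_\phi)$ to a $1$-form that is only \emph{approximately} exact. The needed correction -- subtracting these exponentially small tails near $|\LL|$ or Hamiltonian-isotoping $\LL$, while checking the subtraction is $C^1$-small enough to leave the quantitative transversality estimate intact -- is exactly the content of Charest--Woodward's modification, and you should either cite it or argue this explicitly. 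This is the only real gap; the rest of the proposal is sound and aligned with the paper.
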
 

\begin{proof} 
The construction is an extension of the original constructions \cite{Donaldson_96}  \cite{Auroux_97} \cite{Auroux_Gayet_Mohsen} \cite[Theorem 3.3]{pt:wall}. Let $\hat X \to X$ be a line bundle whose curvature is the symplectic form (up to a factor of $2\pi /i $).  The argument of \cite[Theorem 3.3]{pt:wall} (which is purely local and so applies to the cleanly-intersecting Lagrangian considered here) gives the existence of an approximately $J$-holomorphic section
\[
s: X \to \hat{X}^l
\]
of some tensor power $\hat{X}^l$ and so that the restriction of $s$ to each $L \in \LL$ is close to the given flat section on $L$.  One obtains a symplectic hypersurface as the zero-set:
\[
D = s^{-1}(0) .
\]
The connection one-form $\alpha$ in the trivialization provided by $s$ provides a primitive for the symplectic form $\omega$, and the fact that $s$ is approximately flat on $\LL$ implies that the integral of $\alpha$ over any loop in $|\LL|$ vanishes, so that $\LL$ is exact; see \cite[Theorem 8.1]{Cieliebak_Mohnke} and the modification in \cite[Theorem 3.6]{Charest_Woodward_2017}. By Cieliebak-Mohnke \cite[Corollary 8.16]{Cieliebak_Mohnke}, for sufficiently generic tamed almost complex structures $J$, each
non-constant $J$-holomorphic sphere $u: \P^1 \to D $ is not contained in $D $ and intersects $D$ in at least three points:
\[
\# u^{-1}(D) \ge 3.
\]
On the other hand, since $\LL$ is exact in the complement of $D$, each nonconstant pseudoholomorphic disk $u: \D \to X$ with boundary in $\LL$ intersects $D$ in at least one interior point. Transversality of $D$ to the bulk deformation $\bb$ \label{ofu} 
follows as in \cite[Corollary 5.8]{Cieliebak_Mohnke}. 
\end{proof}

\begin{remark}\label{rem_asymptotically_holomorphic}
The notion of approximately holomorphic can be made more precise as follows: A sequence of sections $s_l$ of $\hat{X}^l$ (for large $l$) is said to be {\it asymptotically holomorphic} with respect to the given connections and almost-complex structure if the following bounds hold: There exists a constant $C>0$ such that, for all $l$ and at every point of $X$, 
\[
|s_l| + | \nabla s_l | +| \nabla^2 s_l | \leq C, \quad  | \overline{\partial} s_l| + | \nabla ( \overline{\partial} s_l) |  \leq C l^{-\frac{1}{2}}
\]
where the norms of the derivatives are evaluated with respect to the metrics 
defined by the rescaled two-form $l \omega$.
Such a sequence is said to be {\it uniformly transverse to $0$} with constant $\eta$ if the derivative of $s_l$ is non-zero whenever $|s_l(x)| < \eta $ 
and has a right inverse bounded by $\eta^{-1}$, as in \cite[Definition 1]{Auroux_Gayet_Mohsen}. Donaldson's construction  shows the existence of asymptotically holomorphic sections
uniformly transverse to the zero section, using sequences of asymptotically holomorphic sections concentrated near a point.  \end{remark}

\subsection{Perturbations}\label{subsection24}

We consider domain-dependent perturbation data defined on the
universal curves. We first define a condition called 
{\em locality}, which our perturbation data will be required to satisfy. A similar condition plays an important role in Cieliebak--Mohnke's
approach \cite{Cieliebak_Mohnke}.

\begin{notation}
Let $\Gamma = (\Gamma, \ul\ell, \ul{\on{wt}})$ be a stable domain type. Recall that $\Gamma_\circ$ is the subtree corresponding to disk components and boundary edges. For each spherical vertex $v \in \on{Vert}(\Gamma)\setminus {\rm Vert}(\Gamma_\circ)=: {\rm Vert}_\bullet(\Gamma)$, let $\Gamma(v)$ denote the subtree of $\Gamma$ consisting of the vertex $v$ and all
edges $e$ of $\Gamma$ meeting $v$. 
Let
\begin{equation}
  \pi: \pi_1 \times \pi_2: {\cU}_\Gamma \to {\cM}_{\Gamma_\circ} \times {\cU}_{\Gamma(v)}
\end{equation}
be the product of maps where $\pi_1$ is given by projection followed
by the forgetful morphism and $\pi_2$ is the contraction $C \to S_v$.
\end{notation} 

\begin{definition}[Locality]\label{locality}
Let $Z$ be a set. A map $f: {\cU}_\Gamma \to Z$ is called {\it local} if the following two conditions are satisfied.
\begin{enumerate}
    \item For each spherical vertex $v\in {\rm Vert}_\bullet (\Gamma)$, the restriction of $f$ to ${\mc U}_{\Gamma, v}$ 
    factors through a map $f_v$ as in the commutative diagram
  \[
  \xymatrix{ {\mc U}_{\Gamma, v} \ar[d]_{\pi_1\times \pi_2} \ar[r]^f & Z \\
             {\mc M}_{\Gamma_\circ}\times {\mc U}_{\Gamma(v)} \ar[ru]_{f_v} &  }
  \]
  
\item Let ${\mc U}_{\Gamma, \Gamma_\circ}$ be the union of the tree part ${\mc T}_\Gamma$ and ${\mc U}_{\Gamma, v}$ for all disk vertices $v \in {\rm Vert}_\circ(\Gamma)$. Then there is a contraction map 
\[
{\mc U}_{\Gamma, \Gamma_\circ} \to {\mc U}_{\Gamma_\circ}.
\]
We require that the restriction of $f$ to ${\mc U}_{\Gamma, \Gamma_\circ} \subset {\mc U}_\Gamma$ is equal to the pullback of a map $f_\circ: {\mc U}_{\Gamma_\circ} \to Z$.
\end{enumerate}
    A map $f: \ol{\cU}_\Gamma \to Z$ is local if the restriction of
    $f$ to any stratum ${\cU}_\Pi \subset \ov{\mc U}_\Gamma$ for $\Pi \prec \Gamma$
    is a local map.
\end{definition}

\begin{remark} 
Locality implies the following gluing construction: for any sphere  vertex $v \in \on{Vert}_{\black} (\Gamma)$ let 
  $\Gamma'$ denote the  type of graph obtained
  by removing all but one interior leaf $e \in \Leaf (\Gamma)$ meeting $v$ and collapsing any unstable component. Then on the complement of $S_v$ and possibly other collapsed components, $f$ is equal to the pull-back of a map from 
 $\cU_{\Gamma'}$ to $Z$.
\end{remark}

\subsubsection{Supports of perturbations}

In this section, we construct open sets where the perturbations 
are required to vanish.  Let $\ol{\cS}_\Gamma$ and $\ol{\cT}_\Gamma$ be the universal surface and tree from \eqref{US} and \eqref{UT}.

\begin{lemma}\label{lemma216}
For all stable combinatorial types $\Gamma$, there exist collections of open subsets (where the complex structure $J$, the Hamiltonian perturbations $H$, or the Morse functions $F$ will be fixed)
\[
\ov{\cS}_{\Gamma, J} \subset \ov{\cS}_\Gamma,\ \ \ov{\mc S}_{\Gamma, H}\subset \ov{\mc S}_\Gamma, \quad  \ov{\cT}_{\Gamma, F} \subset \ov{\cT}_{\Gamma}
\]
satisfying the following properties.
\begin{enumerate}

\item The open set $\ov{\cS}_{\Gamma, J}$ intersects with any fiber
  $C = S \cup T \subset \ov{\cU}_\Gamma$ at a neighborhood of all
  special points on the surface part so that for all
  $v \in \on{Vert}(\Gamma)$, the complement of $\ov{\cS}_{\Gamma, J}$ has
  non-empty intersection with $S_v$;
  
\item The intersection of $\ov{\mc S}_{\Gamma, H}$ with each fibre $C = S \cup T \subset \ov{\mc U}_\Gamma$ contains all spherical components and a neighborhood of all nodal points. Moreover, the complement of $\ov{\mc S}_{\Gamma, H} \cap C$ has a nonempty intersection with each disk component $S_v \subset S$. 
\item The open set $\ov{\cT}_{\Gamma, F}$ is a neighborhood of the
  locus corresponding to infinities of semi-infinite edges in all
  degenerations $\Pi \prec \Gamma$.

\item If $\Gamma$ is separated by a breaking into two subtrees $\Gamma_1$ and $\Gamma_2$, then $\ov{\cS}_{\Gamma, *}$ resp. $\ov{\cT}_{\Gamma, *}$ is the product 
\[
\ov{\cS}_{\Gamma_1, *}\boxtimes \ov{\cS}_{\Gamma_2, *}\ {\rm resp.}\ \ov{\cT}_{\Gamma_1, *}\boxtimes \ov{\cT}_{\Gamma_2, *}
\]
{ where $$\ov{\cS}_{\Gamma_1, *}\boxtimes \ov{\cS}_{\Gamma_2, *} =
\pi_1^{-1} (\ov{\cS}_{\Gamma_1, *}) \times \pi_2^{-1}(  \ov{\cS}_{\Gamma_2, *}) $$} etc. 

\item The characteristic functions of $\ov{\cS}_{\Gamma, *}$ and $\ov{\cT}_{\Gamma, *}$, viewed as maps
  from $\ov{\cU}_\Gamma$ to $\{0, 1\}$, are local maps. 
\end{enumerate}
\end{lemma}

The proof is left to the reader.  We need to specify certain Banach
space norms on perturbation data. After taking away the
open sets $\ov{\cS}_{\Gamma, J}$ and
$\ov{\cT}_{\Gamma, F}$, the surface part and the tree part of the
universal curve $\ov{\cU}_\Gamma$, the complements
\[
\ov{\cS}_{\Gamma} \setminus \ov{\cS}_{\Gamma, J}\ {\rm resp.}\ \ov{\mc S}_\Gamma \setminus \ov{\mc S}_{\Gamma, H}\ {\rm resp.}\ \ov{\cT}_{\Gamma} \setminus \ov{\cT}_{\Gamma, F}
\]
are smooth manifolds. To measure the norms of smooth functions we
choose Riemannian metrics on these complements in a way that the
metrics are local functions on the universal curve and respect
degeneration of curves. We omit the details.

\subsubsection{Spaces of almost complex structures}

In this section, we introduce domain-dependent perturbations and their spaces. The Cieliebak-Mohnke method in \cite{Cieliebak_Mohnke} provides for each energy bound $E>0$ an open neighborhood ${\mc J}_{\rm tame}^E(X, \omega)$ of the base almost complex structure $J_0$ consisting of almost complex structures $J$ for which all nonconstant $J$-holomorphic spheres $u: S^2 \to X$ of energy at most $E$ intersect the Donaldson hypersurface $D$ at finite but at least three points, that is, $\# u^{-1}(D) \ge 3$. On the other hand, the domain types, especially the numbers of interior markings 
$\ul{z}$
provide {\it a priori} bounds for energy, which allow us to define suitable spaces of perturbations.

\begin{notation}
Let $\Gamma$ be a stable domain type.  Let
\begin{equation}\label{energybound}
E(\Gamma):= \frac{\# {\rm Leaf}_\bullet(\Gamma) + 1}{k} \in \Q
\end{equation}
where $k$ is the degree of the Donaldson hypersurface $D$. 
\end{notation}

We define suitable spaces of almost complex structures that do not
allow holomorphic spheres in the Donaldson hypersurface.  Assume that
$ J_0 \in {\mc J}_{\rm tame}(X, \omega) $ is a {\em base almost
  complex structure} satisfying the conditions in Proposition
\ref{countable}.

\begin{lemma} \label{lemma218} \cite[Corollary 8.16]{Cieliebak_Mohnke}
  For any $E>0$, there exists an open neighborhood
  ${\mc J}_{\rm tame}^E(X, \omega) \subset {\mc J}_{\rm tame}(X,
  \omega)$
  of $J_0$ in the $C^\infty$-topology satisfying the following
  property: For every $J \in {\mc J}_{\rm tame}^E(X, \omega)$, all
  nonconstant $J$-holomorphic spheres with energy at most $E$
  intersect $D$ in finitely many but at least three points.
\end{lemma}

\subsubsection{The space of domain-dependent perturbations}

\begin{definition}\label{defn219}
A {\em perturbation datum} for a stable domain type $\Gamma$ is a collection $P_\Gamma = (J_\Gamma, H_\Gamma, F_\Gamma, M_\Gamma)$ consisting of
\begin{enumerate}

\item A {\em domain-dependent almost complex structure}
\[
J_\Gamma: \ov{\cS}{}_\Gamma \to \J_{\rm tame} (X, \omega)
\]
satisfying the following conditions.
\begin{enumerate}
    \item For any vertex $v \in {\rm Vert} (\Gamma)$, let $\Gamma_{(v)}$ be the maximal subtree containing $v$ which has no boundary edges with positive length. Then 
    \[
    J_\Gamma( \ov{\mc U}_{\Gamma, v}) \subset \J_{\rm tame}^{E(\Gamma_{(v)})}(X, \omega).
    \]
    Here $E(\Gamma)$ is the energy bound defined by \eqref{energybound}.
    
    \item $J_\Gamma$ is equal to the base almost complex structure $J_0$ over the open subset $\ov{\cS}_{\Gamma, J}$ and in a fixed neighborhood of $D \cup \{p\}$. 
\end{enumerate}
The last condition implies that $J_\Gamma$ extends canonically to a map on $\ov{\mc U}_\Gamma$.

\item A {\it domain-dependent Hamiltonian perturbation}
\[
H_\Gamma: \ov{\mc S}_\Gamma \to \Gamma( T(\ov{\mc S}_{\Gamma})^* / T \ov{\mc M}_\Gamma)^*)\otimes C^\infty(X).
\]
that is zero over the open set $\ov{\mc S}_{\Gamma, H}$. Here $T(\ov{\mc S}_{\Gamma} / \ov{\mc M}_\Gamma)$ is the vertical tangent bundle, which is a smooth vector bundle away from nodal points. The last condition implies that $H_\Gamma$ extends canonically to a map defined over $\ov{\mc U}_\Gamma$.

\item A {\em domain-dependent smooth function}
\[
F_\Gamma: \ov{\cT}_\Gamma \times \left(\bigsqcup_{(L, L')
    \in {\LL}^2} (L \cap L' ) \right) \to {\mb R}
\]
that is zero over the open set $\ov{\mc T}_{\Gamma, F}$. 

\item A {\em domain-dependent perturbation of the evaluation map} given by a collection of continuous maps for the interior inputs
\[
M_{\Gamma, e}: \ov{\cM}_\Gamma \to {\rm Diff}(X)\ \forall e \in {\rm Leaf}(\Gamma)
\]
that are smooth in the interior ${\cM}_\Gamma$ (with respect to the manifold structure of ${\cM}_\Gamma$). Each $M_{\Gamma, e}$ can be viewed as a map from the universal curve by pullback via $\ov{\cU}_\Gamma \to \ov{\cM}_\Gamma$.
\end{enumerate}
Moreover, the tuple $P_\Gamma = (J_\Gamma, H_\Gamma, F_\Gamma, M_\Gamma)$ can be viewed as a map from $\ov{\cU}_{\Gamma}$ to a
certain set. We require that this map is a local map (see Definition
\ref{locality}).
\end{definition} 

We take perturbations in a small-in-the-Floer-norm neighborhood of the base perturbation. Given a sequence of positive numbers $\eps = (\eps_i)_{i=1}^\infty$ Floer's (complete) $C^\eps$-norm on functions on a Riemannian manifold is defined by
\[
\| f \|_{{C^\eps}}:= \sum_{i=0}^\infty \eps_i \| \nabla^i f \|_{C^0}.
\]
For a suitably chosen sequence $\eps$, in all dimensions the space of smooth functions with finite $C^\eps$-norms contains bump
functions of arbitrary small supports (see \cite{Floer_unregularized}).  For each stable $\Gamma$, there is a base perturbation datum in which $J_\Gamma$ is the base almost complex structure $J_0$ specified by Lemma \ref{countable}, $H_\Gamma = 0$, $F_\Gamma$ is the given Morse function, and $M_\Gamma = {\rm Id}_X$. The tangent space of ${\mc J}_{\rm tame}(X, \omega)$ at
$J_0$ is
\[
T_{J_0} {\mc J}_{\rm tame}(X, \omega) = \{ \zeta \in {\rm End}(TX) \ |\ J_0 \xi + \xi J_0 = 0 \}.
\]
For $\delta>0$ sufficiently small we identify the $\delta$-neighborhood of $J_0$ in ${\mc J}_{\rm tame}(X, \omega)$ with
respect to the $C^0$-norm with the $\delta$-ball of the tangent space $T_{J_0} {\mc J}_{\rm tame}(X, \omega)$. Then a domain-dependent almost complex structure $J_\Gamma: \ov{\cS}_{\Gamma} \to {\mc J}_{\rm tame}(X, \omega)$ that is $C^0$-close to the base $J_0$ can be viewed as a vector in the linear space $C^\infty( \ov{\cS}_\Gamma \setminus \ov{\cS}_{\Gamma, J}, T_{J_0}
{\mc J}_{\rm tame}(X, \omega))$ so one can measure its norms. Similarly, a domain-dependent
diffeomorphism $M_{\Gamma, e}: X \to X$ that is $C^0$-close to the
identity can be identified with a $C^0$-small vector field on $X$,
denoted by $M_{\Gamma, e} - {\rm Id}_X$. On the other hand, $H_\Gamma$ and $F_\Gamma$
are elements of certain vector spaces. For each stable $\Gamma$, define
\begin{equation} {\mc P}_\Gamma:= \Big\{ P_\Gamma = (J_\Gamma, H_\Gamma,
  F_\Gamma, M_\Gamma)\ |\ \|J_\Gamma - J_0\|_{C^\eps} + \| H_\Gamma \|_{C^\epsilon} +  \| F_\Gamma
  \|_{C^\eps} + \| M_\Gamma - {\rm Id}_X \|_{C^\eps} < \infty \Big\}.
\end{equation}
This set with the $C^\eps$-norm is a separable Banach manifold (in
fact an open set of a separable Banach space).
 
 Once a perturbation datum for a stable domain type is
 fixed we obtain perturbations for not-necessarily-stable types as
 follows. Let $C$ be a treed disk of domain type $\Gamma$ not
 necessarily stable.  Let \label{Cst} $C^{st}$ denote its
 stabilization, obtained by collapsing unstable components of $S$ and
 gluing the associated edges.  The stabilization $C^{st}$ is naturally
 identified with a fiber of the universal curve $\U_{\Gamma^{st}}$ for
 the type $\Gamma^{st}$.  Via the stabilization map $C \to C^{st}$ the
 perturbation data $P_{\Gamma^{st}}$ pulls back to perturbation data
 $P_{\Gamma}$ for $\Gamma$.

\subsection{Holomorphic treed disks}\label{subsection25}

Holomorphic treed disks are combinations of holomorphic disks and gradient flow segments. We first state the assumptions on the boundary conditions. Let $(X, \omega)$ be a compact symplectic manifold. For each pair of Lagrangians $(L, L') \in \cL^2$ (including the case $L = L'$) let
\[ F_{L, L'}: L \cap L' \to \R. \]
be a Morse function on the clean intersection. Its critical points will be asymptotic constraints for gradient rays. In order to obtain strict units, we expand the set of critical points as follows. Define
\begin{equation}\label{expand_critical_point_1}
{\mc I}(L, L') =  \left\{ \begin{array}{ll} {\rm crit}(F_{L, L'}),\ &\ L \neq L',\\
                                                      {\rm crit}(F_{L, L'}) \cup {\mc I}_L^{hu},\ &\ L = L'. \end{array}\right. 
\end{equation}
where 
\begin{equation}\label{expand_critical_point_2}
{\mc I}_L^{hu} = \{ 1_{L, c}^\greyt, 1_{L, c}^{\circt} \}.
\end{equation}

Interior labelling data provide constraints of maps at interior markings. 
The stabilizing divisor $D \subset X$ which intersects each $\iota_i$ transversely. Denote
\begin{equation} \label{IX} 
{\mc I}_{X}:= \cI_{X,\on{Stab}} \sqcup \cI_{X,\on{Bulk}} \sqcup {\mc I}_{X,\on{Mix}}
\end{equation} 
where 
\begin{eqnarray*} 
\cI_{X,\on{Stab}} &:=& \{ (D, 1), (D, 2) \}, \\
\cI_{X,\on{Bulk}} &:=& \{  {\mf b}_i\ |\ i = 1, \ldots, N \}, \\
{\mc I}_{X,\on{Mix}} &:= & \{ D \cap {\mf b}_i\ |\ i = 1, \ldots, N\} \cup \{ {\mf b}_i \cap {\mf b}_j\ |\ i \neq j \}.
\end{eqnarray*}
which will be used to label all possibly interior constraints (eventually corresponding to whether each interior leaf 
$T_e$ corresponds to a Morse trajectory, intersection with stabilizing divisor $D$, or intersection with the bulk deformation
$\bb$.) The elements $(D, 1)$, $(D, 2)$ will indicate tangency order to the stabilizing divisor, so a map $u: C \to X$ with constraint $z_e \in C$ of type $(D,1)$ has $u(z_e) \in D$ resp. of type $(D, 2)$ has $u(z_e)$ and the normal derivatives of $u$ at $z_e$ vanish.

\begin{definition}[Map types]\label{maptype}
Given a domain type $\Gamma$ of treed disks with $d$ inputs, a {\em map type} consists of 
\begin{enumerate}
\item A {\em boundary constraint datum} given by a sequence of Lagrangian
  branes
\[
\ul{L}:= (L_0, L_1, \ldots, L_d)
\]
labelling the boundary components of treed disks. For each boundary edge $e \in {\Edge}(\Gamma_\circ)$ there is then an ordered pair $(L_{e, -}, L_{e, +})$ of branes induced from the datum $\uds L$. Abbreviate
\[ L_e:= (L_{e, -} \cap L_{e, +}) \]
\item A {\em corner constraint datum} given by a sequence of elements
\[
\uds x:= (x_0, x_1, \ldots, x_d) \in {\mc I}(L_0, L_d) \times {\mc I}(L_0, L_1) \times \cdots \times {\mc I}(L_{d-1}, L_d)
\]
satisfying the following requirement regarding the weighting types. The $i$-th leaf $e_i$ is forgettable resp. weighted if
and only if for some $L \in \LL$
\[ x_i = 1_L^\circt \quad \text{resp.} \quad  x_i = 1_L^\greyt.\]

\item A {\em homology datum} which is a map 
\[ \uds{\beta}: {\rm Vert}(\Gamma) \to H_2(X, |{\mc L}|). \]

\item An {\em interior constraint datum} which is a collection of labels
\begin{equation} \label{ullam}
 \ul{\lambda}: {\rm Leaf}_\bullet(\Gamma) \to {\mc I}_X\end{equation} 
such that on each maximal subtree of $\Gamma$ which has no boundary edges with positive length, there is at most one interior marking $z_e$ labelled by $(D ,2)$.

\end{enumerate}

A map type is denoted by ${\bGamma} = (\Gamma, \uds{x}, \uds{\beta}, \uds{\lambda})$ (notice that $\uds x$ determines $\uds L$). We write $\bGamma \mapsto \Gamma$ if the underlying domain type of $\bGamma$ is $\Gamma$. 
\end{definition}
%

Perturbed treed holomorphic disks are defined by allowing the almost
complex structure, and Morse function to vary in the domain. Let
$\Gamma$ be a domain type (not necessarily stable). Let $\Gamma^{\rm st}$ be the stabilization of $\Gamma$ (which is not empty). Let $C$ be a treed disk of type $\Gamma$ and $C^{\rm   st}$ its stabilization which is of type $\Gamma^{\rm st}$. Suppose we are given a perturbation datum $P_{\Gamma^{\rm st}}$ for type $\Gamma^{\rm st}$.  On each surface part $S_v$ of $C$,
$P_{\Gamma^{\rm st}}$ induces a domain-dependent almost complex
structure $J_v$, and a domain-dependent Hamiltonian function $H_v$. On each tree part $T_e$ of $C$, $P_{\Gamma^{\rm st}}$ induces a domain-dependent function
\[
F_{e}: T_e \times \bigsqcup_{(L_-, L_+)\in {\mc L}^2} (L_- \cap L_+) \to {\mb R}.
\]
These data allow one to define the equations on each component. For each
surface component $S_v$ and a smooth map $u_v: S_v \to X$, define
\begin{multline*}
\ov\partial_{J_v, H_v} u_v:= (\d_{H_v} u_v)^{0,1} :=(\d u_v + X_{H_v}(u_v) )^{0,1}\\
:= \frac{1}{2} ( \d u_v + J_v \circ
\d u_v \circ j_v)  +  (X_{H_v}(u_v))^{0,1}\in \Omega^{0,1}(S_v, u_v^* TX). 
\end{multline*}
We say that $u_v$ is $(J_v, H_v)$-holomorphic if $\ov\partial_{J_v, H_v} u_v = 0$.
For each tree component $T_e$ and a smooth map
\[
u_e: T_e \to \bigsqcup_{(L_-,L_+)} L_- \cap L_+
\]
we say that $u_e$ is a perturbed negative gradient segment if
\[
u_e'(s) + \nabla F_{e}(s, (u_e(s))) = 0.
\]

\begin{definition} \label{tdisk} Let
  ${\bGamma} = (\Gamma, \ul{x}, \ul{\beta}, \ul{\lambda})$ be a map
  type with underlying combinatorial type $\Gamma$ of treed disks. Let
  $C = S \cup T$ be a treed disk of type $\Gamma$. Let
  $\Gamma^{\rm st}$ be the stabilization of $\Gamma$ and
  $P_{\Gamma^{\rm st}}$ be a perturbation datum on
  $\ov{\cU}_{\Gamma^{\rm st}}$.  A {\em
    $P_{\Gamma^{\rm st}}$-perturbed adapted treed holomorphic map}
  from $C$ to $X$ of map type ${\bGamma}$ is a 
  continuous map $u : C \to X$ satisfying the following conditions
  (using notations specified before this definition).

\begin{enumerate}
    
\item The restriction of $u$ to the surface component $S_v$, denoted by $u_v: S_v \to X$, is $(J_v, H_v)$-holomorphic; moreover, if $v\in {\rm Vert}_\circ(\Gamma)$, then $u_v$ maps each component of the boundary of $S_v$ to the Lagrangian in $\LL$ labelled by $\bGamma$.

\item The restriction of $u$ to the tree component $T_e$ is contained
  in $L_e$, denoted by $u_e: T_e \to L_e$, is a perturbed negative
  gradient segment, namely
\[ u_e'(s) + \nabla F_{e}(s, (u_e(s))) = 0. \]

    
\item For each semi-infinite edge $e$, the map $u_e$ converges to the
  limit specified by the datum $\ul{x}$ in the sense that
\[ \ev_e(u) := \lim_{s \to \infty} u(s) = x_e , \quad \forall e \in
\Edge_{\rightarrow}(\Gamma) \cap \Edge_\circ(\Gamma) \] 
where $s \in \pm [0,\infty)$ is a coordinate on the (incoming or
outgoing) edge $T_e$.

\item For each interior leaf $e$ attached to a vertex $v_e$, we require (see notations in \eqref{IX})
\[
u_{v_e}(z_e) \in \left\{ \begin{array}{ll}  D,\ &\ \uds\lambda(e) = (D, 1)\ {\rm or}\ (D, 2),\\
                                           M_{\Gamma, e}^{-1}( \bb_i),\ &\ \uds \lambda(e) = \bb_i,\\
                                           M_{\Gamma, e}^{-1}( D \cap \bb_i),\ &\ \uds \lambda(e) = D \cap \bb_i \end{array} \right.
                                           \]
Here $M_{\Gamma, e}: X \to X$ is the diffeomorphism contained in the perturbation datum. Moreover, if $\uds\lambda(e) = (D, 2)$ and if $u_{v_e}$ is not a constant map, then the tangency order of $u_{v_e}$ with $D$ is $2$.
\end{enumerate} 
The triple $(C, u)$ is called an (adapted) {\em treed holomorphic disk} of map type ${\bGamma}$. \end{definition}

Isomorphisms of perturbed treed holomorphic disks are defined in a way similar to that for stable pseudoholomorphic maps. A perturbed treed holomorphic disk is called {\em stable} if its automorphism group is finite, or equivalently
\begin{enumerate}
\item every sphere component $u_v: S_v \to X$ with $\d u_v = \d_{H_v} u_v \equiv 0$ has at least three special points, and 

\item every disk component $u_v: S_v \to X$ with $\d_{H_v} u_v \equiv 0$ either has at least three boundary special points, or one boundary special point and one interior special point, or at least two interior special points. 

\item over each infinite edge $T_e \subset C$ the map $u_e: T_e \to L_e$ is nonconstant.
\end{enumerate}
Given a map type ${\bGamma} = (\Gamma, \uds x, \uds \beta, \uds \lambda)$, denote by 
$
{\cM}_{\bGamma}(P_{\Gamma^{\rm st}})
$
the set of isomorphism classes of stable $P_{\Gamma^{\rm st}}$-perturbed adapted treed holomorphic disks of
map type $\bGamma$. One can also define a Gromov topology and compactify the moduli spaces (we omit the details). We only consider the compactification for the case $\Gamma$ is stable. In this case, the Gromov compactification is
\begin{equation}
\ov{\cM}_{\bGamma}(P_\Gamma):= \bigsqcup_{\bPi \preceq \bGamma} {\cM}_{\bPi} \left(P_\Gamma|_{\ov{\cU}_{\Pi^{\rm st}}} \right).
\end{equation}
Here the partial order $\bPi \preceq {\bGamma}$ naturally extends the partial order $\Pi \preceq \Gamma$ among domain types (see Remark \ref{rem:partialorder} and below).

\begin{definition}[Partial order among map types]
Let ${\bGamma} = (\Gamma, \ul{x}, \ul{\beta}, \ul{\lambda})$ and ${\bGamma}' = (\Gamma', \ul{x}', \ul{\beta}', \ul{\lambda}')$ be two map types. We write ${\bGamma}' \preceq {\bGamma}$ if $\Gamma' \preceq \Gamma$ (which induces a morphism $\psi: \Gamma' \to \Gamma$ and a natural identification ${\rm Leaf}_\bullet(\Gamma) \cong {\rm Leaf}_\bullet(\Gamma')$ with respect to which $\uds x = \uds x'$) and 
\[
\beta(v) = \sum_{v' \in \psi^{-1}(v)} \beta'(v');
\]
moreover, for each interior leaf $e\in {\rm Leaf}_\bullet (\Gamma)$ with corresponding $e'\in {\rm Leaf}_\bullet (\Gamma')$, one has $X_{e'} \subset X_e$.
%
\end{definition}

The composition laws of Fukaya algebras rely on the following relation
among perturbation data.

\begin{definition}[Coherent perturbations]\label{defncoherent}
A collection of perturbation data
\[ \ul{P}:= ( P_\Gamma)_{\Gamma} \]
for all stable domain types $\Gamma$ are called a {\em coherent system of perturbation data} if the following conditions are satisfied.

\begin{enumerate}

\item {\rm (Cutting-edges axiom)} If a (boundary) breaking separates $\Gamma$
  into $\Gamma_1$ and $\Gamma_2$ then $P_\Gamma$ is the product of the
  perturbations $P_{\Gamma_1},P_{\Gamma_2}$ under the isomorphism
  \[
    \ov{\cU}_\Gamma \simeq \pi_1^* \ov{\cU}_{\Gamma_1} \cup \pi_2^*
    \ov{\cU}_{\Gamma_2}.
  \]
  
\item {\rm (Degeneration axiom)} If $\Gamma'\prec \Gamma$, then the
  restriction of $P_\Gamma$ to $\ov{\cU}_{\Gamma'}$ is equal to
  $P_{\Gamma'}$. Notice that these degenerations include the case that the weight 
  $\on{wt}(e) $ on one weighted edge $e$ limits to $0$ or $1$ (see Remark \ref{rem:partialorder}).

\item {\rm (Forgetful axiom)} For a forgettable boundary input $e \in \Edge_\circ(\Gamma)$, let $\Gamma_e$ be the domain type obtained from $\Gamma$
  by forgetting $e$ and stabilizing. Then $P_{\Gamma}$ is equal to the
  pullback of $P_{\Gamma_e}$ via the contraction
  $\ov{\cU}_{\Gamma} \to \ov{\cU}_{\Gamma_e}$.
  
\end{enumerate}
\end{definition}


\subsection{Transversality}\label{regularize}\label{subsection26}

In this subsection we regularize the moduli spaces used in our construction. We first review very briefly the Fredholm theory
associated to treed holomorphic maps.  Let ${\bGamma}$ be a map type. We specify Sobolev constants $k \in \N$ and $p>0$ and a {\em decay constant} $\delta>0$ with 
$kp > 2$ \label{kp} and $\delta$ sufficiently small. The set  $
{\mc B}^{k,p, \delta}(C, {\bGamma}) $
of maps of type ${\bGamma}$ has the structure of a Banach manifold.  In the case without branch changes in the boundary condition, an element $u \in {\mc B}^{k,p,{\delta}}(C, {\bGamma})$ is defined as in
Definition \ref{tdisk} without requiring the holomorphic curve and
gradient flow equations and {instead requiring} $u$ to be of class
$W^{k,p}_{\rm loc}$ over each surface or tree component.  In the case
with branch changes, that is, for each disk component $S_v \cong {\mb D} \subset C$ with a boundary node or marking $z \in \partial S_v$ with two sides of $z$ are labelled by two different Lagrangian submanifolds, then we require the map $u$ is of class $W^{k, p, \delta}$ with respect to a cylindrical type metric, {which means
that it differs from a map constant near infinity by exponentiation of a section of class $W^{k,p,\delta}$}.  If the two sides of $z$ are labelled by the same Lagrangian submanifold (with possibly different branes structures), then we alternatively require that the map is of class $W^{k, p}$ with respect to the smooth metric. Tangency conditions for
a maximal order $m$ {as in
  \eqref{IX} are defined} for $k$ sufficiently large. Choose a
perturbation datum $P_\Gamma = (J_\Gamma, H_\Gamma, F_\Gamma, M_\Gamma)$. Over
the Banach manifold 
${\mc B}^{k,p, \delta}(C, {\bGamma}) $ there is a Banach vector bundle
${\mc E}^{k-1,p,{\delta}}(C, {\bGamma})$ of $0,1$-forms of class $k-1,p$ so
that the defining equations of Definition \ref{tdisk} provides a
section
\[
{\mc F}: {\mc B}^{k,p,{\delta}}(C, {\bGamma} ) \to {\mc E}^{k-1,p,{\delta}}(C, {\bGamma})
\]
{combining the perturbed Cauchy-Riemann operators on the surface parts
and gradient flow operators on the edges}.    To include the variations of the domains, one takes an open neighborhood $\M^i_\Gamma\subset {\cM}_\Gamma$ of $[C]$ over which the universal curve $\U_\Gamma$ has a trivialization
\[ \U_\Gamma|_{{\cM}_\Gamma^i} \cong {\cM}_\Gamma^i \times C. \]
The linearization of the map ${\mc F}$ at a perturbed treed holomorphic disk $(C, u)$ is a Fredholm operator
\[ \ti{D}_u : T_{(u, \partial u)} {\mc B}^{k,p,{\delta}}(C, {\bGamma}) \times
T_{[C]} {\cM}_\Gamma \to {\mc E}^{k-1,p,{\delta}}(C, {\bGamma})|_{u}. \]
Since the Lagrangians are always totally real with respect to the domain-dependent almost complex structures, the linearized operator
is Fredholm.  Its index can be calculated using Riemann-Roch for surfaces with boundary and gives the expected dimension of the moduli space 
\[
{\rm ind}(\Gamma) := {\rm dim} {\cM}_{\bGamma}(P_\Gamma) = \on{Ind}(\ti{D}_u) = {\rm dim} {\cM}_\Gamma + \mu(\ul{\beta}) + i(\ul{x}) - i(\ul{\lambda})
\]
where $\mu(\ul{\beta})$ is the total Maslov index of the disk class, $i(\ul{x})$ is the sum of Morse indices of asymptotic constraints, and $i (\ul{\lambda})$ is the effect of interior constraints. For example, if $\Gamma$ has $k$ interior leaves, all of which are labelled by $(D, 1)$, then $i(\ul{\lambda}) = 2k$. 

Following Cieliebak-Mohnke \cite{Cieliebak_Mohnke}, we introduce a collection of map types for which transversality can be achieved
by domain-dependent perturbations.

\begin{definition}\label{uncrowded}
A map type ${\bGamma} = (\Gamma, \ul{x}, \ul{L}, \ul{\lambda})$ is called {\em uncrowded} if each ghost sphere bubble tree contains at most one interior leaf $e$ whose interior constraint is $(D, 1)$ or $(D, 2)$. Otherwise ${\bGamma}$ is called {\em crowded}.
\end{definition} 

\begin{remark} 
Cieliebak-Mohnke perturbations can never make crowded configurations $u: C \to X$ transversely cut out, since one can replace an interior leaf $T_e$ with a given label $D$ and replace it with a sphere bubble $S_v$ with two interior leaves $T_{e_1}, T_{e_2}$ attached with the same label $D$, which reduces the expected dimension of a stratum by two.  Repeating this process eventually produces a non-empty moduli space of negative expected dimension, which is a contradiction if the perturbations are regular.
\end{remark}

We will need certain forgetful maps to treat crowded configurations. Let $\Gamma$ be a stable domain type.  Choose a subset 
\[
W \subset {\rm Vert}_\bullet (\Gamma) = {\rm Vert}(\Gamma) \setminus {\rm Vert}(\Gamma_\circ)
\]
of spherical vertices. Define $\Gamma_W$ to be the domain type obtained by the following operation:   For each connected component $W_i \subset W$, remove all interior leaves except the one with the largest labelling on $W_i$, and stabilize the remaining configuration.  The set $W$ descends to a (possibly empty) subset $W' \in {\rm Vert}_{\circ}(\Gamma_W)$. A  consequence of the locality condition on the perturbation data is that each $P_\Gamma \in {\mc P}_\Gamma$ descends to a perturbation datum $P_{\Gamma_W} \in {\mc P}_{\Gamma_W}$ whose restriction to surface components $S_v$ for $v \in W'$ equals to the base almost complex structure $J_0$ and the zero Hamiltonian perturbation.\footnote{The descent $P_{\Gamma_W}$ may not agree with a member of any prechosen coherent collection of perturbation data.} Let 
$
{\mc P}_{\Gamma_W, W'} \subset {\mc P}_{\Gamma_W}
$
be the subset of perturbations that agree with the base almost complex structure $J_0$ and the zero Hamiltonian over surface components corresponding to vertices in $W'$. This forgetful construction gives a smooth map of Banach manifolds
\begin{equation}\label{forgetw}
{\mc P}_{\Gamma} \to {\mc P}_{\Gamma_W, W'}.
\end{equation}
Indeed, this is 
a surjective map and essentially a projection, hence admits a smooth right inverse. 

\begin{definition}\label{stronglyregular}
  Let $\Gamma$ be a stable domain type. A perturbation
  $P_\Gamma \in {\mc P}_\Gamma$ is called {\em regular} if all
  uncrowded maps of type ${\bGamma}$ with underlying domain type
  $\Gamma$ are regular.  The perturbation $P_\Gamma$ is called
  {\em strongly regular} if for any subset
  $W \subset {\rm Vert}_\bullet(\Gamma)$ and for any uncrowded map
  type ${\bGamma}_W$ whose underlying domain type is $\Gamma_W$ and whose homology classes on surface components
  corresponding to vertices in $W'$ are zero, every map of type
  $\bGamma_W$ is regular.
\end{definition}

The main result of this section is the regularity of moduli spaces for uncrowded map types and the selection of a coherent collection of perturbation data. 

\begin{theorem}\label{regular} There exist a
  coherent collection of perturbation data $\ul{P} = (P_\Gamma)$ whose
  elements $P_\Gamma$ are all strongly regular. \end{theorem}

\begin{proof} 
The proof is an induction on the possible domain types according to the
  partial order (see Remark \ref{rem:partialorder}). First we introduce an equivalence relation among stable domain types.  We write $\Gamma \sim \Pi$ for the equivalence relation generated by \label{genby}
\[
\Pi \preceq \Gamma\ {\rm and}\ \rho|_{\Pi_\circ}: \Pi_\circ \to
\Gamma_\circ\ {\rm is\ an\ isomorphism};
\]
that is, if roughly they have isomorphic disk part. Here $\rho$ is the tree map induced from the partial order relation $\Pi \preceq \Gamma$ (see Remark \ref{rem:partialorder}). Let $[\Gamma]$
denote the equivalence class of $\Gamma$. The partial order relation
among domain types descends to an equivalence relation among their
equivalence classes.

The inductive step is the following.  Fix an equivalence class $[\Gamma]$. Suppose we have chosen strongly regular perturbation data $P_\Pi$ for all stable domain types $\Pi$ with $[\Pi] \prec [\Gamma]$ as well as domain types with strictly fewer boundary inputs or the same number of boundary inputs but strictly fewer interior leaves, such that the chosen collection is coherent in the sense of Definition \ref{defncoherent}. 

\begin{definition}  For each $\Gamma$ in this class $[\Gamma]$, 
denote by 
$
{\mc P}_\Gamma^* \subset {\mc P}_\Gamma
$
the closed Banach submanifold
consisting of perturbation data whose values over all lower strata $\ov{\cU}_\Pi$ with $\Pi \prec \Gamma$ and $[\Pi] \prec [\Gamma]$ agree with the prechosen one $P_\Pi$. 
\end{definition} 

We prove the following sublemma.

\vspace{0.1cm}

\noindent {\bf Sublemma.} There is a comeager subset ${\mc P}_\Gamma^{*, {\rm reg}} \subset {\mc P}_\Gamma^*$ whose elements are regular. 

\vspace{0.1cm}

\noindent {\it Proof of the sublemma.}
Let $\M^i_\Gamma$ be a subset of $\M_\Gamma$ over which the universal curve $\U_\Gamma$ is trivial, and $\U_\Gamma^i$ the restriction of $\U_\Gamma$ to $\M^i_\Gamma$. For each uncrowded map type ${\bGamma}$ with underlying domain type $\Gamma$, consider the universal moduli space
\[
\M^{i,\univ}_{\bGamma}({\mc P}_\Gamma^*) = \{ ([u: C \to X], P_\Gamma) | P_\Gamma \in {\mc P}_\Gamma^*,\ C \subset \U_\Gamma^i, \ [u] \in {\cM}_{\bGamma}(P_\Gamma) \}.
\]
of maps with domain in $\U_\Gamma^i$ together with a perturbation
datum $P_\Gamma$. By the Sard-Smale theorem, this sublemma can be proved
once we show the regularity of the local universal moduli
space. Suppose this is not the case, so that for some $(u, P_\Gamma)$
the linearization of the defining equation of the universal moduli is
not surjective. 
Then there exists a nonzero section $\eta$ in the $L^2$-orthogonal complement of the image of the linearized operator, or equivalently, in the kernel of the formal adjoint of the linearized operator. By elliptic regularity, $\eta$ is actually smooth. 
%
We will derive a contradiction by showing that each component of $\eta$ vanishes identically on that component.

\noindent {\it Step One. The form $\eta$ vanishes on any nonconstant sphere component  $u_v: S_v \to X$.}
\label{izero} By our assumption on the domain-dependent Hamiltonian perturbation $H_\Gamma$ (see Lemma \ref{lemma216} and Definition \ref{defn219}), $H_\Gamma$ vanishes on spherical components. Since the support of the perturbation $J_{\Gamma, v}$ has nonzero intersection with $S_v$, the restriction $\eta_v$ of $\eta$ to $S_v$ must vanish over a nonempty open set of $S_v$. The unique continuation principle for first order elliptic equation implies that $\eta_v$ vanishes identically. 

\noindent {\it Step Two. The form $\eta$ vanishes on any disk component $u_v: S_v \cong {\mb D} \to X$ corresponding to the vertex $v \in {\rm Vert}(\Gamma_\circ)$. }  Suppose that $\d_{H_v} u_v$ is not identically zero.
Then $\d_{H_v} u_v$ is nonzero over an nonempty open subset $U \subset S_v$ with $u_v(U)$ is disjoint from the neighborhood of $D$ where $J_v \equiv J_0$.  By orthogonality 
to images of deformation of $J_v$ over $U$,  $\eta_v$ is zero over $U$. Unique continuation principle shows that $\eta_v$ vanishes identically. Suppose $u_v$ is covariantly constant, i.e., $d_{H_v} u_v \equiv 0$. Then $u_v(S_v)$ has a nonempty intersection with the neighborhood of $|\LL|$ where one can perturb the Hamiltonian $H_v$. This again shows that $\eta_v$ vanishes on a nonempty open subset of $S_v$, 
and so vanishes identically on $S_v$ by the unique continuation principle. 

\noindent {\it Step Three. The form $\eta$ vanishes on each edge $T_e$ with positive length $\ell(e) > 0$.}  First, for an edge $T_e$ with positive or infinite length $\ell(e) \in \{ 0, \infty \}$, if the gradient segment
$u_e: T_e \to L_e$ is mapped into a positive dimensional target $L_e$,
then since the support of the perturbation $F_{e}$ is nonempty, it
also follows that the restriction $\eta_e$ to $T_e$ vanishes
identically. If $L_e$ is zero-dimensional, then by definition $\eta_e \equiv 0$.

\noindent {\it Step Four. The form $\eta$ vanishes on each ghost sphere component $S_v \cong \P^1, \d u_v = 0$. }  Let $u_v: S_v \to X$ be a constant map with value $x_v \in X$. For any domain-dependent almost complex structure, the linear map 
\[
\ov\partial_{J_v}: \Omega^0(S_v, T_{x_v} X) \to \Omega^{0,1}( {\mb P}^1, T_{x_v} X)
\]
is surjective with kernel equal to the finite dimensional subspace of
constant vector fields $\xi$ on $S_v$.  However, there might be constraints coming
from special points on this component.  For this we use the
uncrowdedness condition. Consider a maximal ghost sphere tree
$W \subset {\rm Vert}(\Gamma)$. By the above argument for ghost disk components, we may assume that $W$ contains only spherical vertices $v \in \Ver_{\black}(\Gamma)$. There is at most one special point $z_e$ on  the corresponding curve $C_W \subset C$ which is constrained by $(D, m)$; this condition puts a two-dimensional constraint on a constant vector field $\xi$ restricted to $C_W$. For any other interior marking $z_e$, one can use the deformation of the diffeomorphism $M_{\Gamma, e}$ to allow variations in $\xi$ while preserving the constraints.    For any node connecting $C_W$ to a nonconstant component, the
constraints are transversely cut out using the fact that the linearized operator is surjective on deformations on the adjacent nonconstant components even vanishing at the node.   Thus the form $\eta$ vanishes on components in
$W$. \hfill {\it End of the proof of the sublemma.}

Next we construct a comeager subset 
of strongly regular perturbations. For any subset $W \subset {\rm Vert}_\bullet (\Gamma)$, consider the domain type $\Gamma_W$ with a descent subset
$W' \subset {\rm Vert}_\bullet (\Gamma_W)$.  The trees $\Gamma_W$ and $\Gamma$ have isomorphic disk parts. Hence the prechosen perturbations provides a subset ${\mc P}_{\Gamma_W, W'}^* \subset {\mc P}_{\Gamma_W, W'}$ consisting
perturbations whose values are fixed precisely over strata $\Pi'$ with
$[\Pi'] \prec [\Gamma_W]$. Moreover, the forgetful map \eqref{forgetw}
restricts to a forgetful map
\[
\pi_W: {\mc P}_\Gamma^* \to {\mc P}_{\Gamma_W, W'}^*
\]
which has a right inverse given by pullback. By the same argument as
the proof of the above sublemma, there is a comeager subset
${\mc P}_{\Gamma_W, W'}^{*, {\rm reg}}$ consisting of perturbations
$P_{\Gamma_W}$ that regularize moduli spaces
${\cM}_{{\bGamma}_W}(P_{\Gamma_W})$ for map types
${\bGamma}_W$ that are ghost on surface components corresponding to
vertices in $W'$. Define
\[
{\mc P}_{\Gamma}^{*, {\rm s.reg}}:= \bigcap_{W \subset {\rm Vert}_\bullet(\Gamma)} \pi_W^{-1} ( {\mc P}_{\Gamma_W, W'}^{*, {\rm reg}}).
\]
This subset is still comeager  and all its elements are strongly regular. 

Lastly, we choose perturbations for each strata extending the prechosen perturbations on lower-dimensional strata. We define smaller comeager subsets ${\mc P}_\Gamma^{**}$ inductively as follows. If $\Gamma$ is a smallest element of the equivalence class $[\Gamma]$, then define ${\mc P}_{\Gamma}^{**}:= {\mc P}_{\Gamma}^{*, {\rm s.reg}}$. Suppose for a general $\Gamma$ in $[\Gamma]$ one has defined ${\mc P}_{\Gamma'}^{**}$ for all $\Gamma' \prec \Gamma$ with $[\Gamma'] = [\Gamma]$.  Define 
\[
{\mc P}_{\Gamma}^{**}:= {\mc P}_{\Gamma}^{*, {\rm s.reg}} \cap \bigcap_{\substack{\Gamma' \prec \Gamma \\ [\Gamma'] = [\Gamma]}} \pi_{\Gamma, \Gamma'}^{-1}( {\mc P}_{\Gamma'}^{**}).
\]
Here $\pi_{\Gamma, \Gamma'}: {\mc P}_\Gamma \to {\mc P}_{\Gamma'}$ is the map defined by restricting to boundary strata. Then we have defined ${\mc P}_{\Gamma}^{**}$ for all $\Gamma$ in this equivalence class. 
The equivalence class $[\Gamma]$ has a unique maximal element $\Gamma_{\rm max}$. Choose an arbitrary perturbation $P_{\Gamma_{\rm max}}\in {\mc P}_{\Gamma_{\rm max}}^{**}$. By boundary restriction this choice induces $P_\Gamma$ for all $\Gamma$ in this equivalence class. By construction, all these $P_\Gamma$ extend the existing perturbations on lower-dimensional strata.  By  induction, one obtains the claimed collection $\ul{P}$. 
\end{proof}

\begin{remark}\label{rem228}
We could use more restricted types of Hamiltonian perturbations. Indeed, we only need to regularize the potential constant disks at intersections of two or more different Lagrangian submanifolds. Therefore, we only need to turn on the Hamiltonian perturbation in a small open neighborhood of such intersections. In particular, if there is an embedded Lagrangian submanifold $L$ supporting different branes in $\LL$ and $L$ does not intersect other branes in $\LL$, then without using Hamiltonian perturbations the constant disks at points in $L$ are already regular for any almost complex structure. This condition
holds in particular in the blowup case where the exceptional collection of Lagrangian branes are supported on the same Lagrangian torus which does not intersect the ``old'' branes. 
\end{remark}

\subsection{Boundary strata}\label{subsection27}

In this section,  we describe the boundary of the moduli spaces we use, which are those of expected dimension at most one.      Fix a coherent collection of strongly regular perturbation data $\ul{P} = (P_\Gamma)$ and abbreviate all moduli spaces ${\cM}_{\bGamma}(P_\Gamma)$ by ${\cM}_{\bGamma}$.

\begin{definition}\label{essentialtype}
\begin{enumerate}

\item A map type ${\bGamma}$ is called {\it essential} if it has no broken edges
$T_e = T_{e_1} \cup T_{e_2}$, no edges $T_e$ of length zero or infinity except for 
the leaves and root, no spherical components $S_v, v \in \Ver_\black(\Gamma)$, if all interior constraints are either $(D, 1)$ or $\bb$ and for each disk vertex $v \in {\rm Vert}(\Gamma_\circ)$, the number of interior leaves labelled by $(D, 1)$ is equal to $k \omega ( \beta_v)$ where $k$ is the degree of the Donaldson hypersurface.

\item Given asymptotic data $\ul{x} = (x_0, x_1, \ldots, x_d)$ (see Definition \ref{maptype}) and $i = 0, 1$, let 
\[
{\mc M}_{d, 1}(\uds x)_i = {\mc M}_{d, 1}(x_0, x_1, \ldots, x_d)_i
\]
be the union of moduli spaces ${\cM}_{\bGamma}$ for essential map types of expected dimension $i$ whose asymptotic data is $\ul{x}$. 
\end{enumerate}
\end{definition}

\begin{remark}
As in \cite{Wehrheim_Woodward_orientation} the determinant lines of the linearized operators become equipped with orientations induced by relative spin structures. In particular, if all strata of ${\cM}(\ul{x})_0$ are regular then there is a map
\[
\eps: {\mc M}_{d, 1}(\uds x)_0 \to \{\pm 1\}.
\]
\end{remark} 

The following lemma classifies types of topological boundaries of one-dimensional moduli spaces.

\begin{lemma} \label{refinedcompactness} Suppose  $\ul P = (P_\Gamma)$ is a 
coherent and regular collection of perturbations. For an essential map type $\bGamma$ of expected dimension zero, the moduli space ${\cM}_{\bGamma}(P_\Gamma)$ is compact. For a map type ${\bGamma}$ of expected dimension one, the boundary of the compactified one-dimensional moduli $\ov{\cM}_\bGamma(P_\Gamma)$ is the disjoint union of moduli spaces ${\cM}_\bPi(P_\Pi)$ where $\bPi$ is a map type related to $\Gamma$ by exactly one of the following operations:
\begin{enumerate}
    \item collapsing an edge $e \in \Edge(\Pi)$ of length zero;
    
    \item shrinking a finite edge 
    $e \in \Edge(\Pi)$ to length zero or breaking into two semi-infinite edges;
    
    \item in the case when the output edge $e_0 \in \Edge(\Gamma)$ is not weighted, setting  the weight $\rho(e)$ of exactly one weighted input $e \in \Edge(\Gamma)$ to be zero or one; or
    
    \item in the case when the output $e_0 \in \Edge(\Gamma)$ is weighted, changing the weight $\rho(e)$ of exactly one weighted input  $e \in \Edge(\Gamma)$ so that it becomes one. 
\end{enumerate}
\end{lemma}

\begin{proof}[Sketch of proof] It suffices to check sequential compactness.
Let $(C_\nu, u_\nu)$ be a sequence of treed holomorphic disks representing a sequence of points in ${\cM}_{\bGamma}(P_\Gamma)$. By a combination of Gromov compactness for  (pseudo)holomorphic disks and compactness of the moduli space of gradient trajectories, there is a subsequence (still indexed by $\nu$) that converges to a limiting treed holomorphic disk $(C_\infty, u_\infty)$ of certain map type $\bPi$.
We first claim that the domain type $\Pi$ is stable. Suppose on the contrary it is not the case. Then, there is either a domain-unstable disk or a domain-unstable sphere component 
$u_{\infty,v}: 
S_v \to X, v \in \Ver(\bPi)$.
By the stability condition, $u_v$ must be a nonconstant map. Moreover, $u_v$ is pseudoholomorphic with respect to a constant tamed almost complex structure $J  = J_{\Pi^{\rm st}} (z)$ on $X$, where $z \in C_\infty^{\on{st}}$ is the image of $S_v$ in the stabilization. On the other hand, there exists a unique maximal subtree $\Gamma_{(v)} \subset \Gamma$ which contains no boundary edge with positive edge, such that the bubble $u_v$ comes from energy blow up on components belonging to $\Gamma_{(v)}$. Therefore, by the conditions of the perturbation data (see (a) of Definition \ref{defn219}), one has
\[
J \in {\mc J}_{\rm tame}^{E(\Gamma_{(v)})} (X, \omega).
\]
Since the convergence preserves the total energy, the disk or the
sphere has energy at most $E(\Gamma_{(v)})$.  By Lemma \ref{lemma218}, the $J$-holomorphic
map $u_v: S_v \to X$ is not contained in $D$ and
must intersect $D$ in at \label{atat} least three points resp. one point in the sphere resp. disk case.  Topological invariance of intersection numbers, as in Cieliebak-Mohnke \cite{Cieliebak_Mohnke}, implies that for $\nu$ sufficiently large, $u_v$
intersects with $D$ in at least three resp. one nearby point. Since the
type ${\bGamma}$ is essential, all these intersection points are
marked points labelled by $(D, 1)$. The convergence implies
that the intersection points of $u_\infty$ with $D$ must all be
marked points, contradicting the assumption that the domain of the
domain $S_v$ is unstable. Therefore, the domain type $\Pi$ is stable.  Since $\Pi \preceq \Gamma$, the perturbation
datum $P_\Gamma$ induces by restriction a perturbation datum
$P_\Pi$ so that $[(C_\infty, u_\infty)] \in {\cM}_{{\bPi}}(P_\Pi).$

Next we show that type of the limiting map constructed in the previous
paragraph is uncrowded. Suppose this is not the case, then let
$W \subset {\rm Vert}_{\circ}(\Pi)$ be the (nonempty) set of ghost
sphere components. By the locality property, the perturbation data
$P_\Pi$ descends to a perturbation $P_{\Pi_W}$ which is equal to $J_0$
over $W'$. The limiting configuration $[(C_\infty, u_\infty)]$ then
descends to an element
\[ [(C', u')] \in {\cM}_{{\bPi}_W}(P_{\Gamma_W}). \]
Since $\bPi_W$ is uncrowded, the above moduli space is regular and nonempty. However, similar to the argument of \cite{Cieliebak_Mohnke}, the reduction drops the expected dimension by at least two. This contradiction shows that $\bPi$ must be uncrowded.

Finally, we claim $\bPi$ has no sphere components.  Indeed, each sphere component will drop the dimension of the domain moduli space by two and $\Gamma$ has no sphere components. It follows from the dimension formula for ${\cM}_\Gamma$ that when ${\rm dim} {\cM}_{\bGamma} = 0$, $\Pi$ must be identical to $\Gamma$ and hence ${\cM}_{\bGamma}$ is compact. When ${\rm dim} \ {\cM}_{\bGamma} = 1$, the only possibly types of $\Pi$ are described in the above list. 
\end{proof}

Moreover, we distinguish the boundary strata as either {\em true} or {\em fake} boundary components. The true boundaries are those corresponding to edge breaking and weight changing to zero or one while the fake boundaries are those corresponding to disk bubbling or edges shrinking to zero. For  one-dimensional moduli strata ${\cM}_{d, 1} (\ul{x})_1$, define  
\[
\ov{\cM}_{d, 1} (\ul{x})_1 = \bigcup_{{\bGamma}}\ov{\cM}_{\bGamma}
\]
to be the union of all compactified moduli space of expected dimension one while identifying fake boundaries.  Standard gluing constructions (gluing disks or gradient lines) show that $\ov{\cM}_{d, 1} (\ul{x})_1$ a topological 1-manifold with boundary and its cutoff at any 
energy 
level (indexed by the number of interior markings) is compact. The boundaries are strata corresponding to edge breaking and weight changing to zero or one.

\section{Fukaya categories and quantum cohomology}\label{section3}

In this section,  we introduce bulk-deformed Fukaya categories
associated to a given 
rational
finite collection of
Lagrangian immersions. Given the regularization of the moduli spaces
in the Section \ref{regularize} above, the material in this section is
fairly straightforward adaptation of that in Fukaya-Oh-Ohta-Ono
\cite{fooo}.

\subsection{Composition maps} 

In this section,  we apply the transversality results above to construct Lagrangian Floer theory. In the Morse model the generators of the Lagrangian Floer cochains are critical points of a Morse function on the Lagrangian intersection, assumed clean. Recall for each pair $(L, L') \in \LL^2$, the intersection $L\cap L'$ is a smooth manifold. We have chosen a Morse function
\[
F_{L, L'}: L \cap L' \to {\mb R}.
\]
Consider the set of all {\it Lagrangian branes} supported on $\LL$, i.e., 
\[
\widehat {\mc L}:= \big\{ \LB \to L \ |\  L \in {\mc L} \big\}
\]
For each pair of branes $(\LB, \LB')$, denote the set of critical points 
on the intersection
\[
{\mc I}( \LB, \LB' )=  \left\{ \begin{array}{cc} {\rm crit} ( F_{L, L'}),\ &\ \LB \neq \LB',\\
{\rm crit} ( F_{L, L'} ) \cup {\mc I}_{\LB}^{hu},\ &\ \LB = \LB', \end{array} \right.                                   
\]
where ${\mc I}_{\LB}^{hu}$ contains $1_{\LB}^\greyt$ and $1_{\LB}^\circt$ for all connected component $c$ of $L$ (see \eqref{expand_critical_point_1} and \eqref{expand_critical_point_2}). The ``Morse indices'' of the extra generators are defined by
\begin{align*}
&\ {\rm index}_{\rm Morse}(1_{\LB}^\greyt) = {\rm dim}(L \cap L') + 1,\ &\ {\rm index}_{\rm Morse}(1_{\LB}^\circt) = {\rm dim}(L \cap L').
\end{align*}

\subsubsection{Gradings}

In order to obtain graded Floer cohomology groups a grading on the set of generators is defined as follows. Let $N \in \Z$ be an even integer and let 
\begin{equation} \label{mcover}
\pi^N: \Lag^N(X) \to \Lag(X)
\end{equation}
be an $N$-fold Maslov cover of the bundle of Lagrangian subspaces as in Seidel \cite{Seidel_2000}; we always assume that the induced $2$-fold cover $\Lag^2(X) \to \Lag(X)$ is the bundle
of oriented Lagrangian subspaces. A {\em $\Z_N$-grading} of $L \in \LL$ is a lift \label{zngrad} 
\begin{equation} \label{zngradeq}
\xymatrix{ & \Lag^N(X) \ar[d] \\
           L \ar[r] \ar[ru]^{\phi_L^N} & \Lag(X) }\end{equation}
where the horizontal arrow is the map assigning to each $x \in L$ its tangent space. Given such a grading, there is a natural $\Z_N$-valued map
\[ \cI( \LB_0, \LB_1) \to \Z_N, \quad x \mapsto |x| \]
{defined as follows. Recall that for each pair of paths $\lambda_0, \lambda_1: [a, b] \to \Lag( T_x X)$, there is a Maslov index 
\begin{equation} \label{mindex}
\mu(\lambda_0, \lambda_1) \in \frac{1}{2} {\mb Z}
\end{equation}
which is an integer if and only if ${\rm dim} (\lambda_0(a) \cap \lambda_1(a)) \equiv {\rm dim} (\lambda_0(b) \cap \lambda_1(b))\ {\rm mod}\ 2$.  For any $x \in {\mc I}(\LB_0, \LB_1)$, choose two paths 
\[
\tilde \lambda_0, \tilde \lambda_1: [a, b] \to \Lag^N(T_x X)
\]
such that $\tilde \lambda_0(a) = \tilde \lambda_1(a)$ and 
\begin{align*}
&\ \tilde \lambda_0(b) = \phi_{L_0}^N( T_x L_0),\ &\  \tilde \lambda_1(b) = \phi_{L_1}^N( T_x L_1)
\end{align*}
with notation from \eqref{zngradeq}. 
Define 
\[
|x|:= \frac{n}{2} - \mu( \pi^N (\tilde\lambda_0), \pi^N(\lambda_1)) + \frac{1}{2} {\rm dim} (L_0 \cap L_1) - {\rm index}_{\rm Morse}(x) \in {\mb Z}/N{\mb Z}.
\]
with notation from \eqref{mindex} and \eqref{mcover}. 
For example, when $L_0 = L_1$ and $x$ is an ordinary critical point, then $|x|$ is the dimension of the stable manifold of $x$ under the negative gradient flow.

\subsubsection{Weighted counting}

The moduli space of holomorphic disks is non-compact, and to remedy
this the structure maps of the Fukaya algebra are defined over Novikov
rings in a formal variable. The Floer cochain space is a free module
over generators corresponding to Morse critical points 
and the two additional generators from \eqref{expand_critical_point_2} necessary
to achieve strict units. Given two branes $\LB, \LB'$ let
\[CF^\bullet (\LB, \LB' ) = \bigoplus_{ x \in \cI (\LB, \LB' ) } 
\Hom(\LB_x \otimes_{\Lambda^\times} \Lambda,\LB_x' \otimes_{\Lambda^\times} \Lambda) \]
the sum of space of linear maps between the fibers
of the local systems.  The space of Floer cochains is naturally $\Z_N$-graded by 
\[ CF^\bullet (\LB, \LB') = \bigoplus_{k \in \Z_N} CF^k (\LB, \LB'), \quad CF^k (\LB, \LB') = \bigoplus_{x \in \cI^k(\LB, \LB')} \Lambda x.\]
The $q$-valuation on $\Lambda$ extends naturally to $CF^\bullet ( \LB, \LB')$:
\[ \val_q: CF^\bullet ( \LB, \LB' ) - \{ 0 \} \to \R, \quad
\sum_{x \in
  \cI( \LB, \LB' )} c(x) x \mapsto \min(\val_q(c(x))) .\]

The local systems contribute to the coefficients of the composition maps in the expected way. For any holomorphic treed disk $u: C \to X$ with boundary in some collection $\ul{\LB} = (\LB_0, \ldots, \LB_d)$ and mapping the corners to $x_0,\ldots, x_d$, generators
\[ a_i \in 
\Hom(\LB_{i-1,x_i} \otimes_{\Lambda^\times} \Lambda,\LB_{i,x_i} \otimes_{\Lambda^\times} \Lambda) \]
denote by 
\begin{equation} \label{yu}
y(u)  =   T_{d-1} a_{d-1} \ldots a_1 T_0 a_0 T_0 \in 
\Hom(\LB_{0,x_0} \otimes_{\Lambda^\times} \Lambda,\LB_{d,x_0} \otimes_{\Lambda^\times} \Lambda)  
\end{equation} 
the product of parallel transports $T_i$ along the restrictions $u_v | (\partial C)_i$.   For more complicated treed disks the holonomy 
is defined recursively starting with the components furthest away from the root.

\begin{definition}\label{defn_composition} 
Fix a coherent collection of strongly regular perturbation $\ul{P} = (P_\Gamma)_\Gamma$ (whose existence is provided by Theorem \ref{regular}). For each $d \geq 0$ define {\em higher composition maps}
\[
m_d: CF^\bullet ( \LB_{d-1}, \LB_d) \otimes \ldots \otimes CF^\bullet  ( \LB_0, \LB_1) \to CF^{\bullet} ( \LB_0, \LB_d )[2-d]
\]
on generators by the weighted count of treed disks by
\begin{equation}\label{composition}
m_d(a_d, \ldots, a_1)= \sum_{x_0 \in {\mc I}(\LB_0, \LB_d)} \sum_{u \in {\cM}_{d, 1} ( x_0, \ldots, x_d)_0}  (-1)^{\heartsuit} \wt(u)  
\end{equation}
where the weightings
\begin{equation} \label{weightings} 
\wt(u) := c(u,\bb)p(u) y(u) q^{A(u)} o(u) d(u)^{-1}\end{equation} 
are defined as follows:
\begin{itemize}
\item if the domain type of $u$ is $\Gamma$, then 
\begin{equation} \label{du}
 d(u):= ( k A(u) )! = (\# {\rm Leaf}_\bullet(\Gamma))!
\end{equation} 
which is the number of permutations of interior markings $z_e$ mapped into $D$;
\item the coefficient $c(u,\bb)$ is a product of coefficients $c_i$ of the
  bulk deformation, with product taken over interior leaves mapping to $\bb$,
\item the coefficient $p(u)$ is the coefficient $p_i$ of the multivalued perturbation $P_\Gamma$ of \eqref{mv} evaluated at the branch containing $u$,
   and
\item $y(u)$ is the holonomy of the local system as defined in 
\eqref{yu}; 
  \item the exponent $A(u)$ is the symplectic area of the map $u$.
\item the sign $o(u)$ arises from the choice of coherent
  orientations and the overall sign $\heartsuit$ is given by
\begin{equation} \label{heartsuit}  \heartsuit = {\sum_{i=1}^d i|x_i|} .\end{equation}
\end{itemize} 
\end{definition} 

We first define a curved \ainfty category with infinitely many objects 
supported on the given Lagrangian submanifolds.  Later, we will consider a modified definition of the objects so that the \ainfty category is flat.

\begin{theorem} 
For any strongly regular coherent perturbation system $\ul{P} = (P_\Gamma)$ the maps $(m_d)_{d \ge 0}$  define a (possibly curved) \ainfty category $\Fuk_{\LL}^\sim (X, \bb)$ with 
\begin{enumerate}

\item the set of objects given by ${\rm Ob} \left( \Fuk_{\LL}^\sim (X, \bb) \right):= \widehat{\mc L}$,

\item the set of morphisms from $\LB$ to $\LB'$ given by ${\rm Hom}(\LB, \LB'):= CF^\bullet( \LB, \LB')$,

\item the composition maps $(m_d)_{d\geq 0}$  defined by Definition \ref{defn_composition}, and

\item for each object $\LB$ the strict unit equal to $1_{\LB}^\circt\in CF^0 (\LB, \LB)$.
\end{enumerate}
\end{theorem}

\begin{proof} 
We must show that the composition maps $m_d$ satisfy the {\it $A_\infty$-associativity equations} \eqref{ainftyassoc}. Up to
  sign the relation \eqref{ainftyassoc} follows from the description
  of the boundary in Lemma \ref{refinedcompactness} of the
  one-dimensional components. The strict unit axiom follows in the
  same way as in
  \cite{flips}, by noting that by definition for any edge $e \in \Edge^{\circt}(\Gamma)$ the perturbation data is pulled back under the morphism of universal moduli spaces forgetting $e$ and stabilizing (whenever such a map exists).
  \end{proof}

\begin{remark} \label{indep} 
The \ainfty homotopy type of $\Fuk_{\LL}^\sim (X,\bb)$ (as a curved \ainfty algebra with curvature with positive $q$-valuation over the Novikov ring $\Lambda_{\ge 0}$) is  independent of the choice of almost complex structures, perturbations,\footnote{We do not use Hamiltonian perturbations of Lagrangians in this paper, so the Fukaya category we use here is defined over the Novikov ring $\Lambda_{ \geq 0}$ as opposed to the Novikov field $\Lambda$.}
stabilizing divisors, and depend only on the isotopy class of bulk deformation.  The argument uses a moduli space of {\em quilted disks} with seams labelled by the diagonal, as in \cite[Section 5.5]{flips}. Suppose first that the Donaldson hypersurface is fixed. Let $J_{\Gamma,t}$ be an isotopy of almost complex structures,  and $\bb_t $ be an isotopy of 
cycles $\bb_0$ to $\bb_1$. Requiring that the markings map to $\bb_t$, the treed disks $u| S_v$ are $J_{\Gamma,t}$-holomorphic
on components at distance $d(v) =  1/(1-t) - 1/t$ as in Equation \eqref{dv}
produces a moduli space 
fibered over the multiplihedron as in \cite[Section 5.5]{flips} producing a homotopy equivalence given by a collection of maps 
\[ \phi_d: CF^\bullet ( \LB_{d-1}, \LB_d)_0 \otimes \ldots  \otimes CF^\bullet (\LB_0, \LB_1)_0   \to   CF^\bullet ( \LB_0, \LB_d )_1[1-d] \] 
as in Seidel \cite[Section 1d]{Seidelbook} where the groups $CF^\bullet (\LB_k,\LB_{k+1})_t$ are the morphism spaces for the categories defined using the data for the given value $t$ in the family. The independence from the choice of Donaldson hypersurface is shown in the appendix. We do not address the question of invariance under Hamiltonian isotopy of Lagrangians and the relation to the Fukaya categories defined by other methods, and dependence only on the cobordism class of the cycle $\bb$, and so the homology class $[\bb]$.
\end{remark}

\subsubsection{Maurer-Cartan equation, potential function, and Floer cohomology}

Floer cohomology is defined for projective solutions to the {\it Maurer-Cartan equation}.  For each $\LB \in {\rm Ob}( \Fuk_{\LL}^\sim(X, {\mf b}))$, the element
\[
m_0 (1) \in CF^\bullet ( \LB, \LB )
\]
 is the {\em curvature} of the Fukaya algebra $CF^\bullet ( \LB, \LB )$. Its $q$-valuation $\val_q(m_0(1))$ is positive because the bulk deformation $\bb$ and the Lagrangian submanifolds do not intersect. The Fukaya algebra $CF^\bullet ( \LB, \LB)$ is called {\em flat} if $m_0(1)$ vanishes and {\em projectively flat} if $m_0(1)$ is a multiple of the identity $1_\LB^\circt$. \label{flat}
\footnote{With these definitions, the Fukaya algebra is rarely projectively flat, because any disk with positive energy has an unforgettable output, while the strict unit $1_L^\circt$ only labels a forgettable semi-infinite edge.  In such cases one needs a  nontrivial weakly bounding cochain.} Consider the sub-space of $CF^\bullet ( \LB, \LB)$ consisting of elements with positive $q$-valuation with notation from \eqref{novikov}:
\[
CF^\bullet ( \LB, \LB)_+ = \bigoplus_{x \in \cI (\LB, \LB) } \Lambda_{>0} x .
\]
Define the {\em Maurer-Cartan map}
\[
\mu : CF^{\rm odd} ( \LB, \LB)_+ \to CF^\bullet ( \LB, \LB ), \quad b \mapsto m_0(1) + m_1(b) + m_2(b,b) + \ldots .
\]
Let $MC (\LB )$ denote the space of {\em weak solutions to the Maurer-Cartan space}
\[
MC( \LB ):= \{ b \in CF^{\on{odd}} ( \LB, \LB ) \ | \ \mu (b) = W(b) 1_\LB^\circt, \quad W(b) \in \Lambda \}.
\]
The value $W(b)$ for $b \in MC (\LB )$ defines the {\em disk potential}
\[
W: MC (\LB) \to \Lambda .
\]

\begin{definition}[Weakly unobstructed branes and Floer cohomology]\hfill
\begin{enumerate}
\item A {\it weakly unobstructed brane} is a triple $\WB = (\LB, b)$ where $\LB \in \widehat{\mc L}$ and $b \in MC(\LB)$. 

\item The set of all weakly unobstructed branes supported on $\LL$ is denoted by 
\[
MC( \LL ):= \Big\{ \WB = (\LB, b)\ |\ \LB \in \widehat{\mc L},\ b \in MC(\LB) \Big\}.
\]

\item The {\it Floer cohomology} of a weakly unobstructed brane $\WB$ is
\[
HF^\bullet ( \WB, \WB):= {\rm ker} (m_1^b)/ {\rm im}(m_1^b)
\]
where $m_1^b$ is defined by 
\[
m_1^b: CF^\bullet( \LB, \LB ) \to CF^\bullet ( \LB, \LB ),\ m_1^b(a) = \sum_{k_1, k_2\geq 0} m_1( \underbrace{b, \ldots, b}_{k_1}, a, \underbrace{b, \ldots, b}_{k_2}).
\]
\end{enumerate}
\end{definition}

\subsubsection{Flat $A_\infty$ category and spectral decomposition}

Given a curved $A_\infty$ category, flat $A_\infty$ categories are obtained by restricting to particular values of the curvature.

\begin{definition}{\rm(Flat $A_\infty$ category)}
Let ${\mc F}^\sim$ be a strictly unital curved $A_\infty$ category over $\Lambda$. The  flat $A_\infty$ category ${\mc F}$ associated to ${\mc F}^\sim$ is defined as the disjoint union
\[
{\mc F}^\flat:=  \bigsqcup_{w\in \Lambda} {\mc F}_w
\]
where each component ${\mc F}_w$ has the set of objects
\[
{\rm Ob} ({\mc F}_w):= \left\{ \WB = (\LB, b)\ |\ \LB \in {\rm Ob}({\mc F}^\sim),\ b \in MC(\LB),\ W_\LB (b) = w \right\},
\]
the set of morphisms
\[
{\rm Hom} ( \WB, \WB' ):= {\rm Hom} (\LB, \LB'),
\]
and the higher compositions for $d \geq 1$
\[
m_d(a_d,\ldots,a_1) = \sum_{k_0,\ldots, k_d \ge 0} m_{d + k_0 + \ldots + k_d}(\underbrace{b_d,\ldots, b_d}_{k_d} , a_d,
  \ldots,  \underbrace{b_1,\ldots, b_1}_{k_1}, a_1,\underbrace{b_0,\ldots, b_0}_{k_0}).
\]
Define $m_0 = 0$. If ${\rm Ob}({\mc F}_w) \neq \emptyset$, then we say ${\mc F}_w$ is an {\it eigen-subcategory} of ${\mc F}$.
\end{definition}

\begin{proposition} 
For any $w \in \Lambda$, the category ${\mc F}_w$ is a flat strictly unital \ainfty category as defined in \eqref{flatdef}.
\end{proposition} 

\begin{proof} 
The flatness condition $m_0(1) = 0$ holds by definition. The $A_\infty$ relation
  \begin{multline} 0 = \sum m_{d -i + 1} (\underbrace{b_d,\ldots,
      b_d }_{i_d}  ,    {a}_{d} ,
\ldots, , b_{j+i} \\
    m_i(  
\underbrace{b_{j+i}, \ldots,
    b_{j+i}}_{k_{j+i}},
    {a}_{j+i},\ldots,
    {a}_{j+1},
    \underbrace{b_{j+1},\ldots, b_{j+1}}_{k_j}) , \\ \ldots, b_{j+1}, {a}_{j+1},
    \ldots,         \underbrace{b_1,\ldots, b_1}_{i_1}, 
 {a}_1, \underbrace{b_0,\ldots, b_0}_{i_0}
 ) \end{multline}
  follows from the $A_\infty$ relation for ${\mc F}^\sim$, the strict identity relation, and the inclusion
\[ \sum_{i \ge 0} m_i(\underbrace{b_j,\ldots, b_j}_i) \in
\on{span}(1_\LB^\circt ) \quad \forall j = 0,\ldots, d. \qedhere \]
\end{proof} 

In the case of the bulk deformed curved Fukaya category $\Fuk_{\mc L}^\sim (X, {\mf b})$, we have the associated flat category.

\begin{definition} 
Define a flat $A_\infty$ category
\[
\Fuk_\LL^\flat(X,\bb):= \bigsqcup_{w \in \Lambda}   \Fuk_\LL(X,\bb)_w 
\]
whose set of objects is the disjoint union of all objects in the eigen-subcategories, and the space of morphisms between objects in different eigen-subcategories is the zero vector space. More generally, given a subset ${\mf L} \subset MC(\LL)$, denote by 
\[ \Fuk_{\mf L}^\flat(X, \bb) \]
be the full $A_\infty$-subcategory with the set of objects equal to ${\mf L}$.
\end{definition}

\subsection{Hochschild (co)homology}

Hochschild homology of a category is the homology of a contraction operator on the space of all composable sequences of morphisms. In the case of curved \ainfty categories, there seems to be no good definition at the moment, although we understand from Abouzaid that he and Varolgunes and Groman are developing such a theory. For our purposes it suffices to use the Hochschild theory for flat categories in combination with a spectral decomposition. We first recall the definition from, for example, \cite[Section 2]{Seidel_2008}.

\begin{definition}\label{bimodule}
Let $\F$ be a flat ${\mb Z}_N$-graded $A_\infty$-category.
\begin{enumerate} 

\item As in Seidel \cite[Section 2]{Seidel_2008} an {\em \ainfty bimodule} $\M$ over $(\F,\F)$ consists of
\begin{enumerate} 
\item a map assigning to any pair of objects $\WB, \WB'$ a graded vector space $ \M (\WB, \WB')$ and
\item multiplication maps for integers $d, d' \geq 0$ and objects  $\WB_0, \ldots, \WB_d$, $\WB_0', \ldots, \WB_{d'}'$ of $\cF$
\begin{multline} 
m_{d, d'} : 
 \Hom ( \WB_d, \WB_{d-1} ) \otimes \cdots \otimes \Hom ( \WB_1, \WB_0) 
 \otimes
 \M (\WB_0, \WB_0') \otimes  \\
 \Hom (\WB_0', \WB_1') \otimes \cdots \otimes \Hom ( \WB_{d'-1}', \WB_{d'}' ) \to \M (\WB_d, \WB_{d'}')
\end{multline}  
\end{enumerate}
satisfying the \ainfty bimodule axiom, see \cite[Section 2]{Seidel_2008}.
\item Given an \ainfty bimodule $\M$ over $(\F,\F)$, the space of {\em
    Hochschild chains} with values in $\M$ is the direct sum
\begin{multline} 
CC_\bullet(\F,\M) = \bigoplus_{\WB_0,\ldots, \WB_d \in  \on{Ob}(\cF)}     \Hom (\WB_{d-1}, \WB_d)\otimes 
 \\
  \ldots  \otimes \Hom (\WB_{i+1}, \WB_{i+2}) \otimes   \M ( \WB_i, \WB_{i+1}) \otimes \Hom( \WB_{i-1}, \WB_i) 
 \otimes \ldots \\
 \otimes \Hom( \WB_1, \WB_2)  \otimes \Hom ( \WB_0, \WB_1)  \otimes \Hom ( \WB_d, \WB_0).
\end{multline}
In particular $\F$ is itself a bimodule over $(\F,\F)$, called the
{\em diagonal bimodule}. The Hochschild chain group in this case is denoted by $CC_\bullet(\cF)$. A generator of the summand
\[
\Hom (\WB_{d-1}, \WB_d)\otimes \cdots  \otimes \Hom ( \WB_0, \WB_1)  \otimes \Hom ( \WB_d, \WB_0)
\]
is typically denoted by ${a}_d \otimes \cdots \otimes {a}_0$.

\item The {\it Hochschild differential} on $CC_\bullet( \cF)$ is defined by summing over all possible contractions:
\begin{multline} \label{hh} 
\delta_{CC}: {a}_d \otimes \ldots \otimes {a}_0 \mapsto \sum_{i+j \leq d} (-1)^{\mathsection} 
  m_{d- j-1}({a}_{i-1} \otimes \ldots \otimes {a}_{i+j+1}) \otimes {a}_{i+j} \otimes \ldots \otimes {a}_i \\ 
+ \sum_{i+j \leq d} (-1)^{{\maltese_0^{i-1}}} {a}_d \otimes \ldots \otimes {a}_{i+j+1} \otimes 
  m_{j+1}( a_{i+j} \otimes \ldots \otimes {a}_{i}) \otimes {a}_{i-1}
  \otimes \ldots \otimes {a}_0 \end{multline}
where $\maltese_k^l$ is given by \eqref{maltese} and 
\[
{\mathsection := \maltese_0^{i-1} ( 1 + \maltese_i^d) + \maltese_i^{i+j}.} 
\]
For $\F$ flat as above denote by
\[ HH_\bullet(\cF) : = \frac{\on{ker}(\delta_{CC})}{\on{im}(\delta_{CC} )} \]
the homology of $\delta_{CC}$.  

\item For a curved \ainfty category $\F^\sim$ let 
\[
{\mc F}^\flat:= \bigsqcup_{w \in \Lambda} {\mc F}_w
\]
the flat $A_\infty$ category obtained via the spectral decomposition.  Denote by %
\begin{equation}\label{sdecomp}
CC_\bullet (\F^\sim): = CC_\bullet( {\mc F}^\flat ) \cong \bigoplus_{w\in \Lambda} CC_\bullet ({\mc F}_w ) \end{equation}
the direct sum over possible values $w$ of the potential of the
Hochschild homologies of the flat categories obtained by fixing the
value of the curvature.
\end{enumerate}
\end{definition}

The Hochschild cohomology is defined for a flat \ainfty category as
follows.  A {\em Hochschild cochain} $\tau$ on a flat \ainfty category $\F$
valued in $\F$ is a collection
\begin{equation}
  \label{eq:hoc-cochain}
  \tau:= (\tau_{\WB, d} )_{\WB \in {\rm Ob}({\mc F}^\flat),d \geq 0}  
\end{equation}
where $\tau_{\WB, d}$ is a linear map
\[
\tau_{\WB, d}: \bigoplus_{\WB_1, \ldots, \WB_d} \Hom ( \WB_{d-1}, \WB_d )  \otimes \ldots \otimes
  \Hom ( \WB, \WB_1) \to  \Hom ( \WB, \WB_d ). 
\]
The space $CC^*(\F,\F)$ is an \ainfty algebra whose composition maps
are
%
\begin{multline}\label{eq:ccm1} 
  (m^1_{CC^*} \tau)^d (a_d,\ldots,a_1) = \sum_{i,j} (-1)^\dagger
  m^{d-j+1}_{\F}( a_{d},\ldots, a_{i+j+1},\tau^{j}(a_{i+j},\\ \ldots,
  a_{i+1}), a_i,\ldots,a_{1}, ) - \sum_{i,j} (-1)^{\clubsuit}
  \tau^{d - j + 1}(a_d,\ldots,a_{i+j+1}, \\ m^j_{\F}(a_{i+j},\ldots,
  a_{i+1}),a_{i},\ldots,a_1),
\end{multline}
where
\[\dagger = (|\tau|-1)( |a_1| + \ldots + |a_{i_1 + \ldots +
  i_{k-1}}| - i_1 - \ldots - i_{k-1}) , \quad \clubsuit:=i + \sum_{j=1}^i |a_j| + |\tau| - 1,\]
and for $e \geq 2$
\begin{multline}\label{eq:ccmd}
  (m^e_{CC^*}(\tau_e,\dots,\tau_1))^d(a_d,\dots,a_1):=\sum_{\substack{i_1,\dots,i_e
      \\ j_1,\dots, j_e}}(-1)^\circ 
m_\F^{d - \sum j_k}(a_d,\ldots,
a_{i_e+j_e+1},
\tau_e^{j_e}(\dots),
 a_{i_e}, 
\\ \ldots,
a_{i_1+j_1+1}, 
\tau_1^{j_1}(a_{i_1+j_1}, \dots,a_{i_1+1}), 
a_{i_1},\dots,a_{1}, 
)
\end{multline}
where
\[\circ := \sum_{j=1}^e \sum_{k=1}^{i_j}(|\tau_j|-1)(|a_k|-1).\]
The boundary operator $m^1_{CC^*} $ squares to zero, and we denote by
$HH^\bullet(\F) := HH^\bullet(\F,\F)$ the Hochschild cohomology of $\F$ valued in $\F$.

\begin{remark} \label{hhid}
The Hochschild cohomology is equipped with a natural identity. 
Suppose
the $A_\infty$ category is strictly unital. Consider the cochain $1_{\mc F} \in CC^0({\mc F}, {\mc F})$ defined by
\[
1_{{\mc F},d} ( {a}_d \otimes \cdots \otimes {a}_1) = \left\{ \begin{array}{cc} 0, &\ d > 0,\\
1_{\WB}, &\  d= 0.\end{array} \right.
\]
Then $m^1_{CC}(1_{\mc F}) = 0$ \label{inconsistent} and the cohomology
class of $1_{\mc F}$ is the unit of the Hochschild cohomology
ring. 
\end{remark} 

\begin{remark} \label{indep4} 
Any \ainfty functor $\Phi:  {\mc F} \to {\mc F}'$ (between flat $A_\infty$ categories) induces a map of Hochschild homologies  $HH_\bullet(\Phi): 
 HH_\bullet({\mc F}, {\mc F}) \to HH_\bullet({\mc F}', {\mc F}')$ as in \cite[Section 2.9]{Ganatra_thesis}, depending only on the homotopy type of the functor,
and the resulting maps are functorial for \ainfty functors with respect to composition.   In particular, the isomorphism class of  the Hochschild homology of the Fukaya category $HH_\bullet({\mc F}, {\mc F})$ is independent of the 
is  independent of the choice of almost
  complex structures, perturbations,
stabilizing divisors, and depend only on the
  isotopy class of bulk deformation. 
  Similarly the Hochschild cohomology is the cohomology of the space of endomorphisms
  of the identity functor, and so independent up to isomorphism of all such choices.  
\end{remark} 

\begin{remark} 
Any $A_\infty$ homotopy equivalence $\Phi$ between {\it curved} $A_\infty$ categories $\cF^\sim, {\mc G}^\sim$ induces homotopy equivalences between eigencategories $\Phi_w: \cF_w \to {\mc G}_w$ for each $w$. The \ainfty morphism axiom
implies that for any object $\LB\in {\rm Ob}(\cF^\sim)$ the induced map on Maurer-Cartan spaces $\Phi: MC (\LB ) \to MC(\Phi( \LB ))$ preserves the potential function. For any object $\WB = (\LB, b)\in {\rm Ob}(\cF_w)$ we obtain an object
\[
\Phi_w (\WB) := (\Phi(L), \Phi(b) )\in {\rm Ob}({\mc G}_w).
\]
The induced map of morphisms spaces $\Hom ( \WB, \WB' ) \to \Hom ( \Phi_w(\WB), \Phi_w(\WB'))$ then satisfy the \ainfty homotopy axiom with vanishing curvature terms.
\end{remark} 

\begin{remark}  The length filtration induces a spectral sequence computing 
the Hochschild (co)homology of any flat \ainfty category $\cF$  with first page the Hochschild (co)homology of the  (co)homology category $H(\cF)$ whose morphism groups are $H ( \Hom (\WB_1, \WB_2))$, see \cite[Lemma 5.3]{Getzler_Jones_1990}.
\end{remark}

\subsection{Quantum cohomology} \label{diag} \label{quantumcohomology}

Before discussing the open-closed and the closed-open maps, we first give a construction of quantum cohomology using the Morse model and Cieliebak-Mohnke's method which can be incorporated into the constructions of Fukaya category and the open-closed/close-open maps. We first extend the terminology of trees and treed disks to include spheres.
\begin{definition}
\begin{enumerate}

\item A {\em domain type of treed spheres} consists of a rooted tree
  $\Gamma$ with empty disk part $\Gamma_\circ$ and a decomposition 
\[
{\rm Leaf}(\Gamma) = {\rm Leaf}_{\rm grad}(\Gamma) \sqcup {\rm Leaf}_{\rm const}(\Gamma)
\]
of the set of leaves into subsets of {\em {gradient}} and {\em constrained} leaves
(which eventually will correspond to leaves that map to gradient
trajectories in $X$, or leaves that map to the stabilizing divisor or
bulk deformation.) A domain type of treed spheres $\Gamma$ is {\em stable} if
the valence of each vertex $v \in \Ver(\Gamma)$ is at least three.\footnote{To define the
  quantum multiplication we do not need to allow finite edges to
  acquire length. However it is necessary for the proof of the associativity.}

\item A treed sphere $C = S \cup T$ of $\Gamma$ is obtained from a nodal sphere $S'$ whose combinatorial type is described by $\Gamma$ by attaching 
a copy of $(-\infty, 0]$ for each {gradient} leaf
and an interval $[0, +\infty)$ for the output. The surface part $S$ is the union of spherical components $S_v$ labelled by vertices $v \in {\rm Vert}(\Gamma)$ while the tree part $T$ is the union of these semi-infinite intervals $T_e, e \in \Edge(\Gamma)$.

\item Given a stable domain type of treed spheres $\Gamma$, the universal curve ${\cU}_\Gamma$ is formally the disjoint union
\[
{\cU}_\Gamma = \bigsqcup_{[C] \in {\cM}_\Gamma} C.
\]

\end{enumerate}
\end{definition}
A natural partial order among domain types can be defined in a similar way as Section \ref{section2}. 

Quantum cohomology will be defined via the choice of a Morse-Smale pair on the manifold.
Each Morse-Smale pair $(f_X, h_X)$ on $X$ induces a Morse-Smale-Witten complex 
\[ (CM^\bullet(f_X, h_X), \delta_{\rm Morse}) \] 
generated by critical points over $\Lambda$ and graded by $2n$ minus the Morse index; the Morse differential $\delta_{\rm Morse}$ counts trajectories of the negative gradient flow of $f_X$ and hence increases the grading.   The cohomology $HM^\bullet( f_X, h_X)$ of $\delta_{\rm Morse}$ is isomorphic to the (co)homology of $X$ over ${\mb Z}$.  More precisely, if $\sum c_i {\xx}_i$, where $c_i \in {\mb Z}$ and ${\xx}_i \in {\rm crit}(f_X)$ is a Morse cocycle of degree $k$, then the linear combination
\[ \label{Wsum}
\sum_i c_i W^s({\xx}_i)
\]
of stable manifolds is a $2n-k$-dimensional 
pseudocycle (see Schwarz \cite{Schwarz_1999}), 
hence defines a $k$-dimensional cohomology class. 

\subsubsection{Perturbations and transversality}

We introduce domain-dependent perturbations on the universal curves
similar to those before. 
Let $D \subset X$ be a Donaldson hypersurface and let $J_0\in {\mc J}_{\rm tame}(X, \omega)$ be a tamed almost complex structure satisfying (b) of Lemma \ref{countable}. In the case of quantum cohomology, for treed spheres of domain type $\Gamma$, the perturbation data $P_\Gamma$ include a domain-dependent almost complex structure $J_\Gamma$ which are sufficiently close to $J_0$, a domain-dependent perturbation of the Morse function $f_X$, and diffeomorphisms $M_\Gamma$ of $X$.

Quantum multiplication is defined by counting treed spheres with two {gradient} leaves $T_{e_1}, T_{e_2}$ and one output $T_{e_{\rm out}}$. Let $\Gamma_n$ bea domain type with only one vertex $v \in \Ver(\Gamma_n)$ and $n+3$ leaves in total. When $n =  0$, $\Gamma_0$ is a trivalent graph.  In this case we set $J_{\Gamma_0} \equiv J_0$ and perturb $f_X$ away from infinities of the semi-infinite edges. Such a perturbation induces two perturbations $W_{e_1}^u(x)$, $W_{e_2}^u(x)$ of each unstable manifold $W^u(x)$ of $f_X$ and one perturbation $W_{e_{\rm out}}^s(x)$ of each stable manifold $W^s(x)$. This perturbation of $f_X$ on the trivalent graph induces perturbations of $f_X$ on all $\ov{\mc U}_{\Gamma_n}$. Furthermore, when $n \geq 2$, we require that $J_{\Gamma_n}$ and $M_{\Gamma_n}$ do not depend on the positions of the {gradient} leaves. More precisely, let $\Gamma_n'$ be the domain type obtained from $\Gamma_n$ by forgetting the two gradient, leaves (which is still stable).  Let $\ov{\mc U}_{\Gamma_n} \to \ov{\mc U}_{\Gamma_n'}$ denote the naturally induced contraction.  We require that $J_{\Gamma_n}$ and $M_{\Gamma_n}$ are equal to pullbacks of perturbations on $\ov{\mc U}_{\Gamma_n'}$. We also require the locality property: for each $\Pi \prec \Gamma_n$, let $P_\Pi$ be the restriction of $P_{\Gamma_n}$ to $\ov{\mc U}_\Pi \subset \partial \ov{\mc U}_{\Gamma_n}$. For each $v \in {\rm Vert}_{\Pi}$, the restriction of $P_\Pi$ to ${\mc U}_{\Pi, v}$ is equal to the pullback from a function defined on ${\mc U}_{\Pi(v)}$ (see the relevant notations in Definition \ref{locality}).

One can achieve transversality in the same way as the case for treed disks (see Theorem \ref{regular}). An essential map type $\bGamma$ with underlying domain type $\Gamma_n$ contains a labelling of critical points $x_1, x_2, x_{\rm out}$ at the {gradient} leaves of $\Gamma_n$, $n_1$ constrained leaves labelled by $D$ and $n_2 = n- n_1$ constrained leaves labelled by components of the bulk deformation $\bb$, and a homology class $\beta\in H_2(X; {\mb Z})$ satisfying $n_1 = k\omega(\beta)$ (where $k$ is the degree of the Donaldson hypersurface).  A generic perturbation of $f_X$ on the trivalent graph $\Gamma_0$ and perturbing $J_{\Gamma_n}, M_{\Gamma_n}$ for each $n\geq 1$,  makes each moduli space ${\mc M}_{\bGamma}^{\rm QH} (P_{\Gamma_n})$ transverse; and, in addition, if ${\rm index}\ \bGamma = 0$ resp. ${\rm index}\ \bGamma = 1$, then ${\mc M}_{\bGamma}^{\rm QH} (P_{\Gamma_n})$ is compact resp. compact up to at most one breaking of gradient trajectories. 

\subsubsection{Bulk deformed quantum cohomology ring}

Now we repeat the Piunikhin-Salamon-Schwarz construction \cite{PSS} under the current setting. Fix critical points ${\xx}_1, {\xx}_2, {\xx}_\infty\in {\rm crit}(f_X)$. Let $\bGamma$ be an essential map type with incoming {gradient} leaves labelled by ${\xx}_1, {\xx}_2$ and outgoing leaf labelled by ${\xx}_\infty$. If the (only) vertex of $\bGamma$ is labelled by $\beta\in H_2(X; {\mb Z})$, then the expected dimension of the moduli space ${\mc M}_{\bGamma}(P_\Gamma)$ is 
\[
{\rm \index} \ \bGamma = 2c_1(\beta) - {\rm deg} {\xx}_1 - {\rm deg} {\xx}_2 + {\rm deg} {\xx}_\infty.
\]
Let 
\[
{\mc M}^{\rm QH}_{2, 1}({\xx}_1, {\xx}_2, {\xx}_\infty)_0
\]
be the union of all moduli spaces ${\mc M}_{\bGamma}^{\rm QH} (P_\Gamma)$
with labelled map types $\bGamma$  having index zero.  Define a bilinear map 
\[
\star_\bb: CM^\bullet( f_X, h_X) \otimes CM^\bullet( f_X, h_X) \to CM^\bullet( f_X, h_X)
\]
whose values on the generators ${\xx}_1 \otimes {\xx}_2$ are
\[
\star_\bb({\xx}_1, {\xx}_2) = \sum_{{\xx}_\infty\in {\rm crit}(f_X)} \left( \sum_{[C, u]\in {\mc M}_{2, 1}^{\rm QH} ({\xx}_1, {\xx}_2, {\xx}_\infty)_0 } c(u, \bb)  q^{A(u)} o(u) d(u)^{-1} \right) {\xx}_\infty
\]
where $A(u)$ is the symplectic area of $u$, $o(u) \in \{1, -1\}$ is the sign determined by the orientation, $d(u) = (k A(u))!$, and $c(u, \bb)$ is the coefficient determined by the interior leaves mapped to components of the bulk deformation. Strong transversality implies that the above is a finite sum 
for each area bound.  Arguments involving the boundary of the one-dimensional moduli spaces show that $\star_\bb$ is a chain map and hence induces a bilinear map 
\[
\star_\bb: HM^\bullet( f_X, h_X) \otimes HM^\bullet( f_X, h_X) \to HM^\bullet( f_X, h_X).
\]
A cobordism argument shows that  the operation $\star_\bb$ on cohomology is independent of the choice of perturbation data and the choice of the Morse-Smale pair. Lastly, by allowing treed sphere with three incoming {gradient} leaves and interior edges with positive lengths, one can prove that the quantum multiplication $\star_\bb$ is associative.    We denote this graded unital ring 
\[
QH^\bullet (X, \bb) = ( HM^\bullet( f_X, h_X), \star_b) 
\]
and call it the {\it $\bb$-deformed quantum cohomology ring} of $X$.

The quantum cohomology has a natural identity element defined as follows. Since $X$ is connected, one can choose $f_X$ such that it has a unique critical point $x_{\max}$ of maximal Morse index. It is clearly a cochain and the fact that its cohomology class is the identity follows from the fact that the Morse--Smale pairs on {gradient} leaves are fixed and the perturbation respects forgetting {gradient} leaves. 

\begin{remark} \label{indep3} 
The quantum cohomology $QH^\bullet (X, \bb)$ is also independent of the choice the stabilizing divisor. The relevant constructions are carried out in the Appendix.
  \end{remark} 

\subsubsection{Quantum multiplication by submanifolds}

We will use a particular chain-level definition of the quantum multiplication by the class of a submanifold. Recall that the class of a submanifold may be expressed in 
terms of the stable manifolds as follows.  If $ Y \subset X$ be an oriented submanifold
then 
\[ [Y] = \left[\sum_i c_i {\xx}_i \right] \in H(X) \] 
as in \eqref{Wsum}.  Each coefficient $c_i$ may be written as a signed count of intersections
\[ c_i = \# (Y \cap W^u({\xx}_i) ) \]  
may be taken to be the number of intersection points
of $Y$ with $W^u({\xx}_i)$.  Equivalently, $c_i$ is the number of rigid gradient trajectories connecting $Y$ with ${\xx}_i$, counted up to sign.

We may express  quantum multiplication by the class of a submanifold in terms
of  treed holomorphic spheres with a constraint in the submanifold, as follows. 
Above quantum multiplication is defined by counting treed holomorphic spheres with two inputs and one outputs all of which are labelled by Morse critical points of $f_X: X \to {\mb R}$.   Consider the space of configurations of treed spheres $u: C \to X$ where the first incoming interior leaf has been replaced  a marking $z_\bullet$ 
that maps to $u(z_\bullet) \in Y$.   Let $\Gamma$ be a combinatorial type of treed disks of this type, i.e., one interior incoming leaf $T_{e_{\bullet,1}}$, one interior outgoing leaf $T_{e_{\bullet,0}}$, a number of normal interior markings
$z_e$ and one auxiliary marking $z_\bullet$. We do not allow interior edges to acquire length. A map type $\bGamma$ refining $\Gamma$ consists of homology classes $\beta(v) \in H_2(X) \cup H_2(X,L)$ labelling vertices $v \in \Ver(\Gamma)$ and labelling leaves 
$e \in \Edge(\Gamma)$ indicating the limit of the corresponding Morse trajectory
and the type of weighting, and for the auxiliary marking
%
$z_\bullet$
a label of either $Y$, $Y \cap D$, or $Y \cap \bb$
indicating the constraint at that marking.  A map type $\bGamma$ is {\em essential} if there are no broken edges, no sphere components, all normal interior markings 
$z_e$  
are labelled by either $(D,1)$ or $\bb$, and the auxiliary marking 
%
$z_\bullet$ 
labelled by $Y$, and the number of interior markings labelled by $(D,1)$ is the expected number $k \sum \lan \beta(v), [\omega] \ran$. Let $P_\Gamma$ be a perturbation datum defined on the universal moduli space $\ov{\mc U}_\Gamma$ which does not depend on the position of the auxiliary marking. Let ${\mc M}_\bGamma(P_\Gamma)$ be the moduli space of treed spheres of map type $\bGamma$. 
For generic $P_\Gamma$ each zero-dimensional moduli space ${\mc M}_\bGamma(P_\Gamma)$ with essential map type $\bGamma$ is compact and regular, and each one-dimensional component of the moduli space with essential map types is compact up to one breaking
at either the incoming interior edge or the outgoing interior edge.  Given two critical points $\xx, \xx'$, let 
\[
{\mc M}^{\rm QH}(Y, \xx, \xx')_i,\ i = 0, 1
\]
be the union of $i$-dimensional moduli spaces with essential map types with the input and output labelled by $\xx$ and $\xx'$ respectively. Define the chain map 
\begin{equation}\label{qm_chain}
Y \star_\bb: CM^\bullet(f_X, h_X) \to CM^\bullet(f_X, h_X)
\end{equation}
by 
\[
Y \star_\bb (\xx) = \sum_{\xx'} \sum_{[u] \in {\mc M}^{\rm QH}(Y, \xx, \xx')} \frac{1}{(k E(u))!} {\rm Sign}(u) \xx'.
\]

\begin{proposition} $Y \star_\bb$ is a chain map and induces the map on cohomology
\[
[Y] \star_\bb: QH^\bullet( X; \bb) \to QH^\bullet( X; \bb).
\]
\end{proposition}

\begin{proof} The fact that $Y \star_\bb$ is a chain map follows by considering boundaries of 1-dimensional moduli spaces with $z_\bullet$ 
constrained to map to $Y$:   Boundary configurations occur when a Morse trajectory bubbles off  on the incoming edge or the outgoing edge.  
The equality with $[Y] \star_\bb$ in cohomology is proved by 
considering the moduli space of configurations where the marking
$z_\bullet$ is replaced by  leaf $T_\bullet$
of some length $\ell(T_\bullet) \in [0,\infty]$.  In the case
$\ell(T_\bullet) = \infty$ one obtains $ (\sum c_i W_i^s({\xx}_i)) \star_\bb$,
while the case $\ell(T_\bullet) = 0$ gives $Y \star_\bb$.  
The other boundary configurations involve breaking off a Morse trajectory
at one of the other two edges, and we obtain 
\[ [Y \star_\bb c ] = [Y] \star_\bb [c ]  \] 
for any cocycle $c \in CM^\bullet( f_X, h_X)$ as desired.
\end{proof}

\subsection{Open-closed maps}\label{section:OC}

The open-closed maps, roughly speaking, are defined via counts of treed holomorphic disks where the inputs are on the boundary
(generators of morphisms spaces of the Fukaya category) and the outputs are critical points in the ambient symplectic manifolds.  The combinatorial structure underlying  the maps that will be used to define the open-closed map  combine the features of  treed disks and spheres in the construction of Floer and quantum cohomology.

\subsubsection{Open-closed domains and perturbations}

\begin{definition} \label{octype}
  {\rm(Open-closed domain type)} 
The {\em open-closed domain type} consists of a variation of the rooted two-colored tree where all the semi-infinite edges on the disk part 
$e \in \Edge_\white(\Gamma)$ are inputs, and the output $e_{\rm out} \in \Edge_\black (\Gamma)$ is an interior semi-infinite edge and the only {gradient} leaf, along with the metric type $\ul \ell$ and a weighting type
  $\ul{\on{wt}}$ of the boundary edges defined as follows:
  Similar to rooted two-colored trees, a metric on an open-closed domain is a map
\[
\ell: {\Edge}_{\rm \fin} ( \Gamma_\circ) \to [0, +\infty),
\]
and the weighting is a map 
\[
\on{wt}: {\Edge}_\rightarrow(\Gamma) \to [0,1]
\]
that is zero on all interior semi-infinite edges. We do not require
here the relation \eqref{weight_relation} on weightings. The discrete datum
underlying $\ell$ resp. $w$ is called a {\em metric type} resp. {\em
  weighting type} and denoted by $\ul{\ell}$ resp. $\ul{\on{wt}}$.
\end{definition}

Open-closed domain types describe treed disks with an interior output. The stability condition is defined in the usual way, as the absence of non-trivial infinitesimal automorphisms.  A broken open-closed domain type may have unbroken components which are domain types of treed disks or infinite edges supporting flow lines in $X$; however, we remark that there is no unbroken component that is the domain type for treed spheres. Perturbation data for open-closed domain types  extend the existing perturbation data chosen for defining the Fukaya category and a Morse-Smale pair $(f_X, h_X)$ on $X$. We also require the perturbation $P_\Gamma$ does not depend on the position of the {gradient} leaf. More precisely, we require the following: if $\Gamma'$ is the domain type obtained by forgetting the (only) {gradient} leaf $e$ on $\Gamma$ and stabilization, then with respect to the contraction map ${\mc U}_\Gamma \to {\mc U}_{\Gamma'}$, the perturbation $P_\Gamma$ is naturally induced from a perturbation defined on ${\mc U}_{\Gamma'}$. In particular, if $\Gamma'$ becomes empty, then in $P_\Gamma = (J_\Gamma, F_\Gamma, H_\Gamma, M_\Gamma)$, $J_\Gamma \equiv J_0$, $F_\Gamma = 0$, and $H_\Gamma = 0$.  Transversality in this case 
requires that the stable manifolds of the Morse function $f_X$ be transverse to the unstable manifolds on the Lagrangian, which  can be achieved by generic choice of $f_X$.

\subsubsection{Open-closed moduli spaces}

The moduli space of open-closed maps admits a stratification by type.
An {\em open-closed map type} $\bGamma$ includes an extra labelling on the interior {gradient} leaf $e \in \Leaf_{\rm grad} (\Gamma)$ by a critical point of a chosen Morse function $f_X$. For any perturbation datum $P_\Gamma$, a treed holomorphic disk of map type $\bGamma$ consists of a treed disk $C$ of type $\Gamma$ and a continuous map $u: C \to X$ that is a perturbed holomorphic map on each surface component $S_v \subset C$, a perturbed negative gradient line/ray/segment on each boundary edge, and a negative gradient ray of $f_X$ on the out-going {gradient} leaf $T_e \cong [0, +\infty)$. Let ${\mc M}_{\bGamma}^{\rm OC} (P_\Gamma)$ denote the moduli space of treed holomorphic disks of map type $\bGamma$.  Transversality for uncrowded strata is proved in the same way as Lemma \ref{regular}. As in the case of treed disks with gradient trajectories in the Lagrangians, a version of Gromov compactness implies that the union
\[
\ov{\cM}{}_{\bGamma}^{\rm OC} (P_\Gamma) = \bigsqcup_{\bPi \preceq {\bm
    \Gamma}} {\cM}_{\bPi}^{\rm OC} \left(P_\Gamma|_{\ov{\cU}_{\Pi^{\rm st}}}
  \right)
\]
over all open-closed types is a compact Hausdorff space, {and only finitely many types appear for any given energy bound.} Lower strata also include (arbitrarily many) breakings in the distinguished interior semi-infinite edge.

\begin{definition}
An open-closed map type ${\bGamma}$ is called {\em essential} if it has no spherical components $S_v, v \in \Ver_\black(\Gamma)$
nor edges $T_e$ of length $\ell(e)$ zero, all interior markings are either $(D, 1)$ or $\bb$ and for each disk component $v$, the number of interior markings labelled by $(D, 1)$ is equal to $k \omega(\beta_v)$ where $k$ is the degree of the Donaldson hypersurface.
\end{definition}

The following lemma can be proved in the same way as Lemma \ref{refinedcompactness}.

\begin{lemma} \label{tboundplus}
Let ${\bGamma}$ be an essential open-closed map type. If the expected dimension of ${\bGamma}$ is zero, then ${\cM}_{\bGamma}^{\rm OC}$ is compact. If the expected dimension is one, then $\ov{\cM}_{\bGamma}^{\rm OC}$ is a compact topological 1-manifold with boundary where the boundary strata consist of moduli spaces ${\cM}_{\bPi}^{\rm OC}$ where $\Pi$ is either obtained from $\Gamma$ by one of the operations listed in Lemma \ref{refinedcompactness}, or obtained from $\Gamma$ by breaking the interior semi-infinite edge once.
\end{lemma}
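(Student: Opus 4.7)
My plan is to mirror the proof of Lemma \ref{refinedcompactness} (Lemma \ref{tbound}), adding one new case to account for the distinguished interior semi-infinite edge. Let $(C_\nu, u_\nu, \partial u_\nu)$ be a sequence representing points in $\mc M_{\bm\Gamma}(P_\Gamma)$. By general Gromov compactness for treed holomorphic disks with one interior output, a subsequence converges to a limiting treed configuration of some open-closed map type $\bm \Pi$, allowing breakings on boundary edges, on finite length edges, on the interior output, as well as bubbling on surfaces.

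First I would verify that the limiting domain type $\Pi$ is stable, by exactly the argument used in Lemma \ref{refinedcompactness}: any unstable nonconstant disk component would have to be $J$-holomorphic for some constant $J \in \mc J_{\rm tame}^{E(\Gamma)}(X,\omega)$ and, by the choice of $J$ and exactness of $\LL$ on $X \setminus D(\LL)$, intersect $D(\LL)$; essentialness forces all such interior intersections to be marked points labelled by $(D(\LL),1)$, contradicting instability. The same applies to unstable sphere bubbles, using the three-point intersection property from Lemma \ref{countable}. Next I would rule out crowded ghost sphere trees: if a maximal ghost sphere tree $W \subset \on{Vert}_{\rm sphere}(\Pi)$ appeared, locality of $P_\Gamma$ and Definition \ref{locality} would descend the limit to a regular moduli space $\mc M_{\bm\Pi_W}(P_{\Pi_W})$ of strictly smaller expected dimension, giving a contradiction by the same dimension-drop argument as in Cieliebak--Mohnke \cite{Cieliebak_Mohnke}. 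This step uses strong regularity of the perturbations from Theorem \ref{regular} applied to open-closed types, and I would verify that the proof of Theorem \ref{regular} extends verbatim to open-closed domain types, since the only new feature is the unweighted interior output leaf, whose matching condition at its basepoint is freed by the diffeomorphism $E_{\Gamma, e_{\rm out}}$ or by variation on the adjacent nonconstant sphere/disk component.

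Next I would rule out genuine sphere bubbling on the surface part of the limit: each sphere component drops the expected domain dimension by two, while by assumption $\Gamma$ has no spheres. Combined with the essentialness condition, the interior markings on any would-be sphere bubble would have to be labelled by the same divisor constraints and again reduce the dimension below admissible range. Thus in the limit $\Pi$ has no sphere components.

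With these structural restrictions in place, I apply the dimension formula $\dim \mc M_{\bm\Pi} \le \dim \mc M_{\bm\Gamma}$ with equality possible only when $\Pi$ differs from $\Gamma$ by operations that change dimension by zero. When $\dim \mc M_{\bm\Gamma} = 0$ this forces $\Pi = \Gamma$, yielding compactness. When $\dim \mc M_{\bm\Gamma} = 1$, the admissible degenerations are precisely: (i) collapsing a length-zero edge; (ii) a finite edge length going to $0$ or $\infty$; (iii) a weight on a weighted boundary leaf going to $0$ or $1$; or (iv) the distinguished interior semi-infinite edge breaking once. The last item is the new case, and the main (small) technical point is that essentialness together with exactness of $\LL$ in $X\setminus D(\LL)$ precludes bubbling of a disk or sphere on either side of such a break that is not accounted for by a marked point. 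Exactly one breaking occurs because a second breaking would drop expected dimension by two, contradicting the one-dimensional assumption. Together with the local gluing construction on each type of boundary stratum (standard for edge breakings, weight changes, and interior breakings), this identifies $\ov{\mc M}_{\bm\Gamma}$ as a compact topological $1$-manifold with boundary given by the listed strata. The hardest conceptual point, if any, is ensuring the coherence and locality of $P_\Gamma$ on open-closed types restricts correctly to the products of factor perturbations for an interior-edge breaking, so that the gluing produces the asserted boundary components and nothing more.
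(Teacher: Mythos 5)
Your proposal is correct and takes essentially the same approach as the paper, which in fact gives no explicit proof but simply notes that the argument of Lemma~\ref{refinedcompactness} carries over; your write-up fills in that mirroring (stability of the limit via essentialness and the Cieliebak--Mohnke intersection property, ruling out crowded ghost trees via locality and dimension drop, ruling out sphere bubbles by the dimension formula) and correctly identifies the single new codimension-one stratum as a single breaking of the distinguished interior output edge.
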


The open-closed map is defined by counting treed holomorphic disks whose output edge is an interior edge.
See Figure \ref{co} for an example.

\begin{figure}[ht]
\includegraphics[height=2in]{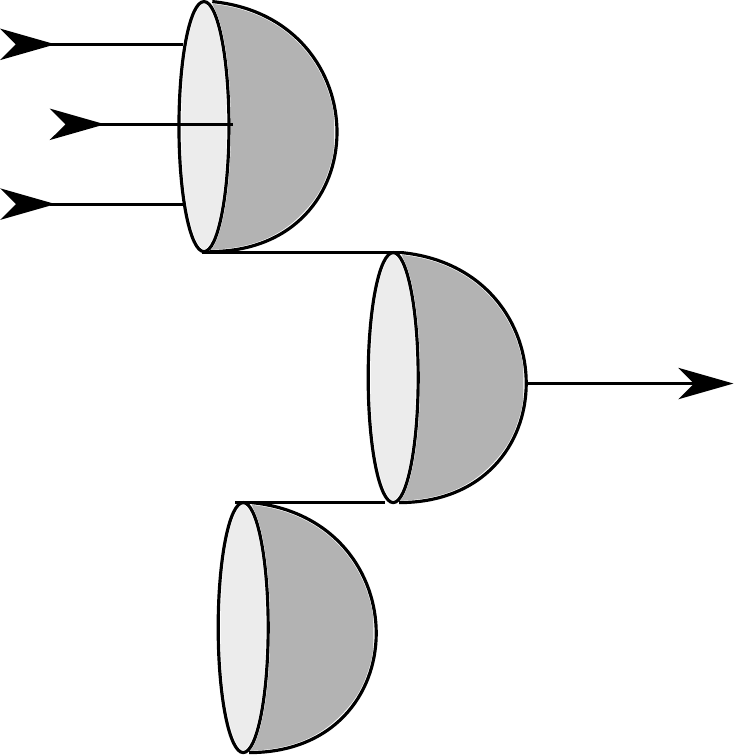} 
\caption{A typical configuration that possibly contributes to the definition of the open-closed map. Interior markings to be mapped to the Donaldson hypersurfaces and the bulk deformation and boundary edges with Maurer-Cartan insertions are omitted.} 
\label{co}
\end{figure} 

\begin{lemma}\label{oc_unit}
Let $\bGamma$ be an essential open-closed map type with index $0$. Suppose the outgoing {gradient} leaf is labelled by $x_{\min}$, the only critical point with minimal Morse index. Then either ${\mc M}_{\bGamma}^{\rm OC} (P_\Gamma) = \emptyset$, or $\bGamma$ contains exactly one incoming semi-infinite edge labelled by a critical point of $f_L: L \to {\mb R}$ for some $L \in {\mc L}$ with minimal Morse index.
\end{lemma}

\begin{proof}
Let $\Gamma'$ be the domain type obtained from $\Gamma$ by forgetting the {gradient} leaf and stabilization. If $\Gamma' \neq \emptyset$, then $\bGamma$ induces a map type $\bGamma'$ with negative index. Since the perturbation $P_\Gamma$ does not 
depend on the position of the {gradient} leaf, it induces a perturbation $P_{\Gamma'}$ on $\ov{\mc U}_{\Gamma'}$. As ${\mc M}_{\bGamma'}^{\rm OC} (P_{\Gamma'}) = \emptyset$ by transversality, ${\mc M}_{\bGamma}^{\rm OC} (P_\Gamma) = \emptyset$. Therefore, $\Gamma' = \emptyset$. As a consequence, $\Gamma$ has one or two incoming semi-infinite edges and no interior constrained leaves.  This implies that $\bGamma$ has zero energy. Hence any configuration in ${\mc M}_{\bGamma}^{\rm OC} (P_\Gamma)$ is a constant map (since no Hamiltonian perturbation in this case) on the surface part. If $\Gamma$ has two boundary inputs, then ${\mc M}_{\bGamma}^{\rm OC} (P_\Gamma)$ cannot be zero-dimensional. Hence $\Gamma$ has only one input.  By the zero index condition, the input 
must be labelled by a critical point of $f_L: L \to {\mb R}$ for some $L \in {\mc L}$ with minimal Morse index.
\end{proof}

We introduce the following notation for the moduli spaces with fixed limits along the semi-infinite edges. Suppose 
\[
x_d \in {\mc I}(\LB_{d-1}, \LB_d), \ldots, x_1 \in {\mc I}(\LB_d, \LB_1),\ \xx \in {\rm crit}(f_X).
\]
For $i = 0, 1$, denote  
\[
{\mc M}_{d, 1}^{\rm OC}(x_1, \ldots, x_d, \xx)_i:= \bigcup_{\bGamma} {\mc M}_\bGamma^{\rm OC}(P_\Gamma)
\]
where the union is taken over all open-closed map types $\bGamma$ of expected dimension $i$ whose boundary inputs are labelled by $x_1, \ldots, x_d$ (in counterclockwise order) and the outgoing interior leaf is labelled by $\xx$.

\subsubsection{The open-closed map}

Recall that 
we have fixed a collection of Lagrangian submanifolds ${\mc L}$ from which we have 
constructed
a curved $A_\infty$ category $\Fuk_{\mc L}^\sim(X, \bb)$ and an associated flat $A_\infty$ category $\Fuk_{\mc L}^\flat(X, \bb)$. Given a subset of weakly unobstructed branes ${\mf L}\subset MC({\mc L})$ we have a full subcategory $\Fuk^\flat_{\mf L}(X, \bb)$ 
whose objects are the branes ${\mf L}$.

\begin{definition}[Open-closed map] 
Write for simplicity
\[
CC_\bullet (\Fuk^\flat_{\mf L}(X, \bb)):=CC_\bullet ( \Fuk^\flat_{\mf L}(X, \bb), \Fuk^\flat_{\mf L}(X, \bb) ).
\]
Define the {\em bulk-deformed open-closed map}
\begin{multline}
  OC_d^\sim (\bb): CC_\bullet  ( \Fuk^\flat_{\mf L}(X, \bb))  \to CM^\bullet (f_X, h_X) \\
  a_d\otimes \cdots \otimes a_1 \mapsto \sum_{\xx\in {\rm crit}(f_X)} \sum_{ [u] \in {\mc
      M}^{\rm OC}_{d, 1}(x_1, \ldots, x_d, \xx)_0} (-1)^{\heartsuit + |x_{d(\circ)}|} \wt(u) \xx
\end{multline}
with weightings as in \eqref{weightings}, but with the product of parallel transports and generators $a_1,\ldots, a_d$ now an element of $\Lambda$.  The chain-level open-closed map $OC(\bb)$ is the direct sum $OC_d^\sim$ deformed by the Maurer-Cartan data on each Lagrangian brane: 
\begin{multline*}
OC(\bb):CC_\bullet ( \Fuk^\flat_{\mf L}(X, \bb)) \to CM^\bullet( f_X, h_X), \\
a_d \tensor \dots \tensor a_1 \mapsto \\ \sum_{j_1, \dots, j_d \geq 0} OC_{d+ j_1 + \cdots + j_d}^\sim (\underbrace{b_d, \dots, b_d}_{j_d},a_d,,\dots,, a_3,\underbrace{b_2,\dots, b_2}_{j_2},a_2, \underbrace{b_1,\dots, b_1}_{j_1},a_1)
\end{multline*}
where $a_i \in {\rm Hom}( \WB, \WB_i )$ and $\WB_i = (\LB_i, b_i)$.
\end{definition} 

\begin{proposition} \label{occhain} 
The open-closed map $OC(\bb): CC_\bullet( \Fuk^\flat_{\mf L}(X, \bb)) \to CM^\bullet(f_X, h_X)$ is a chain map, that is,
\[
OC(\bb) \circ \delta_{CC_\bullet}(\bb) = \delta_{\rm Morse} \circ OC(\bb)
\]
where $\delta_{CC_\bullet} (\bb)$ is the Hochschild differential on $CC_\bullet ( \Fuk^\flat_{\mf L}(X, \bb))$. Therefore $OC(\bb)$ induces a map between (co)homology
\[
[OC(\bb)]: HH_\bullet(  \Fuk^\flat_{\mf L} (X, \bb)) \to HM^\bullet(f_X, h_X)\cong QH^\bullet(X, \bb).
\]
\end{proposition} 

\begin{proof}[Sketch of proof]
The identity follows from the description of the boundary strata of open-closed moduli spaces ${\mc M}_{d, 1}^{\rm OC}(a_1, \ldots, a_d, \xx)_1$ in Lemma \ref{tboundplus} with verification of signs. We remark that the terms involving the curvature $m_0(1)$ vanish,
since by assumption the output of $m_0(1)$ is a multiple $w 1_{\LB_i}$ is the identity of each brane $\LB_i$.  The strict
identity axiom implies that $OC$ vanishes except in the case of two inputs, in which case the two terms
involving $m_0(1)$ cancel.
\end{proof}

\begin{remark} \label{indep2} 
Continuing Remark \ref{indep}, the 
open-closed map is independent of the choice of the stabilizing divisor, the 
perturbation, and only depends on the isotopy class of the bulk deformation. The proof of independence uses a moduli space of quilted holomorphic disks shown in Figure \ref{concentric}. 
\begin{figure}[h]
  \centering \scalebox{.8}{
\begingroup%
  \makeatletter%
  \providecommand\color[2][]{%
    \errmessage{(Inkscape) Color is used for the text in Inkscape, but the package 'color.sty' is not loaded}%
    \renewcommand\color[2][]{}%
  }%
  \providecommand\transparent[1]{%
    \errmessage{(Inkscape) Transparency is used (non-zero) for the text in Inkscape, but the package 'transparent.sty' is not loaded}%
    \renewcommand\transparent[1]{}%
  }%
  \providecommand\rotatebox[2]{#2}%
  \newcommand*\fsize{\dimexpr\f@size pt\relax}%
  \newcommand*\lineheight[1]{\fontsize{\fsize}{#1\fsize}\selectfont}%
  \ifx\svgwidth\undefined%
    \setlength{\unitlength}{448.99819147bp}%
    \ifx\svgscale\undefined%
      \relax%
    \else%
      \setlength{\unitlength}{\unitlength * \real{\svgscale}}%
    \fi%
  \else%
    \setlength{\unitlength}{\svgwidth}%
  \fi%
  \global\let\svgwidth\undefined%
  \global\let\svgscale\undefined%
  \makeatother%
  \begin{picture}(1,0.40377869)%
    \lineheight{1}%
    \setlength\tabcolsep{0pt}%
    \put(0,0){\includegraphics[width=\unitlength,page=1]{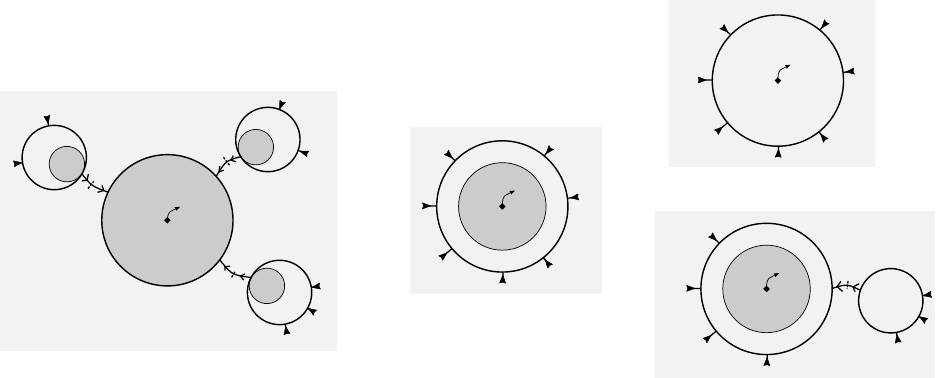}}%
    \put(0.01425502,0.04749745){\color[rgb]{0,0,0}\makebox(0,0)[lt]{\lineheight{1.25}\smash{\begin{tabular}[t]{l}(a)\end{tabular}}}}%
    \put(0.72125218,0.24829136){\color[rgb]{0,0,0}\makebox(0,0)[lt]{\lineheight{1.25}\smash{\begin{tabular}[t]{l}(b)\end{tabular}}}}%
    \put(0.71956482,0.01712525){\color[rgb]{0,0,0}\makebox(0,0)[lt]{\lineheight{1.25}\smash{\begin{tabular}[t]{l}(c)\end{tabular}}}}%
  \end{picture}%
\endgroup%
}
  \caption{Curve types (a), (b), (c) that can occur on the boundary of a one-dimensional moduli space of quilted disks with concentric seam (center).} 
\label{concentric}
\end{figure}
Each domain $C$ is a collection of disks $S_v, v \in \Ver(\Gamma)$
possibly with an additional {\em seam} which is an embedded circle
$Q_v \subset S_v$ either tangent to a single point on the boundary $\partial S_v$, or a concentric dilation of the boundary circle $Q_v$ towards the outgoing marking, as in
Figure \ref{concentric}. Given an isotopy of Donaldson hypersurfaces $D_t \subset X, t \in [0,1]$ the resulting moduli spaces with $d$ boundary inputs and one interior output are denoted $\M_{d,1}(L,D_t)$.  
On the components without seams "before" the seam, with respect to the ordering of components starting with the incoming edges, 
   the complex structure, divisor, and bulk deformation used are 
  $J_0,D_0, \bb_0$, while on components "after" the seam those used
  are $J_1,D_1,\bb_1$.  
  
  The one-dimensional components of the moduli spaces so defined are compact one-manifolds with boundary corresponding to three types of configurations:  (a) Configurations $u: C \to X$ where the inner seam $Q$ has "bubbled off" onto the boundary $\partial S$ creating a number of quilted disks with seams tangent to the boundary (b) configurations $u: C \to X$ where the inner seam $Q$
  has collapsed onto the output inner marking and (c) configurations
  $u: C \to X$ where an unquilted disk $S_v$ has broken off.  
  
  The description of the boundary configurations gives a chain homotopy as follows.
  Configurations of the first type (a) contribute to the map 
  \[  OC(\bb)_1 \circ CC_\bullet(\phi):  
   CC_\bullet(\Fuk^\flat_{\cL}(X,\bb)_0) \to  CM(f_X,h_X)  \] 
 while configurations of the second type are exactly those of $OC_0$.
 Configurations of the third type are of the form  $ OC_\bullet \circ \delta $
 where $OC_\bullet$ is a variant of the open closed map (shifted by degree) 
 that counts rigid treed quilted disks where the radius of the seam is allowed
 to vary between $0$ and $1$.   Restricted to Hochschild cycles, contributions of this type vanish giving an equality between $OC_0$ and $OC_1 \circ HH_\bullet (\phi)$.   Since $HH_\bullet(\phi)$ induces an isomorphism of Hochschild homologies 
 by Remark \ref{indep4}, this gives an identification of the images. 
 \end{remark}

\subsection{Spectral decomposition under open-closed map}\label{subsec:ocspect}

In this subsection,  we prove Theorem \ref{thm16} in the introduction which says that the open-closed map respects 
the spectral decomposition of the Fukaya category and quantum cohomology.
 The components in the spectral decomposition of the quantum cohomology may be viewed  as generalized eigen-spaces of quantum multiplication by either the symplectic class $[\om]$ or the first Chern class $c_1(X)$. To work with the the latter viewpoint, we need the additional assumption that $c_1(X)$ is representable in the following sense: 
  We say  that $c_1(X)$ is {\em representable with respect to $\LL$} if some
 multiple of the Poincar\'e dual of  $c_1(X)$ can be represented by a smooth submanifold $Y$ in $X$ disjoint from $|{\mc L}|$, so that 
for each component $L \in \LL$ the submanifold $Y$ represents the Maslov 
class in $H^2(X,L)$.  This condition is automatic if $\LL$ consists of a single 
brane $L$,  since we may take $Y$ to be the zero locus of a generic section of the anticanonical bundle, trivialized over $L$ using the orientation.  The following result on the open-closed map subsumes Theorem \ref{thm16}.  Recall the definitions of $[\omega]^\bb$
and $c_1(M)^{\bb}$ from \eqref{ombb}, \eqref{c1bb}.

\begin{theorem}\label{spectral} 
The image of $HH_\bullet( \Fuk_{\mf L} (X,\bb)_w )$ in $QH^\bullet (X,\bb)$ under the open-closed map $OC(\bb)$ lies in the generalized eigenspace for quantum multiplication $[\omega]^\bb \star_\bb$ by the symplectic class $[\omega]^\bb$ with eigenvalue $D_q w$. 

Furthermore, suppose that $c_1(X)$ is representable with respect to $\LL$.  The image of $HH_\bullet( \Fuk_{\mf L} (X,\bb)_w )$ in $QH^\bullet (X,\bb)$ under the open-closed map $OC(\bb)$ lies in the generalized eigenspace for quantum multiplication $c_1(X)^\bb \star_\bb$ with eigenvalue $w$.
\end{theorem} 

\begin{remark} \label{eigenval} 
In the monotone, non-bulk-deformed case the image of each summand 
\[
OC(\bb) ( HH_\bullet(\Fuk_{\mf L} (X,\bb)_w) ) \subset  QH^\bullet (X, \bb)
\]
is contained in the $w$-eigenspace of the operator given by quantum multiplication by the first Chern class, as in Sheridan  \cite{Sheridan_2016} (see also Yuan \cite{Yuan_2021} in a more general setting). The above statement is compatible with this fact since in the monotone case $w$ has only terms of power $q^{1/\lambda}$ where $\lambda [\omega] = c_1(X)$. Hence $D_q w= w/ \lambda$ in this case. To see why $w$ is an eigenvalue of 
$c_1(X)^\bb \star_\bb$, one can also replace the Donaldson hypersurface  by an anticanonical divisor and make use of the fact that only Maslov 2 disks contribute to the potential function.
\end{remark}

\subsubsection{The case of a length-one Hochschild chain}
\label{samplecase}

We first give a simplified argument for Theorem 
\ref{spectral} assuming that the Hochschild chain has length one, all weakly bounding cochains and bulk deformations are zero and the disks are transversely cut out without using a domain-dependent perturbation.   Thus, by assumption, the curvature $m_0(1)$ is a multiple of the strict unit for all involved branes.   The proof in this simplified case is based on the study of moduli spaces of open-closed domains with an auxiliary interior marking with a specified offset angle in comparison with the first boundary marking.  Given a treed disk $C$ of an open-closed domain type (we allow interior edges to acquire length), there is a unique disk component $S_0 \subset C$ that is closest to the unique outgoing interior semi-infinite edge $T_0$. We call $S_0$ the {\it central disk} and let $z_0 \in S_0$ be the interior special point that is connected to $T_0$. There is also a boundary special point $w_0\in \partial S_0$ that is closest to the $0$-th boundary incoming semi-infinite edge.  Identify $S_0$ with $\mb{D}$ biholomorphically so that $z_0$ resp. $w_0$ is identified with $0\in {\rm Int} {\mb D}$ resp. $1 \in \partial {\mb D}$ and such an identification $S_0 \cong {\mb D}$ is unique. There is a contraction map 
\begin{equation}
  \label{contractionmap} 
  \sigma_C: C \to S_0
\end{equation}
which is the identity on the central disk $S_0 \subset C$ and which contracts points on other surface components $S_v$ or edges $T_e$ to the corresponding attaching points on the central disk $S_0$. A point $z \in C$ is said to have {\it offset angle $\theta\in S^1$} if 
\[
\sigma_C(z) \in ( e^{i \theta} {\mb R}_+ \cap S_0 ) \cup \{z_0\}.
\]
Let $\Gamma$ be a stable open-closed type consisting of disks with a single boundary leaf $T_1$, the interior leaf $T_0$, and an interior marking $z_\bullet \in S_0$.  
Fix an angle $\theta \in S^1$. Define the subspace
\[
{\mc M}_\Gamma^\theta \subset {\mc M}_\Gamma
\]
consisting of isomorphism classes of open-closed domains $C$ of type $\Gamma$ such that the auxiliary marking $\mathring{z}$ has offset angle $\theta$, as in Figure \ref{fig:spectral0}.  
\begin{figure}[h]
    \centering
    \includegraphics[scale=0.9]{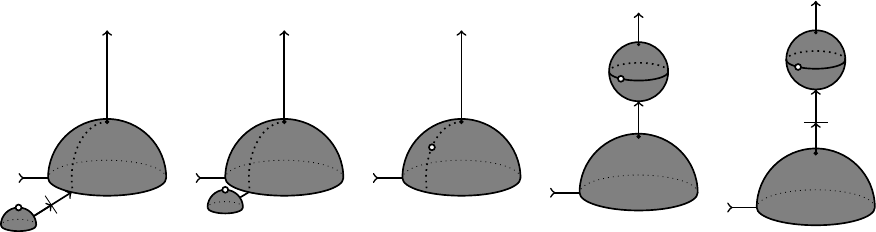}
    \caption{A one-dimensional moduli space considered to show the spectral property of the open-closed map. The auxiliary marking, which is hollow in the picture, must have a fixed angle 
    shown as 
    the dashed curve in the first three configurations.}
    \label{fig:spectral0}
\end{figure}
Suppose $[\omega]$ is integral and let 
\[ Y \subset X \] 
be a representative of $[\omega]$ transverse to $D$.  
We construct a moduli space $\M_{1,1}^\theta(L,Y) $
of open-closed maps bounding $L$ equipped with a map 
\[ \M_{1,1}^\theta(L,Y) \to \M_{1,1}^\theta \]
as follows.  Configurations in $\M_{1,1}^\theta(L,Y)$
consist of holomorphic treed disks $u:C \to X$ with an open-closed domain type $\Gamma$ with the boundary edge $T_1$ labelled by components of $\alpha$, the interior {gradient} leaf  $T_0$ labelled by a critical point $\xx\in {\rm crit}(f_X)$, and for the interior auxiliary marking $\mathring{z}$ we require that 
\[ u(\mathring{z}) \in Y .\] 
%
We may assuming that the perturbations are independent of the position of the
point $\mathring{z}$, since the intersections with $D$ stabilize the domain.

Using the moduli spaces above, we define a modified version of the open-closed map.
Given a Floer cochain 
\[ \alpha \in CF^\bullet(\WB, \WB) \] 
for some weakly unobstructed brane $\WB$ with potential $w$, define a map 
\[ OC^Y(\alpha) \in CM(f_X,h_X) \]  
by weighting the contributions of the moduli space  $ \M_{1,1}^\theta(L,Y) $ by the 
coefficients of $\alpha.$  Each (true) boundary stratum of  ${\mc M}^\theta_{1,1}(L,Y)_1$ consists of configurations $(C,u: C \to X)$ with exactly one broken edge $T_e \subset C$ 
and belongs to the following types, as in  Figure \ref{fig:spectral0}:
\begin{enumerate}
    \item configurations $u: C \to X$ with a broken incoming edge $T_1$ contributing to 
    $OC^Y(m_1(\alpha))$;
    \item configurations $u: C \to X$ with a broken interior edge $T_1$, contributing to $Y \star OC(\alpha)$ (such as the right-most configuration in Figure \ref{fig:spectral0});
    \item configurations $u: C \to X$ with one disk component $S_v$ containing $z_\bullet$ and no boundary labels, connected to other components by a broken boundary edge $T_e$ (for example, the left-most configuration in Figure \ref{fig:spectral0}), to be explained below; and

    \item configurations $u: C \to X$ with a broken interior leaf $T_1$, 
    contributing to
    $ \delta_{\rm Morse} OC^Y(\alpha)$.
\end{enumerate} 
To understand the contributions arising from the third type of boundary configuration,
write the potential function 
\[ w = \sum_i c_i q^{A_i} \]  
as a sum over contributing holomorphic disks $u_i: C \to X$ of area
$A_i$ with coefficients $c_i \in \Q$. Holomorphic disks with energy $A_i$ intersect $Y$ at $A_i$ points counted with sign, since by construction the intersection is transversal and the perturbations are independent of the choice of auxiliary marking. 
Each configuration contributes 
$ A_i OC(\alpha)$, as we have $ A_i$ choices of the auxiliary marking $z_\bullet$.  Since the signed count of boundary points of the one-dimensional moduli space vanishes, we obtain the relation
\[
 Y \star OC(\alpha) = \sum_i c_i A_i q^{A_i} OC(\alpha) =  (D_q w) OC(\alpha) \quad
 \text{mod} \ \on{Im} \delta_{\rm Morse}.
\]
The argument for quantum multiplication by the first Chern class $c_1(X)^\bb$ is similar. Suppose $Y \subset X$ is a smooth submanifold representing half the Maslov class in $H^2(X,L)$. Then
\[
 Y \star OC(\alpha) = \sum_i c_i \frac{1}{2} I_i q^{A_i} OC(\alpha) =  w OC(\alpha)
  \ \text{mod} \ \on{Im} \delta_{\rm Morse}
\]
where $I_i$ is the Maslov index of the $i$-th disk contributing to $w$, necessary equal to $2$ since the bulk and boundary deformations vanish.

\subsubsection{Treed disks with auxiliary markings and specified offsets}

The proof of the Spectral Theorem \ref{spectral} in the general case involves moduli spaces with a collection of interior markings at fixed offset angles.  These moduli spaces define generalized open closed maps which satisfy a recursive relation, equivalent to the 
image of the open-closed map lying in a generalized eigenspace for quantum multiplication.

\begin{definition}
\label{angleseq} 
\begin{enumerate}
\item An {\em angle sequence} is a collection 
$\theta_1, \ \theta_2, \  \ldots \in {\mb R}/ 2\pi {\mb Z} \cong S^1$
of distinct, non-zero angles. 
    \item An {\em open-closed domain type with auxiliary markings} is an open-closed domain type $\Gamma$ together with decompositions
    %
    $ {\rm Leaf}_{\bullet, {\rm const}}(\Gamma) = {\rm Leaf}_{\bullet, {\rm normal}}(\Gamma) \sqcup {\rm Leaf}_{\bullet, {\rm auxiliary}}(\Gamma),
    $
    %
    and 
    %
    $ {\rm Leaf}_{\circ} = {\rm Leaf}_{\circ, {\rm normal}}(\Gamma) \sqcup {\rm Leaf}_{\circ, {\rm auxiliary}}(\Gamma)$ 
    %
    such that the $0$-th boundary leaf is normal and such that the path connecting each auxiliary leaf to the central disk $S_0$ contains at least one broken boundary edge $T_e$. Such a domain type $\Gamma$ is said to be {\em of type $(l_{\black},l_{\white})$} if there are $l_{\black}$ interior auxiliary leaves and $l_{\white}$ boundary auxiliary leaves. 

    \item An {\em open-closed treed disk with $m$ auxiliary markings} of type $\Gamma$ and angle sequence $\ul{\theta}$ is a treed disk $C = S \cup T$ of type $\Gamma$ such that the offsets of the auxiliary markings $\mathring{z}_{\on{aux},1}, \ldots, \mathring{z}_{\on{aux},m}$ are $\theta_1, \ldots, \theta_m$.
    
    \item By cutting along breakings, a treed disk $C$ is separated to tree disks $C_1,\ldots, C_k$  with no breakings.  The components $C_1,\ldots, C_k$ will be called the {\it unbroken components}. The {\it branch} of $C$ at offset angle $\theta$ of a treed disk $C$ with one auxiliary marking $\mathring{z}_i$ is the union of unbroken components $C_j$ that are connected to the central treed disk $C_0$ via the boundary special point on $C_0$ with offset angle $\theta$.
    \end{enumerate}
\end{definition}

\begin{center}
\begin{figure}

\includegraphics{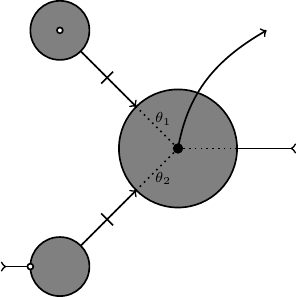}
\caption{A treed disk with one interior auxiliary marking and one boundary auxiliary marking (the hollow markings). The semi-infinite on the right is at the $0$-th boundary marking.}
\end{figure}
\end{center}

\subsubsection{Generalized open-closed maps}

Define a generalized open-closed map using the above kind of domains
as follows.   Open-closed map types are map types with the following 
labelling of edges and boundary arcs: Each interior auxiliary leaf $T_{\black,i}$ is labelled either by $Y$ or by the bulk deformation $D_q \bb$, and the two short arcs on both sides of a boundary auxiliary leaf $T_{\white,i}$ are labelled by the same Lagrangian brane.  Consider a sequence of weakly unobstructed branes $(\WB_0, \ldots, \WB_d)$.  
Define a (not necessarily chain) {\em fixed-angles map} 
\[
OC_m(\bb): CC_\bullet( \Fuk_{\mf L}(X,\bb)_w) \to CM^\bullet( f_X, h_X)
\]
by counting map types $\bGamma$ with $m$ auxiliary markings with the following conditions as in Figure \ref{fig:OCmm}:
\begin{enumerate}
\item \label{aa} each interior auxiliary marking $\mathring{z}_i$ maps either to $Y$ or to the bulk deformation $D_q \bb$;
\item \label{bb} each auxiliary boundary marking $\mathring{z}_i$, if it is on a boundary arc labelled by $\WB_i$, then the contributions to 
$OC_m(\bb)$ are weighted by the coefficients of $D_q b_{\WB_i}$; and
\item \label{cc}  each branch $C_j$ contains a normal (that is, non-auxiliary) boundary marking 
weighted by the coefficients of the Hochschild chain $\alpha$.
\end{enumerate}
We remark that the bulk deformation at normal (non-auxiliary) interior markings is $\bb$ and the bulk deformation at auxiliary markings is $D_q \bb$.  Analogously the boundary insertion at an auxiliary marking is $D_qb$ and a normal boundary marking is either weighted by the coefficients of the Hochschild chain $\alpha$ or has an insertion $b$. 
\begin{figure}[h]
    \centering
    \includegraphics[scale = 0.7]{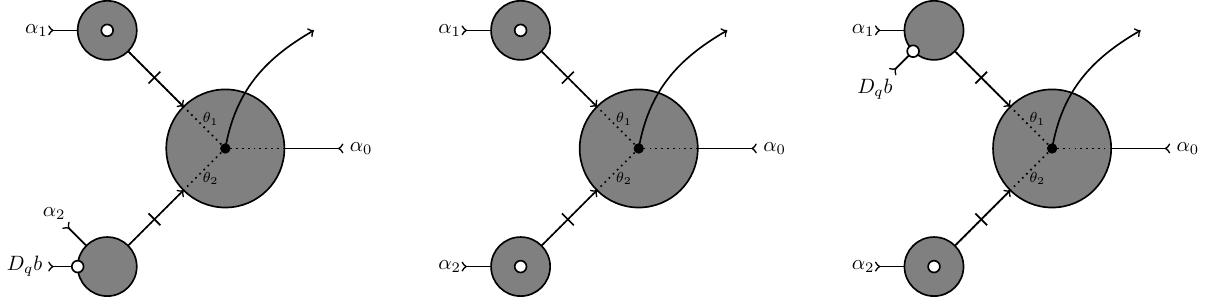}
    \caption{Configurations possibly contributing to the map $OC_2$ with three boundary insertions $\alpha_0$, $\alpha_1$ and $\alpha_2$. The insertion by weakly bounding cochains are omitted.  Each branch contains an auxiliary marking (either interior or boundary) and at least one boundary insertion labelled $\alpha$.}
    \label{fig:OCmm}
\end{figure}

We will prove in Lemma  \ref{lemma_spectral_2} that the fixed-angles maps $OC_m$ have image lying in generalized eigenspaces of quantum multiplication by $[\omega]^\bb$. To prove the Lemma, we introduce some variants of the fixed-angle open-closed map. Define the maps
\[
OC_{m,\white}(\bb) \quad \text{resp.} \quad OC_{m,\black}(\bb): CC_\bullet( \Fuk_{\mf L}(X,\bb)_w) \to CM^\bullet( f_X, h_X)
\]
as counts of the same treed holomorphic disks  as those counted by $OC_{m+1}(\bb)$ with the additional condition that the last auxiliary marking $\mathring{z}_{m}$ is a boundary resp. interior marking. Thus,
\[OC_{m}(\bb)=OC_{m,\white}(\bb)+ OC_{m,\black}(\bb).\]
We need another variation of the map $OC_{m,\white}$. 
Consider treed disks with $m+1$ auxiliary markings $\mathring{z}_i$ (either interior or boundary) with the last one $\mathring{z}_{m+1}$ being a boundary auxiliary marking; the first $m$ branches $C_1,\dots, C_m$ satisfying conditions \eqref{aa},  \eqref{bb} \eqref{cc} listed above; and the last branch $C_{m+1}$ satisfying \eqref{bb} but not \eqref{cc}, that is,
 none of the normal boundary markings in the branch $C_{m+1}$ are labelled by the Hochschild chain $\alpha$. 
The count of such configurations defines a (not necessarily chain) map
\[
OC_{m, +}(\bb): CC_\bullet( \Fuk_{\mf L}(X,\bb)_w) \to CM^\bullet( f_X, h_X)
\]
as in Figure \ref{fig:OCmp}.
\begin{figure}[h]
    \centering
    \includegraphics[scale=.7]{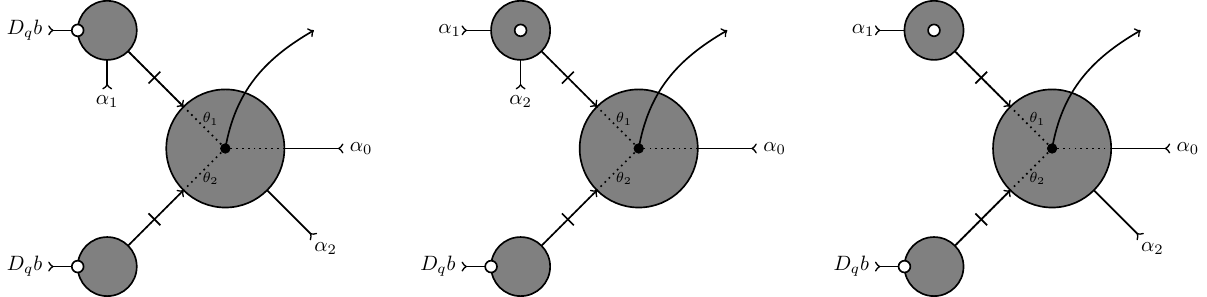}
    \caption{Configurations possibly contributing to the map $OC_{1, +}$ with three boundary insertions $\alpha_0$, $\alpha_1$, and $\alpha_2$. The insertions by weakly bounding cochains are omitted.  The second (last) branch can only contain a boundary auxiliary marking (hollow) labelled by $D_q b$ and does not contain insertions by $\alpha_i$.}
    \label{fig:OCmp}
\end{figure}

\begin{lemma}\label{lemma_spectral_1}
Let $\alpha \in CC_\bullet( \Fuk_{\mf L}(X,\bb)_w)$ be a Hochschild cycle. For any $m \ge 1$,
\[ 
OC_{m+1,\white}(\bb)(\alpha) + OC_{m, +}(\bb)(\alpha) \in {\rm Im} (\delta_{\rm Morse} ).
\]
\end{lemma}

\begin{proof}   The relation above follows from studying a moduli space where
the number of enforced breakings is one less than the number of auxiliary markings.
Consider the moduli spaces ${\mc M}^\theta_{\bGamma}(L,D)_1$ for types 
$\bGamma$ with $m+1$ auxiliary markings and exactly $m$ breakings, with one breaking 
between each of the first $m$ branches $C_1,\ldots, C_m$ and the treed central disk $C_0$; and whose last auxiliary marking $\mathring{z}_{m+1}$ is a boundary marking not separated from $C_0$ by any broken edges.  Counts of rigid such configurations define yet another
map 
\[ OC_{m, ++}(\bb): CC_\bullet( \Fuk_{\mf L}(X,\bb)_w) \to CM^\bullet( f_X, h_X) .\]
True boundary strata of the one-dimensional space of such configurations consist of configurations $u: C \to X$ with an additional breaking at some edge $T_e$.  If the additional breaking is at an interior edge $T_e$, then the contribution of such configurations gives an element in ${\rm Im}(\delta_{\rm Morse})$. If the additional breaking is at a boundary edge $T_e$ creating an unbroken component $C' \subset C$ containing no auxiliary markings $z_{{\diamond},i}$ or normal boundary edges labelled by $\alpha$, then the contribution of $u: C \to X$ is zero by forgetful property of the perturbation data and the definition of weakly bounding cochain. 
The other possibilities where the additional breaking is at a boundary edge
are as follows; in each case we identify  the  contributions of 
the corresponding strata.
\begin{enumerate}
\item The additional breaking is at the offset angle $\theta_{m+1}$
  such that the $(m+1)$-st auxiliary marking $\mathring{z}_{m+1}$ is
  separated from the central treed disk $C_0$ by one breaking. The
  contribution of these configurations is the term
  $OC_{m+1,\white}(\bb)(\alpha) + OC_{m, +}(\bb)(\alpha)$.
\item The additional breaking is at a generic offset angle $\theta$
  different from the fixed ones $\theta_1, \ldots, \theta_{m+1}$.  The
  breaking separates the central treed disk $C_0$ with a treed disk
  $C'$ labelled by some subset of the $\alpha_i$ and weakly bounding
  cochains $b_{\WB_i}$.  These configurations contribute to
  $(OC_{m,++})(\bb)_k(\delta_{l}(\alpha))$ where $(OC_{m,++})(\bb)_k$ is the
  open-closed map acting on $k$ Hochschild inputs and $\delta_{l}$ is
  the component of the Hochschild differential $\delta$ of $\alpha$
  involving contractions of $l$ elements.
\item The additional breaking is at one of the fixed offset angles
  $\theta_i$, $i = 1, \ldots, m$, such that the auxiliary marking
  $\mathring{z}_i$ in the branch $C_i$ is still separated from the
  central disk $S_0$ by the breaking of a single edge $T_e$.  These
  configurations also contribute to $(OC_{m,++})(\bb)_k(\delta_{l}(\alpha))$
  where $(OC_{m,++})(\bb)_k$ is the open-closed map acting on $k$
  Hochschild inputs and $\delta_{l}$ is the component of the
  Hochschild differential $\delta$ of $\alpha$ involving contractions
  of $l$ elements. 
\item The additional breaking is at one of the fixed offset angle
  $\theta_i$ such that the auxiliary marking $\mathring{z}_i$ in the
  branch $C_i$ is separated from the central disk by two broken edges,
  say $T_{e'}$ and $T_{e''}$.  Moreover, the unbroken component $C'$
  containing $\mathring{z}_i$ contains a normal boundary marking
  labelled by some $\alpha$. Such configurations can be viewed as the
  boundary of two strata of one dimension higher.  Indeed, either of
  the broken edges $T_{e'}$ and $T_{e''}$ may be glued, as Figure
  \ref{fig_spectral_4}. Therefore, we can regard this type of boundary
  strata as a fake boundary component of the one-dimensional moduli
  space.
    
\begin{figure}[h]
  \includegraphics[trim={0 2.5cm 0 0}, clip]{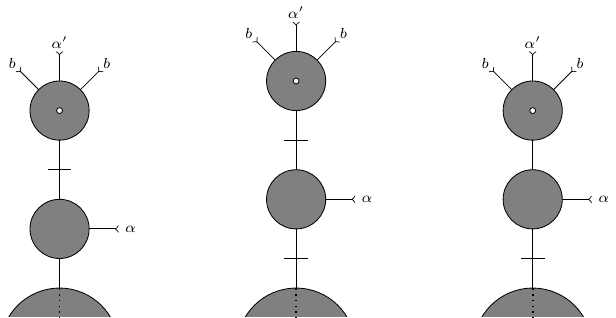}
  \caption{A fake boundary stratum.}\label{fig_spectral_4}
\end{figure}

\item The additional breaking is at one of the fixed offset angle
  $\theta_i$ such that the auxiliary marking $z_{{\diamond},i}$ in
  this branch $C_i$ is separated from the central disk $C_0$ by
  breakings of two edges, say $T_{e'}$ and $T_{e''}$.  Moreover, the
  unbroken component $C'$ containing this auxiliary marking contains
  no normal boundary markings labelled by $\alpha$.
\end{enumerate}

\begin{figure}[h]
  \includegraphics[trim ={0 2.5cm 0 0}, clip]{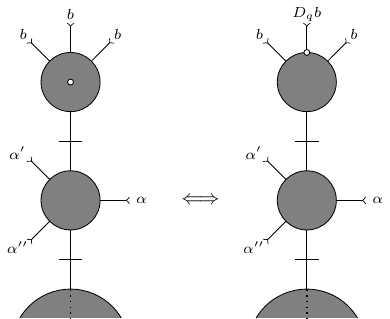} 
\caption{The cancellation of two boundary contributions.}
   \label{fig:spectral_3}
\end{figure}

In the rest of the proof we study the contribution from configurations in the last item.  More precisely, those with an interior auxiliary marking at angle $\theta_i$, together with the corrections arising from the bulk deformation $D_q \bb$, cancel with those with a boundary auxiliary marking, as in Figure \ref{fig:spectral_3}.  Abbreviate $b_\WB$ by $b$ and write 
\[ m_k (a_k,\ldots,a_1) = \sum_{l,d \geq 0} q^l m_{k,l,d}(a_k,\ldots,a_1;\bb,\ldots,\bb) \] 
where $m_{k,l,d}$ is the contribution from treed disks of area $l$ with 
$d$ bulk insertions.
Suppose $Y$ represents $[\omega]$.  We have 
\begin{multline}\label{eqn_spectral}
(  D_q w) 1_\WB =  D_q \sum_{k,d \geq 0} m_k(b, \ldots, b;\bb,\ldots,\bb) =  D_q \sum_{k ,d\geq 0} \sum_{l \geq 0} q^l m_{k,l}(b, \ldots, b;\bb,\ldots,\bb)\\
= \sum_{k\geq 0} \sum_{l \geq 0}  lq^l m_{k,l}(b, \ldots, b;\bb,\ldots,\bb) + \sum_{k ,d\geq 0} \sum_{1 \leq s \leq k} m_k( \underbrace{b, \ldots, b}_{s-1},  D_q b, b, \ldots, b;\bb,\ldots,\bb) \\  + \sum_{k ,d \geq 0} \sum_{1 \leq s \leq d} m_k( b,\ldots, b, \underbrace{\bb, \ldots, \bb}_{s-1},  D_q \bb, \bb,\ldots, \bb) 
\\
= \sum_{k,d \geq 0} m_k^+(b, \ldots, b;\bb,\ldots,\bb) + \sum_{k,d \geq 0} \sum_{1 \leq s \leq k}  m_k( \underbrace{b, \ldots, b}_{s-1}, D_q b, b, \ldots, b;\bb,\ldots,\bb)
\\ 
 + \sum_{k ,d \geq 0} \sum_{1 \leq s \leq d} m_k( b,\ldots, b, \underbrace{\bb, \ldots, \bb}_{s-1},  D_q \bb, \bb,\ldots, \bb) 
\end{multline}
where $m_k^+$ is the count of treed disks with an interior auxiliary marking constrained to lie in $Y$ with $k$ inputs; recall that the perturbations are independent of the auxiliary markings.   The terms involving $D_q \bb$ are produced by the 
quantum corrections to the symplectic form in \eqref{ombb}.  Since the count of configurations with one boundary edge labelled by the identity vanishes automatically by the forgetful axiom, 
the two types of contributions in  Figure \ref{fig:spectral_3} combine with the corrections $D_q \bb$ to give zero.   A similar identity holds
in the case that $Y$ represents $c_1(X)$, with the modification that 
$D_q b$ for a weakly bounding cochain $b$ is replaced by 
\[ \left( \frac{1-E}{2} \right) b = \sum_{k=0}^\infty \left( \frac{1-k}{2} \right) b_k \]  
where $b_k$ is the component of $b$ of degree $k$ and $E$ is the grading (Euler) operator.
\end{proof}

\begin{lemma}\label{lemma_spectral_2}   For each Hochschild cycle $\alpha \in CC_\bullet( \Fuk_{\mf L}(X,\bb)_w)$, one has 
\[
 (Y + D_q \bb) \star_\bb OC_m (\bb) (\alpha)
-  (D_q w ) \ OC_m (\bb)(\alpha)
- OC_{m+1}(\bb) (\alpha) \in {\rm Im}(\delta_{\rm Morse}).
\]
Similarly if $c_1(X)$ is representable then
\[
 \left( Y +  \sum_i  \frac{|\bb_i| - 2}{2} \bb_i \right) \star_\bb OC_m (\bb) (\alpha)
-  w \ OC_m (\bb)(\alpha)
- OC_{m+1}(\bb) (\alpha) \in {\rm Im}(\delta_{\rm Morse}).
\]
\end{lemma}

\begin{proof} The proof of the statement of the Lemma follows by
  considering families of fixed-angle configurations with an
  additional interior marking at some additional fixed angle that is
  not separated from the central disk by a broken edge.  Let
  $\theta_1,\ldots, \theta_{m+1}$ be an angle sequence as in
  Definition \ref{angleseq}. Consider open-closed treed disks
  $u: C \to X$ with $m+1$ auxiliary markings
  $\mathring{z}_1, \dots, \mathring{z}_{m+1}$ constrained to lie at
  angles $\theta_1,\ldots, \theta_{m+1}$ (as defined by
  \eqref{contractionmap}); the domain $C$ has exactly $m$ broken edges
  that split $C$ into a central disk $C_0$ and branches
  $C_1,\dots, C_m$ such that for $i=1,\dots, m$, $\mathring{z}_i$ lies
  in the branch $C_i$; and the last auxiliary marking
  $\mathring z_{m+1}$ is an interior marking and lies in the central
  disk $C_0$. Consider the moduli space
 $\M(\LL,Y)$ of maps $u: C \to X$ for which the images $u( \mathring{z}_i)$ lies either on $Y$ or on the bulk deformation $D_q \bb$.
 The rigid count of such maps defines a (not necessarily chain) map 
 \[ OC_{m,1}(\bb) : CC_\bullet( \Fuk^\flat_{\mf L}(X, \bb)) \to CM^\bullet(f_X, h_X) . \]
The true boundary points of the one-dimensional component of $\M(\LL,Y)$ are configurations $u: C \to X$ with an additional breaking, say at an edge $T_e$. If the new breaking is along an interior edge $T_e$, then such configurations contribute to $Y \star_\bb (OC_m(\bb)(\alpha))$, $D_q\bb \star_\bb (OC_m(\bb)(\alpha))$, or to the Morse coboundary $\delta_{\rm Morse}(OC_m(\bb)(\alpha))$.  On the other hand, if the broken edge $T_e$ is a boundary
 edge and $C'$ is the treed disk separated from $C_0$ one of the following possibilities occur:
 \begin{enumerate}
 \item If the last auxiliary marking $z_{{\diamond},m+1}$ 
   is not separated from $C_0$ by broken edges then $u: C \to X$ contributes
    to the expression $OC_{m,1}(\bb)(\delta(\alpha))$.

  \item If the new breaking $T_{e'}$ is at the $(m+1)$-st fixed offset angle $\theta$ that separates $\mathring{z}_{m+1}$ from the central disk $C_0$, then consider the $(m+1)$-st branch $C_{m+1}$.
    Two cases arise.
    \begin{enumerate}
    \item If the branch $C_{m+1}$ has at least one  normal boundary marking labelled by $\alpha$, then the configuration contributes to
      \[OC_{m+1,\black}(\bb)(\alpha). \]
    \item If the branch $C_{m+1}$ does not have any normal boundary marking labelled by $\alpha$, then as in our sample case in Section \ref{samplecase}, the configuration contributes to the difference 
    \[ (D_q w) OC_m(\bb)(\alpha) - OC_{m,+} (\bb) (\alpha) \]
    as in Figure \ref{fig_spectral_5}, where $OC_{m,+}(\bb)$ counts configurations with a branch $C_{m+1}$ containing a boundary edge labelled $D_q b$ but with no boundary edge labelled $\alpha$.
    \end{enumerate}

   \end{enumerate}

\begin{figure}[h]
\includegraphics[trim ={0 2.5cm 0 0}, clip]{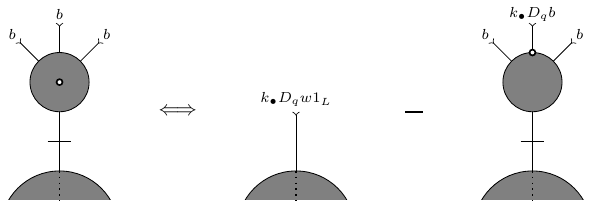} 
\caption{The appearance of the eigenvalue. In the leftmost figure, the auxiliary marking lies on $Y$ or on the bulk deformation $D_q \bb$.}
\label{fig_spectral_5}
\end{figure}
The signed count of the true boundary points of the one-dimensional components of the moduli space 
$\M(\LL,Y)$ is zero, and thus we obtain
\begin{multline*}
0 = ( Y + D_q \bb) \star_\bb OC_m (\bb)(\alpha) \\
 -  (D_q w ) \ OC_m (\bb)(\alpha)
+  OC_{m,+}(\bb)(\alpha) - OC_{m+1,\black}(\bb)(\alpha)  \ \text{mod}   \ {\rm Im}(\delta_{\rm Morse}).   
\end{multline*}
  By Lemma \ref{lemma_spectral_1} 
  $OC_{m,+}(\bb)(\alpha) + OC_{m+1,\white}(\bb)(\alpha)$ vanishes modulo boundary terms.
  Together with the fact that $OC_{m+1,\black}(\bb) + OC_{m+1,\white}(\bb)=OC_{m+1}(\bb)$, we obtain the
claimed identity 
\[
 (Y + D_q \bb) \star_\bb OC_m (\bb) (\alpha)
-  (D_q w ) \ OC_m (\bb)(\alpha)
- OC_{m+1}(\bb) (\alpha) \in {\rm Im}(\delta_{\rm Morse}).
\]
  The proof in the case that $Y$ represents $c_1(X)$
is similar.
\end{proof}

\begin{proof}[Proof of Theorem \ref{spectral}]
Suppose $Y$ represents the Poincar\'e dual of $[\omega]$ and is disjoint from $|\LL|$.
By definition, any Hochschild chain $\alpha$ has bounded length.
Hence for $m$ sufficiently large, $OC_m(\bb)(\alpha) = 0$.  
By Lemma \ref{lemma_spectral_2},
\[
( ((Y + D_q \bb) \star ) -  (D_q w ) \on{Id} )^m (OC(\bb)(\alpha)) \in {\rm Im}(\delta_{\rm Morse}).
\]
Therefore, passing to cohomology we have
\[
( ( [\omega]^\bb \star ) - (D_q w) \on{Id} )^m ([OC(\bb)]([\alpha])) = 0.
\]
Hence $[OC(\bb)]([\alpha])$ is in the generalized eigenspace of $[\omega]^\bb \star$ with eigenvalue $D_q w$.
Similarly, if $c_1(X)$ is representable with respect to $\LL$ by some $Y$ then 
\[
( ( c_1(X)^\bb \star ) - w \on{Id} )^m ([OC(\bb)]([\alpha])) = 0 \]
and $[OC(\bb)]([\alpha])$, if non-zero, has eigenvalue $w$.
\end{proof}

\begin{corollary} \label{orthogonal1}  For any $w \neq w'$,
the images of the Hochschild homology groups 
$ HH_\bullet(\Fuk_{\mf L} (X,\bb)_w) $ 
and 
$HH_\bullet( \Fuk_{\mf L} (X,\bb)_{w'}) $ 
in $QH(X,\bb)$ under $[OC(\bb)]$ are orthogonal with respect to the Poincar\'e pairing.  
\end{corollary}

\begin{proof} Since quantum multiplication by even classes is self-adjoint with respect
to the Poincar\'e pairing, the claim follows from Theorem \ref{spectral}. 
\end{proof} 

\subsection{Closed-open maps}\label{section:CO}

The closed-open map takes as input a quantum cohomology class and its output is an element of Hochschild cohomology: 
\[ 
[CO(\bb)]: QH^\bullet (X,\bb) \to HH^\bullet (\Fuk^\flat_{\cL}(X,\bb)).
\]
In the monotone situation, the construction of this map is a special case of the construction of the functor described in the work of Ma'u, Wehrheim, and the second author \cite{Mau_Wehrheim_Woodward}. For monotone symplectic manifolds $X_0$, $X_1$, \cite{Mau_Wehrheim_Woodward} defines an \ainfty functor
\[
\Phi: \Fuk(X_0^- \times X_1) \to \on{Func}(\Fuk(X_0), \Fuk(X_1)).
\]
If $X_0:=X_1:=X$, $\Phi$ maps the diagonal to the identity functor:
\[ \Phi( \Delta \subset X^-\times X) = \Id_{\Fuk(X)} \] 
and $\Phi$ restricts to an \ainfty map from the Fukaya algebra of the diagonal to the space of natural transformations on the identity functor, i.e., the space of Hochschild cochains. 

\subsubsection{Definition of the chain-level closed-open maps}

We first describe the combinatorics of the domains responsible for the chain-level maps.
  
\begin{definition} 
A {\em closed-open domain type} consists of a two-colored tree $\Gamma$ with the output $e_\infty \in \Edge_\rightarrow(\Gamma_\circ)$ in the disk part $\Gamma_\circ$ and with exactly one {gradient} leaf $e_\black \in {\rm Leaf}_{\rm grad}(\Gamma)$, a metric type $\ul\ell$ and a weighting type $\ul{\on{wt}}$ for boundary semi-infinite edges and interior constrained leaves,
as in Definition \ref{octype}. Moreover, we require that the weighting type comes from a weighting on semi-infinite edges that satisfies \eqref{weight_relation}. A {\em closed-open map type} $\bGamma$ consists of a closed-open domain type $\Gamma$ (which has $d$ boundary inputs), a collection
\[
\ul{x} = (x_0, x_1, \ldots, x_d) \in \cI( \LB_d, \LB_0)  \times \cI( \LB_0, \LB_1) \times \dots \times \cI( \LB_{d-1}, \LB_d) 
\]
of critical points corresponding to a sequence of Lagrangians
\[
\ul{L} = (L_0, \ldots, L_d ),
\]
a collection 
\[
\ul{\beta} = (\beta_v)_{v\in {\rm Vert}(\Gamma)}
\]
of homology classes, a critical point $\xx$ of the Morse function $f_X : X \to \R$, and additional interior labelling data $\ul{\lambda}$ as in \eqref{ullam} indicating whether
the interior leaf represents a Morse
trajectory in $X$ or an intersection with the Donaldson hypersurface $D$.
\end{definition}

Moduli spaces for closed-open maps with Lagrangian boundary conditions are defined similarly as the case of open-closed maps, but now the Morse trajectory on the {gradient} leaf goes in the opposite direction. Given a closed-open map type $\bGamma$ and a perturbation $P_\Gamma$, let
${\mc M}_{\bGamma}(P_\Gamma)$ denote the moduli space of stable holomorphic treed disks of map type ${\bGamma}$. Regularization of these moduli spaces can be achieved using Donaldson hypersurfaces constructed in the same way as in Theorem \ref{regular}.  Here, we also require that the system of coherent perturbations extends the existing system for defining the Fukaya category. 
In the case of no incoming boundary markings, we fix perturbations that are independent of the position of the incoming interior leaf; 
such perturbations may be chosen since the interior markings constrained to 
map to the Donaldson hypersurface already stabilize the components on which the map is non-constant.  On the other hand, transversality 
on the constant components (including transversality of the matching conditions)
may be achieved by a generic perturbation of the function on the incoming edge, independent of the domain.\footnote{This is no longer true in the case of several incoming edges, if one wants to obtain a strictly unital \ainfty morphism from $CF(\Delta,\Delta)$.  However we will not need that such a morphism is strictly unital.}   A closed-open map type is {\em essential} if it has no spherical components, all boundary edges $e \in \Edge_\circ(\Gamma)$ have positive lengths $\ell(e) > 0$, and the number of edges labelled by the Donaldson hypersurface $D$ on each surface component $S_v$ is equal to the expected number $k \omega(\beta_v)$, where $k$ is the degree of the Donaldson hypersurface. For a collection of boundary inputs $\ul{x} = (x_0, x_1, \ldots, x_d)$ and a critical point $\xx$ of $f_X$, let 
\[
{\mc M} (\ul{x}, \xx)_0:= \bigsqcup_{{\bGamma}} {\cM}_{\bGamma}(P_\Gamma)
\]
denote the union of the moduli spaces of closed-open treed disks of essential map types whose expected dimensions are zero, whose {gradient} leaf $T_{e_\black}$ is labelled $\xx$, and whose boundary insertions are $x_0, x_1, \ldots, x_d$.    

\begin{definition}{\rm(Closed-open map, without bounding cochains)}
For an integer $d \geq 0$ and a sequence of branes $\uds{ \LB}:=(\LB_0, \ldots, \LB_d)$, define a map
\begin{multline}
CO_{d, \ul{\LB }}^\sim (\bb): CM^\bullet(f_X, h_X) \to \\ {\rm Hom} \Big( CF^\bullet ( \LB_{d-1}, \LB_d) \otimes \cdots \otimes CF^\bullet  (\LB_0, \LB_d)), CF^\bullet (\LB_0, \LB_d) \Big)
\end{multline}
between the Morse complex of $(f_X, h_X)$ and Hochschild cochain complex (see \eqref{eq:hoc-cochain}) as follows. For any generator $\xx$ of $CM^\bullet(f_X, h_X)$ and generators $a_1 \in CF^\bullet (\LB_0, \LB_1)$, $\cdots$, $a_d \in CF^\bullet (\LB_{d-1}, \LB_d)$, define
\[
CO_{d, \ul{\LB}}^\sim (\bb)(\xx) (a_1\otimes \cdots \otimes a_d) := \sum_{x_0 \in \cI (\LB_0, \LB_d)} \left( \sum_{ [u] \in {\mc M}(\ul{x}, \xx)_0}  (-1)^{\heartsuit} \wt(u) \right) 
\]
with weightings $\wt(u)$ from \eqref{weightings}, extended linearly over $\Lambda$.
\end{definition}

\begin{definition}{\rm(Closed open map, with bounding cochains)} \label{defn412}
Given a subset of weakly unobstructed branes ${\mf L}$, the chain level closed-open map from the $\bb$-deformed quantum cohomology of $X$ to the flat $A_\infty$ category $\Fuk^\flat_{\mf L} (X, \bb)$ is a map
\[
CO(\bb): CM^\bullet( f_X, h_X ) \to CC^\bullet (\Fuk^\flat_{\mf L}(X, \bb))
\]
defined as follows. Suppose 
\[ \WB_i = (\LB_i, b_i), \ i = 0, \ldots, d \] 
are weakly unobstructed branes in ${\mf L}$. For each $\xx$, $CO(\bb)(\xx)$ is the cochain that maps
\[
a_d \otimes \cdots \otimes a_1 \in  {\rm Hom} ( \WB_{d-1}, \WB_d ) \otimes \cdots \otimes {\rm Hom} ( \WB_0, \WB_1)
\]
to the following element in ${\rm Hom}(\WB_0, \WB_d)$:
\[
\sum_{j_1,\ldots,j_d \geq 0} CO_{d + j_1 + \cdots + j_d, \ul{\LB}}^\sim (\bb) (\xx) (\underbrace{b_d,\ldots, b_d}_{j_d},a_d,
\ldots, a_2,
\underbrace{b_1,
  \ldots,b_1}_{j_1},
a_1,  \underbrace{b_0,\ldots, b_0}_{j_0} ).
\]
\end{definition}

See Figure \ref{closedopen} for an illustration of a typical configuration possibly contributing to the closed-open map

\begin{figure}[ht]
    \centering
   \includegraphics[height=2in]{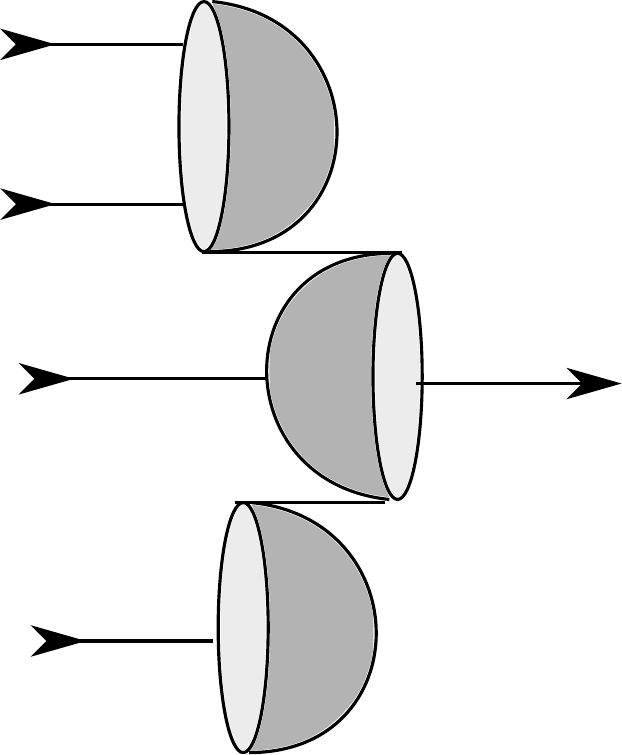}
    \caption{A configuration that possibly contributes to the closed-open map (Maurer--Cartan insertions are omitted).}
    \label{closedopen}
\end{figure}

First we show the closed-open map induces a map from the quantum cohomology to the Hochschild cohomology. 
\begin{theorem} 
The map $CO(\bb): CM^\bullet (f_X, h_X ) \to CC^\bullet (\Fuk^\flat_{\mf L}(X, \bb))$ defined by Definition \ref{defn412} has the following properties. 
\begin{enumerate}
    \item \label{cochainy} $CO(\bb)$ is a cochain map.
    
    \item \label{unity} If $d \geq 1$ and $a_i = 1_{L_i}^\circt$ for some $i = 1, \ldots, d$, then 
    \[
    CO (\bb)(\xx) (a_d \otimes \cdots \otimes a_1) = 0.
    \]
\end{enumerate}
\end{theorem}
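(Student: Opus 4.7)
Proof proposal.

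For part \eqref{cochainy}, the plan is to run the standard boundary-of-one-dimensional-moduli-space argument. Fixing asymptotic data $(\uds x, \xx)$, I would consider the compactified union $\ol{\M}(\uds x, \xx)_1$ of essential closed-open moduli spaces of expected dimension one. A closed-open analog of Lemma \ref{tboundplus} classifies its true codimension-one boundary strata into three kinds: first, a single breaking of the interior long leaf, which detaches a Morse trajectory of $F_X$ and accounts for $CO(\bb)(\delta_{\rm Morse}\xx)$; second, a breaking along a boundary semi-infinite edge, which by the cutting-edges axiom of Definition \ref{defncoherent} splits the configuration into a smaller closed-open piece together with an $A_\infty$ composition map of $\Fuk_\cL(X,\bb)$, and assembles into the two sums in \eqref{eq:ccm1} that define $m^1_{CC^*}(CO(\bb)\xx)$; and third, weight-changing degenerations at forgettable or weighted boundary leaves, handled exactly as in Lemma \ref{tbound}. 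The vanishing of the signed count of boundary points of this compact $1$-manifold yields, after sign verification parallel to that in \cite{flips}, the cochain map identity
\begin{equation*}
CO(\bb)\circ \delta_{\rm Morse} = m^1_{CC^*} \circ CO(\bb).
\end{equation*}
The Maurer-Cartan insertions $b_j$ on boundary edges are absorbed into the boundary combinatorics in the same way as in the verification that the deformed maps $m_d^w$ of \eqref{mddef} satisfy the $A_\infty$ axioms, using that $m(b_j)$ is a multiple of $1_{\phi_j}$.

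For part \eqref{unity}, the plan is to invoke the forgetful axiom of Definition \ref{defncoherent}. Suppose $d \ge 1$ and $x_i = 1_{\phi_{i-1},c}^\circt$; then on any essential closed-open combinatorial type $\Gamma$ contributing to $CO_{d, \uds\phi}(\bb)(\xx)(x_1 \otimes \cdots \otimes x_d)$, the $i$-th incoming boundary semi-infinite edge $e$ is forgettable. Let $\Gamma_e$ denote the domain type obtained from $\Gamma$ by forgetting $e$ and stabilizing, and $\bm\Gamma_e$ the associated map type. The forgetful axiom states that $P_\Gamma$ is the pullback of $P_{\Gamma_e}$ under the contraction $\ol{\U}_\Gamma \to \ol{\U}_{\Gamma_e}$, so the induced projection
\begin{equation*}
\M_{\bm\Gamma}(P_\Gamma) \to \M_{\bm\Gamma_e}(P_{\Gamma_e})
\end{equation*}
has fiber the interval parametrizing the position of the forgotten marking on the appropriate boundary component of the underlying treed disk. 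Since this fiber has positive dimension, every regular zero-dimensional essential moduli space with a forgettable boundary input is empty, and each term in the sum defining $CO_{d, \uds\phi}(\bb)(\xx)(x_1 \otimes \cdots \otimes x_d)$ vanishes.

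The main technical obstacle will be the careful sign verification in part \eqref{cochainy}, together with the bookkeeping of Maurer-Cartan insertions on boundary edges; the combinatorial classification of boundary strata and the strict unit vanishing in part \eqref{unity} are essentially formal consequences of the closed-open analog of Lemma \ref{tboundplus} and the forgetful axiom of Definition \ref{defncoherent}.
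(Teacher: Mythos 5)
Your proposal is correct and follows essentially the same route as the paper: part \eqref{cochainy} by the boundary-of-a-one-dimensional-moduli-space argument with the closed-open analog of Lemma \ref{tboundplus} (boundary breakings giving the Hochschild differential and interior long-leaf breakings giving the Morse differential), and part \eqref{unity} via the forgetful axiom of Definition \ref{defncoherent}, exactly as in the strict unitality proof for the Fukaya category. Your write-up is more explicit than the paper's terse sketch, in particular in spelling out the positive-dimensional fiber argument for \eqref{unity} and flagging the weight-changing boundary strata, but these are elaborations of the same argument rather than a different approach.
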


\begin{proof} 
To prove the claim \eqref{cochainy} that $CO(\bb)$ is a chain map, consider the one-dimensional moduli space ${\mc M}(\ul{x}, \xx)_1$ for fixed labelling data $\ul{x}$ and $\xx$. A compactness theorem similar to Lemma \ref{refinedcompactness} shows that the boundary of such a moduli space consists of once-broken configurations, that is, strata ${\mc M}_{\bGamma}(P_\Gamma)$ of expected dimension zero with one infinite-length edge $e \in \Edge(\bGamma)$. The broken edge $e$ could be on the boundary, so that part of the configuration contributes to the differential $\delta$ of the Hochschild cochain complex, or on the interior {gradient} leaf which corresponds to the Morse differential on $X$.
To prove property \eqref{unity}, notice that the perturbation data $P_\Gamma$ for types involving forgettable boundary inputs 
$e \in \Edge_{\rightarrow,\white}(\Gamma)$ is pulled back under the forgetful map that removes such inputs and collapses unstable components.  Property \eqref{unity} holds in the same way as the unitality of the Fukaya category.
\end{proof}

\subsubsection{A spectral property of the closed-open map}

We prove a spectral property of the closed-open map similar to the spectral property of the open-closed map. This result allows us to obtain a refined generation result for the Fukaya category (see the statement of Theorem \ref{crit}).

\begin{theorem}\label{CO_spectral}
Suppose $\lambda, w \in \Lambda$ with $\lambda \neq D_q w$. Suppose ${\bm K} \in MC({\mc L})$ is a weakly unobstructed brane with potential function $w$ and $\gamma\in QH^\bullet(X; \bb)$ is a generalized eigenvector of the quantum multiplication by $[\omega]^\bb$ corresponding to eigenvalue $\lambda$.
Then
\[
[CO(\bb)_{0, {\bm K}}](\gamma) = 0 \in HF^\bullet( {\bm K}, {\bm K}).
\]
The same identity holds if $\gamma$ is a generalized eigenvector of 
 the quantum multiplication by $c_1(X)^\bb$ corresponding to eigenvalue $\lambda \neq w$.
\end{theorem}

\begin{proof}[Sketch of Proof] Similar to the case of the open-closed map, we introduce a new kind of closed-open domain with an interior marking at a a distinct angle $\theta \in (0, 2\pi)$ and a chain-level map
\[
CO_+: CM^\bullet( f_X, h_X) \to CF^\bullet( {\bm K}, {\bm K})
\]
that counts configurations described by Figure \ref{fig:CO_spectral}. 
\begin{figure}[h]
    \centering
    \includegraphics{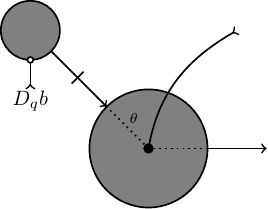}
    \caption{A configuration contributing to $CO_+$. There is one auxiliary boundary marking labelled by $D_q b_{\bm K}$. Boundary insertions by weakly bounding cochains are omitted.}
    \label{fig:CO_spectral}
\end{figure}
It follows in a way similar to Lemma \ref{lemma_spectral_1} that if the input $\xx$ is a Morse cocycle, then  $ 
CO_+(\xx) \in {\rm Im} (m_{1, {\bm K}})$ is a Floer coboundary. 
Choose a submanifold $Y\subset X$ that represents the Poincar\'e dual of $[\omega]$ and that intersects transversely with the Donaldson hypersurface $D$.  Via the moduli space similar to that described by Figure \ref{fig:spectral0} with the directions on all semi-infinite edges reversed, one obtains that on the chain level
\[
CO(\bb)_{0, {\bm K}} ( (Y + D_q \bb) \star_\bb(\xx)) = (D_q w) CO(\bb)_{0, {\bm K}}(\xx) + CO_+(\xx).
\]
Hence, for $\gamma$ a generalized eigenvector corresponding to $\lambda$, one has
for some positive integer $m$
\[
(D_q w - \lambda)^m [CO(\bb)_{0, {\bm K}}] (\gamma) = 0.
\]
As $\lambda \neq D_q w$, the first part of the theorem follows.
The second part is similar, using that since ${\bm K}$ is orientable, 
the first Chern class $c_1(X)$ is representable by a generic section of the 
anticanonical bundle that is non-vanishing on ${\bm K}$.   
\end{proof}

\subsubsection{The homomorphism property}

Lastly, we show that the map on the cohomology level intertwines with the ring structures. 

\begin{theorem}\label{homo}
The cohomology-level closed-open map
\[
[CO(\bb)]: QH^\bullet(X ,\bb) \to HH^\bullet \left( \Fuk^\flat_{\mc L} (X,\bb) \right)
\]
is a unital ring homomorphism.
\end{theorem}

Before giving the proof, we note the following consequence for automorphism algebras of objects in the Fukaya category. Given a weakly unobstructed brane $\WB = (\LB, b) \in MC({\mc L})$, we may consider the component 
\[
CO_{0,\WB}(\bb):  CM^\bullet (f_X, h_X) \to {\rm Hom}( \WB, \WB) \cong CF^\bullet( \LB, \LB)
\]
of the closed-open map that outputs Hochschild cochains of length zero lying in $CF^\bullet (\LB, \LB)$. Theorem \ref{homo} implies that the resulting map is a ring homomorphism from the quantum cohomology of $X$ to the Floer cohomology of $\WB$, and in particular non-vanishing of Floer cohomology 
gives eigenvalues for quantum multiplication; we thank Marco Castronovo 
for discussions on this point.
\begin{corollary} 
For any weakly unobstructed brane $\WB\in MC({\mc L})$, the closed-open map
\[
[CO_{0,\WB}(\bb)] : QH^\bullet (X, \bb) \to HF^\bullet (\WB, \WB)
\]
is a unital ring homomorphism.  In particular, if $HF^\bullet (\WB, \WB)$
is non-zero and $\WB$ lies in 
$ \Fuk_{\mf L}(X,\bb)_w$ then 
$w$ is an eigenvalue for quantum multiplication by $c_1(X)^\bb$.
\end{corollary}

\begin{proof}  The first part of the statement of the Corollary 
is immediate from Theorem \ref{homo} and Definition \ref{hhid}. 
The second part follows from the unitality property and the fact
that if $HF(\WB,\WB)$ is non-zero, then the unit must be non-vanishing.
By Theorem \ref{CO_spectral}, the unit in $QH(X,\bb)$ must have a non-vanishing eigen-component for quantum multiplication by $c_1(X)^\bb$.
\end{proof} 

The central ingredient of the proof of Theorem \ref{homo} is the notion of
a balancing condition on some interior markings on a disk which is similar
to the notion of {\em quilted disks} in \cite{Mau_Wehrheim_Woodward}. 

\begin{definition}{\rm(Balanced marked disks)}\label{defn_balanced_disk}
Consider a marked disk $S \simeq \DD$ with two interior markings $z', z''$ and boundary markings
 $\ul z = (z_0,\dots,z_d)$. The marked disk $(S, z', z'', \ul z)$  is {\em balanced} if the interior markings $z', z''$ and the  boundary output marking $z_0$ lie on a circle $S \subset \DD$ tangent to $\partial \DD$ at $z_0$.  In the combinatorial type of a  balanced treed disk, the interior markings $z', z''$ correspond to {gradient} leaves.  This ends the Definition.
\end{definition}

The moduli space of balanced disks is defined as follows. \label{tangentcircle} The balanced condition is
invariant under the action of $PSL(2; {\mb R}) \cong {\rm Aut}(\DD)$, 
and we denote by $\M^b$ the set of isomorphism classes of balanced disks.
This moduli space can be equipped with a Hausdorff topology in the same way as for marked disks.

\begin{figure}[ht]
 \centering \scalebox{.8}{
\begingroup%
  \makeatletter%
  \providecommand\color[2][]{%
    \errmessage{(Inkscape) Color is used for the text in Inkscape, but the package 'color.sty' is not loaded}%
    \renewcommand\color[2][]{}%
  }%
  \providecommand\transparent[1]{%
    \errmessage{(Inkscape) Transparency is used (non-zero) for the text in Inkscape, but the package 'transparent.sty' is not loaded}%
    \renewcommand\transparent[1]{}%
  }%
  \providecommand\rotatebox[2]{#2}%
  \newcommand*\fsize{\dimexpr\f@size pt\relax}%
  \newcommand*\lineheight[1]{\fontsize{\fsize}{#1\fsize}\selectfont}%
  \ifx\svgwidth\undefined%
    \setlength{\unitlength}{542.54996712bp}%
    \ifx\svgscale\undefined%
      \relax%
    \else%
      \setlength{\unitlength}{\unitlength * \real{\svgscale}}%
    \fi%
  \else%
    \setlength{\unitlength}{\svgwidth}%
  \fi%
  \global\let\svgwidth\undefined%
  \global\let\svgscale\undefined%
  \makeatother%
  \begin{picture}(1,0.91235834)%
    \lineheight{1}%
    \setlength\tabcolsep{0pt}%
    \put(0,0){\includegraphics[width=\unitlength,page=1]{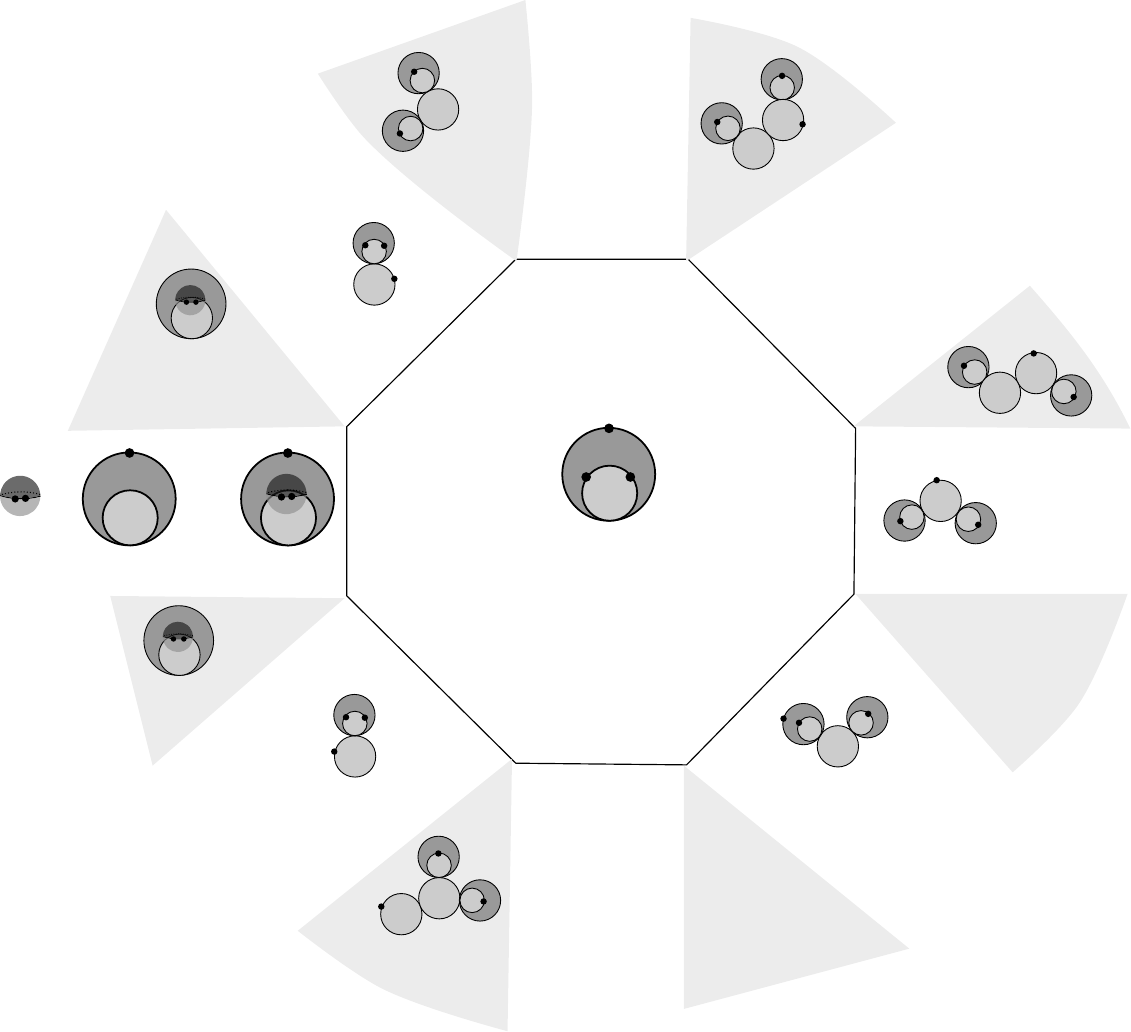}}%
    \put(0.17228804,0.46521938){\color[rgb]{0,0,0}\makebox(0,0)[lt]{\lineheight{1.25}\smash{\begin{tabular}[t]{l}=\end{tabular}}}}%
    \put(0.04591423,0.46591759){\color[rgb]{0,0,0}\makebox(0,0)[lt]{\lineheight{1.25}\smash{\begin{tabular}[t]{l}+\end{tabular}}}}%
    \put(0,0){\includegraphics[width=\unitlength,page=2]{mwfig.pdf}}%
  \end{picture}%
\endgroup%
}
 \caption{The compactified moduli space of balanced disks with two interior markings (long leaves), one boundary input marking, and one boundary output marking.}
  \label{fig:mwfig}
\end{figure}

\begin{remark} {\rm(Balanced versus quilted disks)} 
The balancing condition in the above definition is similar to the condition in Ma'u-Wehrheim-Woodward \cite{Mau_Wehrheim_Woodward} that the interior markings lie on the same seam. Therefore in Figure \ref{fig:mwfig}, balanced disks are depicted in a similar manner to quilted disks; the circle containing  $z', z'', z_0$ resembles a quilting circle and the part of the disk above resp. below the circle is colored dark  resp. light. However the balancing condition differs from the  quilting condition in that in the compactification of the moduli  space, disk/sphere components that do not contain either of the markings  $z'$, $z''$ are unquilted, and boundary inputs are allowed to  be incident on light components as well. Figure \ref{fig:mwfig}  shows the compactified moduli space of balanced disks with one  boundary input, which may be contrasted with the quilted version in  \cite[Figure 11]{Mau_Wehrheim_Woodward}.
\end{remark}

We generalize the balanced condition to treed disks.
\begin{definition}\label{balanced}{\rm(Balanced treed disk)}
Consider a treed disk $C = S \cup T$ of domain type $\Gamma$ that has two {gradient} leaves $e', e''$ and one boundary output. We say that $C$ is {\em balanced} if the following conditions are satisfied. Let $v', v'' \in {\rm Vert}(\Gamma_\circ)$ be the two vertices in the disk part that are closest to the two {gradient} leaves $e'$ and $e''$ respectively.

\begin{enumerate}
\item If $v' \neq v''$, then for the (unique) path $e_1, e_2, \ldots, e_k$ in $\Gamma_\circ$ connecting $v'$ and $v''$, require 
\[
   \sum_{i=1}^k \pm \ell(e_i) = 0
\]
 where the signs depend on whether the direction of the path is towards the root or away from the root.
    
 \item If $v' = v'' = v$, then let $z', z'' \in S_v \simeq \DD$ be the node corresponding to them and let $z_0\in \partial S_v$ be the node towards the output. Then we require that the marked disk $(S_v, z', z'', z_0)$ is balanced (see Definition \ref{defn_balanced_disk}).
\end{enumerate}
\end{definition}

We make a few remarks on the differences between the moduli space of balanced treed disks and the moduli spaces of treed disks used before.  For any stable domain type $\Gamma$ of treed disks with two {gradient} leaves, inside the moduli space ${\cM}_\Gamma$ of stable treed disks the locus of balanced treed disks, denoted by $ {\mc   M}_\Gamma^b \subset {\mc   M}_\Gamma$ is a real codimension one submanifold. See Figure \ref{ringmap} for an illustration of a compactified moduli space of balanced treed disks with two {gradient} leaves.  As opposed to stable maps, the number of nodes is not equal to the codimension of the stratum; instead, there are relations on the gluing parameters arising from the fact that the markings must lie on the same interior circle. See \cite{Mau_Wehrheim_Woodward} for more details (for disks rather than treed disks).

We introduce moduli spaces of balanced disks with Lagrangian boundary conditions as follows. For any map type ${\bGamma}$ let ${\cM}_{\bGamma}^b(P_\Gamma)$ denote the moduli space of maps from balanced disks with perturbation data $P_\Gamma$. The transversality argument of Section \ref{regularize} can be extended to guarantee that ${\cM}_{\bGamma}^b (P_\Gamma)$ is cut out transversely as {gradient} as ${\bGamma}$ is uncrowded. We also require that the coherent system of perturbations extends the existing one used for defining the Fukaya category and the quantum multiplication.

\begin{figure}[ht]
\centering \centering \scalebox{.8}{
\begingroup%
  \makeatletter%
  \providecommand\color[2][]{%
    \errmessage{(Inkscape) Color is used for the text in Inkscape, but the package 'color.sty' is not loaded}%
    \renewcommand\color[2][]{}%
  }%
  \providecommand\transparent[1]{%
    \errmessage{(Inkscape) Transparency is used (non-zero) for the text in Inkscape, but the package 'transparent.sty' is not loaded}%
    \renewcommand\transparent[1]{}%
  }%
  \providecommand\rotatebox[2]{#2}%
  \newcommand*\fsize{\dimexpr\f@size pt\relax}%
  \newcommand*\lineheight[1]{\fontsize{\fsize}{#1\fsize}\selectfont}%
  \ifx\svgwidth\undefined%
    \setlength{\unitlength}{497.22014099bp}%
    \ifx\svgscale\undefined%
      \relax%
    \else%
      \setlength{\unitlength}{\unitlength * \real{\svgscale}}%
    \fi%
  \else%
    \setlength{\unitlength}{\svgwidth}%
  \fi%
  \global\let\svgwidth\undefined%
  \global\let\svgscale\undefined%
  \makeatother%
  \begin{picture}(1,0.16243626)%
    \lineheight{1}%
    \setlength\tabcolsep{0pt}%
    \put(0,0){\includegraphics[width=\unitlength,page=1]{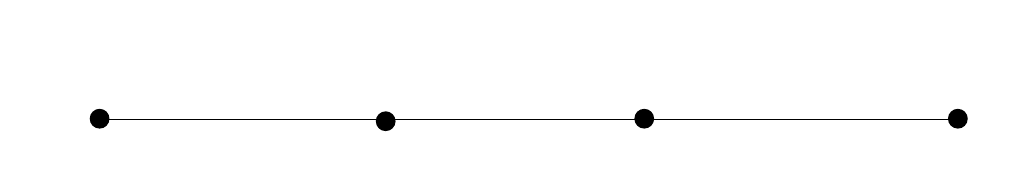}}%
    \put(0.08523909,0.00720978){\color[rgb]{0,0,0}\makebox(0,0)[lt]{\lineheight{1.25}\smash{\begin{tabular}[t]{l}$\rho=-\infty$\end{tabular}}}}%
    \put(0.35202463,0.00479285){\color[rgb]{0,0,0}\makebox(0,0)[lt]{\lineheight{1.25}\smash{\begin{tabular}[t]{l}$\rho=0$\end{tabular}}}}%
    \put(0.59085508,0.00720912){\color[rgb]{0,0,0}\makebox(0,0)[lt]{\lineheight{1.25}\smash{\begin{tabular}[t]{l}$\rho=1$\end{tabular}}}}%
    \put(0.88929997,0.00720912){\color[rgb]{0,0,0}\makebox(0,0)[lt]{\lineheight{1.25}\smash{\begin{tabular}[t]{l}$\rho=\infty$\end{tabular}}}}%
    \put(0,0){\includegraphics[width=\unitlength,page=2]{ring2.pdf}}%
    \put(0.71529543,0.10485339){\color[rgb]{0,0,0}\makebox(0,0)[lt]{\lineheight{1.25}\smash{\begin{tabular}[t]{l}{\tiny $\tau$}\end{tabular}}}}%
    \put(0.78059501,0.10394225){\color[rgb]{0,0,0}\makebox(0,0)[lt]{\lineheight{1.25}\smash{\begin{tabular}[t]{l}{\tiny $\tau$}\end{tabular}}}}%
    \put(0,0){\includegraphics[width=\unitlength,page=3]{ring2.pdf}}%
  \end{picture}%
\endgroup%
}
    \caption{The compactified moduli space of balanced treed disks with two interior markings (long leaves) and one output. This moduli space is one-dimensional and the points $\rho=0, 1$ are fake boundary strata.}
   \label{ringmap}
\end{figure} 

\begin{proof}[Proof of Theorem \ref{homo}] \label{homoproof} Denote by $\ast$ the Yoneda product $\mu^2_{CC^\bullet}$ of \eqref{eq:ccmd}. Fix Morse cocycles ${\xx}_1, {\xx}_2 \in CM^\bullet(f_X, h_X)$. We will show that the difference
\[
CO(\bb)( {\xx}_1 \star_\bb {\xx}_2) - CO(\bb)({\xx}_1) \ast CO(\bb)({\xx}_2)
\]
is a coboundary in the Hochschild cochain complex. To reduce notational complexities, we assume that both ${\xx}_1$ and ${\xx}_2$ are single critical points. We first construct the coboundary. For any $w \in \Lambda$, consider weakly unobstructed branes $\WB_0, \cdots, \WB_d$ with potential function having value $w$; consider generators $\ul{a} = (a_d, \ldots, a_0)$ where
\[
a_1 \in CF^\bullet ( \LB_0,  \LB_1), \ldots, a_d \in CF^\bullet ( \LB_{d-1}, \LB_d), a_0 \in CF^\bullet( \LB_0, \LB_d).
\]
Fix $j_0, j_1, \ldots, j_d \geq 0$ and consider balanced domain types $\Gamma$ with two {gradient} leaves, $d + j_0 + \cdots + j_d$ boundary inputs and essential map types ${\bGamma}$ of expected dimension zero whose {gradient} leaves are labelled by ${\xx}_1, {\xx}_2$ and whose boundary inputs are labelled by 
\[
\underbrace{b_0, \ldots, b_0}_{j_0}, a_1, \underbrace{b_1, \ldots, b_1}_{j_1}, a_2, \cdots, a_d, \underbrace{b_d, \ldots, b_d}_{j_d}, a_0
\]
(in counterclockwise orientation, the last one is the output). For
each such moduli space ${\cM}_{\bGamma}^b$ the count of rigid
elements defines an element
\[
\tau({\xx}_1, {\xx}_2)(\ul{a}) \in \Lambda. 
\]
and so one obtains a cochain
\[
\tau({\xx}_1, {\xx}_2) \in CC^\bullet (\Fuk_{\mc L} (X, \bb)_w ).
\]
We claim that 
\begin{equation}\label{eqn37}
CO(\bb)( {\xx}_1 \star_\bb {\xx}_2) - CO(\bb)({\xx}_1) \ast CO(\bb)({\xx}_2) = m_1^{CC} (\tau({\xx}_1, {\xx}_2)).
\end{equation}
To show this relation, consider a one-dimensional balanced moduli space with {gradient} leaves labelled by ${\xx}_1, {\xx}_2$ and any number of boundary inputs. There are three types of true boundary strata, see types (a), (b), (c) in Figure \ref{fig:cobdry}. In the first type (a) of boundary strata, there is a broken treed segment at an interior node. In the second type (b), there are two boundary breakings on a path connecting the two disk components having the two {gradient} leaves. In the third type, there is one boundary breaking that is not in the path connecting the disk components having the two {gradient} leaves. These types correspond to the three terms in \eqref{eqn37}.  

It remains to show that the closed-open map is unital.  
 One possible argument would be to construct $CO$ on the chain level so that it maps strict units
 to strict units. Rather than take this route, we  note that because the perturbations 
were chosen to be independent of the position of the leaf labelled by $\xx$, configurations
with input $\xx$ equal to the geometric unit $x_{\max}$ and no other boundary inputs can be rigid only if the underlying configuration is unstable, which  means that the map is constant and has a single output, necessarily the geometric unit in $CF(\WB,\WB).$   Since the geometric unit minus the strict unit 
$1_\WB^\whitet$ is the boundary of 
$1_\WB^\greyt$ up to terms with higher $q$-valuation, this implies that
with notation from Remark \ref{hhid} 
the difference $[CO(\bb)] (1_{QH(X)}) - 1_{HH(\cF^\flat)} $ has positive $q$-valuation, or vanishes.  Suppose that the difference is non-vanishing. Then for some $\Upsilon_0,\Upsilon_1 \in 
HH^\bullet \left( \Fuk^\flat_{\mc L} (X,\bb) \right)$ we have 
\[  [CO(\bb)] (1_{QH(X)}) = 1_{HH(\cF^\flat)} + \Upsilon_0  + \Upsilon_1\]
where $\Upsilon_1$ 
has length at least one in the length filtration on $HH(\Fuk^\flat_{\mc L} (X,\bb))$.
Write  $\Upsilon_0  = \Upsilon_0' + \Upsilon_0''$ where
$\Upsilon_0'$ is homogeneous in $q$ and $\val_q(\Upsilon_0'') > \val_q(\Upsilon_0')$, if non-vanishing.   We view $\Upsilon_0'$
as the leading order term in $\Upsilon_0 + \Upsilon_1$.
The homomorphism property and preservation of the length filtration implies that 
\begin{eqnarray*}
[CO(\bb)] (1_{QH(X)})^2 &=&  (1_{HH(\cF^\flat)} + \Upsilon_0' + \ldots)^2 \\
&=& (1_{HH(\cF^\flat)} + 2\Upsilon_0'  + \ldots ) \\
&=& [CO(\bb)] (1_{QH(X)}) = 1_{HH(\cF^\flat)} + \Upsilon_0'  + \ldots   .\end{eqnarray*}
Hence $2\Upsilon_0' = \Upsilon_0'$ which forces $\Upsilon_0$ to vanish.
If $\Upsilon_1$ has length $\ell \ge 1$ (that is, can be represented by a cochain
which vanishes unless the number of inputs is at least $\ell$)
then $\Upsilon_1^2$ has  length at least $2\ell$, which is a contradcition unless $\Upsilon_1$ vanishes.
\end{proof}

\begin{figure}[ht]
\centering \scalebox{.8}{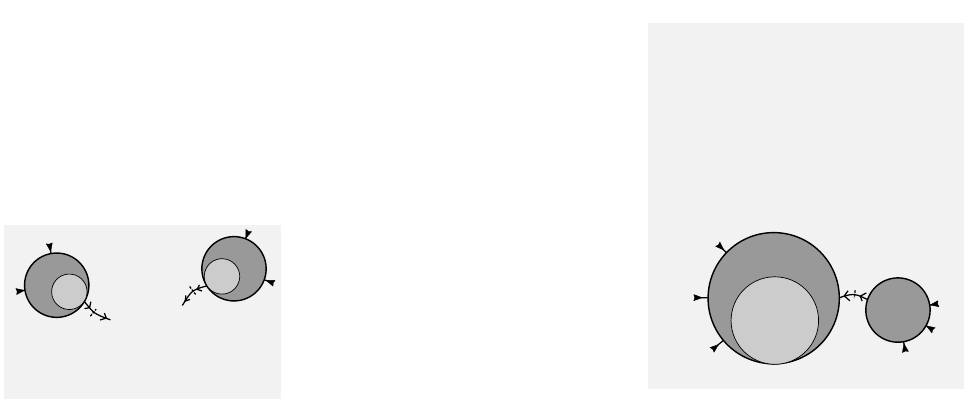}
\caption{Curve types (a), (b), (c) that can occur on the boundary of a one-dimensional moduli space of balanced treed disks with two {gradient} leaves. These three types contribute to the relation \eqref{eqn37}.}
\label{fig:cobdry}
\end{figure}

\section{Abouzaid's split-generation criterion}\label{acrit}\label{section4}

In this section,  we adapt Abouzaid's criterion \cite{Abouzaid_generation} for the split-generation of the Fukaya category to the non-exact case in which the \ainfty composition maps are defined by counts of treed
disks. We follow the argument of \cite{Abouzaid_generation} to prove Theorem \ref{crit}. The main technical input is the use of moduli spaces of treed annuli and a particular way of degenerating treed annuli. Using a  different degeneration we also prove that disjoint branes have orthogonal images under the open-closed maps, i.e., Theorem \ref{thm110}.

\subsection{The Cardy diagram} 

The idea of Abouzaid's construction is to produce the maps necessary for writing a Lagrangian as a mapping cone by degenerating holomorphic annuli to pairs of disks.  Given a collection ${\mf G}$ of objects of $\Fuk^\flat_{\mc L} (X,\bb)$, we wish to show that any object ${\bm K}$ of $\Fuk^\flat_{\mc L} (X,\bb)$ is split-generated (see Definition \ref{def:split-gen}) by the objects  ${\mf G}$.  For example, we might hope to show that ${\bm K}$ is a sub-object of some object $\WB$ of ${\mf G}$; to show this we want morphisms
\begin{align*}
&\ \alpha \in \Hom ( {\bm K}, \WB ),  &\ \beta \in \Hom ( \WB, {\bm K} ) 
\end{align*}
such that 
\[
m_2 (\alpha,\beta) = 1_{\bm K} \in \Hom ({\bm K}, {\bm K} ) .
\]
Naturally one hopes that the chains $\alpha,\beta$ can be produced geometrically as a count of holomorphic disks with two outputs.  If this is the case, one can glue to obtain holomorphic annuli with an output labelled by the identity $1_{\bm K}$.  A degeneration of the annulus to ``infinite length'' shows that a count of holomorphic disks with a single output must be non-trivial, see Figure \ref{mannuli}.

The result, Abouzaid's criterion Theorem \ref{crit}, gives a factorization of the open-closed and closed-open maps through the tensor product of Yoneda modules. 

\begin{definition}{\rm(Yoneda modules, collapsing map)}
Let $\WK$ be an object of the flat Fukaya category $\Fuk^\flat_{\mc L}(X, \bb)$. 
\begin{enumerate}
\item For any $w \in \Lam$ and $\WK \in \Obj(\Fuk_{\mc L}(X,\bb)_w)$, 
denote by $ \YY_\WK^{\rm L}$ resp. $\YY_\WK^{\rm R}$ the left resp. right {\em Yoneda module} over $\Fuk^\flat_{\mc L} (X,\bb)$ defined on objects by
\begin{align*}
&\ \YY_\WK^{\rm R}( \WB ) = \Hom( \WB, \WK ), &\  \YY_\WK^{\rm L} ( \WB ) = \Hom ( \WK, \WB ) 
\end{align*}
 for $\WB \in \Obj(\Fuk_{\mc L}(X,\bb)_w)$.

\item The tensor product of Yoneda modules is an \ainfty bimodule over $\Fuk^\flat_{\mc L} (X,\bb)$ denoted $\YY_\WK^{\rm L} \otimes \YY_\WK^{\rm R}$. It is hence an $A_\infty$ bimodule over any full subcategory $\Fuk^\flat_{\mf G}(X, \bb)$ by restricting to a subset of weakly bounding cochains ${\mf G} \subset MC({\mc L})$. The Hochschild homology
\[
HH_\bullet (\Fuk^\flat_{\mf G} (X,\bb), \YY_\WK^{\rm L} \otimes \YY_\WK^{\rm R} ) = H_\bullet ( \YY_\WK^{\rm R} \otimes_{\Fuk^\flat_{\mf G} (X,\bb)} \YY_\WK^{\rm L} )
\]
is computed by the {\em bar complex} 
\begin{multline} \label{barcomplex} 
B (\YY_\WK^{r} \otimes_{\Fuk^\flat_{\mf G} (X,\bb)} \YY_\WK^{l})\\  = \bigoplus_{k=0}^\infty
  \bigoplus_{\substack{\WB_1, \ldots, \WB_k \in {\mf G}\\ w(\WB_i)=w(\WK) }} {\rm Hom} (\WB_{k}, \WK)  \otimes \cdots \otimes {\rm Hom} (\WB_{1}, \WB_{2}) \otimes  {\rm Hom} (\WK, \WB_{1})  
\end{multline}
with differential given by the possible ways of collapsing. Here the $k=0$ summand is $\Hom(\WK, \WK)$. 

\item The {\it collapsing map}
\[
\mu_\WK: B(\YY_\WK^{\rm R} \otimes_{\Fuk^\flat_{\mf G} (X,\bb)} \YY_\WK^{\rm L}) \to \Hom (\WK,\WK )
\]
is defined by composing all factors in \eqref{barcomplex}:
\[
\mu_\WK: a_+ \otimes a_k \otimes \ldots \otimes a_1 \otimes a_- \mapsto (-1)^{\diamondsuit} m_{k+2}(a_+ ,a_k,\ldots, a_1, a_-)
\]
where $\diamondsuit$ is the Koszul sign 
\[
| a_-| + \sum_{j=1}^k \Vert a_j \Vert .
\]
The $A_\infty$ relation implies that $\mu_\WK$ is a chain map, hence induces a map
\begin{equation}
\mu_\WK: H_\bullet ( {\mc Y}_\WK^{\rm R} \otimes_{\Fuk^\flat_{\mf G}(X, \bb)} {\mc Y}_\WK^{\rm L} ) \to HF^\bullet( \WK, \WK).
\end{equation}

\end{enumerate}
\end{definition}

The following characterization of split-generation (see \cite[Lemma 1.4]{Abouzaid_generation}) will serve as the definition for our purposes.
\begin{definition}{\rm(Split-generation)} \label{def:split-gen}
  A flat $A_\infty$-category ${\mf L}$ is split-generated by a set of objects ${\mf G}$ if for any object $\WK$ in ${\mf L}$, the image $\mu_\WK(H_\bullet ( {\mc Y}_\WK^{\rm R} \otimes_{\Fuk^\flat_{\mf G}(X, \bb)} {\mc Y}_\WK^{\rm L} ))$ contains the identity element $1_\WK^\whitet \in HF(\WK, \WK)$.
\end{definition}

\subsubsection{The coproduct}

We define an \ainfty coproduct functor 
\[
\delta_\WK: \Fuk^\flat_{\mf G}(X, \bb) \to {\mc Y}_\WK^{\rm L} \otimes {\mc Y}_\WK^{\rm R}
\]
by counting treed disks with two outputs:  Such a functor consists of a collection of maps $\{\delta_{r|1|s}\}_{r,s \geq 0}$ where 
\begin{multline}
  \delta_{r|1|s} : \ \Hom (\WB_{r-1}, \WB_r) \otimes \cdots \otimes \Hom (\WB_0, \WB_1)
  \otimes \Hom (\WB_0, \WB_0') \otimes \WB(\WB_0', \WB_1') \otimes \\ \cdots \otimes
  \Hom (\WB_{s-1}', \WB_s') \to \Hom (\WK, \WB_r) \otimes \Hom (\WB_s', \WK), 
  \end{multline}
satisfying an \ainfty axiom (see \cite[(4.13)]{Abouzaid_generation}).  We will define these maps $\delta_{r|1|s}$ by counting holomorphic disks with two outputs. As the moduli spaces are different from what we have been using, the construction deserves a separate discussion.

We briefly discuss the moduli spaces of treed disks with two outputs. The domain types are two-colored trees with weighting types on semi-infinite edges and metric types on finite edges. However, in comparison with the types used for the construction of the Fukaya algebras, the trees are no longer rooted. The restriction on the weighting types is different from Definition \ref{defn23}. We require that both outputs are unforgetful (labelled by $\blackt$) while inputs can still be forgetful ($\circt$), unforgetful ($\blackt$), or weighted ($\greyt$). The stability condition remains the same. For each stable domain type $\Gamma$, there is a moduli space ${\mc M}_\Gamma$ and its compactification $\ov{\mc M}_\Gamma$ as well as the universal curve $\ov{\mc U}_\Gamma \to \ov{\mc M}_\Gamma$. Notice that treed disks with two outputs can degenerate to broken treed disks whose unbroken components can have only one output. In order to define maps respecting the existing $A_\infty$ structure on the Fukaya category, we require the perturbations used here to extend the existing ones used for defining the Fukaya category. The notion of map types is similar to previous cases.  For any stable domain type $\Gamma$, a perturbation datum $P_\Gamma$, and a map type $\bGamma$, one has a moduli space ${\mc M}_\bGamma(P_\Gamma)$ of treed disks with two outputs of type 
$\bGamma$.  There exists a coherent system of strongly regular perturbations $P_\Gamma$, so that for all uncrowded map type $\bGamma$, ${\mc M}_\bGamma(P_\Gamma)$ is a smooth manifold of the expected dimension. Moreover, for essential map types (see Definition \ref{essentialtype}) of expected dimension zero or one, a refined compactness result similar to Theorem \ref{refinedcompactness} holds.

The structure maps for the coproduct functor are defined as follows. For $r, s \geq 0$, Lagrangian branes $\LB_r, \ldots, \LB_0, \LB_0', \ldots, \LB_s'$, and generators $x_i \in {\mc I}( \LB_{i-1}, \LB_i)$ for $i = r, \ldots, 1$, $x_0 \in {\mc I}(\LB_0, \LB_0')$, $x_j' \in {\mc I}(\LB_{j-1}', \LB_j' )$ for $j = 1, \ldots, s$, and $y^{\rm L} \in {\mc I}(\widehat K, \LB_r)$, 
$y^{\rm R} \in {\mc I}( \LB_s', \widehat K)$, $y_1, \ldots, y_t \in {\mc I}^{\rm odd} (\widehat K, \widehat K)$, one considers the moduli space 
\[
{\mc M}_{r|1|s}( \uds x; y^{\rm R}, \uds y, y^{\rm L} )_0:= {\mc M}_{r|1|s} ( x_r, \ldots, x_1, x_0, x_1', \ldots, x_s'; y^{\rm R}, y_1, \ldots, y_t, y^{\rm L} )_0
\]
given by the union of moduli spaces ${\mc M}_{\bGamma}(P_\Gamma)$ of essential map types $\bGamma$ whose boundary edges are labelled by these generators (see Figure \ref{fig:coproduct}).    Define
\begin{multline}\label{eq:deltaexp}
  (a_{r},\ldots, a_1, a_0, a_1',\ldots, a_{s}'; a''_1, \ldots, a''_t) \\
  \mapsto \sum_{y^{\rm R} \in {\mc I}( \widehat L_s', \widehat K)} \sum_{y^{\rm L} \in {\mc I}(\widehat K, \widehat L_r)} \sum_{u  \in {\mc M}_{r|1|s}( \uds x; y^{\rm R}, \uds y, y^{\rm L})_0} (-1)^{\ddagger} \wt(u) 
  \end{multline}
where the sum is over rigid maps $u$ with two output leaves and one distinguished input (in this case $x_0$) among a list of input leaves, the $a'_i$ resp. $a''_i$ are the generators corresponding 
to $x_i$ and $y_i$ respectively, and the product of holonomies over $u$
is interpreted as an element in the tensor product of identification of 
local systems in  $y^{\rm L}$ and $y^{\rm R}$.

\begin{figure}[h]
    \centering
    \includegraphics{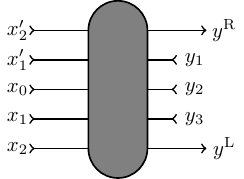}
    \caption{Moduli spaces defining the $A_\infty$ coproduct.}
    \label{fig:coproduct}
\end{figure}
The sign $\ddagger$ is given as in Abouzaid
\cite[4.17]{Abouzaid_generation} by
\[ \sum_{j=1}^s (s - j + 1) |x'_j| 
+ s | x_0 |   + \sum_{j=1}^r (j + s) | x_j |   . \] 
The coproduct map is defined by summing over all possible ways of inserting weakly bounding cochains; especially, we insert $a''_1 = a''_2 = \cdots = a''_t = b_{\bm K}$ in \eqref{eq:deltaexp}). For any $r, s \geq 0$, we can define 
\begin{multline*}
\delta_{r|1|s}: \bigotimes_{i=1}^r {\rm Hom}( \WB_{i-1}, \WB_i) \otimes {\rm Hom}( \WB_0, \WB_0') \otimes \bigotimes_{j=1}^s \Hom (\WB_{j-1}', \WB_j') \\
\to  \Hom ( \WK, \WB_r) \otimes \Hom( \WB_s' , \WK),
\end{multline*}
where $\WB_i, \WB_i' \in \Obj(\Fuk_{\mc L}(X,\bb)_{w(\WK)})$. 
We obtain a coproduct map on Hochschild chains
\begin{multline}\label{eq:ccdelta}
\delta_\WK : CC_d (\Fuk^\flat_{\mc L} (X, \bb)) \to \YY^{\rm R}_\WK
\otimes_{\Fuk^\flat_{\mc L} (X,\bb)}  \YY^{\rm L}_\WK\\
a_d \otimes \ldots \otimes a_0 
\mapsto \sum_{r,s} (-1)^{\diamond} {\mc T} \Big( a_{r+1} \otimes \ldots a_{d -s}
\otimes \delta_{r|1|s} ( a_r \otimes \cdots \otimes a_1 \otimes a_0 \\ \otimes a_d \otimes \cdots \otimes a_{d-s+1} ) \Big)   \\
\end{multline}
where the map $\cT$ reorders the factors
\[
\cT(a_{r+1} \otimes \ldots a_{d -s} \otimes  y^{\rm L} \otimes y^{\rm R} )  = (-1)^{\circ} y^{\rm R} \otimes a_{r+1} \otimes \ldots \otimes a_{d - s - 1} \otimes y^{\rm L}
\]
and the signs are given by the formulas 
\begin{equation} \label{diamondsign} \diamond = \maltese_1^r (1 +
  \maltese_{r+1}^d) + \dim(X) \maltese_{r+1}^{d - s- 1} \end{equation}
and 
\begin{equation} \label{circsign} 
\circ  = \deg(y^{\rm R})( \deg(y^{\rm L}) + \maltese_{r+1}^{d - s- 1}) .
\end{equation} 

\begin{proposition}
For any subset ${\mf G}\subset MC({\mc L})$ of weakly unobstructed branes and $\WK \in MC({\mc L})$, the coproduct map $\delta_{\bm K}: CC_\bullet( \Fuk^\flat_{\mf G}(X, \bb)) \to B( {\mc Y}_{\WK}^{\rm R} \otimes_{\Fuk^\flat_{\mf G}(X, \bb)} {\mc Y}_\WK^{\rm L})$ is a chain map.
\end{proposition}

\begin{proof}
The statement of the Proposition is a consequence of the classification of boundary strata of moduli spaces of treed disks with two outputs and the verification of signs. Indeed, for any one-dimensional moduli space of treed disks with two outputs, there are two types of boundary strata: either the two outputs are in the same unbroken components, or they are in different unbroken components (see Figure \ref{fig:coproduct2}). 
\begin{figure}[h]
    \centering
    \includegraphics{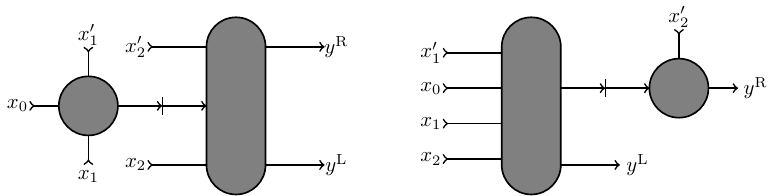}
    \caption{Two types of boundary strata of moduli spaces of treed disks with two boundary outputs.}
    \label{fig:coproduct2}
\end{figure}
These two boundary types correspond to (part of) the differentials on the Hochschild complex and the bar complex. The other parts of these differentials are carried over 
in the terms in the definition of $\delta_{\bm K}$ which do not count holomorphic disks. 
\end{proof}

We denote the induced map on homology still by the same notation:
\[
\delta_\WK: HH_\bullet( \Fuk^\flat_{\mf G}(X, \bb)) \to H_\bullet( {\mc Y}_\WK^{\rm R} \otimes_{\Fuk^\flat_{\mf G}(X, \bb)} {\mc Y}_\WK^{\rm L}).
\]
Note that $\delta_\WK=0$ on $HH_\bullet(\Fuk^\flat_{\mf G}(X,\bb)_{w'})$ for $w' \neq w(\WK).$

\subsubsection{The Cardy diagram}

The coproduct map, collapse map $\mu$, and open-closed and closed-open
maps $CO(\bb),OC(\bb)$ fit into a commutative-up-to-sign {\em Cardy
  diagram}:
  
\begin{theorem} \label{cardy} (Abouzaid \cite{Abouzaid_generation} in the exact, embedded case; see also Ganatra \cite{Ganatra_thesis}) For any collection ${\mf G} \subset MC({\mc L})$ and any object ${\bm K} \in MC({\mc L})$, there is a {\em Cardy diagram}
\begin{equation}\label{cardy_diagram}
\vcenter{ \xymatrix{  HH_\bullet ( \Fuk^\flat_{\mf G} (X,\bb)) \ar[rr]^{\delta_\WK} \ar[d]_{[OC(\bb)]} & & H_\bullet ( \YY_{\bm K}^{\rm R} \otimes_{\Fuk^\flat_{\mf G} (X,\bb)} 
    \YY^{\rm L}_{\bm K} ) \ar[d]^{\mu_\WK} \\ 
            QH^\bullet_{\mf G} (X,\bb) \ar[rr]_{[CO_\WK(\bb)]} &  & HF^\bullet ( {\bm K}, {\bm K} ) } }
            \end{equation}
that commutes up to an overall sign of $(-1)^{\dim(X)(\dim(X) + 1)/2}$.
\end{theorem} 

Abouzaid's generation criterion (Theorem \ref{crit}) follows as a consequence of the commutativity of the Cardy diagram.
\begin{proof}[Proof of Theorem \ref{crit}]
  The flat $A_\infty$ category $\Fuk^\flat_{\mc L}(X,\bb)$ is split
  generated by ${\mf G}$ if for any weakly unobstructed brane
  ${\bm K} \in MC(\mc L)$, the image of $\mu_{\bm K}$ contains the
  identity $1_{\bm K}^\whitet$. By the commutativity of the Cardy
  diagram (Theorem \ref{cardy}), this containment holds if the identity $1_{QH^\bullet(X)}$ lies in
  the image of $OC(HH_\bullet(\Fuk^\flat_{\mf G}(X,\bb)))$, or in other
  words, $QH^\bullet(X,\bb)$ is generated by
  $\mf G \subset MC(\mc L)$.
\end{proof}

\subsection{Holomorphic treed annuli}\label{hannuli}

To prove the commutativity of the Cardy diagram Theorem \ref{cardy} in the version of the Fukaya category considered here, we begin with some preliminaries. Notice that the composition of maps on either direction of the diagram \eqref{cardy_diagram} consist of maps which count certain degenerate treed holomorphic annuli.
\begin{definition}  Given $0<\rho_1<\rho_2$, an {\em annulus} is a complex curve with boundary 
of the form
\[ A_{\rho_1,\rho_2} = \Set{ z \in \C | \ \rho_1 \leq |z| \leq \rho_2
  }. \]
The boundary components are denoted by
\[\partial_- A_{\rho_1,\rho_2}:=\{z:|z|=\rho_1\}, \quad \partial_+ A_{\rho_1,\rho_2}:=\{z:|z|=\rho_2\}. \]
\end{definition}

\begin{definition} {\rm(Stable treed annuli)}
  \label{def:treedann}
  \begin{enumerate}
  \item {\rm(Marked annulus)} For $d = (d_-, d_+)$ a pair of positive integers and $d_\bullet\geq 0$ 
  a {\em $(d_-, d_+, d_\bullet)$-marked annulus} consists of the following data: an inner and outer radii $\rho_1 < \rho_2$, a collection of interior markings
\[
z_{\black,i} \in \on{int}( A_{\rho_1,\rho_2} ), \quad 1 \leq i    \leq d_\bullet
\]
and a collection of boundary marked points
\[
z^\pm_{\white,j} \in \partial_\pm A_{\rho_1,\rho_2}, \quad 1     \leq j \leq d_\pm. 
\]
We always require that the boundary markings on the outer circle are counterclockwise ordered, while the boundary markings on the inner circle are clockwise ordered.
  \item {\rm(Treed annulus)} 
  There is a compactification of the moduli space of marked annuli by allowing {\em stable} nodal annuli: nodal annuli $S$ with no non-trivial infinitesimal automorphisms.   
  As in the case of stable marked disks, a combinatorial type underlying a stable annulus is a graph $\Gamma$.  A {\em treed annulus} $C$
    is obtained from a nodal annuli by replacing each boundary node $w_e$, $e \in \Edge_{\white,-}(\Gamma)$ by a segment $T_e$ equipped with a length $\ell(e) \in [0,\infty)$, and attaching a semi-infinite treed segment $T_e$ at each boundary
    marking $z_e$, $e \in \Edge_{\white,\to}(\Gamma)$. We then allow the finite lengths to increase to infinity and the finite edges to break.

  \item {\rm (Additional features)} We consider treed annuli with some
    additional features to prove the Cardy relation and an orthogonality relation in Section \ref{subsec:orth}. 
    \begin{enumerate}
    \item {\rm (Distinguished leaves and Balanced lengths)} The leaves/markings $z_{0}^+$ and $z_{0}^-$ are {\em distinguished leaves} and are constrained to have an angle offset of $\pi$:
      \begin{equation}\label{eq:fixp}
       {\rm\text{(Angle offset)}} \quad \exists \theta : \enspace z_{0}^+ = \rho_2e^{\iota \theta}, \quad z_{0}^-=\rho_1 e^{\iota(\theta+\pi)}. 
     \end{equation}
     The lengths of treed segments are subject to a balancing condition: let $S_{v_\pm} \subset C$ be the surface component containing $z_0^\pm$. When $S_{v_-} \neq S_{v_+}$, there are exactly two paths $\gamma_1, \gamma_2$ connecting them. We require that the two paths have the same total length:
    \begin{equation}\label{eq:balann}
      {\rm(Balanced)} \quad  \sum_{e \in \gamma_1} \ell(e) = \sum_{e \in \gamma_2 } \ell(e) ;  
    \end{equation}    
    See Figure \ref{mannuli} where the paths $\gamma_\pm$ are the vertical paths in the two left diagrams.
      
    \item {\rm(Treed segment at an interior node)} An interior node
      that disconnects $z_{0}^+$ from $z_{0}^-$ is called a {\em path node}. We allow path nodes to be replaced by treed
      segments which can have a positive length.
    \end{enumerate}
  \end{enumerate}
\end{definition}

We introduce a moduli space of stable treed annuli with fixed angle offset as follows.  Denote by 
$ \ov{\mc M}{}_{d_-, d_+, d_\bullet}^{\rm ann}$
the moduli space of stable treed annuli for which the $0$-th boundary markings on the inner circle and outer circle have an angle offset of $\pi$ (as in \eqref{eq:fixp}) and that satisfy the balancing condition \eqref{eq:balann} for treed segments at path edges. Standard arguments show that the moduli space $\ov{\mc M}{}_{d_-, d_+, d_\bullet}^{\rm ann}$ is compact and Hausdorff.  The subspace of $\ov{\mc M}{}^{\rm ann}_{d_-,d_+, d_\bullet}$ that parametrizes curves with at most one path node is a topological manifold of dimension
\[
{\rm dim} \ov{\mc M}{}^{\rm ann}_{d_-,d_+, d_\bullet} = d_- + d_+ + 2d_\bullet - 1.
\]
The moduli space is equipped with a universal curve $\ov{\mc U}{}^{\rm ann}_{d_-,d_+, d_\bullet}$ which decomposes into a surface part $\ov{\mc S}{}^{\rm ann}_{d_-,d_+, d_\bullet}$ and tree part $\ov{\mc T}{}^{\rm ann}_{d_-,d_+, d_\bullet}$.  There is a forgetful map %

\begin{equation}  \label{eq:forgetann}
\ov{\mc M}{}_{d_-, d_+, d_\bullet}^{\rm ann} \to \ov{\mc M}{}_{1, 1, 0}^{\rm ann} 
\end{equation}
that forgets all markings except the $0$-th markings on the inner and outer circles.

\begin{remark}
In the moduli space of treed annuli, we fixed the angle offset  between distinguished boundary markings as $\Phi:=\pi$. This choice is arbitrary. In fact, choosing any non-zero angle offset $\Phi \in (0,2\pi)$ produces a homeomorphic moduli space.  The angle offset zero $\Phi=0$ produces a different moduli space, which we will use in Section \ref{subsec:orth}.
\end{remark}

\begin{example} \label{innerandouter} 
We describe the moduli space of isomorphism classes of annuli with one inner boundary leaf and one outer boundary leaf with an angle offset of $\pi$. There is a homeomorphism 
\begin{equation}\label{eq:rhoann}
\rho: \ov{\mc M}{}^{\rm ann}_{1, 1, 0} \to [-\infty, +\infty]
\end{equation}
defined as follows (See Figure \ref{mannuli}). 
\begin{center}
\begin{figure}[h]
  \includegraphics[scale=0.65]{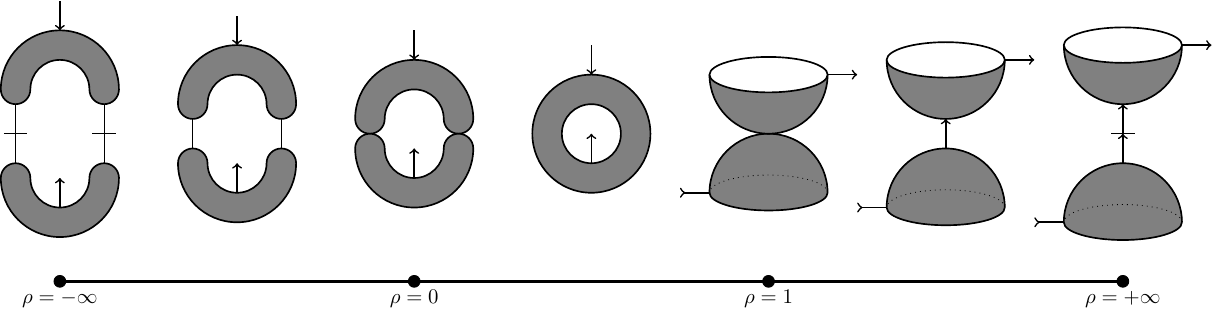}
\caption{Moduli of treed annuli with fixed non-zero angle offset} 
\label{mannuli}
\end{figure} 
\end{center}
For configurations containing an annulus component with inner radius $\rho_1$ and outer radius $\rho_2$ we define
\begin{equation} \label{rhoC}
\rho (C) = \frac{\rho_1 \rho_2^{-1}}{1 + \rho_1 \rho_2^{-1}}. 
\end{equation} 
In the case that the $0$-th markings on the inner and outer circles are contained in different disk components, suppose these two disks are connected by a path consisting of boundary edges $T_{e_1}, \ldots, T_{e_k}$ of lengths $\ell(e_1), \ldots, \ell(e_k)$; then we define
\[
\rho (C) = - \ell(e_1) - \cdots - \ell(e_k);
\]
the balanced condition implies that this value is independent of the choice of the path. On the other hand, if the disks containing two boundary circles are connected by a path consisting of interior edges $T_{e_1}, \ldots, T_{e_k}$ of lengths $\ell(e_1), \ldots, \ell(e_k)$, then define
\[
\rho(C) = 1 + \ell(e_1) + \cdots + \ell(e_k).
\]
\end{example} 

The description of the one inner-and-outer marking moduli space in Example \ref{innerandouter} leads to the following natural defined functions on moduli spaces with higher numbers of inner and outer markings: Composing the homeomorphism $\rho$ in \eqref{eq:rhoann} with the forgetful map \eqref{eq:forgetann}, we obtain a map 
\begin{equation}
  \label{eq:rho-d-ann}
  f : \ol{\M}_{d_-,d_+, d_\bullet}^\ann \to [-\infty,\infty].
\end{equation}
For any $\rho \in [-\infty,\infty]$ the fiber $f^{-1}(\rho)$ is the moduli space of annuli with a fixed ratio of inner and outer radii, and is denoted by
\begin{equation}
  \label{eq:fixedrho}
  \M_{d_-,d_+, d_\bullet}^{\ann, \rho} \subset \ol{\M}_{d_-,d_+, d_\bullet}^\ann.  
\end{equation}
Treed annuli can degenerate to broken configurations whose components can be treed disks (with no interior {gradient} leaves), open-closed domains, and closed-open domains. 

\begin{remark} \label{morient} 
The moduli space of treed annuli admits an orientation induced from choices of orientations on nodal annuli induced from the positions of the interior and boundary markings. We can identify each annuli of any width $\rho \in (0, 1)$ with a fixed annulus $A$; by recording the markings, one obtains a map
\[
{\mc M}_{d_-,d_+, d_\bullet}^{{\rm ann}, \rho} \hookrightarrow \left( {\rm Int}(A)^{d_\bullet} \times (\partial^+ A)^{d_+}\times (\partial^- A)^{d_-} \right) / S^1.
\]
The orientations on this stratum extends to a global orientation on the manifold with boundary $\ol{\M}_{d_-,d_+, d_\bullet}^\ann$. The boundary of $\ol{\M}_{d_-,d_+, d_\bullet}^\ann$ consists of configurations where the ratio $\rho$ is equal to $\infty$, configurations where the ratio $\rho$ is equal to $-\infty$ (in the sense that the lengths of the paths $\gamma_\pm$ above are infinite) and configurations where a collection of leaves $T_e$ have bubbled onto
  disks $S_v, v \in \Ver(\Gamma)$ attached to the outer boundary, and
  configurations where leaves $T_e$ have bubbled onto disks $S_v$ on
  the inner boundary.  The latter two types of boundary strata
  $\M_\Gamma, \Gamma = \Gamma_1 \# \Gamma_2$ have opposite
  orientations compared to the product orientation on
  $\M_{\Gamma_1} \times \M_{\Gamma_2}$.
\end{remark}

Regularizing families of holomorphic maps from treed annuli requires regularization of holomorphic disks, strips, and spheres as before.  We shall require that the perturbations for defining the equation for holomorphic treed annuli extend the existing ones, by induction on the type of the map. For each stable annuli type $\Gamma$, a map type $\bGamma$ consists of 
\begin{enumerate}
\item labelling of boundary markings by generators of the chain groups $CF^\bullet( \LB_i, \LB_{i+1})$, 
\item a labelling of interior markings by either components of the bulk deformation, the Donaldson hypersurface $D$, or their (transverse) intersections,
\item a labelling of interior {gradient} leaves by critical points of $f_X$, and 
\item a labelling of surface vertices by homology classes 
$\beta \in H_2(X, |{\mc L}|)$.  
\end{enumerate}
For each map type $\bGamma$ one has a moduli space of treed holomorphic annuli ${\mc M}_\bGamma^{\rm ann}(P_\Gamma)$ with respect to the perturbation $P_\Gamma$ (if $\Gamma$ is stable; otherwise we pull back $P_{\Gamma^{\rm st}}$).   A map type is called {\it essential} if, as in previous cases, that there is no finite edges $T_e$ with length
$ell(e) = 0$, no broken edges $T_e$, no sphere components $S_v, v \in \Ver_\black(\Gamma)$, that all interior markings $\ul{z}_v$ are labelled by the Donaldson hypersurface $D$ or components of the bulk deformation, and the width parameter $\rho$ is not equal to $-\infty$, $0$, $1$, or $+\infty$.  Given two sequences $\uds x^\pm = (x_0^\pm, \cdots, x_{d_\pm}^\pm)$ of generators of the Floer chain groups where $x_i^\pm \in {\mc I}(\LB_i^\pm, \LB_{i+1}^\pm)$ (we define $L_{d_\pm + 1}^\pm = L_0^\pm$) let
\[
{\mc M}^{\rm ann}(\uds x^-, \uds x^+)_i:= \bigsqcup_{\bGamma} {\mc M}_{\bGamma}(P_\Gamma)_i \quad i = 0, 1
\]
where the union is taken over all essential map types $\bGamma$ whose boundary labelling data are $\uds x_\pm$ and whose expected dimension is $i$.

\begin{lemma} \label{Abound} 
\begin{enumerate}

\item There exist a coherent system of strongly regular 
(Definition 
\ref{stronglyregular})
perturbations $P_\Gamma$ for all stable treed annuli that extend the existing perturbation data for treed disks (with no interior {gradient} leaves), open-closed domains, and closed-open domains. As a consequence, for each uncrowded map type $\bGamma$, the moduli space ${\mc M}_{\bGamma}(P_\Gamma)$ is regular of expected dimension.

\item For such a system of perturbations, for $d_\pm \geq 1$, the zero-dimensional moduli space ${\mc M}^{\rm ann} (\uds x^-, \uds x^+)_0$ is discrete and finite under each energy level. 
\item Moreover, the one-dimensional moduli space ${\mc M}^{\rm ann}(\uds x^-, \uds x^+)_1$ has true boundary corresponding to the following types of degenerations (all of which have no sphere bubbling).
\begin{enumerate}
    \item The width parameter $\rho$ goes to $+\infty$ and one interior edge breaks.
    \item The width parameter $\rho$ goes to $-\infty$ and two boundary edges breaks.
    \item $\rho$ is finite and different from $0$, $1$, while a boundary edge breaks.
    
\end{enumerate}
\end{enumerate}
\end{lemma}

\subsection{Commutativity}

We show that the Cardy diagram commutes at the level of cohomology. Using the moduli spaces of holomorphic treed annuli, we define a homotopy operator relating the composition of the maps around the two sides of the diagram in Theorem \eqref{cardy}.  This shows that the diagram in Theorem \ref{cardy} is commutative. For weakly unobstructed branes $\WB_0, \ldots, \WB_{d_+}$ with underlying Lagrangian submanifolds from ${\mc L}$, $w = w(\WB_i)=w({\bm K})$, and any nonnegative integer $d_-$, we define a linear map
\begin{equation}\label{eq:alephdef}
{\mc S}: CF^\bullet(\WB_0, \WB_{d_+}) \otimes \cdots \otimes CF^\bullet( \WB_0, \WB_1) \to {\rm Hom} \left( CF^\bullet( {\bm K}, {\bm K} )^{\otimes d_-}, CF^\bullet ( {\bm K}, {\bm K} )\right)
\end{equation}
by counting holomorphic treed annuli. More precisely, for $\uds a^+ = a_{d_+}^+\otimes \cdots \otimes a_0^+$ and $a_{d_-}^-, \ldots, a_1^-$, one has
\[
{\mc S}(\uds x^+)(a_{d_-}^- \otimes \cdots \otimes a_1^-) = \sum_{a_0^-} \sum_{u \in \ov{\mc M}{}^{\rm ann} (\uds x^-, \uds x^+)_0 }
 (-1)^{\heartsuit}\wt(u) .
\]
By summing over all possible ways of inserting weakly bounding cochains on $\WB_0, \ldots, \WB_{d_+}$ and the weakly bounding cochain on ${\bm K}$, we obtain a map (c.f. Abouzaid \cite[Equation 6.22]{Abouzaid_generation})
\[
{\mc S}: CC_\bullet( \Fuk_{{\mc L}}(X, \bb)_w ) \to {\rm Hom}(\WK, \WK).
\]

\begin{figure}[ht]
  \centering 
  \includegraphics[scale=0.9]{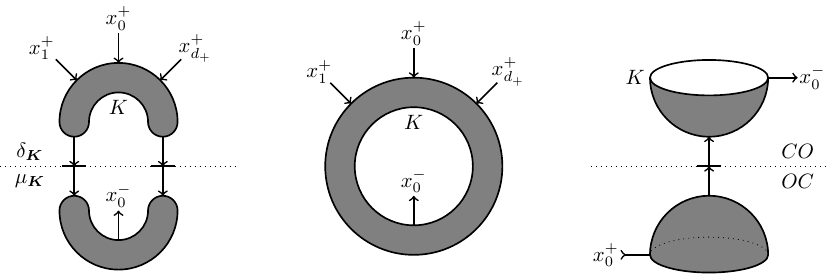}
  \caption{Cardy relation : End-points of a one-dimensional moduli space of holomorphic treed annuli. There could be insertions of weakly bounding cochains on both inner and outer circles.} 
  \label{fig:cardy}
\end{figure}

\begin{proof}[Proof of Theorem \ref{cardy}]
  It follows from the description of the boundary (see Figure
  \ref{fig:cardy}) in Lemma \ref{Abound} that the operator ${\mc S}$
  is a homotopy operator relating the two sides of the Cardy diagram:
  We have
  \begin{equation}\label{hop}
    m_{1, {\bm K}} \circ {\mc S} + {\mc S} \circ \delta_{CC} = (-1)^{\dim(X)(\dim(X) + 1)/2} CO(\bb) \circ OC(\bb) - \mu_\WK \circ \delta_\WK
  \end{equation}
  on $\Fuk_{\mc L}(X,\bb)_{w(\WK)}$.  The sign computation is carried
  out  in \cite{Abouzaid_generation}, and will not be repeated
  here. Therefore, on homology level, the Cardy diagram
  \eqref{cardy_diagram} commutes up to the expected sign. On
  $\Fuk_{\mc L}(X,\bb)_{w}$ with $w \neq w(\WK)$,
  $\mu_\WK \circ \delta_\WK$ is zero by definition, and
  $CO(\bb) \circ OC(\bb)$ vanishes as a consequence of the spectral
  decomposition results. In particular, by Theorem \ref{thm16}, 
  $OC(\bb)(\Fuk_{\mc L}(X,\bb)_{w})$ lies in the generalized
  eigenspace $QH^\bullet(X)_{D_q w} \subset QH^\bullet(X)$ of the
  quantum multiplication by $[\om]^\bb$, and the image
  $CO_K(QH^\bullet(X)_{D_q w})$ is zero by Theorem \ref{CO_spectral}.
\end{proof}

\subsection{Orthogonality for disjoint Lagrangians}\label{subsec:orth}

We prove another result about the orthogonality of images under the open-closed map, 
c.f. Corollary \ref{orthogonal1}.

\begin{theorem}\label{ortho} {\rm(Restatement of Theorem \ref{thm110})}
Suppose that $\cL_-, \cL_+ \subset \cL$ are disjoint collections of Lagrangian submanifolds in $X$, that is, $|{\mc L}_-|\cap |{\mc L}_+| = \emptyset$. Suppose 
\[
{\mf L}_\pm \subset MC({\mc L}_\pm).
\]
Then the images of elements
\begin{align*}
&\ [{\alpha}_-] \in HH_\bullet(\Fuk^\flat_{{\mf L}_-} (X,\bb)), \ &\ [{\alpha}_+] \in HH_\bullet (\Fuk^\flat_{{\mf L}_+} (X,\bb))
\end{align*}
under the open-closed map are orthogonal with respect to the intersection pairing.
\end{theorem} 
The proof of Theorem \ref{ortho} of the moduli space of treed holomorphic annuli as the width parameter
goes to zero.

\subsubsection{Intersection pairings and two chain-level open-closed maps}

We recall the chain-level definition of the intersection pairing. Recall that $(f_X, g_X)$ is a Morse-Smale pair on $X$ with the cochain complex $CM^\bullet(f_X)$. The pair $(-f_X, g_X)$ is also a Morse-Smale pair with complex $CM^\bullet(-f_X)$. Define 
\[
\langle \cdot \rangle: CM^\bullet(f_X) \otimes CM^\bullet(-f_X) \to \Lambda
\]
by 
\[
\left\langle \sum_i a_i x_i, \sum_j b_j x_j \right\rangle = \sum_i a_i \ov{b_i}.
\]
It is easy to see that the chain-level pairing descends to cohomology.

The two Morse-Smale pairs can define potentially different chain-level open-closed maps. By using perturbations on open-closed treed disks where we use either $(f_X, g_X)$ or $(-f_X, g_X)$, we can define two chain maps
\[
OC(\bb)_\pm: CC_\bullet (\Fuk^\flat_{\mf L}(X, \bb)) \to CM^\bullet( \pm f_X).
\]
On the homological level the two maps are identical. On the other hand, we can define a chain-level pairing
\[
\langle \cdot \rangle_\infty: CC_\bullet( \Fuk^\flat_{{\mf L}_-}(X, \bb)) \otimes CC_\bullet( \Fuk^\flat_{{\mf L}_+}(X, \bb)) \to \Lambda
\]
by 
\[
\langle \alpha_-, \alpha_+ \rangle_\infty:= \langle OC(\bb)_-(\alpha_-), OC(\bb)_+(\alpha_+) \rangle.
\]
We will prove that on the homology level the pairing is zero.

\subsubsection{Treed annuli}

Treed annuli used in the proof of the generation criterion are defined as follows. First, we modify the conditions on the marked annuli specified in Definition \ref{def:treedann}. We require that both inner $(\partial S)_-$ and outer circles $(\partial S)_+$ of an annulus $S$ contain boundary markings, and that the boundary markings on the outer circle $(\partial S)_+$ are counterclockwise ordered while those on the inner circle
$(\partial S)_-$ are clockwise ordered.  Given a marked annulus, 
we create a treed annulus by attaching to each boundary marking a semi-infinite edge and require that all these semi-infinite edges are incoming ones. We also require that, most importantly, the angle offset between $z_0^+$ and $z_0^-$ is $0$ instead of $\pi$. Hence, when we compactify the moduli space of treed annuli, when the width parameter $\rho$ approaches to zero, in the degenerate configurations $z_0^+$ and $z_0^-$ can be contained in the same surface component.
Figure \ref{fig:offset0} describes a compactified one-dimensional moduli space of such treed annuli. In general, there is a width parameter 
\[
\rho: \ov{\mc M}{}^{\rm ann}_{d_-, d_+, d_\bullet} \to [0, +\infty]
\]
on the moduli space of stable treed annuli with $d_-$ resp. $d_+$ boundary markings on the inner resp. outer circle and $d_\bullet$ interior markings.

Perturbations are defined on the universal curves of treed annuli. Notice that in the current situation, treed annuli can degenerate to broken configurations whose unbroken components can be either a treed disk with exactly one output (the $\rho=0$ slice of Figure \ref{fig:offset0}), or a treed disk of open-closed type but not closed-open type (the $\rho = +\infty$ slice of Figure \ref{fig:offset0}). This is also different from the case of the Cardy diagram. We require that when the annuli degenerate to two disks of open closed type, the perturbation on the component containing the outer resp. inner circle coincides with the perturbation chosen for the open-closed map for the Morse-Smale pair $(f_X, g_X)$ resp. $(-f_X, g_X)$.

Moduli spaces of treed annuli are defined as follows. We require that the labelling on the outer circle are from the branes in ${\mf L}_+$ and the labelling on the inner circle are from the branes in ${\mf L}_-$. A map type is called {\it essential} if it has no spheres, broken edges, or edges of length zero. Given boundary labelling data $\uds x_- = (x_{-, 0}, \ldots, x_{-, d_-})$, $\uds x_+ = (x_{+, 0}, \ldots, x_{+, d_+})$, homology classes labelling surface components, and interior labelling data, we can consider perturbed treed holomorphic annuli satisfying these constraints.  We require that, when the interior edge has positive length, the treed map satisfies the negative gradient flow equation for $(f_X, g_X)$ if we orient the edge from the component containing the outer circle to the component containing the inner circle. One can achieve transversality in the same way as before and we omit the details. Then let ${\mc M}^{\rm ann} (\uds x_-, \uds x_+)_i$ be the union of moduli spaces of essential map types $\bGamma$ of expected dimension $i$. When $i = 1$, by identifying fake boundary strata, we can describe the true boundaries of the closure $\ov{\mc M}^{\rm ann}(\uds x_-, \uds x_+)_1$. The true boundary strata include  configurations $u: C \to X$ where
\begin{enumerate}
    \item the width parameter $\rho$ is $+\infty$ and two disk components, no sphere components, and only one breaking on the interior edge.
    
    \item the width parameter $\rho$ is $0$ and there is one disk component and no sphere components; since the node on the infinitely thin annulus must have evaluation on $|{\mc L}_-| \cap |{\mc L}_+|$, which is empty. Hence this boundary stratum is empty;
    
    \item the width parameter $\rho$ is positive and finite and there is a breaking of a boundary edge $T_e$. 
\end{enumerate}

We define a chain-level map using treed annuli of varying width parameters. For $\uds x_\pm = x_{\pm, 0},\ldots, x_{\pm, d_\pm}$, consider map types $\bGamma$ with the outer resp. inner circles labelled by $\uds x_+$ resp. $\uds x_-$. First consider only the essential map types, so that the width parameter $\rho$ is positive and finite, having no boundary edges of length zero or boundary breakings. By counting rigid configurations, one obtains a map 
\begin{multline*}
{\mc T}: \Big( CF^\bullet( \LB_{-, d_-}, \LB_{-, 0}) \otimes \cdots \otimes CF^\bullet( \LB_{-, 0}, \LB_{-, 1}) \Big)\\ \otimes \Big( CF^\bullet( \LB_{+, d_+}, \LB_{+, 0}) \otimes \cdots \otimes CF^\bullet( \LB_{+, 0}, \LB_{+, 1}) \Big) \to \Lambda  
\end{multline*}
by 
\[
{\mc T}(\uds a_-, \uds a_+) = \sum_{u \in {\mc M}^{\rm ann} (\uds x_-, \uds x_+)_0} (-1)^{\heartsuit} {\rm wt}(u). 
\]
The map ${\mc T}$ induces a (not necessarily chain) map \label{lplm}
\[
{\mc T}: CC_\bullet( \Fuk^\flat_{{\mf L}_-}(X, \bb)) \otimes CC_\bullet( \Fuk^\flat_{{\mf L}_+}(X, \bb)) \to \Lambda.
\]

\begin{figure}[t]
  \centering 
  \includegraphics[scale=0.7]{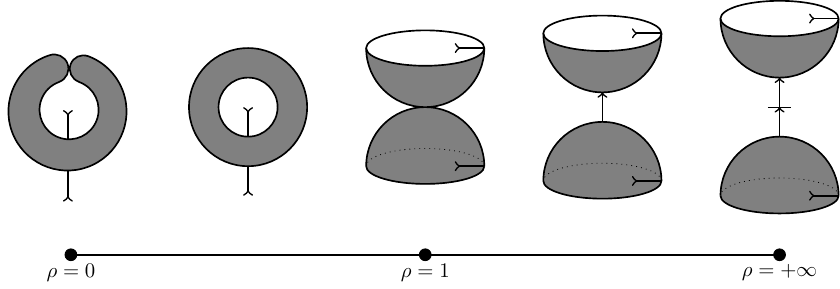}
  \caption{Moduli space of treed annuli with zero angle offset between distinguished leaves.} 
  \label{fig:offset0}
\end{figure}

\begin{proof}[Proof of Theorem \ref{ortho}]
Suppose $\alpha_\pm \in CC_\bullet( \Fuk^\flat_{{\mf L}_\pm}(X, \bb))$ are Hochschild cycles. Let $\uds a_\pm = a_{\pm, 0}\otimes \cdots \otimes a_{\pm, d_\pm}$ be a component of $\alpha_\pm$ with underlying 
critical points $\ul{x}_\pm$. Consider one-dimensional moduli spaces $\ov{\mc M}^{\rm ann}(\uds x_-, \uds x_+)_1$. The boundary strata consist of the following types:
\begin{enumerate}
    \item The  strata corresponding to $\rho = \infty$, denoted by ${\mc M}^{\rm ann}_\infty(\uds x_-, \uds x_+)_0$.  These strata contribute to the chain-level pairing  
    \[
    \langle OC(\bb)_- (\uds a_-), OC(\bb)_+ (\uds a_+) \rangle\in \Lambda.
    \]
    Indeed, on the broken edge, the treed map satisfies the negative gradient flow equation of $f_X$ (from the disk with the positive boundary to the disk with the negative boundary). We regard the map restricted to the semi-infinite edge attached to the disk with the negative boundary as the (perturbed) negative gradient flow equation of $-f_X$. Therefore, by the definition of the chain-level intersection pairing, the count of such configurations is exactly $\langle OC(\bb)_-(\uds a_-), OC(\bb)_+(\uds a_+) \rangle$.
    
    \item The union of strata corresponding to $\rho = 0$, denoted by ${\mc M}_0^{\rm ann}(\uds a_-, \uds a_+)_0$. Since $|{\mc L}_-|\cap |{\mc L}_+| = \emptyset$, this moduli space is always empty. 
    
    \item Configurations for $\rho \in (0, \infty)$ with one boundary breaking. These configurations contribute to 
    \[
    {\mc T}( \delta_{CC}(\alpha_-), \alpha_+) \pm {\mc T}( \alpha_-, \delta_{CC}(\alpha_+))
    \]
    which is zero.
\end{enumerate}
Therefore, it follows that on the chain level
\[
\langle OC(\bb)_-(\alpha_-), OC(\bb)_+(\alpha_+)\rangle = 0. \qedhere
\] 
\end{proof}

\section{Fukaya categories of blowups}\label{section5}

In this section,  we consider the special cases of previous constructions in the setting of the main theorem.  More precisely,
we study a perturbation scheme for which one has a correspondence
between treed disks in the original symplectic manifold
and its blowup.  

\subsection{The geometry of the blowup}

We fix an explicit construction of a family of blowups at the chosen point. 
From now on, $(X, \omega)$ denotes a rational symplectic manifold, ${\mc L}$ denotes a collection of rational Lagrangian submanifolds (see Definition \ref{srat}) satisfying Hypothesis \ref{lagrangian_assumption}, and $\bb$ is a bulk deformation. In addition, fix a point $p$ disjoint from the Lagrangians and the bulk deformation. We also fix a Donaldson hypersurface $D$ as before with the additional requirement that $p\notin D$, and a tamed almost complex structure $J_0$ satisfying (b) of Lemma \ref{countable}.   Let $U$ be a Darboux coordinate chart centered at $p$ that is disjoint from $|{\mc L}|$ and $\bb$ with Darboux coordinates $x_1, y_1, \ldots, x_n, y_n$. As $D\cap U = \emptyset$, we may assume that $J_0|_U$ is the standard complex structure with complex coordinates $z_i = x_i + \sqrt{-1} y_i$. 
The symplectic blowup $\tilde{X}$ of $X$ at $p$ is defined by removing
Darboux chart from $X$ and gluing in a neighborhood of
$\tilde{Z} = \C P^{n-1}$ in 
\[ \Bl_0(\C^n) := \Set{ (\ell,z) \in \C P^{n-1} \times \C^n | 
z \in \ell  }.\]
It admits 
an almost complex structure $\tilde J_0$ whose restriction to $\tilde U:= \pi^{-1}(U)$ is the integrable complex structure $J_{\tilde U}
: {\tilde U} \to {\tilde U}$ coming from the blowup.

We equip the blowup with a family of symplectic structures by symplectic cut. Following Lerman \cite{Lerman_1995}, for each $\epsilon>0$ sufficiently small, we may view $\tilde U$ as 
\[
\{ (z_1, \ldots, z_n)\in U \ |\ |z_1|^2 + \cdots + |z_n|^2 \geq \epsilon \}/\sim
\]
where $\sim$ is the relation collapsing the sphere 
\[ |z_1|^2 + \cdots + |z_n|^2 = \epsilon \] 
to $\mb{CP}^{n-1}$. In this way we obtain a family of symplectic forms $\tilde \omega_\epsilon \in \Omega^2(\tilde X)$ that agree with $\pi^* \omega$ outside $\tilde U$. Moreover, for all $\epsilon$, $\tilde J_0$ is $\tilde \omega_\epsilon$-tamed. In notation, we abbreviate $\tilde \omega_\epsilon$ by $\tilde \omega$. One can see that as cohomology classes, 
\begin{equation}\label{area_difference}
[\tilde \omega] = [\pi^* \omega] - \epsilon {\rm PD}([ \tilde Z]) \in H^2(\tilde X; {\mb R})
\end{equation}
where ${\rm PD}$ denotes the Poincar\'e dual. 

\subsubsection{The exceptional Lagrangians}\label{subsubsec:el}

In this section,  we introduce the additional Lagrangians needed to generate
the Fukaya category of the blowup.  First we realize blowup as a symplectic quotient. Consider a diagonal $S^1$-action on ${\mb C} \times {\mb C}^n$ with moment map 
\[
\Phi(z_0, z_1, \ldots, z_n) = - \frac{1}{2} \left( |z_0|^2 - |z_1|^2 - \cdots - |z_n|^2 \right).
\]
The symplectic quotient at the level $\Phi =\frac{\epsilon}{2}$ can be viewed as the $\epsilon$-blowup of $\mb{C}^n$ at the origin.  A neighborhood of the exceptional divisor $\mb{CP}^{n-1}$ can be identified with the neighborhood $\tilde U \subset \tilde X$. 
Consider
\[
\hat{L}_{\bm \epsilon} = \Set{ (z_0,\ldots, z_n) | \left|  z_i \right|^2 = \epsilon_i , \ i = 0,\ldots, n  } \subset {\mb C} \times {\mb C}^n
\]
for ${\bm \epsilon} = (\epsilon_0, \ldots, \epsilon_n) \in (\R_{> 0})^{n+1}$.  Suppose that 
\[
\epsilon_1 + \cdots + \epsilon_n - \epsilon_0 = \epsilon.
\]
In this case we have $\hat L_{\bm \epsilon} \subset \Phi^{-1}(\frac{\epsilon}{2})$ and and so the Lagrangian descends to a Lagrangian torus $L_{\bm \epsilon} \subset \tilde X$. \label{Ldec}

\begin{lemma}
When $\epsilon_0 = \epsilon_1 = \cdots = \epsilon_n = \frac{\epsilon}{n-1}$, $L_{\bm \epsilon}$ is a monotone Lagrangian in $\tilde U$.
\end{lemma}

\begin{proof}   Any disk bounding $L_{\bm \epsilon}$ lifts
to a disk in $\C^n \times \C$ bounding $\hat{L}$ with the 
same area and index.   Maps from disks to  $\C^n \times \C$ are products of disks in the factors.  The homology classes of such 
are generated by the disks of Maslov index two in each factor, all of which have the same area.   See \cite{Cho_Oh} for more details.  
\end{proof}

\subsubsection{Donaldson hypersurfaces in the blowup}

Since the pullback of the original Donaldson hypersurface is no longer a Donaldson hypersurface in the blowup, we need to choose a new Donaldson hypersurface to fit into the general framework. In order to use the explicit calculation in the previous section, we construct perturbations that have standard almost complex structures near the exceptional locus.

\begin{proposition}\label{new_divisor}
For each small rational $\epsilon$, there exist a Donaldson hypersurface $\tilde D \subset \tilde X$ and a tamed almost complex structure $\tilde J$ satisfying the following condition. 
\begin{enumerate}
    \item $\tilde D\subset \tilde X \setminus (|\tilde {\mc L}| \cup \tilde L_{{\bm \epsilon}})$ and the symplectic form $\tilde \omega$ is exact in the complement of $|\tilde {\mc L}| \cup \tilde L_{{\bm \epsilon}}$.

    \item $\tilde J$ coincides with $J_{\tilde U}$ inside $\tilde U$.
    
    \item $\tilde D$ is almost complex with respect to $\tilde J$ and is holomorphic inside $\tilde U$.
    
    \item $\tilde D$ intersects the exceptional locus $\tilde Z$ transversely.
    
    \item $\tilde D$ intersects the pullback hypersurface $\pi^{-1}(D)$ transversely.
    
    \item $\tilde D$ intersects the components of $\tilde {\mf b}_0$ transversely.
    
    \item $\tilde D$ intersects generic Maslov 2 disks in $\tilde U$ transversely.
\end{enumerate}
\end{proposition}

\begin{proof}[Sketch of proof]
The statement of the proposition is essentially a special case
 of  Auroux-Gayet-Mohsen 
\cite[Section 3.1]{Auroux_Gayet_Mohsen}, which describes how 
Donaldson's argument \cite{Donaldson_96} can be extended to a relative setting. More precisely, we identify $\tilde U$ with a neighborhood of the zero section of ${\mc O}(-1) \to \mb{CP}^{n-1}$. For small rational $\epsilon$, we can choose a generic holomorphic section $\tilde s_0$ of a sufficiently positive line bundle on ${\mc O}(-1)$ which intersect the zero locus and all the Maslov two disks transversely. Choose a cut-off function $\rho$ supported in $\tilde U$ which is identically 1 near $\tilde Z$. Then $\rho \tilde s_0$ is a smooth section of a positive line bundle over $\tilde X$ whose Chern form is a large multiple of $\tilde \omega$. Apply Donaldson's argument by using a collection of local sections of this line bundle (supported away from $\tilde Z$) and generic linear combination to achieve transversality to the given section $\rho \tilde s_0$.  
\end{proof}

\subsection{A perturbation system for the new branes}\label{subsection29}

In this section, we describe perturbation data on a blowup that is standard near the exceptional divisor.  We make explicit computations involving holomorphic disks whose boundary maps to exceptional branes.  To achieve symmetry properties of the composition maps, the perturbation data we consider are multivalued.  The symmetry property is used to show that a weak version of the divisor equation holds. \label{somesy}

We recall some geometric details about the neighborhood of the exceptional divisor needed for the construction of our perturbation data. Let $p \in X$ be the blowup point. Recall that the bulk deformation, the collection of Lagrangian branes, and the Donaldson hypersurface are all disjoint from $p$, hence disjoint from a Darboux chart $U \ni p$. Let $\tilde{U} \subset \tilde{X}$ be the preimage of $U$
under the projection $\tilde{X}\to X$. Fix the Darboux coordinate in $U$. Let $J_{\tilde{U}}$ be the integrable almost complex structure on $\tilde{U}$ that is the pullback from the standard complex structure with respect to the Darboux coordinates in $U$. The exceptional branes in ${\mc E}$ are all supported on an embedded Lagrangian
$L_{\bm\epsilon} \subset \tilde{U}$.

\subsubsection{Holomorphic disks bounding the exceptional Lagrangian}

We wish to classify the holomorphic disks of minimal area bounding the
exceptional Lagrangian.  Since the picture is locally toric, the
classification is a special case of the computations in Cho-Oh \cite{Cho_Oh}. In
particular, from the description in Section \ref{subsubsec:el}, a disk
$u: (\D, \partial \D) \to (\tilde X,L_{\bm \eps})$ whose image is
contained in a neighborhood of the exceptional divisor may be viewed
as a disk mapping to $(\C^{n+1},\hat L_{\bm \eps}) \qu S^1$, where
$S^1$ acts on $\C^{n+1}$ with weights $(-1,1,\dots,1)$ and
$\hat L_{\bm \eps}=\{|z_i|=\eps_i, i=0,\dots,n\} \subset \C^{n+1}$.
The disk $u$ lifts to a Blaschke product $\hat u$
whose definition we recall.

\begin{definition}
A {\em Blaschke product} of degree $(d_0,\ldots,d_n)$ with boundary in the Lagrangian $\hat L_{\bm\epsilon}$ is a map $\hat u : (\DD, \partial \D) \to (\mb{C}^{n+1}, \hat L_{\bm \eps})$ prescribed by coefficients
\[ | \zeta_i | = \epsilon_i , \quad a_{i,j} \in \C, \quad |a_{i,j}| < 1, \quad i \leq n+1, \quad j \leq d_i \]
and defined as 
\begin{equation} \label{blaschke} 
\hat u: (\D, \partial \D) \to (\C^{n+1}, \hat L_{\bm\epsilon}), \quad z
    \mapsto \left( \zeta_i \prod_{j=1}^{d_i} \frac{ z - a_{i,j}}{1 - z
        \ol{a_{i,j}}} \right)_{i = 0,\ldots,n} .\end{equation}
\end{definition}

We include the following proposition computing the areas and indices of Blaschke products from Cho-Oh \cite{Cho_Oh} for completeness:

\begin{lemma} \label{index} 
The descent $u : (\D, \partial D) \to (\tilde X, L_{\bm \eps})$ of the Blaschke product $\hat u: (\D, \partial \D) \to (\mb{C}^{n+1}, \hat L_{\bm \epsilon})$ given by \eqref{blaschke} has Maslov index
  \[ I(u) = \sum_{i=1}^{n+1} 2d_i \]
and area 
\[ A(u) = \pi \sum_{i=1}^{n+1} d_i \eps_i .\]
\end{lemma}

\begin{proof} 
  As in Cho-Oh \cite[Theorem 5.3]{Cho_Oh}, the products
  \eqref{blaschke} are a complete description of holomorphic disks
  with boundary in $\hat{L}_{\bm \eps}$.  Any Blaschke product
  $\hat{u} : (\D,\partial \D) \to (\C^{n+1}, \hat L_{\bm \eps})$ disjoint from the
  semistable locus descends to a disk $u: (\D,\partial \D) \to (\tilde X, L_{\bm \eps})$.
  We compute its Maslov index using the splitting (with notation
  $\partial u := u |_{\partial C}$)
  \[ (\hat{u}^* T\C^{n+1}, (\partial \hat{u})^* T\hat L_{\bm \eps}) \cong (u^* T\tilde X , (\partial u)^* TL_{\bm \eps})
  \oplus (\g_\C,\g) \]
where $\g_\C,\g$ denotes the trivial bundle and real boundary
condition with fiber $\g_\C \simeq \C^\times$ resp. $\g \simeq S^1$ the Lie algebras of the
complex resp. real torus acting on $\C^{n+1}$. We write 
\[ I(E,F) \in \Z \]  
for the Maslov index of a pair $(E,F)$ consisting of a complex vector
bundle $E$ on the disk $\DD$ and a totally real sub-bundle $F$ over the
boundary $\partial \DD$.  Since the Maslov index of bundle pairs is
additive, \label{bundlepairs}
\[ I( \hat{u}^* T \C^{n+1}, (\partial \hat{u})^* T\hat L_{\bm \eps} ) = I( u^* T \tilde{X}
, (\partial u)^* TL_{\bm \eps}) + I( u^* \g_\C, (\partial u)^* \g) .\]
The second factor has Maslov index
$I(u^* \g_\C, (\partial u)^* \g) = 0$, as a trivial bundle.  It follows that the
Maslov index of the disk $u$ is given by
\[ I(u) = I(\hat u^*T\C^{n+1}, (\partial \hat u)^* T \hat L_{\bm \eps}) = \sum_{i=1}^{n+1} 2 d_i = 2 \# u^{-1} \left(
  \sum_{i=1}^k [D_i] \right) ;\]
that is, $I(u)$ is twice the sum of the intersection number with the
anticanonical divisor.   That is,  
\[ [K^{-1}] = \sum_{i=1}^k [D_i] \in H^2(X,Z) \]
is the disjoint union of the prime invariant divisors
\[ D_i = [z_i = 0] \subset \C^{n+1} \qu \C, i = 1,\ldots, k . \] 
After an automorphism of the domain $\DD$, the disks of Maslov index two are
those maps $u_i: \DD \to X$ with lifts \label{withlifts} of the form
\[ \hat{u}_i: \D \to \hat{X}, \quad z \mapsto ( b_1,\ldots, b_{i-1},
b_i z, b_{i+1}, \ldots, b_{n+1}) .\]
%
We call these the {\em basic disks} and their homology classes {\em basic classes}.  The area of each such disk is
\[ A(u_i) = A(\hat{u}_i) = \eps_i \]
since 
\[ \int \hat{u}_i^* \hat{\omega} = \int_{r^2/2 = 0}^{r^2/2 =
  \eps_i/2\pi} r \d r \d \theta = \eps_i . \]
\label{primitive}
The homology class of higher index Maslov disks
$u: C \to X, I(u) > 2 $ is a weighted sum 
\[ [u ] = \sum d_i [u_i] \]
of homology classes of basic disks $u_i, i =1,\ldots, {n+1}$.  It follows that
the area $A(u) \in \R$ of such a disk $u$ is the weighted sum
\[ A(u) = \sum d_i A(u_i) \] 
of the areas $A(u_i)$ of disks $u_j$ of Maslov index $I(u_j) =
2$. The claim on the area follows.  \end{proof}

Next we describe the relation between the areas of disks in the blow-up and their projections.
Suppose that the
almost complex structures on $\tilde{X}, X$ are such that the projection 
\[ \pi: \tilde{X} \to X \] 
is almost complex, so that any holomorphic curve
$\tilde{u} : C \to \tilde{X}$ defines a holomorphic curve $u: C \to X$ by
projection.  Since the exceptional divisor $\tilde{Z}$ is almost complex,
the intersection number $\tilde{u}. \tilde{Z}$ is the sum of positive
intersection multiplicities at each of the intersection points
$\tilde{u}^{-1}(\tilde{Z})$, see for example \cite[Proposition
7.1]{Cieliebak_Mohnke}.

\begin{lemma} \label{arearel} The areas of $\tilde{u}$ and
  $u := \pi \circ \tilde{u}$ are related by
  $ A(\tilde{u}) = A(u) - \eps ( [\tilde{u}]. [\tilde{Z}] ) $.
\end{lemma} 

\begin{proof} By Mayer-Vietoris and the definition of the  
  symplectic form on the local model 
the 
  symplectic class $[\tilde{\omega}] \in H^2(\tilde{X})$ is equal to 
\[  [ \tilde{\omega}] = \pi^* [\omega] + \eps [\tilde{Z}]^ \dual \] 
  where $[\tilde{Z}]^\dual \in H^2(\tilde{X})$ is the dual class to the
  exceptional divisor $\tilde{Z}$.  Pairing with
  $[\tilde{u}] \in H_2(\tilde{X})$ proves the claim.
\end{proof}

\begin{proposition}\label{indcor}\label{prop236}
\begin{enumerate}
\item $(\tilde{U}, L_{\bm \epsilon}, \tilde \omega|_{\tilde{U}})$ is monotone with
  minimal Maslov index two.
    
\item The moduli space of $\tilde{J}_0$-holomorphic disks
  ${\cM}_{0,1}( \tilde{U}, L_{\bm \epsilon}, \tilde{J}_0)$ in $\tilde{U}$ with
  boundary in $L_{\bm \epsilon}$, with one boundary marking 
  no interior markings is regular and
  the evaluation map
  $\ev: {\cM}_{0,1}(\tilde{U}, L_{\bm \epsilon}, \tilde{J}_0) \to L_{\bm \epsilon}$ is a
  submersion.
    
    \item All nonconstant $\tilde{J}_0$-holomorphic spheres in $\tilde{U}$ have positive Chern numbers and are contained in the exceptional divisor $\tilde{Z}$. Moreover, the moduli space of these spheres with one marking is regular (as maps into $\tilde{Z}$) and the evaluation map at the marking is a submersion onto $\tilde{Z}$.
\end{enumerate}
\end{proposition}

\begin{proof} The first item follows from Lemma \ref{index}. 
For the second item, note that the torus action on $\tilde{U}$
induces an action on the moduli space of holomorphic disks bounding 
$L_{\bm \epsilon}$.  It follows that $D \ev$ is surjective at
any point.  The splitting in Oh \cite{oh:rh} implies that the boundary value problem defined by $u$ splits into one-dimensional summands with non-negative Maslov index.  In particular, the cokernel of $D_u$ vanishes, hence the regularity in the second item.   For
  the third item, note that any holomorphic sphere $u : \P^1 \to \tilde{U}$   defines a holomorphic sphere in $\tilde{Z}$ by projection, necessarily
  of degree $d$, together with a section of the pull-back of the
  normal bundle to $\tilde{Z}$, \label{tiY} necessarily a line bundle of
  degree $-d$.  Since such bundles have no sections, $u$ has image in
  the exceptional divisor $\ti{Z}$.  The claim follows from homogeneity of
  $\tilde{Z}$, and the fact that the Chern number of any degree $d$ map
  to $\tilde{Z}$ is $d(n-1)$.
\end{proof}

One needs the following simple result to help calculate the potential function for the exceptional torus. 

\begin{proposition} \label{alldisks} 
There exists $\eps_0$ such that for any $\eps \in (0, \eps_0] \cap {\mb Q}$, the following holds: For any smooth domain-dependent almost complex structure $J: {\mb D} \to {\mc J}_{\rm tame}(\tilde{X}, \tilde \omega)$ with $J|_{\tilde U} = J_{\tilde U}$, all $J$-holomorphic disks $u: {\mb D} \to \tilde{X}$ bounding $L_{{\bm \epsilon}}$ with energy at most $\eps$ are contained in $\tilde{U}$, and hence are the standard Blaschke products of Maslov index two.
\end{proposition}

\begin{proof}
The statement of the proposition is a consequence of the monotonicity property of pseudoholomorphic curves. Suppose the statement is not the case, so that for all $\eps$ there is a certain domain-dependent almost
  complex structure $J$ and a holomorphic map
  $u: {\mb D} \to \tilde{X}$ with area at most $\eps$ but not
  contained in the neighborhood $\tilde{U}$. Let
  $\tilde{U}''' \subset \tilde{U}'' \subset \tilde{U}' \subset \tilde{U}$ be a
  nested collection of open neighborhoods of the exceptional divisor $\ti{Z}$,
  so that in particular $u(\partial {\mb D}) \subset \tilde{U}'''$. Let
  $S \subset \tilde{U}$ be the closure of
  $u( {\mb D} ) \cap (\tilde{U}' \setminus \tilde{U}'')$, which is a compact
  minimal surface with boundary. The geometry between
  $\tilde{U}'''$ and $\tilde{U}$ is independent of $\eps$.  By the
  monotonicity property of minimal surfaces (see
  \cite[3.15]{Lawson_1974}, \cite[4.7.2]{Sikorav_1994} \cite[Lemma
3.4]{switching}) there is a constant $\delta_0 >0$ which is
  independent of $\eps$ such that for all non-constant compact minimal
  surface $\Sigma$ with nonempty boundary in the interior of
  $\tilde{U} \setminus \tilde{U}'''$ and $\delta < \delta_0$ we have
\[
x \in \Sigma,\ \partial \Sigma \cap B(x, \delta) = \emptyset \Longrightarrow {\rm Area} (\Sigma) \geq c \delta^2.
\]
Applying the monotonicity property to $S$ one sees that the
holomorphic map $u$ has an area lower bound that is independent of
$\eps$, a contradiction.
\end{proof}

\subsubsection{Multivalued perturbations}

\label{multival}
Next we introduce multivalued perturbations that are needed to establish a weak version of the divisor equation for the Fukaya
algebras of the exceptional tori.  

\begin{definition} Given a stable domain type $\Gamma$, a {\em multivalued perturbation} is a formal linear
combination of perturbations
\begin{equation} \label{mv} P_\Gamma = p_1 P_{\Gamma,1} + \ldots + p_k
  P_{\Gamma,k} \end{equation}
for real numbers $p_1,\ldots, p_k > 0$ summing to $1$.
\end{definition} 

Coherent
collections of multivalued perturbation data for all stable domain
types are defined as before.  Given a multivalued perturbation $P_\Gamma$ we write
\[
\ov{\mc M}_{\bGamma}(P_\Gamma):= \bigcup_{i=1}^k \ov{\mc M}_{\bGamma}(P_{\Gamma, i}).
\]
If each subset in the above union is regular, we consider it as weighted manifold with weights given by the coefficients $p_1,\ldots, p_k$.  We call each $\ov{\cM}_{\bGamma}(P_{\Gamma,i})$ a {\em branch} of $\ov{\cM}_{\bGamma}(P_{\Gamma})$. A multivalued perturbation $P_\Gamma = p_1 P_{\Gamma, 1} + \cdots + p_k P_{\Gamma, k}$ is (strongly) regular if each $P_{\Gamma, i}$ is (strongly) regular.  In fact, we only consider multivalued perturbations $P_\Gamma = (J_\Gamma, H_\Gamma, F_\Gamma, M_\Gamma)$ such that $J_\Gamma$, $H_\Gamma$, and $M_\Gamma$ are all single valued, but there
is no advantage in disallowing these components to be multi-valued also. Example \ref{divfail}
explains why multivalued perturbations are needed to prove the divisor equation.

\subsubsection{Perturbations needed for the divisor equation}

In this section, we identify the Floer cohomology rings of the tori near the exceptional locus with Clifford algebras. This requires a special version of the divisor equation (see Corollary \ref{diveqcor}).  
Recall that if the moduli spaces admit forgetful maps for 
omitting a marking and stabilizing if necessary, then the $A_\infty$ composition will satisfy the general divisor equation for any number of boundary insertions. Unfortunately, it is difficult to achieve existence of the forgetful maps using the perturbations
used in this paper.    Rather, we will achieve transversality while having the divisor equation for the $A_\infty$ algebras of the new branes in the blowup with only two insertions.   We first introduce a class of perturbations for which this restricted version of the divisor equation will hold.

\begin{notation}
$\Gamma^*$ is the stable domain type with only the root vertex $v_0$, two incoming unforgettable boundary leaves $e',e''$ and one outgoing boundary leaf $e_0$ (which must also be unforgettable), and $m(\epsilon)$ interior leaves where $m(\epsilon)$ is the expected number of intersections of the basic Maslov 2 disks with the Donaldson hypersurface $\tilde D$. Denote the segments corresponding to the two incoming edges by  $T_{e'}, T_{e''} \subset \ov{\mc U}_{\Gamma^*}$. Each multivalued perturbation $P_{\Gamma^*}$ restricts to two multivalued functions 
\[
F_{e'} = p_1 F_1' + \cdots + p_k F_k': T_{e'} \times L_{\bm \epsilon} \to {\mb R},\ F_{e''} = p_1 F_1'' + \cdots + p_k F_k'': T_{e''} \times L_{\bm \epsilon} \to {\mb R}.
\]
\end{notation}

\begin{definition}  
A perturbation $P_{\Gamma^*}$ is called {\it symmetric} if with respect to the obvious identification $T_{e'} \cong T_{e''}$, as multivalued functions (with weights) one has
$F_{e'} = F_{e''}.$
\end{definition} 

Now consider the situation of the divisor equation. As $L_{{\bm \epsilon}} \cong (S^1)^n$, there exists a perfect Morse function $F_{L_{{\bm \epsilon}}}$ that has exactly $2^n$ critical points. We call such a function a {\it minimal Morse function} on this torus. There are exactly $n$ critical points, denoted by $x_1, \ldots, x_n$ that have Morse index $n-1$. By choosing orientations on their unstable manifolds, $x_1, \ldots, x_n$ give a basis of $H^1(L_{{\bm \epsilon}})$. Also let $x_0$ be the unique critical point of index $n$, whose unstable manifold is oriented in the same way as $L_{{\bm \epsilon}}$. Let $\bGamma_{\tilde \beta, i, j}$ be the map type determined by a basic disk class $\tilde \beta \in H_2(\tilde X, L_{{\bm \epsilon}} )$, incoming critical points $x_i, x_j$ and outgoing critical point $x_0$. Let $\bGamma_{\tilde \beta}$ denote the map type without the incoming edges and only one output labelled by $x_0$.

\begin{lemma} There exists a symmetric multivalued perturbation $P_{\Gamma^*}$ such that the moduli spaces ${\mc M}_{\bGamma_{\tilde \beta, i, j}}(P_{\Gamma^*})$ are regular
for any basic disk  class $\tilde \beta$.
\end{lemma}

\begin{proof}  The proof is an averaging argument.  
Fix such a disk class $\tilde \beta$. Consider the moduli space of $J_{\tilde U}$-holomorphic disks $u: S \to X$ in this class with one boundary marked point $z_e \in S$. The Blaschke formula \eqref{blaschke} implies that this moduli space is a smooth manifold of dimension equal to ${\rm dim}L_{\bm \epsilon}$ and the evaluation map $u \mapsto u(z_e)$ at the boundary marking $z_e$ is a diffeomorphism onto $L_{\bm \epsilon}$. Therefore, the moduli space ${\mc M}_{\bGamma_{\tilde\beta}}$ contains only one configuration (up to permuting interior markings) whose boundary is an embedded circle $\partial \tilde\beta \subset L_{\bm \epsilon}$. Choose two perturbations to the negative gradient flow equation of $f_{L_{\bm \epsilon}}$, gives two perturbations of the unstable manifolds $W^u(x_i)$ for each $i$, denoted by $W^u_{e_1}(x_i)$, $W^u_{e_2}(x_i)$. We may require that $W^u_{e_1}(x_i)$, $W^u_{e_2}(x_j)$ always intersect transversely and intersect transversely with $\partial \tilde\beta$ so that $W^u_{e_1}(x_i) \cap W^u_{e_2}(x_j) \cap \partial \tilde \beta = \emptyset$. Switching the two perturbations does not alter this condition. Regarding the two perturbations as a perturbation of on the two incoming leaves of $\Gamma^*$ and the switching produces a 2-valued perturbation $P_{\Gamma^*}$. 
\end{proof}

\begin{lemma}\label{lemma511} 
The following divisor relation holds:
\begin{equation} \label{twice}
\# {\mc M}_{\bGamma_{\beta, i, j}} (P_\Gamma)  + \# {\mc M}_{\bGamma_{\beta,j,i}} (P_\Gamma) = \langle x_i, \partial \beta\rangle \langle x_j, \partial \beta \rangle  {\mc M}_{\bGamma_\beta} (P_\Gamma)
\end{equation}
Here $\langle x_i, \partial \beta\rangle$ is the intersection number between the unstable manifold $x_i$ and boundary class $\partial \beta \in H_1(L_{{\bm \epsilon}})$. 
\end{lemma}

\begin{proof}
Suppose that perturbations $F_{e_1}, F_{e_2}$ on the incoming edges have been chosen. For any perturbation datum $P_{\Gamma_0}$ we obtain a perturbation datum for $P_\Gamma$ by pull-back of $P_{\Gamma_0}$ everywhere except the edges
$e_1,e_2$ where we take the perturbation to equal $F_{e_1}, F_{e_2}$. 
Notice that the moduli space ${\mc M}_{\bGamma_\beta}(P_{\Gamma_0})$ is always transversely cut out. Any element of the moduli space ${\mc M}_{\bGamma_{\beta, i, j}^*}(P_{\Gamma^*})$ is determined by an element in ${\mc M}_{\bGamma_{\beta}^0}(P_{\Gamma^0})$ obtained by forgetting the edges together with attaching points of the edges $e_1,e_2$ which flow to $x_1,x_2$ under the perturbed gradient flow of $F_{e_1},F_{e_2}$. For any time-dependent perturbation $F_t$ of $F_{L_{\bm \epsilon}}$, the unstable manifold of $x_i$, which is the space of solutions to the equation
\[
\dot{x}(t) + \nabla F_t (x(t)) = 0,\ t \in (-\infty, 0],
\]
is still a cycle and represents the same class in $H_1(L_{\bm \epsilon})$ as the unperturbed unstable manifold. In the case when $i=j$,  it follows that the number of such configurations for any map of 
type $ \bGamma_0(\beta; x_0)$ is 
$ \frac{1}{2} \langle x_i, \partial \beta\rangle \langle x_j, \partial \beta \rangle  \# {\mc M}_{\bGamma_0(\beta; x_0)}(P_\Gamma)$, with the factor of $\frac{1}{2}$ appearing because the attaching points must appear in cyclic order, and the number of attaching points in either order are equal by the symmetric assumption. In the case when $i \neq j$, a map of 
type $ \bGamma_0(\beta; x_0)$ together with the data of attaching points contributes to exactly one of the two terms in the left hand side of \eqref{twice} depending on the cyclic ordering of $z_0$ 
 and the two attaching points. 
\end{proof}

\begin{example}\label{divfail}
  We give an example to show why the divisor equation \eqref{twice}
  does not hold if multivalued perturbations are not allowed. Consider
  $X=S^2$ with $L=S^1$ being the equatorial circle equipped with a
  minimal Morse function $F$. Let $x_0$ resp. $x_1 \in L$ be the
  minimum resp. maximum point of $F$. There are two basic classes
  $\beta^+$, $\beta^-$ in $H_2(X,L)$ corresponding to the upper and
  lower hemisphere of $S^2$, and there is one map each of class
  $\beta^+$, $\beta^-$ that has a single output mapping to $x_0$. We
  now count the elements in the moduli space
  ${\mc M}_{\bGamma_{\beta^\pm,1,1}} (P_\Gamma)$.  Let $F^{e_1}$ and
  $F^{e_2}$ be single-valued perturbations of $F$ defined on the treed
  segments $T_{e_1}$, $T_{e_2}$. The corresponding unstable manifolds
  $p_1:=W^u_{e_1}(x_1)$ and $p_2:=W^u_{e_2}(x_1)$ are distinct points in
  $L$. Depending on the cyclic ordering of $x_0$, $p_1$ and $p_2$,
  exactly one of the moduli spaces
  ${\mc M}_{\bGamma_{\beta^+,1,1}} (P_\Gamma)$,
  ${\mc M}_{\bGamma_{\beta^-,1,1}} (P_\Gamma)$ is non-empty. Thus the
  relation \eqref{twice} does not hold for the classes $\beta^+$,
  $\beta^-$.
\end{example}

Symmetric perturbations may not suffice to regularize other moduli spaces as one needs to break the symmetry in order to regularize. Nonetheless, a perturbation sufficiently close to a symmetric one will not change the divisor relation above. 

\begin{lemma} \label{semiinvlem}
For each sufficiently small $\epsilon$, there exist a coherent 
and strongly regular system of perturbations $\tilde P_\Gamma$ for treed holomorphic disks in $\tilde X$ so that for the basic disk classes, one has the relation \eqref{twice}.
\end{lemma}

\begin{proof} For each $\Gamma$
there are countably many map types
$\bGamma(\beta; x_i, x_j, x_0)$.
Since the countable intersection of comeager
sets is comeager, we may assume that 
$P_\Gamma$ has been chosen so that 
\eqref{twice} holds.  The rest of the construction of coherent perturbations now remains the same.  
\end{proof} 

We summarize all conditions that can be achieved in the following theorem.

\begin{theorem} \label{ereg}
There exists a coherent system of perturbation data $\uds{\tilde P} = (\tilde P_\Gamma)$ for treed disks in $\tilde X$ satisfying the following conditions.
\begin{enumerate}
    \item Each $\tilde P_\Gamma$ is strongly 
    regular (see Definition \ref{stronglyregular}).
    
    \item The system of perturbation data $\uds P = (P_\Gamma)$ obtained from $\uds{\tilde P}$ via the natural projection map is coherent and each $P_\Gamma$ is strongly regular.
    
    \item If $\Gamma$ is the domain type with a single vertex, two unforgettable incoming boundary edges and one outgoing boundary edge, no interior leaves, then \eqref{twice} holds.
    
    \item In particular, if $\tilde P_\Gamma = (\tilde J_\Gamma, \tilde H_\Gamma, \tilde F_\Gamma, \tilde M_\Gamma)$, then $\tilde J_\Gamma$ agrees with $J_{\tilde U}$ in $\tilde U$, $\tilde H_\Gamma$ vanishes identically in $\tilde U$, and $\tilde M_\Gamma$ is the identity in $\tilde U$.
\end{enumerate}
\end{theorem}

\subsection{Point constraint and restricted perturbations}

In this subsection,  we consider the Fukaya category, the quantum cohomology, the open-closed/closed-open maps with a special bulk deformation.   We first address some considerations for transversality of treed disks before we take the blowup. We consider bulk deformations of the form 
\[
{\mf b} = {\mf b}_0 + q^{-\epsilon} p
\]
where $\epsilon>0$ and ${\mf b}_0$ is a bulk deformation whose components are disjoint from $p$.  We first refine the combinatorial structures for domain types.  The datum for a domain type $\Gamma$
of treed disk includes a partition
\[
{\rm Leaf}_\bullet (\Gamma) = {\rm Leaf}_{\bullet, 0} (\Gamma) \sqcup {\rm Leaf}_{\bullet, {\rm ex}}(\Gamma)
\]
where ${\rm Leaf}_{\bullet, 0}(\Gamma)$ labels interior markings mapped into $D$ or components of ${\mf b}_0$, while the set of {\em exceptional leaves} ${\rm Leaf}_{\bullet, {\rm ex}}(\Gamma)$ labels interior markings (called exceptional markings) constrained by $p$. If $\Gamma$ is such a stable domain type, let $\Gamma'$ be the domain type obtained by forgetting ${\rm Leaf}_{\bullet, {\rm ex}}(\Gamma)$ and stabilizing. This operation induces a contraction map 
\[
\ov{\mc U}_\Gamma \to \ov{\mc U}_{\Gamma'}.
\]
We make the following restrictions on perturbations.

\begin{definition} \label{restrictedperturbations}
For each stable domain type $\Gamma$, a perturbation $P_\Gamma = (J_\Gamma, F_\Gamma, H_\Gamma, M_\Gamma)$ is called {\it restricted} if it satisfies the following conditions.
\begin{enumerate}
    \item $J_\Gamma|_U \equiv J_U$, $H_\Gamma|_U \equiv 0$, and $M_\Gamma|_U = {\rm Id}$.
    
    \item Let $\Gamma'$ be the domain type obtained by forgetting ${\rm Leaf}_{\bullet, {\rm ex}}(\Gamma)$. If $\Gamma'$ is not empty, then $P_\Gamma$ is the pullback of a function on $\ov{\mc U}_{\Gamma'}$ under the contraction $\ov{\mc U}_\Gamma \to \ov{\mc U}_{\Gamma'}$.
    
\end{enumerate}
\end{definition}

\begin{definition}
A map type $\bGamma$ is called $p$-uncrowded if on each ghost vertex $v\in {\rm Vert}(\Gamma)$ there is at most one exceptional leaf.
\end{definition}

We also modify the meanings of regular and strongly regular perturbations in Definition \ref{stronglyregular} by requiring the same conditions only for map types that are both uncrowded and $p$-uncrowded.

\begin{proposition}
There exists a coherent system of restricted and strongly regular perturbations $\uds{P} = (P_\Gamma)_\Gamma$.
\end{proposition}

\begin{proof}
The proof of the statement of the Proposition is similar to the proof of Theorem \ref{regular}.  Note that there are no nonconstant holomorphic spheres contained in the open subset $U$ where the almost complex structure
is unperturbed.
\end{proof}

\begin{proposition}\label{nonzero_derivative}
There exists a coherent system of restricted and strongly regular perturbations $\uds{P}$ such that, for each essential map type $\bGamma$ of expected dimension zero or one and for each element of ${\mc M}_{\bGamma}(P_\Gamma)$ represented by a treed disk $(C, u)$ for each exceptional marking $z \in C$, the derivative of $u$ at $z$ is nonzero.
\end{proposition}

\begin{proof}
The vanishing of derivatives at  markings is a phenomenon with  codimension two, hence generically cannot happen in a zero or one-dimensional moduli space.
\end{proof}

\subsection{Pullback perturbations and exceptional regularity}\label{subsection28}

In this subsection, we discuss the transversality issues related to embedding the Fukaya category to a blowup. In order to compare this category with  the Fukaya category before the blowup, we use the pullback perturbations which depends on markings mapped into the pullback of a Donaldson hypersurface $D \subset X$. However, $\pi^{-1}(D)$ is no longer a Donaldson hypersurface.  This requires a modification of the general construction.

\subsubsection{Exceptional regularity}


We first consider treed disks for defining the Fukaya category. Let $\Gamma$ be any stable domain type (without exceptional leaves). As restricted perturbations used downstairs  (see Definition \ref{restrictedperturbations})  are independent from exceptional leaves, they can be pulled back to a perturbation on $\ov{\mc U}_\Gamma$ for treed disks in $\tilde X$. Indeed, as $J_\Gamma$ is the standard almost complex structure $J_U$ in $U$, it lifts to the integrable almost complex structure $J_{\tilde U}$. The Hamiltonian perturbation $H_\Gamma$, the diffeomorphism $M_\Gamma$ both lifts as well. Therefore, each restricted perturbation $P_\Gamma$ corresponds to a pullback perturbation upstairs. We denote the pullback  by $\pi^* P_\Gamma$. 
A map type for treed disks in $\tilde X$ is denoted by $\tilde\bGamma$ and the corresponding moduli space is ${\mc M}_{\tilde\bGamma}(\pi^* P_\Gamma)$.


\label{excreg}

To regularize the moduli spaces of treed maps that have spherical components mapped to the exceptional divisor we require a
different notion of regularity, as the normal direction to the exceptional divisor may bring in obstructions in the usual sense.
We define a subgraph of the type of a map corresponding to components that map into the exceptional locus. 

\begin{definition}  {\rm (Exceptional subtype)}  Let $\Gamma$ be a domain type, $u: C \to \tilde{X}$ be a treed holomorphic disk of type $\Gamma$. 
 Let $\Gamma_{\rm ex}$ be the union of spherical subtrees  $\Gamma''$ of $\Gamma$ whose energy is positive and so that the images of the corresponding sub-curve $C''$ is contained in the exceptional divisor $\tilde{Z}$ (such a subtree may have ghost components).\footnote{A ghost spherical tree $\cup_{v \in V} S_v$ mapped into $\tilde{Z}$ with all neighboring components $S_{v'}$ not mapped into $\tilde{Z}$ is not contained in $\Gamma_{\rm ex}$.}  In general, a treed holomorphic disk $C$ of domain type $\Gamma$ is called a type $(\Gamma, \Gamma_{\rm ex})$-map if $\Gamma_{\rm ex}$ is union of all maximal spherical subtrees $\Gamma''$ with 
 \begin{align*}
 &\ \sum_{v \in {\rm Vert}(\Gamma'')} A(u_v) > 0\ \ \ \  {\rm and}\ \ \bigcup_{v\in {\rm Vert}(\Gamma'')} u_v(S_v) \subset \tilde Z.
 \end{align*}
\end{definition} 

The moduli space of maps to the blowup can be viewed as a fibre product in the following way. We assume for simplicity that the graph $\Gamma_{\rm ex}$ is connected and its complement, denoted by $\Gamma'$, is also connected. Let $\bGamma_{\rm ex}$ and $\bGamma'$ be the obviously induced map types. Let $u_{\rm ex}$ and $u'$ be the restriction of $u$ to these two parts. The perturbation datum $P_\Gamma$ induces a perturbation datum $P_{\Gamma'}$ on $\ov{\mc U}_{\Gamma'}$ and a perturbation datum $P_{\Gamma_{\rm ex}}$ on $\ov{\mc U}_{\Gamma_{\rm ex}}$. The map $u'$ represents an element of ${\mc M}_{\bGamma'}(P_{\Gamma'})$ and $u_{\rm ex}$ represents an element of ${\mc M}_{\bGamma_{\rm ex}}(P_{\Gamma_{\rm ex}})$. The moduli space of type $(\bGamma, \bGamma_{\rm ex})$ treed holomorphic disks (with respect to the perturbation $P_\Gamma$) can be identified with the fibre product
\[
{\mc M}_{\bGamma'}(P_{\Gamma'}) {}_{\rm ev} \times_{\rm ev} {\mc M}_{\bGamma_{\rm ex}}(P_{\Gamma_{\rm ex}})
\]
where the target set of the two evaluation maps is the exceptional divisor $\tilde Z$. 

\begin{definition}  The treed holomorphic disk $u$ is {\it regular as a type $(\bGamma, \bGamma_{\rm ex})$ map} if $u'$ and $u_{\rm ex}$ are both regular and the above fibre product is transverse at $([u'], [u_{\rm ex}])$.
\end{definition} 

In order to obtain corresponding regularity and compactness results, the notion of strong regularity of Definition \ref{stronglyregular} needs the following modification.

\begin{definition}\label{exceptionallyregular1}
Let $\Gamma$ be a stable domain type. A pullback perturbation $P_\Gamma$ (for treed disks in $\tilde X$) is called {\em exceptionally regular} if the following conditions are satisfied: For each subgraph $\Gamma_{\rm ex} \subset \Gamma$ whose vertices are all contained in ${\rm Vert}_\bullet(\Gamma)$, an uncrowded treed holomorphic disk $u: C \to \tilde{X}$ of domain type $(\Gamma, \Gamma_{\rm ex})$ is regular as a map of type $(\Gamma, \Gamma_{\rm ex})$.  
\end{definition}

\begin{proposition}\label{blowup_regular}
There exists a coherent system of perturbations $\uds P = (P_\Gamma)_\Gamma$ for treed disks in $X$ satisfying the following conditions.
\begin{enumerate}
    \item Each $P_\Gamma$ is strongly regular (Definition \ref{stronglyregular}.) 
        \item The lifted perturbation $\pi^* P_\Gamma$ is strongly regular for curves  in $\tilde X$ having no components mapped into $\tilde Z$.
        \item The lifted perturbation $\pi^* P_\Gamma$ is exceptionally regular.
\end{enumerate}
\end{proposition}

\begin{proof}  The proof is similar to that of Theorem \ref{ereg} and omitted.
\end{proof} 

\begin{remark}\label{exceptionallyregular2}
Exceptionally regularity implies regularity for the following maps obtained by the forgetful construction. Let $u: C \to \tilde{X}$ be a treed holomorphic disk of type $(\Gamma, \Gamma_{\rm ex})$. Let $C'$ be the (possibly disconnected) treed disk obtained by removing all spherical components $S_v$ labelled by vertices $v$ in $\Gamma_{\rm ex}$, and $u': C' \to \tilde{X}$ the induced map which has no nonconstant sphere components mapped into $\tilde{Z}$.  Equip $C'$ with new markings corresponding to nodes connecting $C'$ to its complement $C - C'$. 
Let $\Gamma'$ be the domain type (possibly disconnected) corresponding to $C'$.  (See Figure \ref{forget_exceptional}.) By the locality property of the perturbation data (see Definition \ref{locality}),
  $P_\Gamma$ induces a perturbation datum $P_{\Gamma'}$ so that $u'$ is $P_{\Gamma'}$-holomorphic. The moduli space $\M_{\bGamma,\bGamma_{\rm ex}}(\tilde{X})$ is then
  the fiber product of $\M_{\bGamma'}(X)$ with $\M_{\bGamma_{\rm ex}}(E)$,
  over some number $I$ of copies of $\tilde{Z}$ corresponding to edges
  connection $\Gamma'$ and $\Gamma_{\rm ex}$.  Since nonconstant spheres in
  $\ti{Z} \cong \mb{CP}^{n-1}\subset {\mc O}(-1)$ have obstructions to be
  deformed out of $\tilde{Z}$, the transversality at nodes connecting
  components in $\Gamma_{\rm ex}$ and not in $\Gamma_{\rm ex}$ implies the evaluation map at the
  new markings from the moduli space of $P_{\Gamma'}$-holomorphic
  treed disks is transversal to $(\ti{Z})^l$ at the point represented by
  $u'$.  In particular, the curves in $\M_{\Gamma'}(X)$ are regular.
 \end{remark}

\begin{figure}[ht]
    \centering
    \includegraphics[width=4in]{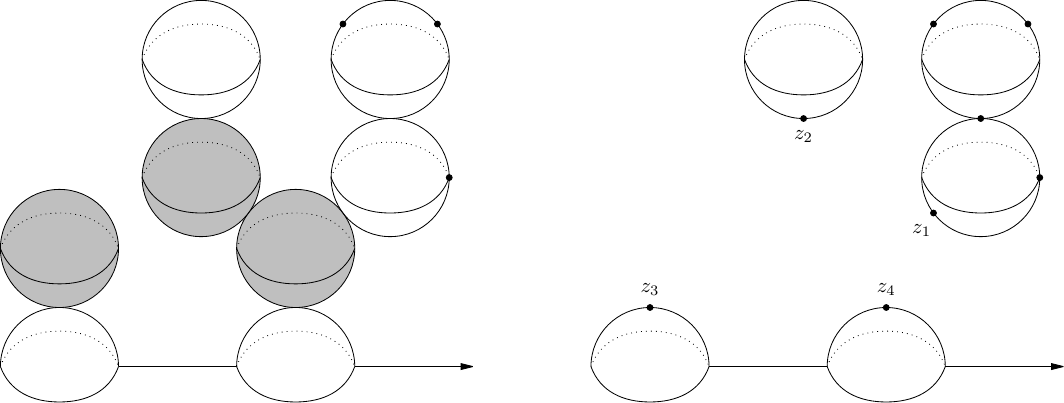}
    \caption{Forgetting sphere components mapped to the exceptional divisor. The gray spheres are (possibly constant) holomorphic spheres in the exceptional divisor. The markings supposed to be mapped to the Donaldson hypersurfaces are not drawn. The exceptional regularity requires regularity of the configuration on the right and the transversality to the exceptional divisor at the markings $z_1, z_2, z_3, z_4$.}
    \label{forget_exceptional}
\end{figure}

\subsubsection{Refined compactness for blowup}

Finally, we show that a version of Gromov compactness holds for the perturbations constructed as above, with complex structure standard in a neighborhood of the exceptional locus.  We first redefine the notion of {\it essential map types} in the blowup case (see Definition \ref{essentialtype}), since the pullback of the stabilizing divisor $\tilde D \subset X$ in $\ti{X}$ is no longer a Donaldson hypersurface, but rather represents the cohomology class $\pi^* [\omega] \in H^2(\ti{X})$.

\begin{definition}
A map type $\bGamma$ of treed holomorphic disks in $\tilde X$ is called {\it essential} if the edges $e \in \Edge({\bGamma})$ have no breakings, there are no edges $e$ of length $\ell(e)$ zero or infinity, no spherical vertices $v \in \Ver_\black({\bGamma})$,  all interior constraints on edges $e \in \Edge_\black({\bGamma})$ are either $(\tilde D, 1)$ or $\ti{\bb}$ and the following holds for each disk vertex $v\in {\rm Vert}({\bGamma}_\circ)$, if the homology class of the component $S_v$ is $\tilde \beta_v$, then the number of interior leaves meeting $v$ labelled by $(\tilde D, 1)$ is equal to $k \omega( \beta_v)$, where $\beta_v$ is the pushforward of $\tilde \beta_v$ to $X$.
\end{definition}

\begin{proposition}[Improved compactness]
\label{icompact}
For a coherent collection $ (\ti{P}_{\Gamma})$ of exceptionally regular perturbations, sequential compactness for moduli spaces
 $\tilde{\M}_{\bGamma}(\ul{L},\tilde{D}(\LL))$
of essential types of expected dimension at most one (exactly the same statement as Lemma \ref{refinedcompactness}) holds. In particular, the limit of a convergent sequence $u_\nu$ of elements $u$ 
in a moduli space 
$\tilde{\M}_{\bGamma}(\ul{L},\tilde{D}(\LL))$
 of essential map type of expected dimension at most one has no component mapped into the exceptional divisor $\ti{Z}$.
\end{proposition}

\begin{proof}
We extend the proof of Lemma \ref{refinedcompactness} to the case of sphere bubbling in the exceptional divisor, which is ruled out by an index argument. Consider an essential map type $\bGamma$  with index at most one and consider a sequence of treed holomorphic disks $u_i: C_i \to \tilde{X}$ representing a sequence of points in ${\cM}_{\bGamma}(\tilde P_\Gamma)$. 
By  general compactness results,  a subsequence converges to a limiting treed holomorphic disk $u: C \to \tilde{X}$ of some type $\bGamma'$. 
Indeed, Gromov compactness for Hamiltonian-perturbed pseudoholomorphic maps with Lagrangian boundary conditions shows that the maps on each surface component have stable limits, after passing to a subsequence, 
and convergence on the tree parts follows from compactness of 
the manifold.  To see that the limit has essential type, first one can as in the proof of Lemma \ref{refinedcompactness} (also the argument of Cieliebak--Mohnke \cite{Cieliebak_Mohnke}) remove crowded ghost components $u_v: S_v \to \tilde{X}$, and so assume that the map type $\bGamma'$ of $u$ is uncrowded. Second, if one can rule out the possibility of a non-constant sphere $u_v: S_v \to \ti{X}$ mapped into the exceptional divisor $\ti{Z}$, then the theorem follows from the same argument of the proof of Lemma \ref{refinedcompactness} as the exceptional regularity agrees with the regularity.

Suppose on the contrary that there are non-constant spherical components of $u$ mapped into the exceptional divisor. We derive a contradiction using the two types of regularity conditions of Definition \ref{exceptionallyregular1} and Remark  \ref{exceptionallyregular2}. Let $\Pi$ be the domain type of $u$. Consider maximal sphere bubble trees $\Pi_{\rm ex}$ in $\Pi$ whose energy is
positive and so that the corresponding maps $u': C' \to \ti{X}$ have images in $\tilde{Z}$. Let $\Pi'$ be the domain type obtained from $\Pi$ by removing $\Pi_{\rm ex}$. Suppose $\Pi_{\rm ex}$ has $m$ connected components $\Pi_{{\rm ex},1}, \ldots, \Pi_{{\rm ex},m}$ with positive degrees $d_1, \ldots, d_m$. Suppose $\Pi'$ have $k+1$ connected components $\Pi_0', \ldots, \Pi_k'$ where $\Pi_0'$ has boundary and $\Pi_1', \ldots, \Pi_k'$ are spherical trees. Suppose the homology class of $\Pi_i'$ is $\beta_i$ and $\Pi_i'$ has $l_i$ new markings. Suppose the component $\Pi_0'$ has map type ${\bPi}_0$ and $l_0$ new markings.  Since each removed node is replaced by 
two new markings, we have the equality
\[
l:= l_0 + l_1 + \cdots + l_k = k + m.
\]
To simplify the computation of the indices, without loss of generality, assume all the spherical trees $\Pi_{{\rm ex}, i}$ or $\Pi_j'$ ($j\geq 1$) have single vertices and the disk components have no bubbling of disks or breaking of edges; otherwise, the index
of the strata would be even lower.  The index of $u$ as a $(\Pi, \Pi_{\rm ex})$-type map (see Definition \ref{exceptionallyregular1}) is (here $2n$ is the dimension of $X$ and $d = d_1 + \cdots + d_m$ is the total degree of spheres in the exceptional divisor).
\begin{multline*}
\sum_{i=1}^k \underbrace{2n + m(\beta_i) + 2l_i - 6}_{{\rm index\ of\ } \Pi_i'} + \underbrace{{\rm ind}({\bPi}_0) + 2l_0}_{{\rm index\ of\ } \Pi_0'} + \underbrace{(2n-2) m + 2nd + 2l - 6m}_{{\rm index\ of\ } \Pi_{\rm ex}} - \underbrace{2nl}_{\rm matching\ condition}\\
  = (2n-6)k - (2n-4)l + (2n-8)m + {\rm ind}({\bGamma}) + 2d\\
  = 2d - 2k - 4m + {\rm ind}({\bGamma}) \geq 0 \end{multline*}
Hence
\begin{equation}\label{degreebound}
d\geq k + 2m.
\end{equation}
On the other hand, consider the induced object of type $\Gamma'$. The index is 
\begin{multline*}
  \sum_{i=1}^k \underbrace{2n + m(\beta_i) + 2l_i - 6}_{{\rm index\ of\ } \Pi_i'} + \underbrace{{\rm ind}({\bPi}_0) + 2l_0}_{{\rm index\ of\ } \Pi_0'} - \underbrace{2l}_{\rm constraints\ at\ new\ markings}\\
  =  (2n-6)k + {\rm ind}({\bGamma}) - 2(n-1)d \geq 0.
\end{multline*}
Hence 
\[
d \leq k - \frac{2k}{n-1}.
\]
This contradicts \eqref{degreebound}. Hence in the limit there cannot be any non-constant spherical components mapped into the exceptional divisor. On the other hand, using ordinary (but not the exceptional) regularity one can also see that there is no constant spherical component that is mapped into the exceptional divisor. 
\end{proof}

\subsubsection{Refined compactness for treed disks with point constraints}

The exceptional regularity achieved upstairs also implies the refined compactness for curves downstairs. Let $\bGamma$ be an essential map type of expected dimension $0$ or $1$ downstairs with $l$ exceptional markings. 

\begin{lemma} For  generic perturbations $P_\Gamma$, we can achieve an additional regularity condition: for each representative $(C, u)$ of points in ${\mc M}_{\bGamma}(P_\Gamma)$,
the fiber $u^{-1}(p)$ is  the set of the $l$ exceptional markings. 
\end{lemma} 

\begin{proof}
 Indeed, each additional point mapped to $p$ cuts down the dimension by $2n-2\geq 2$. Moreover, by Proposition \ref{nonzero_derivative}, the derivatives at the exceptional markings are nonzero. Hence $(C, u)$ lifts to a treed disk $(C, \tilde u)$ that intersects $\tilde Z$ at the positions of the exceptional markings. 
 \end{proof}
 
\begin{lemma}\label{equal_index}
Given a map ${\tilde u}: C \to {\tilde X}$, let $u = \pi \circ \tilde u$ denote the projection
with exceptional markings at $u^{-1}(p)$.  
The indices of $\bGamma$ of $u$ and $\tilde \bGamma$ of $\tilde u$ coincide. 
\end{lemma}

\begin{proof}
Let $n$ be the complex dimension of $X$. By the relation between canonical classes before and after the blowup, one has 
\[
c_1(\tilde X) = \pi^* c_1(X) - (n-1) {\rm PD}([\tilde Z]).
\]
where ${\rm PD}$ denotes the Poincar\'e dual. 
The lemma follows from Riemann-Roch.
\end{proof}

\begin{proposition}
Let $\bGamma$ be an essential map type of expected dimension $0$ resp. $1$. Then ${\mc M}_{\bGamma}(P_\Gamma)$ is compact resp. compact up to at most one of the degenerations listed in Lemma \ref{refinedcompactness}.
\end{proposition}

\begin{proof}
Suppose $(C_i, u_i)$ be a sequence of treed disks of map type $\bGamma$ and $(C_i, \tilde u_i)$ be the sequence of lifts. Suppose their map types are $\tilde\bGamma_i$. By the relation between symplectic forms (see \eqref{area_difference}), the topological energy of the types $\tilde\bGamma_i$ is uniformly bounded. Hence by Gromov compactness, we may assume that $\tilde\bGamma_i$ are all identical to a map type $\tilde\bGamma$. By Lemma \ref{equal_index}, the expected dimension of ${\mc M}_{\tilde\bGamma}(P_\Gamma)$ is either zero or one. The exceptional regularity of the perturbation implies that this moduli space is compact up to at most one codimension one degenerations listed in Lemma \ref{refinedcompactness}. In particular, the limiting configuration contains no spherical components $S_v, v \in \Ver_\black(\Gamma)$. 
\end{proof}

\subsection{The Fukaya category and open-closed/closed-open maps}

We describe how to construct the Fukaya category for blow-ups in 
the case of a point bulk deformation with small negative $q$-valuation.

\subsubsection{Insertions with negative $q$-valuations and convergence}

Consider a bulk deformation in $X$ of the form 
\[
{\mf b} + q^{-\epsilon} p
\]
We will define a curved $A_\infty$ category which is formally 
\[
\Fuk_{\mc L}^\sim (X, {\mf b} + q^{-\epsilon} p ).
\]
Notice that its definition does not automatically follow from the general case because of the negative exponent.

\begin{lemma}\label{lemma312}
There exists $\epsilon_0>0$ such that for any Riemann surface $\Sigma$ with boundary $\partial \Sigma$, any domain-dependent almost complex structure $J: \Sigma \to {\mc J}_{\rm tame}(X, \omega)$ whose restriction to $U$ is $J_U$, for any $J$-holomorphic curve $u: \Sigma \to X$ with $u(\partial \Sigma) \cap U = \emptyset$, we have the energy bound
\[
E(u)\geq 2\epsilon_0 \# u^{-1}(p).
\]
\end{lemma}

\begin{proof}
This follows from the generalization of Gromov's monotonicity result for $J$-holomorphic curves to the case with multiplicities (see \cite[Theorem 12]{Bao_2016}; the result can also be derived from \cite{Fish_2011}). 
\end{proof}

\begin{corollary}\label{corollary_fukaya_blowup}
There exists $\epsilon_0>0$ such that for all $\epsilon \in (0, \epsilon_0]$, the Fukaya category $\Fuk_{\mc L}^\sim (X, {\mf b}_0 + q^{-\epsilon} q)$, the quantum cohomology ring $QH^\bullet( X, \bb_0 + q^{-\epsilon} p)$, the open-closed map $[OC(\bb_0 + q^{-\epsilon} p)]$, and the closed-open map $[CO(\bb_0 + q^{-\epsilon} p)]$ are all well-defined.
\end{corollary}

\begin{proof}
We only prove for the case of $\Fuk_{\mc L}^\sim(X, \bb_0 + q^{-\epsilon} p)$; other cases are similar. Indeed, it suffices to show that the sum \eqref{composition} gives a well-defined element of $CF^\bullet (L_0, L_d)$. As the moduli space for each individual essential map type of expected dimension zero is compact, we need to show that when $\epsilon$ is small, for each $a>0$, there are only finitely many nonempty moduli spaces 
\[
{\mc M}_{\bGamma}(P_\Gamma) \subset {\mc M}(\uds x)_0
\]
that contribute to $m_d(a_1, \ldots, a_d)$ and that satisfy
\[
A( \bGamma) - \epsilon \# {\rm Leaf}_{\rm ex}(\Gamma) < a.
\]
Indeed, if $\epsilon$ is smaller than the $\epsilon_0$ of Lemma \ref{lemma312}, then the number of exceptional markings  is bounded in terms of the area.  By Gromov compactness, there can only be finitely many such map types. 
\end{proof}

\subsubsection{Categories of old branes in the blowup} 

We wish to identify the Fukaya category of the old branes with a subscategory of the Fukaya category of the blow-up.  We must
deal with the issue that the pullback hypersurface
$ \pi^{-1}(D) \subset \tilde X$ is not a Donaldson hypersurface in the blowup $\tilde X$, as its Poincar\'e dual is $k[\pi^* \omega]$. Therefore, below any given energy level, there might be infinitely many essential map types with unbounded numbers of interior markings contributing to the definition of the composition maps. 
To show that the structural maps (higher compositions, chain-level open/closed and closed/open maps) are defined, we need to show the
following:

\begin{lemma} Given any energy bound $E$ and constraints at semi-infinite edges/leaves, there are at most finitely many essential map types 
$\bGamma$ below the energy bound that have a nonempty moduli space.
\end{lemma} 

\begin{proof}  Given a non-empty moduli space ${\mc M}_{\tilde\bGamma}(P_\Gamma)$ and a point in it, after forgetting interior {gradient} leaves, a representative $(C, \tilde u)$ projects to a treed disk $(C, u)$ in $X$. The relation between symplectic classes (see \eqref{area_difference}) implies that 
\[
E(\tilde u) = E(u) - \epsilon \langle \tilde u, [\tilde Z]\rangle = E(u) - \epsilon \# u^{-1}(p).
\]
When $\epsilon$ is smaller than the $\epsilon_0$ of Lemma \ref{lemma312}, the energy bound upstairs implies a uniform bound on $\# u^{-1}(p)$. Therefore, $E(u)$, which is also the intersection number between $\tilde u$ and $\pi^{-1}(D)$, is uniformly bounded. It follows that there can be at most finitely many domain types supporting such map types with nonempty moduli spaces given an energy bound.
\end{proof} 

Gromov compactness then implies the finiteness of contributing moduli spaces and hence finite counts defining the coefficients of the structure maps.

\subsubsection{Homotopy invariance of the category of old branes}

In this section, we sketch the comparison between two constructions of the Fukaya category of old branes in the blowup, the general version provided in Section \ref{section2} and Section \ref{section3} and the special version using pullback perturbations. We use the strategy of Appendix \ref{sec:app}, although in the latter case the divisor used is not a Donaldson hypersurface. For simplicity, we assume that the bulk deformation $\tilde {\mf b}$ in the blowup is trivial. 

We first specify different types of domains. The domains used in the pullback construction are called {\it $\pi^{-1}(D)$-stabilized domains}. On the other hand, let $\tilde D \subset \tilde X$ be a Donaldson hypersurface with respect to the blowup symplectic structure $\tilde \omega$ and $\tilde J_0$ be an $\tilde \omega$-tamed almost complex structure such that $(\tilde D, \tilde J_0)$ satisfies conditions of Proposition \ref{new_divisor}. By following the general construction of Section \ref{section2} and Section \ref{section3}, we have a different version of Fukaya category for branes in ${\mc L}$. The domains used in this case are called {\it $\tilde D$-stabilized domains}. 

Now introduce treed disks with two types of interior markings to incorporate two stabilizing divisors. A $(\pi^{-1}(D), \tilde D)$-stabilized treed disk (or bi-stabilized treed disk) is a treed disk with a partition of the set of interior markings into two groups. By forgetting one group of markings and stabilizing one can obtain from a bi-stabilized treed disk either a $\pi^{-1}(D)$-stabilized treed disk or a $\tilde D$-stabilized treed disk. A perturbation used for $\pi^{-1}(D)$-stabilized treed disk resp. $\tilde D$-stabilized treed disk can be pulled back to a bi-stabilized treed disk. The pullback perturbation on longer satisfies the locality property in the sense of Definition \ref{locality}; but it is still {\it partly local} in the sense of Definition \ref{plocal}. Moreover, the composition maps defined using bi-stabilized treed disks with either one of the pullback perturbations agree with the composition maps obtained using just one type of markings.  Counts of quilted treed disks using generic homotopy between these two system of partly local pullback perturbations defines a homotopy equivalence of $A_\infty$ categories. Notice that we still need to use the argument of exceptional regularity (see Definition \ref{exceptionallyregular1}) to obtain the refined compactness result for zero or one-dimensional moduli spaces. We summarize the conclusion here.

\begin{theorem}
The Fukaya category $\Fuk_{\pi^{-1}({\mc L})}^\sim(\tilde X, \bb)$ defined using pullback restricted perturbations from $X$ is $A_\infty$ homotopy equivalent to the Fukaya category defined using a Donaldson hypersurface in $\tilde X$.
\end{theorem}

\section{Proof of the main theorem}\label{section6}

\subsection{Embedding of the downstairs Fukaya category}\label{embed}

In this section we prove the main Theorem \ref{gen} following the strategy sketched in the introduction.
We first prove Theorem \ref{oldembed}. Recall that $\tilde{X}$ is an $\eps$-blowup of $X$ at a point $p \in X$ and $\E \subset \tilde X$ is the exceptional divisor produced by the blowup. The basic ingredient of the proof is a correspondence between treed disks in $\tilde X$ and treed disks in $X$  defined as follows. Given a map $\tilde u : C \to \tilde X$, we obtain $u:C \to X$ 
by composing  $\tilde u$ with the projection map $\pi : \tilde X \to X$. 
For the map $u$, the points in $u^{-1}(p)$ are designated as
exceptional markings.

We introduce the following notation for moduli spaces with insertions at the exceptional locus or blowup point. Let $\bb$ be a bulk deformation in $X$ disjoint from $p$ and $\tilde{\bb} = \pi^{-1}(\bb)$ its preimage in $\tilde{X}$.  Let 
\begin{itemize} 
\item $\tilde{\bGamma}$ be an essential map type (see Definition \ref{essentialtype}) in $\tilde{X}$ with boundary conditions from the collection $\tilde {\mc L}$. For each vertex $v\in {\rm Vert}(\Gamma)$ (which must be a disk by the definition of essential map type), let $\tilde\beta_v$ be the labelling homology class. Then $\tilde\beta_v$ has a well-defined intersection number $d_v$ with the exceptional divisor $\tilde Z$;  
\item $\bGamma'$ be the map type in $X$ obtained from $\tilde{\bGamma}$ by replacing the decorations
  $\tilde \beta_v \in H_2( \tilde X, |\tilde {\mc L}|)$ with their projections $\beta_v\in H_2( X, |{\mc L}|)$  and adding to each vertex $v$ a set of $d_v$ exceptional leaves $\Leaf_{\rm ex}(\Gamma)$ (to be mapped to $p$). Notice that $\tilde\bGamma$ uniquely determines $\bGamma'$. 
\end{itemize}
Let $P_{\Gamma'}$ belong to the coherent system of perturbations (for treed disks in $X$) chosen in Section \ref{section2}. Remember that because $P_{\Gamma'}$ does not depend on the positions of the exceptional leaves, it lifts to a perturbation for treed disks in $X$ of domain type $\Gamma$, denoted by $P_\Gamma$.  The moduli spaces
are denoted ${\mc M}_{\tilde{\bGamma}}(P_\Gamma)$ upstairs and ${\mc M}_{\bGamma'}(P_{\Gamma'})$ downstairs.  
For each vertex $v$, let $d_{\rm ex}(\tilde\beta_v)$
denote the pairing of the homology class $\tilde \beta_v$
with the class of the exceptional divisor. 

\begin{theorem} \label{bijection} 
Suppose $\tilde \bGamma$ is an essential map type of expected dimension zero.  Composition with projection from $\ti{X}$ to $X$
induces a surjection
\begin{equation}\label{curve_lifting}
{\mc M}_{\bGamma'}(P_{\Gamma'}) \to \bigcup_{\tilde\bGamma \mapsto \bGamma'} {\mc M}_{\tilde\bGamma}(P_\Gamma).
\end{equation}
An element in $ {\mc M}_{\tilde\bGamma}(P_\Gamma)$ has 
\[
d_{\rm ex} (\tilde\bGamma):= \Big( \sum_{v \in {\rm Vert}(\Gamma)} d_{\rm ex}(\tilde\beta_v)\Big) !
\]
number of pre-images under \eqref{curve_lifting} that differ from each other in the ordering of the exceptional leaves.
\end{theorem} 

\begin{proof} 
By the definition of essential map types (see Definition \ref{essentialtype}), if $\tilde \bGamma$ is essential, so is $\bGamma'$. Moreover, by Lemma \ref{equal_index}, if $\tilde \bGamma$ has index zero, so does $\bGamma'$. Choose a point in ${\mc M}_{\bGamma'}(P_{\Gamma'})$ represented by a treed disks $u: C \to X$. Since $J_\Gamma$ coincides with $J_U$ inside $U$, $u$ lifts to a map
\[
\tilde u: C \setminus \pi^{-1}(p) \to \tilde X
\]
which projects down to $u$. Moreover, by the requirement on the perturbation, at each point of $u^{-1}(p)$, to derivative of $u$ is nonzero. Then $\tilde u$ extends continuously, hence smoothly, to a treed disk $\tilde u: C \to \tilde X$. If we remove the exceptional markings, then $\tilde u$ has an essential map type $\tilde\bGamma$ which descends to $\bGamma$. Moreover, as the perturbation $P_\Gamma$ does not depend on the positions of the exceptional markings, $\tilde u$ indeed represents an element of ${\mc M}_{\tilde\bGamma}(P_{\Gamma})$. Hence the map \eqref{curve_lifting} is defined. 

\label{bij}
Now we prove that the map \eqref{curve_lifting} is surjective and has the expected degree. Fix such a map type $\tilde\bGamma$ and let $\tilde u: C \to \tilde X$ represent an arbitrary point of ${\mc M}_{\tilde\bGamma}(P_{\Gamma})$. By the transversality condition in Definition 
\ref{exceptionallyregular1}, see also Remark \ref{exceptionallyregular2},  the curve $\tilde u$ intersects $\tilde Z$ transversely. Moreover, as $\tilde Z$ is almost complex, each intersection point contributes $1$ to the intersection number. Hence on each disk component $S_v \subset C$, $\tilde u$ intersects $\tilde Z$ at exactly $d_{\rm ex}(\tilde\beta_v)$ points. Therefore, the point represented by $\tilde u$ is in the image of \eqref{curve_lifting} where the intersection points with $\tilde Z$ are at the positions of the original exceptional markings. Furthermore, as there are $d_{\rm ex}(\tilde\bGamma)!$ many ways to label the exceptional markings, each point in ${\mc M}_{\tilde\bGamma}(P_{\Gamma})$ has exactly $d_{\rm ex}(\tilde \bGamma)!$ preimages. By comparing the orientations, this number is indeed the degree. 
\end{proof}

\begin{proof}[Proof of Theorem \ref{oldembed} from the Introduction] \label{oldembedproof}
The map \eqref{curve_lifting} constructed in Theorem \ref{bijection} preserves orientations $o(u)$, number of interior leaves $d_{\black}$, and (after the adjustment by $q^{-\eps}$ in the bulk insertion $p$) symplectic areas in the sense that
\[
A(\tilde{u}) = A(u) - \eps ( [\tilde{u}]. [\tilde{Z}] ).
\]
Indeed, any pseudoholomorphic curve in $\tilde{X}$ projects to a curve in $X$, with intersections $\tilde{u}^{-1}(\tilde{Z})$ with the exception locus $\tilde{Z}$ mapping to intersections $u^{-1}(p)$ with the blowup point $p$.

Regarding orientations, after capping off the the strip like ends as in \cite{Wehrheim_Woodward_orientation} we may assume that the boundary condition is given by a single totally real subbundle $(\partial u)^* TL$. Any deformation of the Lagrangian $(\partial u)^* TL$ to a trivial one for $\phi$ induces a similar isotopy for $\tilde{\phi}$. The pullback $\tilde{u}^* T\tilde{X}$ of the tangent bundle of $\tilde{X}$ around an intersection with the exceptional divisor $\tilde{Z}$ has a natural trivialization away from $\tilde{u}^{-1}(\tilde{Z})$. The projection $\pi$ naturally identifies sections of $\tilde{u}^{-1}(\tilde{Z})$ locally with sections of the $u^* TX$ vanishing at $0$. The orientations on moduli spaces of disks constructed in \cite{fooo} are defined by pinching off sphere bubbles on which the linearized operator has a complex kernel and cokernel, preserving the complex structure. It follows that the induced orientations on the determinant lines for $u$ and $\tilde{u}$ are equal. 
\end{proof}

\subsection{Open-closed maps from old branes}

Recall from Section \ref{quantumcohomology} that the quantum cohomology is defined, as a vector space, as the Morse homology of a Morse-Smale pair.  We choose the Morse-Smale pair $(f_X, h_X)$ on $X$ satisfying the following conditions (recall that $X$ is connected):
\begin{assumption}
\begin{enumerate}
\item $f_X$ has a unique critical point $x_{\max}$ of maximal Morse index and a unique critical point $x_{\min}$ of minimal Morse index.
\item For a critical point $x$ different from $x_{\max}$ resp. $x_{\min}$, $p$ is not contained in the unstable resp. stable manifold of $x$.
\end{enumerate}
In particular, $p$ is not a critical point.
\end{assumption}

On the other hand, the pullback $\pi^* f_X: \tilde{X} \to \R$ is a Morse-Bott function on the blowup $\tilde X$ that requires some perturbation. We choose a Morse-Smale pair $(f_{\tilde X}, h_{\tilde X})$ on $\tilde X$ satisfying the following conditions.

\begin{assumption}
\begin{enumerate}
\item $(f_{\tilde X}, g_{\tilde X})$ agrees with $(\pi^* f_X, \pi^* h_X)$ outside a small neighborhood of $\tilde Z$.
\item For each critical point $x \in {\rm crit} (f_X) \subset {\rm crit} (f_{\tilde X})$ that is not $x_{\max}$ resp. $x_{\min}$, the unstable resp. stable manifold of $x$ of the flow of $-\nabla f_{\tilde X}$ coincides with the unstable resp. stable manifold of $x$ of the flow of $- \nabla f_X$.
\end{enumerate}
\end{assumption}
The natural inclusion ${\rm crit}(f_X) \subset {\rm crit}(f_{\tilde X})$ extends to a linear map $CF^\bullet(X) \to CF^\bullet( \tilde X)$. The above conditions on the Morse-Smale pairs imply that it is a chain map and that the induced map on cohomology agrees with the (injective) pullback $QH^\bullet(X) \to QH^\bullet(\tilde X)$.

\begin{proposition} \label{converges}The following diagram is
  commutative: 
\begin{equation}\label{oc_commutative}
\vcenter{ \xymatrix{  HH_\bullet \big( \Fuk^\flat_{\cL}(X,\bb + q^{-\eps} p ) \big) \ar[rr] \ar[d]_{\pi^*} & & QH^\bullet (X,\bb + q^{-\eps} p )  \ar[d]^{\pi^*} \\
      HH_\bullet \big( \Fuk^\flat_{\tilde {\mc L}} (\tilde{X},\tilde{\bb}) \big) \ar[rr] & & QH^\bullet (\tilde{X},\tilde{\bb}) }}
      \end{equation}
where the horizontal arrows are $[OC (\bb + q^{-\eps} p)]$ resp.
$[OC (\tilde{\bb})]$.  In particular,
\[
{\rm dim}\big( [OC(\tilde \bb)]   ( HH_\bullet( \Fuk^\flat_{\tilde{\mc L}}(\tilde X, \tilde \bb))   ) \big) \geq {\rm dim} \big( {\rm Im}( [OC(\bb + q^{-\epsilon} p)] ) \big).
\]
\end{proposition}

\begin{proof} 
We check that the diagram \eqref{oc_commutative} commutes on the chain level by identifying the moduli spaces involved in the definition. The structure constants of the open-closed maps count treed disks with an interior constraint on an unstable manifold in $X$ resp. $\tilde{X}$. Suppose $x\in {\rm crit} (f_X) \setminus \{x_{\min}\}$.  By Theorem \ref{bijection}, treed disks in $X$ with the outgoing {gradient} leaf labelled by $x$ are in bijection (up to permuting constrained leaves labelled by $p$) with treed disks in $\tilde X$ with output the same constraint, as negative gradient trajectories starting from $x$ do not go near $p$. As in the proof of Theorem \ref{oldembed} on page \pageref{oldembedproof}, the bijection preserves the orientations $o(u)$ and the counting coefficients in defining the open-closed maps. Therefore, the diagram \eqref{oc_commutative} commutes up to multiples of the identities in the quantum cohomology. On the other hand, in the direction spanned by $x_{\min}$ (which is a Morse cocycle in both $X$ and $\tilde X$) the open-closed map always only has classical contributions (see Lemma \ref{oc_unit}). Hence \eqref{oc_commutative} commutes.
\end{proof}

\subsection{Floer cohomology of new branes}\label{subsec:floernew}

In this section, we discuss the Fukaya algebras of branes supported on the exceptional torus in the blowup.  The construction of perturbations relies on choosing a Donaldson hypersurface of the blowup, which is not the pullback of the Donaldson hypersurface $D \subset X$. Nevertheless, there exists a special Donaldson hypersurface $\tilde D \subset \tilde X$ which is holomorphic near the exceptional locus $\tilde Z$. We first recall the computation of the potential function and the Floer cohomology of these branes in \cite{Charest_Woodward_2017}.

\begin{theorem}\cite{flips} \label{thm63}
Let $L_{{\bm \epsilon}}$ be the exceptional Lagrangian, which is monotone in a neighborhood of $\tilde Z$. 
\begin{enumerate}
\item For each local system 
\[
y: H_1( L_{{\bm \epsilon}} ) \cong \mb{C}^n \to \Lambda^\times,\ y = (y_1, \ldots, y_n)
\]
the Fukaya algebra $CF^\bullet((L_{{\bm \epsilon}}, y), (L_{{\bm \epsilon}}, y))$ is weakly unobstructed. 

\item There exists a particular weakly bounding cochain $b_{\rm ex}(y) \in MC(L_{{\bm \epsilon}}, y)$ such that 
\begin{equation}\label{naive}
W(b_{\rm ex}(y)) = q^{\frac{\epsilon}{n-1}} \Big( y_1 +\cdots + y_n + y_1 \cdots y_n + {\rm h.o.t} \Big)
\end{equation}
where $\text{h.o.t.}$ denotes higher order terms measured by $q$-valuation. 

\item There are $n-1$ distinct local systems $y_{(k)}$, $k = 1, \ldots, n-1$, such that for $b_{(k)}:= b_{\rm ex}(y_{(k)})$, one has
\[
HF^\bullet( (L_{{\bm \epsilon}}, y_{(k)}, b_{(k)}), (L_{{\bm \epsilon}}, y_{(k)}, b_{(k)})) \cong H^*( L_{{\bm \epsilon}}, \Lambda).
\]
\end{enumerate}
\end{theorem}

\begin{proof}
The computation of the potential function in \cite{flips} was carried out in the following way. First, by a neck-stretching argument along the hypersurface $\partial \tilde U \cong S^{2n-1}$, the Fukaya algebra (possibly with a bulk deformation supported away from the exceptional divisor $\tilde Z$) of $L_{{\bm \epsilon}}$ with any local system is $A_\infty$ homotopy equivalent to a ``broken Fukaya algebra'' defined by counting holomorphic buildings. The holomorphic buildings contains levels in $X$ and in certain toric pieces. Second, by turning on gradient flows of a Morse function $H$ on $\partial \tilde U/ S^1\cong \mb{P}^{n-1}$, the broken Fukaya algebra is $A_\infty$ homotopy equivalent to another $A_\infty$ algebra defined by counting holomorphic buildings whose levels are separated by Morse gradient lines of $H$ of any fixed length $\tau$. Third, while the $A_\infty$ homotopy type of the Fukaya algebra does not depend on $\tau$, when $\tau$ goes to $\infty$, by dimension counting, the holomorphic buildings must be Maslov index two disks in the level containing $L_{{\bm \epsilon}}$. Denote by $m_k^{\tau = \infty}$  the composition maps of the last Fukaya algebra.  The potential in the neck-stretched limit is  
\[
m_0^{\tau  =\infty}(1) = q^{\frac{\epsilon}{n-1}} \left( y_1 + \cdots y_n + y_1 \cdots y_n + {\rm h.o.t} \right) 1_{L_{{\bm \epsilon}}}^\blackt=: W_{\rm ex}(y_1, \cdots, y_n) 1_{L_{{\bm \epsilon}}}^\blackt.
\]
Using the positivity of the toric piece, one can see that $b^{\tau = \infty}:= W_{\rm ex}(y) 1_{L_{{\bm \epsilon}}}^\greyt$ is a weakly bounding cochain. As an $A_\infty$ homotopy equivalence identifies Maurer-Cartan solution spaces and preserves the potential function, the original Fukaya algebra of $(L_{{\bm \epsilon}}, y)$ is weakly unobstructed, with the weakly bounding cochain $b^{\tau = \infty}$ identified with a weakly bounding cochain $b_{\rm ex}(y) \in MC(L_{{\bm \epsilon}}, y)$, at which the potential function has the value $W_{\rm ex}(y)$.

To identify nontrivial Floer cohomologies, consider the leading order term 
\[
W_0 =q^{\frac{\epsilon}{n-1}} ( y_1 + \cdots y_n + y_1 \cdots y_n)
\]
and its critical points. Indeed,
\[
dW_0 = 0 \Longrightarrow  y_1\cdots \widehat{y_i} \cdots y_n = -1
\]
which has $n-1$ solutions $y_{0, (k)}$, $k = 1, \ldots, n-1$ where
\begin{equation}\label{eqn64}
y_{0, (k)} = \left( \exp \left( \frac{ (2k-1) \pi \sqrt{-1}}{n-1}\right), \ldots, \exp \left( \frac{ (2k-1) \pi \sqrt{-1}}{n-1}\right) \right).
\end{equation}
Computing the second-order derivatives shows that the Hessian is non-degenerate at those critical points. The higher order terms in $W_{\rm ex}$ will not change the number of critical points and the non-degeneracy of the Hessian. Let $y_{(1)}, \ldots, y_{(n-1)}$ be the corresponding critical points.  Standard arguments as
in \cite[Theorem 4.10]{FOOO_toric_1} that for these local systems, the Floer cohomology (for the $\tau = \infty$ Fukaya algebra with the weakly bounding cochain) is isomorphic to the ordinary cohomology of $L_{{\bm \epsilon}}$. As $A_\infty$ homotopy equivalence preserves Floer cohomology, the last assertion is proved.
\end{proof}

\begin{definition}\label{exceptional_brane}
The exceptional collection of branes in the blowup $\tilde X$ is  
\[
{\mf E}:= \{ \WB_{(k)} = (L_{\bm \epsilon}, y_{(k)}, b_{(k)})\ |\ k = 1, \ldots, n-1 \}
\]
where $y_{(1)}, \ldots, y_{(n-1)}$ are the critical points of $W_{\rm ex}$ and $b_{(k)}$ are weakly bounding cochains provided above. Notice that the collection also depends on the bulk deformation $\tilde {\mf b}$ in $\tilde X$.
\end{definition}

To compute the ring structure on the Floer cohomologies, we need a version of the {\em divisor equation} as in \cite[Proposition 6.3]{Cho}.

\begin{proposition}\label{diveqcor} 
If the perturbation data for treed disks in $\tilde X$ are chosen such that \eqref{twice} holds, then the following (restricted) {\em divisor equation} holds. For any two Morse cocycle $x_1, x_2$ on $L_{{\bm \epsilon}}$ of degree $1$ (i.e. linear combinations of critical points of Morse indices $1$) and any basic disk class $\beta$
\begin{equation}\label{diveq}
m_{2,\beta}(x_1,x_2) + m_{2,\beta}(x_2,x_1) = \lan [x_1], \partial \beta \ran \lan [x_2] , \partial \beta \ran m_{0,\beta}(1).
\end{equation}
\end{proposition}

\begin{proof}
The statement of the Proposition is a direct consequence of Lemma \ref{lemma511}.
\end{proof}

\begin{proposition} \label{cliffprop}
The branes $\WB_{(k)} = (L_{{\bm \epsilon}}, y_{(k)}, b_{(k)}) \in {\mf E}$, $k = 1,\ldots, n-1$ have distinct values of the potential function and so generate orthogonal summands of the Fukaya category $\Fuk^\flat_{\mf E} (\tilde X, \pi^{-1}(\bb))$. Moreover, each Floer cohomology ring $HF^\bullet (\WB_{(k)}, \WB_{(k)})$ is isomorphic to a Clifford algebra corresponding to a non-degenerate quadratic form whose leading order is the Hessian of $W_0$ at $y_{0, (k)}$ (see \eqref{eqn64}).
\end{proposition} 

\begin{proof} 
Direct calculation shows that the critical values of $W_0$ are all distinct. As the bulk deformation has positive $q$ valuations, the actual potential function $W_{\rm ex}$ is a higher order deformation of $W_0$. So the critical values remain distinct. By definition of the spectral decomposition, $L_{(k)}$ span orthogonal summands in $\Fuk^\flat_{\mf E}(\tilde X, \pi^{-1}({\mf b}))$.

Now we prove the second claim. For each $E>0$ and $x \in CF^\bullet (\LB_{(k)}, \LB_{(k)})$, let $x^{\leq E}$ be the truncation of $x$ at the energy level $E$. Then \eqref{diveq} implies that 
%
%
%
%
 %
%
%
for generators $a_1,a_2$ living over Morse cocycles $x_1, x_2$ of degree $1$, one has
\begin{multline*}
m_2(a_1, a_2)^{\leq \frac{\epsilon}{n-1}} + m_2(a_2, a_1)^{\leq \frac{\epsilon}{n-1}} = \sum_{\beta} \langle x_1, \partial \beta \rangle \langle x_2, \partial \beta \rangle m_0(1)^{\leq \frac{\epsilon}{n-1}} \\
= \partial_{x_1} \partial_{x_2} W_{\rm ex}(y_{(k)})^{\leq \frac{\epsilon}{n-1}}
\end{multline*}
where the summation runs over all the basic disk classes $\beta$.  In this computation, $m_0(1)$ is viewed as a function of the representation $y$ defined by the local system $y$ and taking the second derivative with respect to $y$. By direct calculation, the right hand side is a non-degenerate quadratic form. It follows that $HF^\bullet ( \WB_{(k)}, \WB_{(k)} )$ is a deformation of the Clifford algebra of a non-degenerate quadratic form (i.e. the Hessian of $W_0$ at the $k$-th critical point). Such Clifford algebras are rigid by Lemma \ref{cliff} below so $HF^\bullet( \WB_{(k)}, \WB_{(k)} )$ is itself a Clifford algebra.
\end{proof}

\begin{lemma}\label{cliff} 
Let $A$ be the Clifford algebra of a non-degenerate quadratic form on a vector space $V$ of dimension $n$.  The $\Z_2$-graded $\Ext$ groups\footnote{In Sheridan \cite[Section 6]{Sheridan_2016} the Hochschild cohomology groups are  written in terms of the $\Ext$ groups by recombining the bi-gradings.   However, in the current situation the algebra $A$ is only $\Z_2$-graded 
and we wish to avoid combining the $\Z$-grading and $\Z_2$-grading.}  $\Ext^s(A,A)$ vanish for $s > 0$.  The Hochschild homology $HH_\bullet (A,A)$ is one-dimensional and generated by the class in $HH_0(A,A)$ of $v_1 \ldots v_n \in A$, where $v_1 \ldots v_n$ is an orthogonal basis for $V$. 
\end{lemma} 

\begin{proof} 
(See also \cite[Lemma 3.8.5]{FOOO_asterisque}) the graded Hochschild homology of Clifford algebras is computed in Kassel \cite[Section 6,Proof of Proposition 1]{Kassel_1986}. For the result on 
the $\Ext$ groups see Sheridan \cite[(6.1.6)]{Sheridan_2016}.
\end{proof}

\begin{remark}
By definition, a formal deformation of an algebra $A$ over a field $\Lambda$ of characteristic zero is a $\Lambda[[\hbar]]$-algebra structure over $A[[\hbar]]$ (where $\hbar$ is a formal variable) whose zero order term is the algebra $A$. The Floer cohomology $HF^\bullet(\WB_{(k)}, \WB_{(k)})$ provides a ``first-order'' deformation of the Clifford algebra associated to the Hessian of $W_0$ at its $k$-th critical point, which can be extended to a formal deformation. 
\end{remark}

\begin{corollary} \label{oned} 
For $k = 1, \ldots, n-1$, the Hochschild homology of the $A_\infty$ algebra $\Hom^\bullet (\WB_{(k)}, \WB_{(k)})$ is   one-dimensional.
\end{corollary} 

\begin{proof}
The Clifford algebra is intrinsically formal by Sheridan \cite[Corollary 6.4]{Sheridan_2016}, meaning that the $A_\infty$ algebra $\Hom^\bullet( \WB_{(k)}, \WB_{(k)})$ is quasi-isomorphic to its cohomology algebra,
which in this case is a Clifford algebra. As quasiisomorphisms of $A_\infty$ algebras admits homotopy inverses (see \cite[Corollary 1.14]{Seidelbook}), the $A_\infty$ algebra $\Hom^\bullet( \WB_{(k)}, \WB_{(k)})$ is $A_\infty$ homotopy equivalent to the cohomology algebra, hence has isomorphic Hochschild homology.
\end{proof}

\subsection{Open-closed map from the new branes}\label{subsec:ocnew}

In this section, we examine the open-closed maps on the collections of branes in the blowup constructed above. For this, we need to specify a Morse function that facilitates the calculation. Let $(z_1, \ldots, z_n)$ be the Darboux coordinates in the neighborhood $U$ of $p$ used for constructing the blowup. Inside the exceptional divisor $\tilde Z \subset \tilde X$ we specify the following cycles
\[\label{zkconds}
\tilde Z_k = \big\{ [z_1, \ldots, z_{k+1}, 0, \ldots, 0]\in \tilde Z \cong \mb{CP}^{n-1} \big\} \cong \mb{CP}^k.
\]
Let $[\tilde Z_k]\in H_{2k}(\tilde X)$ be the homology class.   Then 
\[
\tilde H_\bullet( \tilde Z) = {\rm span} \big\{ [\tilde Z_1], \ldots, [\tilde Z_{n-}]\big\}
\]
where $\tilde H_\bullet(\tilde Z)$ is the reduced homology.

\begin{lemma}
For any $\delta > 0$, there exists a Morse-Smale pair $(f_{\tilde X}, h_{\tilde X})$ on $\tilde X$ satisfying the following conditions.
\begin{enumerate}

\item There is a Morse-Smale pair $(f_X, h_X)$ on $X$ with a unique local maximum at $p$ such that outside a neighborhood $\tilde V$ of $\tilde Z$, $(f_{\tilde X}, h_{\tilde X}) = (\pi^* f_X, \pi^* h_X)$ and $|f_{\tilde X} - \pi^* f_X| < \delta$.

\item ${\rm crit}(f_{\tilde X}) \cap \tilde Z$ is the $n$ toric fixed points of $\mb{CP}^{n-1}$ with Morse indices $2, 4, \ldots, 2n-2, 2n$. We call them the exceptional critical points.

\item Each exceptional critical point is $\delta_{\rm Morse}$-closed but not exact.

\item The gradient vector field of $f_{\tilde X}$ is tangent to $\tilde Z$ and the Hessian of $f_{\tilde X}$ is negative definite on the normal direction of $\tilde Z$. The stable manifolds of the exceptional critical points are $\tilde Z_{n-1} \setminus \tilde Z_{n-2}$, $\ldots$, $\tilde Z_1 \setminus \tilde Z_0$, and $\tilde Z_0$ respectively.

\end{enumerate}
\end{lemma}

\begin{proof}
Choose a Morse-Smale pair $(f_X, h_X)$ on $X$ such that $p$ is the unique local maximum. We may assume that in the Darboux neighborhood $U$ of $p$, 
\[ f_X (z_1, \ldots, z_n) =  - |z_1|^2 - \cdots - |z_n|^2 . \] 
Consider the function 
\[ (z_1, \ldots, z_n) \mapsto \sum_{i=1}^n a_i |z_i|^2 .\] 
For generic real numbers $a_i$ this induces a Morse function $f_{\tilde Z}$ on $\tilde Z = \mb{CP}^{n-1}$ with critical points equal to  the toric fixed points. Regard a neighborhood of $\tilde Z$ as a neighborhood of the zero section in ${\mc O}(-1)$ and denote a normal vector by $\xi$.  Define 
\[
f_{\tilde X} = \delta \rho(|\xi|) f_{\tilde Z} - |\xi|^2.
\]
where $\rho: {\mb R} \to {\mb R}$ is a smooth cut-off function supported near $0$. 
For any $\delta>0$, $f_{\tilde X}$ coincides with $\pi^* f_X$ outside the support of $\rho$, and so defines a function on $\tilde X$. Moreover, when $\delta$ is sufficiently small, the only critical points near $\tilde Z$ are the critical points of $f_{\tilde Z}$.  We take $h_{\tilde X}$ to be $\pi^* h_X$ outside the support of $\rho$, the Fubini-Study metric on ${\mc O}(-1)$ near $\tilde Z$, and a generic interpolation in between. This makes $(f_{\tilde X}, h_{\tilde X})$ a Morse-Smale pair. Moreover, the stable manifolds of these exceptional critical points are contained in $\tilde Z$ as the Hessian in the normal direction is negative definite. 
\end{proof}

We fix the following notations. 
Choose a Morse-Smale pair $(f_{\tilde X}, h_{\tilde X})$ as above to define the open-closed map. 
Let $x^* \in L_{{\bm \epsilon}}$ be the (only) critical point of $F_{L_{{\bm \epsilon}}}$ of Morse index $0$. We consider the unit disk $\DD \subset \C$ equipped with the distinguished points $0 \in \on{int}(\D)$ and $1 \in \partial \D$. 

\begin{lemma} \label{uniquemap} For each $k = 1,\ldots, n-1$ there is a unique map $u_k: \D \to \tilde{X}$ of Maslov index $2(n-k)$ bounding $L_{{\bm \epsilon}}$ and satisfying 
\begin{equation}\label{eqn66}
u_k(0) \in \tilde{Z}_k, \quad u_k(1) = x^*, \quad A(u_k) \leq \frac{(n-k)\eps}{n-1}.
\end{equation}
The map $u_k$ is regular as a map with these constraints (that is, the linearized operator restricted to sections lying in   $T\tilde{Z}_k$ \label{tzk} at $0$ and $1$ is surjective) and there are no other stable disks with these properties.
\end{lemma} 

\begin{proof} 
Let $u$ be a holomorphic disk satisfying \eqref{eqn66}. The requirement that $A(u) = \frac{(n-k)\epsilon}{n-1}$ prevents the map from leaving the toric neighborhood $\tilde U$, by Proposition \ref{alldisks}.  Therefore, we may write 
\[
u = [u_0, \ldots, u_n]
\]
using the homogeneous coordinates viewing the local model as a toric quotient. By the Blaschke classification of holomorphic disks (see \eqref{blaschke}), the condition $u(0) \in \tilde Z_k$ requires that $u_0$ and $u_{k+2}, \ldots, u_n$ have degree at least one and have a common zero at an interior point. Hence the Maslov index of $u$ is at least $2(n-k)$. As $L_{{\bm \epsilon}}$ is monotone in $\tilde U$, the energy of $u$ is at least $\frac{(n-k)\epsilon}{n-1}$. Hence the degrees of $u_1, \ldots, u_{k+1}$ are all zero. The Blaschke classification then implies that there is exactly one such disk satisfying in addition $u(1) = x^*$, up to $PSL(2;{\mb R})$ symmetry. Denote such a map by $u_k$. Its regularity follows from the regularity of Blashcke disks in the toric case (see \cite{Cho_Oh}). The uniqueness of $u_k$ as a stable disk with one output is also obvious.
\end{proof} 

We compute the open-closed map on the exceptional Lagrangian branes. Similar computations will also appear in the example of the Clifford torus in the projective space considered below in Subsection \ref{proj_example}. Recall that the Vandermonde matrix 
\begin{equation}
  \label{eq:vdm}
  T(a_0, \ldots, a_m):=  \left[
    \begin{array}{cccc} 1 & 1 & \cdots & 1 \\
      a_0 & a_1 & \cdots & a_m \\
      \vdots & \vdots & \ddots & \vdots \\
      a_0^m & a_1^m & \cdots & a_m^m
    \end{array}
  \right]    
\end{equation}
has determinant 
\[
\det T(a_1,\ldots, a_m) = \prod_{i< j}(a_j - a_i).
\]
This determinant is non-singular when $a_i \neq a_j$. Denote 
\begin{equation}\label{varsigma}
\varsigma_k:= \exp \left( \frac{(2k-1) \pi \sqrt{-1} }{n-1} \right),\ k = 1, \ldots, n-1
\end{equation}
which are the components of the critical points \eqref{eqn64}. Define an $(n-1)\times (n-1)$ matrix $\on{FFT}_q$ whose $(i, j)$-entry is $(q^\epsilon\varsigma_j)^i$. Namely
\begin{equation}
  \label{fftq}
{\rm FFT}_q = \left[ \begin{array}{cccc} q^\epsilon \varsigma_1 & q^\epsilon \varsigma_2 & \cdots & q^\epsilon \varsigma_{n-1} \\
                           (q^\epsilon \varsigma_1)^2 & (q^\epsilon \varsigma_2)^2 & \cdots & (q^\epsilon \varsigma_{n-1} )^2\\
                            \vdots & \vdots & \ddots & \vdots \\
                           (q^\epsilon \varsigma_1)^{n-1} & (q^\epsilon \varsigma_2)^{n-1} & \cdots & (q^\epsilon \varsigma_{n-1} )^{n-1}  \end{array} \right].
\end{equation}
Its determinant is 
\begin{eqnarray*} 
\det (\on{FFT}_q) &=& q^{(n-1)\epsilon} \varsigma_1 \cdots \varsigma_{n-1} \det T( q^\epsilon \varsigma_1, \ldots, q^\epsilon \varsigma_{n-1}) \\
&=& q^{(n-1)\epsilon} \varsigma_1 \cdots \varsigma_{n-1} \prod_{i<j} ( q^\epsilon \varsigma_j - q^\epsilon \varsigma_i) \neq 0.
\end{eqnarray*}
Hence $\on{FFT}_q$ defines an invertible linear map.  

We will show that the leading order term of the open-closed map is given by such a finite Fourier
transform.   Write $QH^\bullet(\tilde{X},\tilde{\bb})$ as the direct sum (as vector spaces) of the image of $QH^{\bullet}(X,\bb)$ under pull-back and a collection of cycle classes $[\tilde{Z}_1],\ldots, [\tilde{Z}_{n-1}]$, supported on the exceptional divisor $\tilde{Z} \cong \P^{n-1}$ with each $\tilde{Z}_k$ diffeomorphic to a complex projective space $\P^k$. Note that the point class $[\tilde{Z}_0] = [ \pt]$ is not an additional generator. Thus we have a splitting of vector spaces
\begin{equation} \label{vsplitting} QH^\bullet(\tilde{X},\tilde{\bb}) \cong
  QH^{\bullet}(X,\bb) \oplus QH^\bullet(\P^{n-1})/\Lambda [\pt] \cong
  QH^{\bullet}(X,\bb) \oplus \Lambda^{n-1} . \end{equation}
Recall the definition of the exceptional collection ${\mf E}$ from Definition \ref{exceptional_brane}.

\begin{lemma} \label{fft2} There exists $\delta > 0 $ so that for any $\eps > 0 $ sufficiently small, the leading order term in the restriction of the open-closed map $OC( \tilde{\bb}) | HH_\bullet (\Fuk^\flat_{\mf E} (\tilde{X},\tilde{\bb}))$ composed with projection
\[
QH^\bullet(\tilde{X},\tilde{\bb}) \to QH^\bullet(\tilde{X},\tilde{\bb})/\pi^* QH^{\bullet}(X,\bb + q^{-\eps} p) \cong \on{span}([\tilde{Z}_1],\ldots, [\tilde{Z}_{n-1}])
\]
is of the form
\[
OC(\tilde{\bb}) | HH_\bullet(\Fuk^\flat_{\mf E}(\tilde{X},\tilde{\bb}))  ) \ \on{mod} \ QH^{\bullet}(X,\bb + q^{-\eps} p ) = \eps \on{FFT}_q   \ \mod \ q^\delta 
\]
with respect to the bases $\{ L_{(i)} \}, \{ \tilde{Z}_j \}$ where $\on{FFT}_q$ is the matrix \eqref{fftq}.  \label{explicit} As a result, for $\eps$ sufficiently small $OC(\tilde{\bb}) | HH_\bullet(\Fuk^\flat_{\mf E}(\tilde{X},\tilde{\bb})) )$ surjects onto
$QH^\bullet(\tilde{X},\tilde{\bb})/ \pi^* QH^{\bullet}(X,\bb + q^{-\eps} p )$.
\end{lemma}

\begin{proof}
The proof is similar to the proof of surjectivity for the Clifford torus in Theorem \ref{ctorus} below. Via the Blaschke classification \eqref{blaschke}, there is a unique disk of Maslov index $2k$ with an interior point mapping to $\tilde{Z}_k$ and boundary on $L_\epsilon$.  Let
\[ \gamma_1 ,\ldots, \gamma_n \in \pi_1((S^1)^n)  \] 
be the standard set of generators for $\pi_1((S^1)^n)$ and the representation defined by the local system $y\in  H^1(L_{{\bm \epsilon}}, \Lambda)$ is written in coordinates as 
\[
(y_1, \ldots, y_n) = (y(\gamma_1), \ldots, y(\gamma_n)).
\]
By Proposition \ref{alldisks}, there exists a constant $\delta > 0$ independent of $\eps$ so that any disk that leaves the fixed exceptional region $\tilde U$ and bounds $L_{{\bm \epsilon}}$ must have energy greater than $\delta$. Hence, the leading order contributions in the open-closed map $OC(\tilde{\bb})$ come from configurations with no interior insertions labelled by the bulk deformation $\tilde \bb$.
These are holomorphic disks $u: \D \to \tilde X$ with a single point constraint $u(z) = x$ on the boundary $z \in \partial \D$.  It follows that for each brane ${\bm L}_{(k)} = (L_{{\bm \epsilon}}, y_{(k)}, b_{(k)})$, the open-closed map $OC(\tilde{\bb})$ sends the point class $[\pt]_{(k)} \in HF^\bullet ( {\bm L}_{(k)}, {\bm L}_{(k)})$ to
\[
(OC(\tilde{\bb}))([\pt]_{(k)})
\on{mod} \ QH^{\bullet}(X,\bb + q^{-\eps} p )
= (y(\gamma_1), y(\gamma_1 \gamma_2), \ldots, y(\gamma_1 \ldots \gamma_n) ) + \text{h.o.t.} 
\]
similar to the terms in \eqref{oftheform}. Recall that the representation defined by the local system $y_{(k)}$ in the brane ${\bm L}_{(k)}$ is a higher order perturbation of the representation
\[
\varsigma_{(k)} = (\varsigma_k, \ldots, \varsigma_k)
\]
where $\varsigma_k$ is in \eqref{eqn64}. Then we see 
\[
OC(\tilde\bb)([\pt]_{(k)}) = ( q^\epsilon \varsigma_k, \ldots, q^{(n-1)\epsilon} \varsigma_k^{n-1} ) + {\rm h.o.t}.
\]
Therefore, in our preferred basis, $OC(\tilde \bb)$ is the matrix ${\rm FFT}_q$ plus a higher order perturbation, hence is invertible. \label{missingtilde}
\end{proof}

For the exceptional collection one has the following result.

\begin{lemma} \label{disjoint} 
The intersection pairing on the image of $HH_\bullet(\Fuk^\flat_{\mf E}(\tilde{X},\tilde{\bb}))$ is non-degenerate. 
\end{lemma}

\begin{proof} 
The image of $HH_\bullet(\Fuk^\flat_{\mf E}(\tilde{X},\tilde{\bb}))$ is the span of the exceptional cycles, up to higher order corrections. Since $\ti{Z}^n = \lan \ti{Z}, c_1(\mO(-1))^n \ran = (-1)^{n-1}$ in $H(\ti{X})$ is non-zero, the powers of $\ti{Z}$ give a basis for the span of exceptional classes on which the pairing is non-degenerate.
\end{proof}

\subsection{Split-generation for the blowup}\label{subsec:splitgen}

We conclude by proving the main theorem, which now follows from a dimension count.

\begin{proof}[Proof of Theorem \ref{gen}] \label{genproof} 
For sufficiently small $\eps$, $\pi^{-1}({\mc L})$ and $L_{{\bm \epsilon}}$ are disjoint. Hence by Theorem \ref{ortho} the images of
\begin{align*}
&\ HH_\bullet(\Fuk^\flat_{\pi^{-1}(\cL)}(\tilde{X}, \pi^{-1}(\bb))),\ &\ HH_\bullet (\Fuk^\flat_{\mf E} (\tilde{X},\pi^{-1}(\bb)))
\end{align*}
under the open-closed map $OC(\pi^{-1}(\bb))$ are orthogonal with respect to the intersection pairing. 
By Lemma \ref{disjoint}, these two images have trivial intersection. Therefore it suffices to show that their images have complementary dimensions. Indeed, by Corollary \ref{oned},
\[
\dim HH_\bullet (\Fuk^\flat_{\mf E} (\tilde{X},\tilde{\bb}))) = n-1.
\]
By Lemma \ref{fft2}, 
\[
{\rm dim} \left( OC(\pi^{-1}(\bb)) \big( HH_\bullet( \Fuk^\flat_{\mf E}(\tilde X, \pi^{-1}(\bb))) \big) \right) = n-1.
\]
On the other hand, by Theorem \ref{oldembed}
\[
\dim \left( OC( \pi^{-1}(\bb)) \big( HH_\bullet (\Fuk^\flat_{\pi^{-1}(\LL)} (\tilde{X}, \pi^{-1}(\bb)))\big) \right) = 
 \dim QH^\bullet( X, \bb + q^{-\epsilon} p).
\]
\label{missingtilde2}
The claim now follows.
\end{proof}

\begin{proof}[Proof of Corollary \ref{under}] \label{underproof}
Equation \eqref{dpi} is an immediate consequence of the split generation statement and the fact that the exceptional and   unexceptional Lagrangians are disjoint.  For the splitting of quantum cohomology, consider the decomposition of the quantum cohomology $QH^\bullet( \tilde{X}, \pi^{-1}(\bb))$ according to subspaces generated the collections $MC(\pi^{-1}(\LL))$ and ${\mf E}$. By proof of Theorem \ref{gen} above, these have orthogonal images with trivial intersection. Hence we have
\[
QH^\bullet( \tilde X, \pi^{-1}(\bb)) \cong QH^\bullet_{\pi^{-1}({\mc L})}(\tilde X, \pi^{-1}(\bb)) \oplus QH^\bullet_{\mf E} (\tilde X, \pi^{-1}(\bb))
\]
where the dimension of the second summand is $n-1$. By the calculation of the potential function, the bulk-deformed quantum cohomology $QH^\bullet_{\mf E}(\tilde X, \pi^{-1}(\bb))$ is the direct sum of the $n-1$ generalized eigenspaces of the quantum multiplication by $[\omega]$ corresponding to the eigenvalues equal to the $n-1$ critical values of the potential function $W_{\rm ex}$. Hence 
\[
QH^\bullet_{\mf G}(\tilde X, \pi^{-1}(\bb)) \cong QH^\bullet (\pt)^{\oplus n-1} .
\]
and the second claim of Corollary \ref{under} follows. 
\end{proof}

\subsection{The example of projective spaces}\label{proj_example}

Lastly, we show that there is a nonempty set of examples for which our theorem applies. The argument is an explicit computation for a projective space and the Clifford torus.    For $X = \mb{CP}^n$, we normalize the toric invariant symplectic form $\omega$ such that its integral $\int_{\P^1} v^* \omega$ over the standard generator $v: \P^1 \to \P^n$ of $H_2$ is $1$. Let $L \subset \mb{P}^n$ be the Clifford torus
\[
L \cong (S^1)^n = \left\{[z_0, \ldots, z_n]\ |\ |z_0| = \cdots = |z_n| \right\},
\]
which is the only member of the collection $\LL$. 
The potential function of the Clifford torus is 
computed in  \cite{Cho} and \cite{Cho_Oh}.  For the standard complex structure $J_{\mb{P}^n}$ on $\mb{P}^n$, all holomorphic disks are regular. Hence, there exists a Donaldson hypersurface $D\subset \mb{P}^n$ which intersects
the Maslov index two holomorphic disks transversely. We can then require that the domain-dependent almost complex structures on domains with minimal number of interior markings (which is the case for Maslov index two disks that have minimal areas) actually coincide with $J_{\mb{P}^n}$.  The inductive construction of the coherent system of perturbation data extends to this case. Therefore, under our general framework, the count of Maslov index two disks coincides with the count of the standard Maslov index two disks. For any local system with representation $y$ with corresponding brane ${\bm L}$, one has 
\[
m_0(1) = q^{\frac{1}{n+1}} \left( y_1 + \cdots + y_n + \frac{1}{y_1 \cdots y_n}\right) 1_{\bm L}^\blackt=: W(y) 1_{\bm L}^\blackt.
\]
One can also verify that 
\[
m_k \big( 1_{\bm L}^\greyt, \ldots, 1_{\bm L}^\greyt \big) = \left\{ \begin{array}{cc} 1_{\bm L}^\circt - 1_{\bm L}^\blackt,\ &\ k = 1 \\                                                                    0   ,\ &\ k \geq 2. \end{array} \right.
\]
Hence
\[
\sum_{k\geq 0} m_k\big( W(y) 1_{\bm L}^\greyt, \ldots, W(y) 1_{\bm L}^\greyt \big) = m_0(1) + W(y) m_1( 1_{\bm L}^\greyt) = W(y) 1_{\bm L}^\circt.
\]
Therefore, we obtain a distinguished weakly bounding cochain $b_{y} = W(y) 1_{\bm L}^\greyt$ for ${\bm L}$. 

To compute the Floer cohomology, note that  the critical points of the potential function are
\begin{equation}\label{Clifford_torus_brane}
y_{(k)} = (\varsigma_{(k)}, \ldots, \varsigma_{(k)} ) = \left( \exp \left( \frac{2k \pi \sqrt{-1}}{n+1}\right), \ldots, \exp \left( \frac{2k\pi \sqrt{-1}}{n+1} \right) \right),\ 0 \leq k \leq n.
\end{equation}
We choose the following set of weakly unobstructed branes:
\[
{\mf L}:= \big\{ \WB_{(k)} = (L, y_{(k)}, b_{y_{(k)}})\ |\ k = 0, \ldots, n \big\}.
\]

For each bulk deformation $\bb$, consider the flat $A_\infty$ category $\Fuk^\flat_{\mf L}(X, \bb)$ with the above $n+1$ objects. Recall that by the definition of Hochschild homology there is a linear map 
\[
\bigoplus_{k=0}^n HF^\bullet( \WB_{(k)}, \WB_{(k)}) \to HH_\bullet( \Fuk^\flat_{\mf L}(X, \bb)).
\]

\begin{lemma}
$HH_\bullet( \Fuk^\flat_{\mf L}(\mb{CP}^n))$ is $(n+1)$-dimensional with a basis given by the images of $[\pt_{(k)}] \in HF^\bullet (\WB_{(k)}, \WB_{(k)})$.
\end{lemma}

\begin{proof}
The argument is similar to that for Theorem \ref{thm63}. Indeed, one can identify the Floer cohomology $HF^\bullet(\WB_{(k)}, \WB_{(k)})$ as the Clifford algebra associated to the (non-degenerate) Hessian of $W$ at the $k$-th critical point $y_{(k)}$. 
\end{proof}

\begin{theorem} \label{ctorus} 
Let $\bb = 0$ be the trivial bulk deformation. Let $H\in H^2(\mb{CP}^n, {\mb Z})$ be the hyperplane class. We have the following: 
\begin{enumerate}

\item \label{itemb} The matrix of the open-closed map 
\[
[OC(0)]: HH_\bullet( \Fuk^\flat_{\mf L}(\mb{CP}^n)) \to QH^\bullet( \mb{CP}^n)
\]
equals
$T(q^{\frac{1}{n+1}} \varsigma_{(0)}, \ldots, q^{\frac{1}{n+1}}
\varsigma_{(n)} )$ plus a higher order term with respect to the basis
$[\pt_{(0)}], \ldots, [\pt_{(n)}]$ of the Hochschild homology and the
basis $1, H, \ldots, H^n$ of the quantum cohomology. Here $T$ is the
Vandermonde matrix from \eqref{eq:vdm} and $\varsigma_{(i)}$ are the
 $(n+1)$-th roots of unity from \eqref{Clifford_torus_brane}. 
In particular, the open-closed map is an isomorphism.

\item \label{itemc} The closed-open maps $CO_{0, \WB_{(k)}}: QH^{\bullet} (\mb{CP}^n ) \to HF^\bullet ( \WB_{(k)}, \WB_{(k)})$ are given
  by
\[
H^l \mapsto q^{\frac{n-l}{n+1}} \varsigma_{(k)}^{n-l} [1_{\WB_{(k)}}], \quad k, l =   0,\ldots, n
\]
where $[1_{\WB_{(k)}}]\in HF^\bullet( \WB_{(k)}, \WB_{(k)})$ is the identity element.
\end{enumerate} 
\end{theorem}

\begin{proof} 
We choose a particular Morse-Smale pair on $\mb{CP}^n$ to simplify the computation. The function 
\[
f(z_0, \ldots, z_n) = \sum_{i=0}^n a_i |z_i|^2
\]
for $a_0 > a_1 > \cdots > a_n$ descends to a Morse function on $\mb{CP}^n$ whose critical points are the toric fixed points. The closure of the unstable manifolds are the cycles
\[
Z_k = \{ [z_0, \ldots, z_k, 0, \ldots, 0] \in \mb{CP}^n \},\ k = 0, \ldots, n
\]
which represents the classes $1, H, H^2, \ldots, H^n$. 

We give an explicit computation of the open-closed map using the Blaschke classification.  Recall by Proposition \ref{cliffprop} that the branes $\WB_{(l)}$ have different values of the potential for distinct $l$, and so $\Hom (\WB_{(l)}, \WB_{(m)}) = 0$ by definition for $l,m$ distinct.
To prove \eqref{itemb}, recall from Proposition \ref{cliffprop} that the Floer cohomology $HF^\bullet ( \WB_{(m)}, \WB_{(m)})$ a non-degenerate Clifford algebra corresponding to the Hessian $\partial^a \partial^b W(y)$ of the potential $W(y)$.  By Corollary \ref{oned} the Hochschild homology $HH_\bullet( HF^\bullet (\WB_{(m)}, \WB_{(m)}))$ 
has a single  generator, which must be the the point class $[\pt] \in HF^n (\WB_{(m)}, \WB_{(m)})$ since its image under the  open-closed map is non-trivial.  Via the Blaschke classification \eqref{blaschke} there is a unique disk $u: \D \to X$ of Maslov index $I(u) = 2k$ with an interior point $z \in \D$ mapping to $Z_k$ and boundary on $L$.  Identify $L \cong (S^1)^n$ via the local model and let
\[
\gamma_1 ,\ldots, \gamma_n \in H_1((S^1)^n)
\]
be the standard set of generators for $H_1((S^1)^n)$.  By \eqref{blaschke} again, \label{exactly} the contributions in the open-closed map $OC(0)$ arise from disks 
    \[ u: C \to X, \quad u(z_e) \subset Z_k \]
    with a single point constraint $z_e$ in the interior of $C$.  It
    follows that the open-closed map $OC(0)$ is given as a function of
    the representation defined by the local system $y$ on the point class $[\pt] \in HF^\bullet ( \WB_{(k)}, \WB_{(k)})$
    by
    \begin{equation} \label{oftheform} [OC(0)]([\pt]) =
      (1,y(\gamma_1), y(\gamma_1 \gamma_2), \ldots , y(\gamma_1 \ldots
      \gamma_n) ) \end{equation}
  As a result, the point class in the brane with representation defined by local system $y_{(k)}$ is mapped under
  the open-closed map $OC(0)$ to
\[
[Z_0] + q^{1/(n+1)} \varsigma [Z_1] + q^{2/(n+1)} \varsigma^2 [Z_2] + \ldots + q^{n/(n+1)} \varsigma^n[Z_n].
\]
In the basis given by $[Z_0],\ldots, [Z_{n}]$ the open-closed map has the matrix as claimed. 

For \eqref{itemc} note that for each cycle $\P^\ell$, the Blaschke products mapping $0$ to $\P^\ell$ with index $2(n-\ell)$ are those
with the first $n-\ell$ components
\[
(u_1,\ldots, u_{n-\ell})(z) = u: \D \to \C^{n+1}, \quad z \mapsto \left( \zeta_i  \frac{ z - a}{1 - z \ol{a}} \right)_{i = 1,\ldots,n-\ell} .
\]
are non-vanishing with a common root at some $a \in \D$.  Hence the moduli space of holomorphic disks bounding $L_\epsilon$ with one interior input labelled by $Z_l$ and one boundary output is non-empty only if the output is a point constraint. In the case of a point constraint there is a single disk with an interior point mapping to $Z_k$ and the contribution is $y_{(k),1} \ldots y_{(k),n-\ell} 1_{\WB_{(k)}} \in HF( {\bm L}_{(k)}, {\bm L}_{(k)})$.
\end{proof}

\begin{remark}
The closed-open map is a ring homomorphism as predicted by Theorem \ref{homo}. For example, in quantum cohomology we have $[\P^{n-1}]^{n+1} = q$ while in Hochschild cohomology 
\[
( CO_{0,\WB_{(k)}}( [\P^{n-1}] )^{n+1} =     (q^{1/(n+1)} y_{(k),1})^{n+1} = q
\]
for any of the branes ${\bm L}_{(k)}$ in question.  
\end{remark}

\subsubsection{Point bulk deformations}

We extend the above calculation to the bulk deformed case considered in this paper. First we know from Corollary \ref{corollary_fukaya_blowup} that the bulk-deformed curved Fukaya category $\Fuk^\sim_{\mc L}(\mb{CP}^n, q^{-\epsilon} p)$ is well-defined when $\epsilon$ is sufficiently small. Its homotopy equivalence class is also independent of the choice of the point $p$. Now we recalculate the potential function for the choice $p = [0,\ldots, 0, 1]$. Suppose we have a rigid holomorphic disk $u: \mb{D} \to \mb{CP}^n$ passing through $p$ at interior markings $l$ times. By the Blaschke classification of holomorphic disks, we know that the Maslov index $\mu(u)$ of $u$ is at least $2ln$.  The dimension of the moduli space of such marked disks with $1$ boundary marking is 
\[
n + \mu(u) - 2 + 2l - 2ln \geq n + 2l -2.
\]
In order to contribute to $m_0(1)$, the dimension is at most $n$. Hence $l = 0, 1$. While the $l = 0$ case corresponds to the original calculation ....., when $l = 1$, there is exactly one Maslov $2n$ disk passing through $p$ in the interior and passing through a fixed point on $L$. Hence in this case  
\[
m_0(1) = \left( q^{\frac{1}{n+1}} \left( y_1 + \cdots + y_n + \frac{1}{y_1 \cdots y_n} \right) + q^{\frac{n}{n+1} - \epsilon} y_1 \cdots y_n \right) 1_{\bm L}^\blackt=: W_\epsilon(y) 1_{\bm L}^\blackt. 
\]
(Note that $n \geq 2$.) When $\epsilon$ is small, $m_0(1)$ is a higher order perturbation of $W(y)$. By the non-degeneracy of the Hessian of $W$ at critical points, there are exactly $n+1$ critical points 
\[
y_{\epsilon, (k)} = (\varsigma_{\epsilon, (k)}, \ldots, \varsigma_{\epsilon, (k)})
\]
where $\varsigma_{\epsilon, (k)} \in \Lambda$ is a solution to 
\[
\frac{1}{x^n} - q^{\frac{n-1}{n+1} - \epsilon} x^n = x
\]
as a higher order perturbation of $\varsigma_{(k)}$. For the branes $\LB_{\epsilon, (k)}$ corresponding to the local systems $y_{\epsilon, (k)}$, one has a canonical weakly bounding cochain
\[
b_{\epsilon, (k)}:= W_\epsilon( y_{\epsilon, (k)}) 1_{{\bm L}}^\greyt.
\]
Take the corresponding weakly unobstructed branes, one obtains the flat $A_\infty$ category
\[
\Fuk^\flat_{{\mf L}_\epsilon}(\mb{CP}^n, q^{-\epsilon}p).
\]

\begin{corollary}\label{piso} 
For any sufficiently small $\epsilon$, the bulk-deformed open-closed map 
\[
[OC(q^{-\eps} p)]: HH_\bullet (\Fuk^\flat_{\mf L}(\mb{CP}^n, q^{-\epsilon} p)) \to H^\bullet( \mb{CP}^n) 
\]
is a linear isomorphism.
\end{corollary}

\begin{proof} 
We compute the disks with bulk insertions at a point. We still take $p = [ 0,  0,\ldots,0, 1]$. As $[OC(q^{-\epsilon} p)]$ in the direction of $1 = {\rm PD}([Z_n])$) is always only the classical contribution (see Lemma \ref{oc_unit}), we only need to compute in the directions of $H, H^2, \ldots, H^n$, whose representatives $Z_{n-1}$, $\ldots$, $Z_0$ are all disjoint from $p$. The requirement that the disk passes through $p$ forces $n$ additional roots in the Blaschke product \eqref{blaschke}.  So the total number of roots in any Blaschke disk with at least one point constraint at $p$ that also contribute to the open-closed map in the directions of $Z_{n-1}, \ldots, Z_0$, is at least $n$. It follows the matrix of $[OC(q^{-\epsilon} q)]$ is that of $[OC(0)]$ plus terms with $q$-valuation at least $\frac{n}{n+1} - \eps$ or greater. Hence $[OC(q^{-\eps} p)]$ is still an isomorphism. \end{proof}

\appendix

\section{Partly-local domain-dependent almost complex structures} \label{sec:app}

In this appendix, we fill a gap pointed out by Nick Sheridan in the proof of independence of genus zero Gromov-Witten invariants from the choice of divisor in the Cieliebak-Mohnke perturbation scheme
 \cite{Cieliebak_Mohnke}.  We then use the same argument to show that the Fukaya category defined using stabilizing divisors is independent of the choice of stabilizing divisor.

\subsection{Independence of Gromov-Witten invariants}

The proof of independence of genus zero Gromov-Witten invariants of a rational symplectic manifold $X$ from the choice of
Donaldson hypersurfaces in \cite[8.18]{Cieliebak_Mohnke} depends on the
construction of a parametrized moduli space for the following
situation: Given a type $\Gamma$ of stable marked curve let
$\ol{\cU}_\Gamma \to \ol{\cM}_\Gamma$ denote the universal curve over
the compactified moduli space $\ol{\cM}_\Gamma$ of curves of type
$\Gamma$.  Let $\J_\tau(X,\omega)$ denote the space of $\omega$-tamed
almost complex structures on the given symplectic manifold
$(X,\omega)$ with rational symplectic class
$[\omega] \in H^2(X,\omega)$.  A {\it domain-dependent almost complex
  structure} is a map
\[ J_\Gamma: \ol{\cU}_\Gamma \to J_\tau(X,\omega) .\]
Associated to a coherent collection of sufficiently generic choices
$\ul{J} = (J_\Gamma)$ is a Gromov-Witten pseudocycle
$\ol{\M}_{0,n}(X,\beta) \subset X^n$ for each number of markings $n$
and each class $\beta \in H_2(X)$.

Naturally, one wishes to show that the resulting pseudocycle is independent, up to cobordism between pseudocycles, from the choice of Donaldson hypersurface.  Suppose that $D', D'' \subset X$ are two
Donaldson hypersurfaces and
$J' = (J'_{\Gamma'}), J'' = (J''_{\Gamma''})$ are two collections of
domain dependent almost complex structures depending on the
intersection points with $D'$ resp. $D''$, depending on some
combinatorial type $\Gamma'$ resp. $\Gamma''$.  Consider the pullback
\[ (f'')^* J'_{\Gamma'}, \ (f')^* J''_{\Gamma''}: 
\ol{\U}_\Gamma \to \J(X,D',D'') \] 
to a common universal curve $\ol{\U}_\Gamma$ for some type $\Gamma$
recording both sets of markings (so that if $\Gamma'$ resp. $\Gamma''$
has $n'$ resp. $n''$ leaves then $\Gamma$ has $n'+n''$ leaves).  One
wishes to construct a homotopy between
$(f'')^* J'_{\Gamma'}, \ (f')^* J''_{\Gamma''}$ to construct a
cobordism between the corresponding pseudocycles $\ol{\M}'_n(X,\beta)$ and
$\ol{\M}_n''(X,\beta)$ .  Unfortunately, as pointed out by Nick
Sheridan, the pullbacks
$(f'')^* J'_{\Gamma'}, \ (f')^* J''_{\Gamma''}$ do not satisfy the
locality condition used to show compactness. 
That is, the restriction
of the almost complex structures $(f')^* J_{\Gamma''}''$ (or
$(f'')^* J_{\Gamma'}'$) to some irreducible component $S_v$ of the
domain curve $C$ are not independent of markings on other components
$S_{v'} \neq S_v$, because collapsed components $S_v$ may map to
non-special points $f'(S_v) = \{ w \} \in f'(C)$ under the forgetful
map $f'$.

In this appendix we modify the definition of the locality on the collapsed
components so that one may homotope between the two domain-dependent
almost complex structures without losing compactness. 
Instead of directly homotoping between the given pull-backs, one first
homotopes each pullback to an almost complex structure that is equal
to a base almost complex structure near any special point.

\subsection{Partly local perturbations}

We introduce the following notation for stable maps with two types of
markings.  Let $\Gamma$ be a combinatorial type of genus zero stable
curve with $n = n'+n''$ markings.  Let $D',D''$ be Donaldson
hypersurfaces in the symplectic manifold $(X,\omega)$, that is,
symplectic hypersurfaces representing large multiples $k' [\omega]$
resp.  $k'' [\omega]$ of the symplectic class
$[\omega] \in H^2(X,\Q)$.  Suppose $D'$ and $D''$ intersect
transversely.  Let $\mathcal{J} (X, D', D'')$ be the space of
$\omega$-tamed almost complex structures on $X$ that make $D'$ and
$D''$ almost complex. Let
$\J^E(X,D',D'') \subset \mathcal{J} (X, D', D'')$ be some contractible
subset of almost complex structures $J: TX \to TX$ preserving $TD'$
and $TD''$ taming the symplectic form $\omega$ and so that any
non-constant pseudoholomorphic $J$-holomorphic map $u: C \to X$ with
some given energy bound $E(u) < E$ to $X$ meets $D',D''$ each in at
least three but finitely many distinct points
$u^{-1}(D'), u^{-1}(D'')$ in the domain $C$ as in
\cite[8.18]{Cieliebak_Mohnke}.  Let
\[ J_{D',D''} \in \bigcap_E  \J^E(X,D',D'') \] 
be a base almost complex structure that satisfies these conditions
without restriction on the energy of the map $u: C \to X$.  

The universal curve breaks into irreducible components corresponding
to the vertices of the combinatorial type.  Let
$\ol{\cU}_\Gamma \to \ol{\cM}_\Gamma$ be the closure of the universal
curve of type $\Gamma$.  For each vertex $v \in \Ver(\Gamma)$ let
$\Gamma(v)$ denote the tree with the single vertex $v$ and edges those
of $\Gamma$ meeting $v$.  Let
$\ol{\cU}_{\Gamma,v} \subset \ol{\cU}_\Gamma$ be the component
corresponding to $v$, obtained by pulling back $\ol{\cU}_{\Gamma(v)}$
so that $\cU_\Gamma$ is obtained from the disjoint union of the curves
$\cU_{\Gamma,v} \to \cM_\Gamma$ by identifying at nodes.

Cieliebak-Mohnke \cite{Cieliebak_Mohnke} requires that the almost complex
structure is equal to the base almost complex structure near the
nodes.  This condition is not true for domain-dependent almost complex
structures pulled back under forgetful maps, and so must be relaxed as
follows.  Recall that Knudsen's (genus zero) universal curve
$\ol{\cU}_\Gamma$ \cite{Knudsen_1983} is a smooth projective variety, and in
particular a complex manifold.  A {\it domain-dependent almost complex
  structure} for type $\Gamma$ of stable genus zero curve is an almost
complex structure
\[ J_\Gamma:  T(\ol{\cU}_\Gamma \times X ) \to 
T(\ol{\cU}_\Gamma \times X)  \]  
that preserves the splitting of the tangent bundle
$T(\ol{\cU}_\Gamma \times X)$ into factors
$T\ol{\cU}_\Gamma \times TX$ and that is equal to the standard complex
structure on the tangent space to the projective variety
$\ol{\cU}_{\Gamma}$, and gives rise to a map from $\ol{\cU}_\Gamma$ to
$\J(X,D',D'')$ with the same notation $J_\Gamma$.  Let
\[ \J_\Gamma^E(X,D',D'') \subset \Map(\ol{\cU}_\Gamma,
\J^E(X,D',D'')) \]
denote the space of such maps taking values in $\J^E(X,D',D'')$.  With
this definition, the standard proof of Gromov convergence applies: Any
sequence $u_\nu: C_\nu \to X$ of $J_\Gamma$-holomorphic maps with
energy $E(u) <E $ may be viewed as a finite energy sequence of maps to
$\ol{\cU}_\Gamma \times X$.  Therefore it has a subsequence with a
Gromov limit $u: C \to X$ where the stabilization $C^s$ of $C$ is a
fiber of $\ol{\cU}_\Gamma$ and $u$ is pseudoholomorphic for the
pull-back of the restriction of $J_\Gamma$ to $C^s$.  If we restrict
to sequences of maps $u_\nu: C_\nu \to X$ sending the markings to $D'$
or $D''$ then in fact $C^s$ is equal to $C$, since non-constant
components of $u$ with fewer than three markings are impossible.

We distinguish components of the curve that are collapsed under
forgetting the first or second group of markings.  Let
\[ f': \ol{\cU}_{\Gamma} \to \ol{\cU}_{\Gamma''}, \quad 
f'':\ol{\cU}_{\Gamma} \to \ol{\cU}_{\Gamma'} \]  
denote the forgetful maps forgetting the first $n'$ resp. last $n''$
markings and stabilizing.  Call a component of $C$ {\it $f'$-unstable}
if it is collapsed by $f'$, and {\it $f'$-stable} otherwise, in which
case it corresponds to a component of $f'(C)$.  $f''$-unstable
components are defined similarly.

\begin{definition}\label{plocal} 
{\rm (Local and partly local almost complex
    structures) } 
\begin{enumerate} 
 \item  A domain-dependent almost complex structure
\[ J_\Gamma: \ol{\cU}_\Gamma \to \J(X,D',D'') \]
is {\it local} if and only if for each $v \in \Ver(\Gamma)$ the restriction
$J_\Gamma | \ol{\cU}_{\Gamma,v}$ is local in the sense that
$J_\Gamma | \ol{\cU}_{\Gamma,v}$ is pulled back from some map
$J_{\Gamma,v}$ defined on the universal curve $\ol{\cU}_{\Gamma(v)}$
and equal to $J_{ D', D''}$ near any special point of
$\ol{\cU}_{\Gamma,v}$.
\item A domain-dependent almost complex structure
 \[ J_\Gamma: \ol{\cU}_\Gamma \to \J(X,D',D'') \]
is {(\it$f'$-local} if and only if 
\begin{enumerate} 
\item for each $v \in \Ver(\Gamma)$ such that $\cU_{\Gamma,v}$ is
  $f'$-stable (that is, has sufficiently many $D''$ markings) then
  $J_\Gamma | \ol{\cU}_{\Gamma,v}$ is local in the sense that
  $J_\Gamma | \ol{\cU}_{\Gamma,v}$ is pulled back from some map
  $J_{\Gamma,v}$ defined on the universal curve $\ol{\cU}_{\Gamma(v)}$
  and equal to $J_{D',D''}$ near any point $z \in C$ mapping to a
  special point $f'(z)$ of $f'(C)$, and
\item for each $v \in \Ver(\Gamma)$ such that $\cU_{\Gamma,v}$ is
  $f'$-unstable (that is, does not have sufficiently many $D''$
  markings) then $J_\Gamma | \ol{\cU}_{\Gamma,v}$ is constant on each fiber of $\ol{\cU}_{\Gamma,v}\to {\mc M}_\Gamma$.
\end{enumerate} 
The definition of $f''$-local is similar.  
In either case, 
we say that $J_\Gamma$ is {\it partly-local.}
\end{enumerate} 
\end{definition}

\begin{remark} \label{weaker} Note that $f'$-pullbacks
  $ (f')^* J_{\Gamma''}$ are $f'$-local, and local almost complex
  structures are $f'$-local.  The condition that an almost complex
  structure be $f'$-local is weaker than the condition that it be
  pulled back under $f'$, because the restriction
  $J_\Gamma | \ol{\cU}_{\Gamma,v}$ is allowed to depend on special
  points $z \in \ol{\cU}_{\Gamma,v}$ that are forgotten under
  $f'$. \end{remark}

\begin{remark} \label{pullback} One can reformulate the $f'$-local
  condition as a pullback condition for a forgetful map that forgets
  almost the same markings as those forgotten by $f'$.  Let $C$ be a
  curve of type $\Gamma$.  Let $C^{\us} \subset C$ be the locus
  collapsed by $f'$.  For each connected component
  $C_i, i = 1,\ldots, k$ of $C^{\us}$ mapping to a marking of $f'(C)$
  choose $j(i)$ so that $z_{j(i)} \in C_i$.  Let
  $I^{\us} \subset \{ 1, \ldots, n \}$ denote the set of indices $j$
  of markings $z_j \in C^{\us}$ with $z_j \neq z_{j(i)}, \forall i$.
  Forgetting the markings with indices in $I^{\us}$ and collapsing
  defines a map $f: C \to f(C)$ such that any collapsed component of
  $C$ maps to a special point of $f(C)$.  Let $\Gamma^f$ denote the
  combinatorial type of $f(C)$.  Then $J_\Gamma$ is $f'$-local if and
  only if $J_\Gamma = f^* J_{\Gamma^f}$ is pulled back from a local
  domain-dependent almost complex structure
  $J_{\Gamma^f}: \ol{\cU}_{\Gamma^f} \to \J(X,D',D'')$.  Indeed, the
  collapsed components under $C \to f(C)$ are the same as those of
  $f': C \to f'(C)$ since adding a single marking $z_i$ on the
  components that collapse $S_v$ to markings $f(S_v) \subset f(C)$
  does not stabilize $S_v$.  So the pull-back condition
  $J_\Gamma = f^* J_{\Gamma^f}$ requires $J_\Gamma$ to be constant on
  the components $S_v$ such that $\dim(f(S_v)) = 0$.  On the other
  hand, any irreducible component of $f(C)$ is isomorphic, as a stable
  marked curve, to an irreducible component of $C$ not collapsed under
  $f'$.
\end{remark} 

\begin{remark} \label{both} There also exist domain-dependent almost
  complex structures that are both $f'$ and $f''$-local.  Indeed,
  suppose that $C$ is a curve of type $\Gamma$, and
  $K \subset \{1,\ldots, n' + n'' \} $ is the set of markings on
  components collapsed by $f'$ or $f''$.  Forgetting the markings
  $z_k, k \in K$ defines a forgetful map
  $f^{{\rm s}{\rm s}}: C \to f^{{\rm s}{\rm s}}(C)$, where $f^{{\rm s}{\rm s}} (C)$ is of some
  (possibly empty) type $\Gamma^{\rm ss}$.  Let
  $J_{\Gamma^{\rm ss}}: \ol{\cU}_{\Gamma^{\rm ss}} \to \J(X,D',D'')$ be a
  domain-dependent almost complex structure for type $\Gamma^{\rm ss}$.
  Then $(f^{\rm ss})^* J_{\Gamma^{\rm ss}}$ is both $f'$ and $f''$-local
  (taking the constant structure $J_{D',D''}$ if $\Gamma^{\rm ss}$ is
  empty.)
\end{remark}

\begin{lemma} \label{extends} The space of $f'$-local
  resp. $f''$-local resp. $f'$ and $f''$-local almost complex
  structures tamed by or compatible with the symplectic form $\omega$
  is contractible.  Any $f'$-local resp. $f''$-local resp. $f'$ and
  $f''$-local $J_\Gamma | \partial \ol{\U}_\Gamma$ defined on the
  boundary
  $\partial \ol{\U}_\Gamma := \ol{\U}_\Gamma | \partial
  \ol{\M}_\Gamma$
  extends to a $f'$-local resp. $f''$-local resp. $f'$ and $f''$-local
  structure $J_\Gamma$ over an open neighborhood of the boundary
  $\partial \ol{\U}_\Gamma$ in $\ol{\U}_\Gamma$.
\end{lemma} 

\begin{proof} Contractibility follows from the contractibility of
  tamed or compatible almost complex structures.  Since the space of
  $f'$-local tamed almost complex structures is contractible, it
  suffices to show the existence of an extension of $J_\Gamma$ near
  any stratum $\ol{\U}_{\Gamma_1} \subset \ol{\U}_\Gamma$ and then
  patch together the extensions.  {Local domain-dependent almost
  complex structures $J_\Gamma$ extend by a gluing construction in
  which open balls $U_+, U_-$ around a node are replaced by a
  punctured ball $V \cong U_+^\times \cong U_-^\times$ on which the
  almost complex structure is equal to the base almost complex
  structure $J_{D', D''}$.}

  In the partly-local case, recall from Remark \ref{pullback} that
  $J_\Gamma$ is the pull-back of a local almost complex structure
  $J_{\Gamma^f}$ near any particular fiber of the universal curve.
  Define an extension of $J_\Gamma$ near curves of type $\Gamma_1$ by
  first extending $J_{\Gamma^f}$ and then pulling back.  In more
    detail, let $C$ be such a curve and let $C_1,\ldots, C_k$ denote
    the connected components of $C$ collapsed by $f'$ to a non-special
    point of $f'(C)$.  Choose a marking $z_i \in C_i$ and let
    $\Gamma^{\s}$ resp. $\Gamma_1^{\s}$ denote the type obtained from
    $\Gamma$ resp. $\Gamma_1$ by forgetting all markings on $C_i$
    except $z_i$, for each $i = 1,\ldots, k$.  Consider the forgetful
    map
  \[ f: \ol{\cU}_{\Gamma} \to \ol{\cU}_{\Gamma^f} \]
  that forgets all but the marking $z_i$ on $C_i$.  As discussed
in Remark \ref{pullback}
  $J_{\Gamma_1}$ is  the pullback of a complex structure
  \[ J_{\Gamma_1^f}: \ol{\cU}_{\Gamma_1^f} \to \J(X,D',D'') . \]
  Since the complex structure $J_{\Gamma_1^f}$ is constant equal to
  the base almost complex structure $J_{D',D''}$ near the nodes (which
  must join non-collapsed components) $J_{\Gamma_1^f}$ naturally
  extends to a domain-dependent almost complex structure
  $J_{\Gamma^f}$ on a neighborhood $\cN_{\Gamma_1^f}$ of
  $\ol{\cU}_{\Gamma_1^f}$ in $\ol{\cU}_{\Gamma^f}$ by taking
  $J_{\Gamma^f}$ to equal $J_{D',D''}$ near the nodes.  Now take
  $J_\Gamma = f^* J_{\Gamma^f}$ to obtain an extension of $J_\Gamma$
  from $\cU_{\Gamma_1}$ to a neighborhood $f^{-1}(\cN_{\Gamma_1^f})$.
  The proof for $f'$ local or $f'$ and $f''$-local structures is
  similar.
\end{proof}

\subsection{Transversality}
 
We wish to inductively construct partly-local almost complex
structures so that the moduli spaces of stable maps define
pseudocycles. Recall that the combinatorial type of a stable map is
obtained from the type of stable curve by decorating the vertices with
homology classes; we also wish to record the intersection
multiplicities with the Donaldson hypersurfaces.  More precisely, a
{\it type} of stable map $u$ from $C$ to $(X,D',D'')$ consists of a
type $\Gamma$ the stable curve $C$ (the graph with vertices
corresponding to components and edges corresponding to markings and
nodes) with the labelling of vertices $v \in \Ver(\Gamma)$ by homology
class $d(v) = [u |S_v] \in H_2(X)$, labelling of the semi-infinite
edges $e$ by either $D'$ or $D''$,\footnote{To obtain evaluation maps
  one should allow additional edges, but here we ignore evaluation
  maps.} and by the intersection multiplicities $m'(e), m''(e)$ with
$D'$ and $D''$ (possibly zero if the corresponding marking does not
map to $D'$ or $D''$.  A stable map is {\it adapted} of type $\Gamma$
if each connected component of $u^{-1}(D')$ resp.  $u^{-1}(D'')$
contains at least one marking $z_e$ corresponding to an edge $e$
labelled $D'$ resp. $D''$, and each marking $z_e$ maps to $D'$ or
$D''$ depending on its label.  A stable map is {\it adapted} of type
$\Gamma$ if
\begin{enumerate}
\item each connected component of $u^{-1}(D')$ resp. $u^{-1}(D'')$ contains at least one marking $z_e$ corresponding to an edge $e$ with labelling $m'(e) \geq 1$ resp. $m''(e) \geq 1$, and

\item if $m'(e) \geq 1$ resp. $m''(e) \geq 1$, then the marking $z_e$ is mapped to $D'$ resp. $D''$.
\end{enumerate}

By forgetting the extra data and stabilization one can associate to
each type of stable maps to a type of stable curves. In notation we do
not distinguish the two notions of types.  Given a type of stable map
$\Gamma$ choose a domain-dependent almost complex structure
$J_\Gamma$.  Denote by $\M_\Gamma(X,J_\Gamma)$ the moduli space of
adapted $J_\Gamma$-holomorphic stable maps $u: C \to X$ of type
$\Gamma$, such that for each $v \in {\rm Vert}(\Gamma)$ with
$d(v) \neq 0$, the image of $u_v$ is not contained in $D' \cup D''$,
and for each semi-infinite edge $e$ attached to $v$, the local
intersection number of $u_v$ with $D'$ resp. $D''$ at $z_e$ is equal
to $m'(e)$ resp. $m''(e)$.   {The moduli space $\M_\Gamma(X,J_\Gamma)$
is locally cut out by a smooth map of Banach manifolds: Given a local
trivialization of the universal curve given by an subset
$\cM^i_\Gamma \subset \cM_\Gamma$ and a trivialization
$C \times \cM^i_\Gamma \to \cU_\Gamma^i = \cU_\Gamma
|_{\cM^i_\Gamma}$,
we consider the space of maps $\Map(C,X)_{k,p}$ of Sobolev class $k,p$
for $p \ge 2$ satisfying the above constraints and $k$ sufficiently
large to the space of $0,1$-forms with values in $TX$ given by the
Cauchy-Riemann operator $\olp_{J_\Gamma}$ associated to $J_\Gamma$.
The linearization of this operator is denoted $D_{u}$ (or
$D_{u,J_\Gamma}$ to emphasize dependence on $J_\Gamma$) and the map
$u$ is called {\it regular} if $D_{u}$ is surjective.}  We call a type
$\bGamma$ of stable map $u: C \to X$ {\it crowded} if there is a
maximal ghost subtree of the domain $C_1 \subset C $ with more than
one marking $z_e \in C_1$ and {\it uncrowded} otherwise.  It is not in
general possible to achieve transversality for crowded types using the
Cieliebak-Mohnke perturbation scheme.

\begin{definition} We say a domain-dependent almost complex structure
  $J_\Gamma$ is {\it regular} for a type of map type $\bGamma$ with 
  underlying domain type $\Gamma$ if
  \begin{enumerate}
  \item if $\bGamma$ is uncrowded then every element of the moduli
    space $\M_\Gamma(X,J_\Gamma)$ of adapted $J_\Gamma$-holomorphic
    maps is regular; and
  \item If $\bGamma$ is crowded then there exists a regular
    $J_{\Gamma^{\s}}$ for some uncrowded type $\bGamma^{\s}$ obtained
    by forgetting all but one marking $z_e$ on each maximal ghost
    component for curves of type $\bGamma$ such that $J_{\Gamma^{\s}}$
    is equal to $J_\Gamma$ on all non-constant components, that is,
    all components of $\ol{\cU}_\Gamma$ on which the maps
    $u: C \to X$ in $\M_\bGamma(X,J_\Gamma)$ are non-constant.
\end{enumerate} 
\end{definition} 

Recall the construction by Floer { \cite[Lemma 5.1]{Floer_unregularized}} of a
subspace of smooth functions with a {separable} Banach space
structure.  Let $\ul{\eps} = ( \eps_\ell, \ell \in \Z_{\ge 0})$ be a
sequence of constants converging to zero.  Let
$\J_\Gamma(X)_{\ul{\eps}}$ denote the space of domain-dependent almost
complex structures of finite Floer norm as in \cite[Section 5]{Floer_unregularized}.
In particular, $\J_\Gamma(X)_{\ul{\eps}}$ allows variations with
arbitrarily small support near any point.

\begin{proposition} 
\begin{enumerate} 
\item \label{c} For a regular domain-dependent almost complex
  structure $J_{\Gamma''}$ the pull-back $ (f')^* J_{\Gamma''}$ is
  regular, and similarly for the pull-back $(f'')^* J_{\Gamma'}$ for
  regular $J_{\Gamma'}$.
\item \label{d} Suppose that $J_\Gamma | \partial \ol{\cU}_\Gamma$ is
  $f'$-local and is a regular domain-dependent almost complex
  structure defined on the boundary
  $\partial \ol{\cU}_\Gamma \to \partial \ol{\M}_\Gamma$.  The set of
  regular $f'$-local extensions is comeager, that is, is the
  intersection of countably many sets with dense interiors.
\item \label{e} Any parametrized-regular homotopy
  $J_{\Gamma,t} | \partial \ol{\cU}_\Gamma$ between two regular
  $f'$-local domain-dependent almost complex structures
  $J_{\Gamma,0}, J_{\Gamma,1}$ on the boundary
  $\partial \ol{\cU}_\Gamma$ may be extended to a parametrized-regular
  one-parameter family of $f'$-local structures $J_{\Gamma,t}$ equal
  to $J_{\Gamma,t}$ over $\ol{\cU}_\Gamma$.
\end{enumerate} 
\end{proposition} 

\begin{proof}
  Item \eqref{c} is immediate from the definition, since any variation
  of $J_{\Gamma''}$ induces a variation of $(f')^* J_{\Gamma''}$.
  \eqref{d} is an application of Sard-Smale applied to a universal
  moduli space.  We sketch the proof, which is analogous to that in
  Cieliebak-Mohnke \cite[Chapter 5]{Cieliebak_Mohnke}.  By Lemma
  \ref{extends}, $J_\Gamma | \partial \ol{\U}_\Gamma$ has an extension
  over the interior.  For transversality, first consider the case of
  an uncrowded type $\bGamma$ of stable map with domain type $\Gamma$.  Choose open subsets
    $L_\Gamma,N_\Gamma \subset \ol{\cU}_\Gamma$ of the boundary resp.
    markings and nodes, such that $L_\Gamma$ is union of fibers of
    $\ol{\cU}_\Gamma$ containing the restriction
    $\ol{\cU}_\Gamma | \partial \M_\Gamma$ and $N_\Gamma$ is
    sufficiently small so that the intersection of the complement of
    $N_\Gamma$ with each component of each fiber of $\cU_\Gamma$ not
    meeting $L_\Gamma$ is non-empty.  Let $\M^{\univ}_\Gamma(X)$
    denote the universal moduli space consisting of pairs
    $(u,J_\Gamma)$, where $u: C \to X$ is a $J_\Gamma$-holomorphic map
    of some Sobolev class $W^{k,p}, kp \ge 3, p \ge 2$ on each
    component (with $k$ sufficiently large so that the given vanishing
    order at the Donaldson hypersurfaces $D',D''$ is well-defined).
    Let $\J^E_\Gamma(X,N_\Gamma,S_\Gamma) \subset \J^E_\Gamma(X)$
    denote the space of $J_\Gamma \in \J_\Gamma^E(X)_{\ul{\eps}}$ that
    are $f'$-local domain-dependent almost complex structures that
    agree with $J_{D',D''}$ on the neighborhood $N_\Gamma$ of the
    nodes and markings $z \in \ol{\cU}_\Gamma$ that map to special
    points $f'(z) \in \ol{\cU}_{f'(\Gamma)}$ as in Definition
    \ref{plocal}, and equal to the given extension in the neighborhood
    $L_\Gamma$ of the boundary, and constant on the components
    required by $f'$-locality in Definition \ref{plocal}.  By elliptic
    regularity, $\M^{\univ}_\Gamma(X)$ is independent of the choice of
    Sobolev constants used in its construction.

    The universal moduli space is a smooth Banach manifold by an
    application of the implicit function theorem for Banach manifolds.
    Let $\cU_\Gamma^i \to \cM_\Gamma^i , i = 1,\ldots, m$ be a
    collection of open subsets of the universal curve
    $\cU_\Gamma \to \cM_\Gamma$ on which the universal curve is
    trivialized via diffeomorphisms
    $\cU_\Gamma^i \to \cM_\Gamma^i \times C$.  The space of pairs
    $(u : C \to X, J_\Gamma)$ with $[C] \in \cM_\Gamma^i$, $u$ of type
    $\Gamma$ of class $W^{k,p}$ on each component, and
    $J_\Gamma \in \J^E_\Gamma(X,N_\Gamma,S_\Gamma)$ is a smooth
    separable Banach manifold.  Since we assume that $J_\Gamma$ is
    regular on the boundary $\partial \cU_\Gamma$, an argument using
    Gromov compactness shows that by choosing $L_\Gamma$ sufficiently
    small we may assume that $\ti{D}_{u,J_\Gamma}$ is surjective for
    $[C] \in L_\Gamma$, since regularity is an open condition in the
    Gromov topology \cite[Section 10.7]{McDuff_Salamon_2004}.  Let
    $\ti{D}_{u,J_\Gamma}$ the linearization of
    $(u,J_\Gamma) \mapsto \olp_{J_\Gamma} u$, and suppose that $\eta$
    lies in the cokernel of $\ti{D}_{u,J_\Gamma}$.  We have
    $D_u^* \eta^s = 0$ where $D_u$ is the usual linearized
    Cauchy-Riemann operator \cite[p. 258]{McDuff_Salamon_2004} for the map; in the
    case of vanishing constraints at the Donaldson hypersurfaces see
    Cieliebak-Mohnke \cite[Lemma 6.6]{Cieliebak_Mohnke}.  By variation of the
    almost complex structure $J_\Gamma$ and unique continuation,
    $\eta$ vanishes on any component on which $u$ is non-constant.  On
    the other hand, for any constant component $u_v: S_v \to X$, the
    linearized Cauchy-Riemann operator $D_{u_v}$ on a trivial bundle
    $u_v^* TX$ is regular with kernel $\ker(D_{u_v})$ the space of
    constant maps $\xi: C_u \to (u_v)^* TX$.  It follows by a standard
    inductive argument that the same holds true for a tree
    $C' = \cup_{v \in V} S_v , \d u |_{C'} = 0 $ of constant
    pseudoholomorphic spheres so the element $\eta$ vanishes on any
    component $S_v \subset C$ on which $u$ is constant.  It follows
    that $\cM^{\univ,i}_\Gamma(X)$ is a smooth Banach manifold.  For a
    comeager subset $\J_\Gamma^{\reg}(X) \subset \J_\Gamma(X)$ of
    partly almost complex structures in the space above, the moduli
    spaces $\cM^i_\Gamma(X) = \cM_\Gamma(X) |_{\cM^i_\Gamma}$ are
    transversely cut out for each $i = 1,\ldots, m$.  The transition
    maps between the local trivializations
    $\cM_\Gamma^i \cap \cM_\Gamma^j \to \Aut(C)$ induce smooth maps
    $\cM_\Gamma^{i}(X) |_{\cM_{\Gamma}^i \cap \cM_\Gamma^j} \to
    \cM_\Gamma^{j}(X) |_{\cM_{\Gamma}^i \cap \cM_\Gamma^j}$
    making $\cM_\Gamma(X)$ into a smooth manifold.

  Next, consider a crowded type $\bGamma$ with domain type $\Gamma$.  Let 
  $f: \Gamma \to f(\Gamma)$ be a map forgetting all but one marking on
  each maximal ghost component $C' \subset C$ and stabilizing; the
  multiplicities $m'(e),m''(e)$ at any marking $z_e$ is the sum of the
  multiplicities of markings in its preimage $f^{-1}(z_e)$.  Define
  $J_{\Gamma^f}$ as follows.
\begin{enumerate} 
\item If $\cU_{\Gamma^f,v} \cong \cU_{\Gamma,v}$ let
  $J_{\Gamma^f} | \cU_{\Gamma^f,v}$ be equal to
  $J_{\Gamma} | \cU_{\Gamma,v}$.
\item Otherwise let
  $J_{\Gamma^f} : \cU_{\Gamma^f,v}\to \J^E(X,D',D'')$ be constant
  equal to $J_{D',D''}$.
\end{enumerate}
\noindent The map $J_{\Gamma^f}$ is continuous because any non-collapsed ghost
component $S_v \subset C$ must connect at least two non-ghost
components $C_{v_1} , C_{v_2} \subset C$ and the connecting points of
the non-ghost components $f'(C_{v_1}), f'(C_{v_2})$ is a node of the
curve $f'(f(C))$ of type $f'(\Gamma^f)$.  For a comeager subset of
$J_\Gamma$ described above, the complex structures $J_{\Gamma^f}$ are
also regular by the argument for uncrowded types.  Item \eqref{e} is a
parametrized version of \eqref{d}.
\end{proof} 

\begin{corollary} There exists a regular homotopy
  $J_{\Gamma,t}, t \in [-1,1]$ between $ (f'')^* J'_{\Gamma'}$ and
  $ \ (f')^* J''_{\Gamma''}$ in the space of maps
  $\ol{\U}_\Gamma \to \J(X,D',D'')$ that are $f'$-local for
  $t \in [-1,0]$ and $f''$-local for $t \in [0,1]$
\end{corollary}

\begin{proof}  
  Let $\ul{J} = (J_\Gamma)$ be a collection of regular
  domain-dependent almost complex structures that are both $f'$ and
  $f''$-local, 
as in Remark \ref{both}.   By part \eqref{d} above, for each type $\Gamma$ there
  exists a regular homotopy from $J_\Gamma$ to $(f')^* J_{\Gamma''}$
  resp. $(f'')^* J_{\Gamma'}$ extending given homotopies on the
  boundary.  The existence of a regular homotopy now follows by
  induction.
\end{proof}

\subsection{Homotopy invariance of Fukaya categories}

We wish to show, as claimed in Remark \ref{indep}, that 
the \ainfty homotopy type of $\Fuk_{\cL} (X,\bb)$ (as a curved \ainfty algebra with curvature with positive $q$-valuation over the Novikov ring $\Lambda_{\ge 0}$) is independent of the choice of almost complex structures, perturbations,\footnote{We do not use Hamiltonian perturbations of Lagrangians in this paper, so the Fukaya category we use here is defined over $\Lambda_{ \ge   0}$.} stabilizing divisors, and depend only on the isotopy class of bulk deformation. The argument uses a moduli space of {\em quilted disks} with seams labelled by the diagonal, as in \cite[Section 5.5]{flips}.   The hardest part is showing independence
  of the choice of Donaldson hypersurface.  Let $D',D''$ be two Donaldson hypersurfaces that intersect transversely.   Let $C = S \cup T$ be a quilted treed disk of type $\Gamma$.  Each 
  component $S_v, v \in \Ver(\Gamma)$ has some distance
\begin{equation} 
\label{dv} d(v) = \sum_{e \in \Edge(\Gamma)_{v}^{v'} } \ell(e) \in \R 
\end{equation} 
  measuring the sum of the lengths of edges $e \in \Edge(\Gamma)_{v}^{v'}$ 
  to a vertex $v'$ corresponding to a quilted component. 
 Thus $d(v)$ is negative if $v$ comes after the quilted components in order of components starting with the incoming edges,  positive distance if it comes before, and zero distance if $S_v$ is itself quilted.    We now consider
  perturbations $P_\Gamma$ for $D' \cup D''$-adapted maps, with each marking
  labelled by the divisor to which it maps.    The perturbations 
  $P_\Gamma$ are required to satisfy the following properties:
  \begin{enumerate} 
  \item On the components $S_v$ with $d(v) = + \infty$, the perturbation $P_\Gamma$
  is required to be a partly-local perturbation obtained by pull-back of maps forgetting the $D''$-markings of some perturbation scheme $P_{f''(\Gamma})$ for 
  markings mapping to $D'$.
  \item On the components $S_v$ with $d(v) = - \infty$, the perturbation $P_\Gamma$
  is required to be partly-local perturbations obtained by pull-back of maps forgetting the $D'$-markings of some perturbation scheme $P_{f'(\Gamma})$ for 
  markings mapping to $D''$.
  \end{enumerate} 
  One obtains from such a scheme an \ainfty morphism 
  \[ \phi_d: CF(L_{d-1},L_d;D') 
  \otimes \ldots \otimes CF(L_{0},L_1;D')
  \to CF(L_0,L_d;D'')[1-d] \] 
  where the inclusion of $D'$ or $D''$ in the notation indicates which perturbation scheme is being used.   To justify the existence of such a perturbation scheme,
  note that as in the proof of Lemma \ref{extends}, any partly local 
  perturbation scheme may be homotoped to a local one by homotoping 
  to the base almost complex structure on certain components, and the space of local perturbations is contractible.  Similarly, reversing the roles of $D'',D'$
  one obtains an \ainfty morphism 
  \[ \psi_d: CF(L_{d-1},L_d;D'')
  \otimes \ldots CF(L_0,L_1;D'') 
  \to CF(L_0,L_d;D')[1-d] .\] 
  Then an argument using twice-quilted treed disks produces \ainfty homotopies
  from $\psi \circ \phi, \phi \circ \psi$ to the relevant identities.  
  
There are some minor differences between the case of genus zero Gromov--Witten invariants and the case of Fukaya category. For example, the universal curve in the Gromov--Witten case is itself a manifold, a fact which can be used to simplify the description of the space of perturbations. 

\begin{remark} \label{finalhequiv}
We also require a version of homotopy invariance which allows one of the divisors to be stabilizing only for disks in a certain subset, as in the case of the inverse image of a Donaldson hypersurface in the blow-down discussed in the main body of the paper.  We consider the following situation:
Let $U \subset X$ be an open subset disjoint from $L$, $J$ an almost 
complex structure on $X$ 
and $\ti{Z} \subset U$ a $J$-almost complex submanifold  
with the property that
any non-constant holomorphic sphere in $U$ is contained in $\ti{Z}$ and has positive Chern number.  Let 
$D',D''$
be codimension two $J$-almost complex submanifolds with the property that any holomorphic sphere not contained in $\ti{Z}$ meets $D', D''$ in finitely many but at least three points, and any holomorphic disk bounding $L$ meets $D', D''$ at least once.    By perturbing the almost complex structure using domain-dependent perturbations away from $U$ one finds that the moduli spaces of holomorphic disks of expected dimension at most one are regular and define 
Fukaya categories $\Fuk_{\cL}^\sim (X,\bb;D')$ 
and $\Fuk_{\cL}^\sim (X,\bb;D'')$, as in Section 
\ref{excreg}, with compactness as in Lemma \ref{icompact}. The argument above now gives the desired homotopy equivalence. \end{remark}

\bibliographystyle{amsalpha} 
\bibliography{Reference}

\end{document}